\documentclass[12pt]{amsart}

\usepackage[T1]{fontenc}
\usepackage{lmodern}

\usepackage{amscd, amssymb, amsmath, amsthm}
\usepackage{enumerate, latexsym, mathrsfs}
\usepackage{xypic}

\usepackage[pdftex,lmargin=1in,rmargin=1in,tmargin=1in,bmargin=1in]{geometry}
\usepackage[
		bookmarks=true, bookmarksopen=true,%
    bookmarksdepth=3,bookmarksopenlevel=2,%
    colorlinks=true,%
    linkcolor=blue,%
    citecolor=blue]{hyperref}
%
%

\newtheorem{theorem}{Theorem}[section]
\newtheorem{lemma}[theorem]{Lemma}
\newtheorem{proposition}[theorem]{Proposition}
\newtheorem{corollary}[theorem]{Corollary}
\newtheorem{conjecture}[theorem]{Conjecture}

\theoremstyle{definition}
\newtheorem{definition}[theorem]{Definition}
\newtheorem{notation}[theorem]{Notation}
\newtheorem{convention}[theorem]{Convention}

\newtheorem{problem}[theorem]{Problem}

\newtheorem{example}[theorem]{Example}

\newtheorem{remark}[theorem]{Remark}

 \numberwithin{equation}{section}

\newcommand{\Natural}{{\mathbb N}}
\newcommand{\Real}{{\mathbb R}}
\newcommand{\Rational}{{\mathbb Q}}
\newcommand{\Complex}{{\mathbb C}}
\newcommand{\Integral}{{\mathbb Z}}

\newcommand{\ud}{{\mathrm{d}}}
\newcommand{\imunit}{{\mathbf{i}}}


\newcommand{\fabcover}{{\#}}

\newcommand{\interior}{{\mathrm{int}}}
\newcommand{\tdigraph}{{T}} 


%
%
%
%
%

\title[Virtual homological spectral radii]{Virtual homological spectral radii\\for automorphisms of surfaces}

\author[Yi Liu]{%
        Yi Liu} 
\address{%
        Beijing International Center for Mathematical Research, Peking University\\
				Beijing 100871, China P.R.} 
\email{%
    liuyi@math.pku.edu.cn}

\subjclass[2010]{Primary 57M50; Secondary 57M10}
\keywords{virtual property, homological spectral radius, pseudo-Anosov}


\begin{document}

\begin{abstract} 
		In this paper, it is shown 
		that any surface automorphism of positive mapping-class entropy
		possesses a virtual homological eigenvalue 
		which lies outside the unit circle of the complex plane.
\end{abstract}

\maketitle

\section{Introduction}\label{Sec-introduction}
	Let $S$ be a connected compact orientable surface. 
	By an \emph{automorphism} of $S$, we mean an orientation-preserving homeomorphism
	$f\colon S\to S$.	
	For any automorphism $f$ of $S$, the \emph{homological spectral radius}
	is defined to be the spectral radius of the induced linear automorphism of $f$
	on the first homology of $S$ with complex coefficients,
	namely, the greatest modulus for all the complex eigenvalues
	of $f_*\colon H_1(S;\Complex)\to H_1(S;\Complex)$.
	Given any connected finite cover $\tilde{S}$ of $S$,
	we say that an automorphism $\tilde{f}\colon\tilde{S}\to\tilde{S}$ \emph{lifts} $f$
	if the following diagram of maps commutes:
	$$\xymatrix{
	\tilde{S} \ar[r]^{\tilde{f}} \ar[d]_{\mathrm{cov.}}& \tilde{S} \ar[d]^{\mathrm{cov.}}\\
	S \ar[r]^{f} & S
	}$$
	In this case, the homological spectral radius for $\tilde{f}$ is said to be
	a \emph{virtual homological spectral radius} for $f$.
	
	Given any automorphism $f$ of a connected compact orientable surface $S$,
	it is evident that any virtual homological spectral radius for $f$ is 
	greater than or equal to $1$,
	and less than or equal to the exponential of the mapping-class entropy of $f$.
	In general, 
	the mapping-class entropy of $f$ is defined to be 
	the infimum of the topological entropy for all the surface automorphisms 
	that are isotopic to $f$; 
	the exponential of that quantity is known to be 
	the dilatation of the pseudo-Anosov part of $f$,
	with respect to the Nielsen--Thurston classification,
	(see \cite[Corollary 10]{Kojima}).
	It is shown by C.~T.~McMullen that any virtual homological spectral radius for 
	a pseudo-Anosov surface automorphism $f$ is strictly less than the dilatation 
	if the invariant foliations for $f$ have prong singularities of odd order \cite{McMullen-entropy}.
	Whereas it is generally impossible to recover the mapping-class entropy 
	using virtual homological spectral radii,
	it is anticipated that 
	nontriviality of the mapping-class entropy may be detected.
	This leads to the following well-known conjecture due to McMullen,
	(first raised as a question for pseudo-Anosov automorphisms;
	see \cite{Koberda-NT_classification}):
	
	\begin{conjecture}\label{conjecture-vhsr}
		Given any automorphism $f$ of a compact connected orientable surface $S$,
		there exists a virtual homological spectral radius for $f$ which is strictly greater than $1$
		if and only if the mapping-class entropy of $f$ is strictly greater than $0$.
	\end{conjecture}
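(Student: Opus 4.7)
The plan is to establish the nontrivial implication of Conjecture~\ref{conjecture-vhsr}: that positive mapping-class entropy forces some virtual homological spectral radius of $f$ to exceed $1$. The reverse implication is elementary, since mapping-class entropy is an isotopy invariant and the entropy of any lift $\tilde f$ on a finite cover is at most that of $f$, so every virtual homological spectral radius is bounded above by the exponential of the mapping-class entropy of $f$. For the forward direction, first I would invoke the Nielsen--Thurston decomposition: positive entropy forces $f$ to have at least one pseudo-Anosov component, supported on an essential $f$-invariant subsurface $R\subset S$ with dilatation $\lambda>1$. By a standard subgroup-separability argument for surface groups, any finite cover of $R$ extends to a finite cover of $S$, and the inclusion $\tilde R\hookrightarrow\tilde S$ yields an injection in $H_1$ modulo peripheral cycles. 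This reduces the problem to the case where $f$ itself is pseudo-Anosov.

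Assuming $f$ is pseudo-Anosov, the second step is to encode the dynamics combinatorially along the lines suggested by the paper's notation. Fix a pants decomposition $\opants$ with its curve system $\ocurves$, and consider a directed cobordism graph $\ocobordism$ whose edges record how $f$-images of pants curves traverse neighboring pants; the Perron eigenvalue of the associated transfer operator should recover $\lambda$, and the growth rates of $\chlen$, $\clen$, and $\modulus$ should reflect the unstable--stable decomposition of the invariant foliations. A virtual eigenvector would then be built as a weighted cycle on $\ocobordism$ decorated by phases $\phasor$ valued in roots of unity. These phases factor through some finite quotient $\pi_1(S)\twoheadrightarrow G$, which prescribes a finite regular cover $\tilde S\to S$, chosen so that $f$ lifts (possibly after passing to a finite power of $f$); the corresponding class in $H_1(\tilde S;\Complex)$ is an eigenvector of $\tilde f_*$ whose eigenvalue modulus, computed from the phased transfer operator, can be made arbitrarily close to $\lambda$ by taking the phases close to trivial.

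The hard part will be to ensure that the constructed class is \emph{nonzero} in $H_1(\tilde S;\Complex)$. McMullen's prong-parity obstruction shows that the virtual homological spectral radius can strictly undercut $\lambda$, so subtle cancellations between phased pants contributions must be handled with care. My intended strategy is to pair the candidate eigenvector against cohomology classes manufactured from the phase character itself, and to derive a lower bound on this pairing that survives as the cover grows. If the family of admissible phase characters is sufficiently rich---so that cancellation can be avoided for generic $G$, perhaps through a measure-theoretic averaging on the space of characters or a pigeonhole argument across compatible quotients---then some finite cover produces a nonzero class with eigenvalue of modulus greater than $1$, as required. Overcoming the cancellation obstacle is where I expect the main analytic and combinatorial work to lie, and it is the principal reason the conjecture has resisted the direct approaches based on congruence covers of the symplectic representation on $H_1(S;\Integral)$.
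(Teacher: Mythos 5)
Your reduction to the pseudo-Anosov case (and, within it, the further split between closed surfaces, bounded surfaces, and the mixed case) is sound in outline, though the mixed case is harder than you suggest: merging a finite cover of the pseudo-Anosov subsurface into a finite cover of the whole surface, in a way that permits a lift of $f$, is not an immediate consequence of subgroup separability and in the paper requires Przytycki--Wise's omnipotence-based merging trick for JSJ pieces of the mapping torus (\cite[Proposition 3.4]{PW-separability}).

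The genuine gap, however, is in your core pseudo-Anosov argument, and you name it yourself. Your plan is: build a ``phased'' transfer operator from a pants decomposition, weight it by characters $\phasor$ with values in roots of unity factoring through a finite quotient $\pi_1(S)\twoheadrightarrow G$, take the finite cover determined by $G$, and observe that the phased Perron eigenvalue is close to $\lambda$ when the phases are nearly trivial. But this last claim is exactly what McMullen's prong-parity result shows can fail: the phased eigenvector may project to $0$ in $H_1(\tilde S;\Complex)$, and when it does, no eigenvalue of $\tilde f_*$ is captured by the transfer-operator computation. You then propose to defeat the cancellation by ``measure-theoretic averaging on the space of characters'' or ``a pigeonhole argument across compatible quotients,'' but no such argument is given, and the difficulty is not merely technical. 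Abelian finite covers (the ones your character-based construction produces) are precisely the covers for which the obstruction is known to bite; Koberda--Sun and Sun's work show that to get Mahler measure (hence an eigenvalue) off the unit circle one must detect contributions from periodic trajectories whose indices do not cancel, and ensuring that requires passing to covers that kill specific quasiconvex subgroups, not just abelian quotients. The paper's proof does this via a completely different machinery: it recasts the problem in the mapping torus $M_f$, uses Markov partitions to build a finite poset of ``clusters'' whose homology direction hulls are compact polytopes, establishes a Mahler-measure criterion (Theorem~\ref{criterion-enfeoffed}) in terms of $\Gamma$-invariant dominant faces, and then constructs the needed regular covers by Wise's (Malnormal) Special Quotient Theorem, Agol--Groves--Manning Dehn filling, and Haglund--Wise virtual retractions. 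Your proposal, as written, stops exactly where the real work begins, and the mechanism you gesture at (generic or averaged characters on abelian covers) is not strong enough to bypass the cancellation obstruction.
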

	%
	
	The first progress toward the conjecture is made
	by T. Koberda, who shows
	that any isotopically nontrivial surface automorphism 
	possesses a virtual homological eigenvalue other than $1$
	 \cite{Koberda-NT_classification}.
	The Nielsen--Thurston type of any given surface automorphism
	is also known to be determined
	by some decidable virtual homological action \cite{Koberda-NT_classification,Koberda--Mangahas}.
	For pseudo-Anosov automorphisms of surfaces with nonempty boundary,
	A.~Hadari shows that some virtual homological action must have infinite order \cite{Hadari-order}.
	A criterion for the existence of 
	virtual homological eigenvalues outside the unit circle
	is provided by H.~Sun,
	in terms of Mahler measure of multivariable Alexander polynomials \cite{Sun-vhsr}.
	
	The goal of this paper is to confirm Conjecture \ref{conjecture-vhsr}:
	
	\begin{theorem}\label{main-vhsr}
		The statement of Conjecture \ref{conjecture-vhsr} holds true.
	\end{theorem}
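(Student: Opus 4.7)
The plan is to reduce to the case where $f$ is pseudo-Anosov on a closed surface $S$, and then to study the mapping torus $M_f=S\times[0,1]/(x,1)\sim(f(x),0)$ using 3-manifold topology. By the Nielsen--Thurston decomposition, a representative of positive mapping-class entropy restricts to a pseudo-Anosov on some invariant subsurface; any virtual homological eigenvalue produced on that subsurface pulls back (via an extended cover of the ambient surface) to a virtual homological eigenvalue of $f$ of the same modulus, so there is no loss of generality. If $S$ has boundary, I would double along the boundary or cap off by branched covers to reduce to the closed case, taking care that the resulting branched cover automorphism still has the same pseudo-Anosov dilatation and that eigenvalues descend.

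With $f$ pseudo-Anosov on closed $S$, Thurston's hyperbolization identifies $M_f$ with a closed hyperbolic 3-manifold that fibers over $S^1$ with monodromy $f$. The next step is to invoke Agol's virtual fibering theorem together with the Agol--Wise technology (virtual specialness, RFRS) to pass to a finite regular cover $\tilde{M}\to M_f$ such that $\tilde{M}$ itself fibers over $S^1$ with fiber a finite cover $\tilde{S}\to S$ and monodromy a lift $\tilde{f}$ of some power $f^n$, and moreover with $b_1(\tilde{M})\geq 2$. One obtains in this way a fibered face $F$ of the Thurston-norm unit ball on $\tilde{M}$ containing the fiber class of $\tilde{S}\to\tilde{M}\to S^1$, together with at least one additional independent cohomology class.

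Given such a cover, I would apply Sun's criterion \cite{Sun-vhsr}: a virtual homological eigenvalue for $f$ lying outside the unit circle is produced from a multivariable Alexander polynomial $\Delta_{\tilde{M}}$ of $\tilde{M}$ whose Mahler measure is strictly greater than $1$. Concretely, the presence of a second cohomology class $\alpha$ independent from the fiber class promotes the characteristic polynomial of $\tilde{f}_*$ on $H_1(\tilde{S};\Complex)$ to a Laurent polynomial in several variables, and Mahler measure $>1$ together with Sun's specialization procedure produces, on a further abelian cover, a homological eigenvalue of modulus $>1$ for a suitable lift of a power of $f$. The core task then becomes verifying that on some cover, the multivariable Alexander polynomial has Mahler measure strictly above $1$.

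The main obstacle, and the technical heart of the argument, will be exactly this last point: while the Teichm\"uller polynomial on the fibered face has Mahler measure equal to the pseudo-Anosov dilatation and is therefore $>1$ automatically, the Alexander polynomial may degenerate, acquire only cyclotomic factors, or shrink its Newton polytope to a facet, all of which leave Mahler measure equal to $1$. To overcome this, I would use the RFRS/virtually special structure of $\pi_1(M_f)$ to build a tower of abelian covers in which one can force the Alexander norm of McMullen to occupy a top-dimensional piece of the Thurston norm on some cover, and then combine the pseudo-Anosov dynamics on the fibered face with the comparison between Alexander norm and Thurston norm to rule out accidental cancellation into cyclotomic factors. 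Establishing this non-vanishing -- essentially that pseudo-Anosov dilatation information survives into the Alexander polynomial after passage to an appropriate finite cover -- is what I expect to be the decisive step.
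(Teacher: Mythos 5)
Your proposal correctly identifies the reduction to closed pseudo-Anosov monodromies, the use of Sun's Mahler-measure criterion, and the role of virtual specialness, but it leaves the decisive technical step as an open gap while also glossing over two nontrivial reductions.

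First, the mixed-case reduction is not as automatic as you suggest. Producing a virtual homological eigenvalue on an invariant pseudo-Anosov subsurface does \emph{not} immediately ``pull back via an extended cover of the ambient surface.'' You must first extend a given finite cover of the subsurface (i.e.\ of the corresponding atoroidal JSJ piece of $M_f$) to a finite cover of all of $M_f$ carrying a lift of $f$; this extension problem is nontrivial, and the paper invokes the Przytycki--Wise merging trick (resting on Wise's omnipotence theorem) precisely at this point. Similarly, your treatment of the boundary case by ``doubling'' or ``branched covers'' is unsubstantiated: doubling need not produce a pseudo-Anosov monodromy, and branched covers need careful control of singularities. The paper instead passes to a characteristic \emph{unbranched} cover in which all boundary circles cover with degree at least $2$ (killing $1$-prong singularities) and then caps off.

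Second, and more seriously, your proposal does not actually show that some regular finite cover $\tilde{M}$ has $\mathbb{M}(\Delta^\#_{\tilde{M}})>1$; you name this as ``the decisive step'' and offer only a vague plan (``force the Alexander norm to occupy a top-dimensional piece of the Thurston norm \ldots rule out accidental cancellation into cyclotomic factors''). This is the mathematical content of the theorem, and nothing in the sketch produces it. The paper's mechanism is quite different and concrete: it expresses the Alexander polynomial through the multivariable Lefschetz zeta function $\zeta^\#_f$, and then proves a quantitative criterion (Theorem~\ref{criterion-enfeoffed}): if a regular cover has sufficiently many disjoint $\Gamma$-invariant ``dominant'' faces of its homology direction hull, the $\ell^1$-norms of the logarithmic coefficients $L_m(\zeta^\#_{\tilde{f}})$ grow beyond the bound permitted by Mahler measure $1$ (Lemma~\ref{algebraic-nontrivial-Mahler}). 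Producing such a cover then requires the entire apparatus of Markov partitions, clusters, and their reciprocal characteristic polynomials developed in Sections~\ref{Sec-perspective_of_Markov_partitions}--\ref{Sec-covering}, together with Wise's (Malnormal) Special Quotient Theorem to manufacture many distinct vertex orbits and Haglund--Wise virtual retractions to make them dominant. RFRS, which your sketch leans on, is explicitly \emph{not} what the paper uses for this step (it is mentioned in the introduction only as a possible--but unverified--substitute for one of the two techniques). So while you have located the right target, you have no argument hitting it.
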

	
	In the literature, there already exist many interesting works
	which study virtual properties of fibered $3$--manifolds
	through surface automorphisms, or vice versa, 
	such as \cite{Calegari--Sun--Wang,Cooper--Long--Reid,Koberda-largeness,Masters}.	
	Every irreducible compact $3$--manifold with empty or tori boundary and of positive simplicial volume
	is known to be finitely covered by a fibered one,
	(see \cite[Theorem 9.2]{Agol-VHC}, \cite[Corollary 16.10]{Wise-book}, 
	and \cite[Corollary 1.3]{PW-mixed}).	
	Combining with other known implications of Conjecture \ref{conjecture-vhsr}
	(see \cite[Theorem 1.2]{Sun-vhsr}, \cite[Conjecture 6.1]{Silver--Williams}, and \cite[Theorem 5]{Le-abelian}), 
	we obtain the following consequences of Theorem \ref{main-vhsr}:
	
	\begin{corollary}\label{corollary-Mahler-measure}
		Every irreducible compact $3$--manifold with empty or tori boundary and of positive simplicial volume
		admits a regular finite cover 
		whose multivariable Alexander polynomial 
		is not constant zero and has Mahler measure strictly greater than $1$.
	\end{corollary}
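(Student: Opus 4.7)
The plan is to combine Theorem \ref{main-vhsr} with the virtual fibering theorems and with the cited dictionary between homological eigenvalues of surface monodromies and Mahler measures of multivariable Alexander polynomials.

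First, since $M$ is irreducible with empty or tori boundary, the virtual fibering theorems \cite{Agol-VHC}, \cite{Wise-book}, \cite{PW-mixed} furnish a finite cover $M_0\to M$ that fibers over the circle, presenting $M_0$ as the mapping torus of some automorphism $f\colon S\to S$ of a compact orientable surface $S$. Because simplicial volume is multiplicative under finite covers, $M_0$ inherits positive simplicial volume from $M$. The JSJ decomposition of a mapping torus of a surface automorphism is governed by the Nielsen--Thurston decomposition of the monodromy, with the hyperbolic JSJ pieces corresponding exactly to the pseudo-Anosov components; hence positivity of the simplicial volume of $M_0$ forces $f$ to have a pseudo-Anosov component, and therefore positive mapping-class entropy. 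Theorem \ref{main-vhsr} then produces a finite cover $\tilde{S}\to S$ and a lift $\tilde{f}\colon\tilde{S}\to\tilde{S}$ of some power of $f$ whose homological spectral radius on $H_1(\tilde{S};\Complex)$ strictly exceeds $1$. Pulling the fibration back along $\tilde{S}\to S$ yields a finite cover $M_1\to M_0$ that fibers over the circle with monodromy $\tilde{f}$, and passing to the normal closure of $\pi_1(M_1)$ inside $\pi_1(M)$ produces a regular finite cover $M_2\to M$ factoring through $M_1$.

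It remains to convert the spectral information about $\tilde{f}$ into the Mahler measure of the multivariable Alexander polynomial of $M_2$. This conversion is provided by \cite[Theorem 1.2]{Sun-vhsr}, together with \cite[Conjecture 6.1]{Silver--Williams} and \cite[Theorem 5]{Le-abelian}: the existence of a virtual homological eigenvalue of $\tilde{f}$ outside the unit circle is translated into the multivariable Alexander polynomial of a suitable fibered cover being nonvanishing with Mahler measure strictly greater than $1$, and the property behaves well under further finite covers, so that it descends to the regular cover $M_2$.

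The principal obstacle is not any single calculation but the conceptual dictionary itself: one must match, on one side, positive simplicial volume of a total space with positive mapping-class entropy of the monodromy of a fibered virtual cover, and, on the other side, a virtual homological eigenvalue outside the unit circle for a surface automorphism with the Mahler measure of the multivariable Alexander polynomial of an associated regular cover. The cited works handle both translations, so the corollary reduces to assembling these established tools once Theorem \ref{main-vhsr} is in hand.
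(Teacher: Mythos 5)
Your overall route---virtual fibering to obtain a fibered cover, positivity of simplicial volume to force a pseudo-Anosov component (hence positive mapping-class entropy), Theorem~\ref{main-vhsr} to obtain a lift with homological spectral radius greater than~$1$, and then the cited dictionary to pass to Mahler measures---is the reduction the paper indicates, since it does not spell out a proof beyond pointing at the same references. But two steps need repair.

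First, a small but real error: you write ``passing to the normal closure of $\pi_1(M_1)$ inside $\pi_1(M)$,'' but the normal closure is the \emph{smallest} normal subgroup containing $\pi_1(M_1)$, which gives a cover sitting \emph{below} $M_1$, not one factoring through it. You want the normal \emph{core}, the intersection of all $\pi_1(M)$--conjugates of $\pi_1(M_1)$, which is a finite-index normal subgroup contained in $\pi_1(M_1)$ and gives the regular cover $M_2\to M$ factoring through $M_1$.

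Second, and more substantially, the sentence ``the property behaves well under further finite covers, so that it descends to the regular cover $M_2$'' is precisely the step that cannot be left as an assertion. The multivariable Alexander polynomial of a cover lives over the group algebra of a different free abelian group than that of the base, and the Mahler-measure integral is taken over a torus of different dimension; there is no formal reason for $\mathbb{M}(\Delta^\fabcover)$ to be monotone under covers, and when it is, this is a theorem (a Shapiro-lemma or torsion norm-formula computation, feeding into the torsion-growth interpretation of Mahler measure that underlies the Le and Silver--Williams citations), not a triviality. You should isolate the exact monotonicity or persistence statement you invoke and attach it to a specific reference. Related to this, the three citations play distinct and non-interchangeable roles and should not be blurred: a homological eigenvalue of $\tilde f$ of modulus greater than $1$ gives, via the Jensen formula, Mahler measure greater than~$1$ only for the \emph{single-variable} Alexander polynomial dual to the fibration of $M_{\tilde f}$; passing to the \emph{multivariable} Alexander polynomial of a suitable finite cover is exactly the content of \cite[Theorem~1.2]{Sun-vhsr}, and passing from that cover down to the regular cover $M_2$ is the persistence issue handled by the Le/Silver--Williams circle of results. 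As written, the proposal treats all of this as a single black box, which is where the argument is incomplete.
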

	
	\begin{corollary}\label{corollary-torsion}
		Every irreducible compact $3$--manifold with empty or tori boundary and of positive simplicial volume
		admits a finite cover whose first integral homology contains nontrivial torsion. 
	\end{corollary}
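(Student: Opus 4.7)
The plan is to derive Corollary \ref{corollary-torsion} from Theorem \ref{main-vhsr} via three bridges: (i) the virtual fibering theorems reduce the question to one about the monodromy of a surface bundle over $S^1$; (ii) positive simplicial volume forces that monodromy to have positive mapping-class entropy, so Theorem \ref{main-vhsr} supplies a virtual homological eigenvalue outside the unit circle; (iii) Sun's criterion turns such an eigenvalue into Mahler measure strictly greater than $1$ for a multivariable Alexander polynomial of some finite cover, and Le's theorem then converts large Mahler measure into nontrivial torsion in $H_1$ of a deep enough abelian cover.

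First, let $M$ be as in the statement. Since $M$ has positive simplicial volume, at least one piece of its geometric decomposition is hyperbolic, so the combined virtual fibering results of Agol, Wise, and Przytycki--Wise produce a regular finite cover $M'\to M$ that fibers over $S^1$, with fiber a compact surface $\Sigma$ and monodromy $\varphi\colon\Sigma\to\Sigma$. Now $M'$ inherits positive simplicial volume from $M$. Since any surface bundle whose monodromy admits a Nielsen--Thurston decomposition with no pseudo-Anosov component is a graph manifold (Seifert fibered pieces glued along tori), and such manifolds have vanishing Gromov norm, $\varphi$ must contain at least one pseudo-Anosov piece. In particular, its mapping-class entropy is strictly positive.

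Next, apply Theorem \ref{main-vhsr} to $\varphi$ to obtain a regular finite cover $\tilde\Sigma\to\Sigma$ and a lift $\tilde\varphi\colon\tilde\Sigma\to\tilde\Sigma$ whose homological spectral radius strictly exceeds $1$; after replacing $\tilde\Sigma$ by the intersection of its conjugates we may assume it is characteristic in $\Sigma$. The mapping torus $\tilde M$ of $\tilde\varphi$ is then a regular finite cover of $M$. The presence of an eigenvalue of $\tilde\varphi_*$ off the unit circle, combined with Sun's criterion \cite[Theorem 1.2]{Sun-vhsr}, shows that the multivariable Alexander polynomial of $\tilde M$ (possibly after passing to a further regular abelian cover) is nonzero with Mahler measure strictly greater than $1$, establishing Corollary \ref{corollary-Mahler-measure} as an intermediate step. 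Finally, Le's theorem \cite[Theorem 5]{Le-abelian} (a proven case of the Silver--Williams conjecture) guarantees that in a tower of regular abelian covers of $\tilde M$, the first integral homology eventually acquires nontrivial torsion, producing the desired cover of $M$.

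The main obstacle I anticipate is Step 2: one must carefully organize the interaction between the JSJ decomposition of the $3$--manifold and the Nielsen--Thurston decomposition of the surface monodromy in order to conclude that vanishing simplicial volume of a surface bundle forces all monodromy pieces to be periodic, and one must track boundary components, cusps, and the regularity of successive finite covers so as to package a single regular cover of $M$ at the end. The remaining steps are essentially a chain of black-box invocations: Theorem \ref{main-vhsr} produces the eigenvalue, Sun's criterion converts it to large Mahler measure, and Le's theorem converts that to torsion.
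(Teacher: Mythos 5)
Your proposal follows exactly the same chain as the paper's own (very compressed) justification: virtual fibering (Agol, Wise, Przytycki--Wise) to pass to a surface bundle; positive simplicial volume forcing a pseudo-Anosov component of the monodromy and hence positive mapping-class entropy; Theorem \ref{main-vhsr} to produce a virtual homological eigenvalue off the unit circle; Sun's characterization \cite[Theorem 1.2]{Sun-vhsr} to translate this into Mahler measure strictly greater than $1$ for a multivariable Alexander polynomial (this is Corollary \ref{corollary-Mahler-measure}); and the Silver--Williams conjecture as proved by Le \cite[Theorem 5]{Le-abelian} to convert Mahler measure into nontrivial torsion in the first homology of a further abelian cover. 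The only point that genuinely needs care, which you already flag, is ensuring the final cover is regular over the original $M$: the cleanest way is to note that every step can be made equivariant under the deck group of the regular fibered cover $M'\to M$ (the paper's Theorem \ref{dominant-diversity} already delivers a regular cover of the mapping torus with the Mahler bound, and one then chooses a $\Gamma$-invariant tower of finite-index subgroups of $H_1$ for Le's theorem, e.g.\ the subgroups $n\cdot H_1(\tilde M;\Integral)_{\mathtt{free}}$), rather than passing to ad hoc characteristic covers of the fiber alone, which by itself does not guarantee regularity over $M$.
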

	
	In particular, Corollary \ref{corollary-torsion} 
	answers affirmatively \cite[Question 1.8]{Sun-torsion}
	(see also \cite[Question 7.5.3]{AFW-group}).
	For closed hyperbolic $3$--manifolds,	
	the existence of virtual homological torsion is a theorem due to H.~Sun \cite{Sun-torsion}
	(compare \cite{L--Sun}).
	In that case,
	Sun actually shows that any finite abelian group
	can be realized as a direct summand of the first homology of some finite cover.
	Corollary \ref{corollary-torsion} works for more general $3$--manifolds,
	but tells less about the pattern of the virtual homological torsion part.
	
	
	
	In the rest of this section, we discuss our proof of Conjecture \ref{conjecture-vhsr}.
	The essential case is when the surface is closed and when the automorphism is pseudo-Anosov.
	We study pseudo-Anosov automorphisms of closed surfaces through their mapping tori.
	From this perspective, periodic trajectories of the suspension flow 
	play an important role in connecting the topological and the dynamical aspects.
	We develop some early work of D.~Fried \cite{Fried-sections,Fried-pptc}	on homology directions
	in the case of pseudo-Anosov suspension flows, 
	and relate it more closely with finite covers and essential submanifolds in $3$--manifold topology.
	The framework of our main proof is inspired by recent works of A.~Hadari \cite{Hadari-shadows,Hadari-order},
	(see Remark \ref{dominant-enfeoffed-remark}).
	Virtual specialization techniques for hyperbolic $3$--manifolds,
	as developed by D. Wise \cite{Wise-book},
	and I.~Agol, D.~Groves, and J.~Manning \cite{Agol-VHC,AGM-MSQT,GM-filling},
	lie in the heart of our main proof.
	
	It is not particularly easy to outline our actual proof of Conjecture \ref{conjecture-vhsr} at this point,
	as the proof evolved from several intuitive ideas into a sophisticated final form.
	In what follows, we provide an overview of the structure of this paper,
	and then illustrate some key ingredients by considering a related sample problem.
	A description of our actual strategy can be found in Section \ref{Sec-detection}, 
	immediately after we state our main working criterion (Theorem \ref{criterion-enfeoffed}).
	We put it there because Section \ref{Sec-detection} 
	is the actual beginning of our proof of Conjecture \ref{conjecture-vhsr}.
		
	\subsection*{Organization of this paper}
	This paper can be divided roughly into four parts, as listed below.
	A reader who would like to see the intuitive ideas is suggested to read through Part I,
	only skipping the appendix to Section \ref{Sec-preliminary}.
	Our complete argument for proving Conjecture \ref{conjecture-vhsr} (Theorem \ref{main-vhsr})
	occupies Parts II--IV, which relies on the preliminary materials of Section \ref{Sec-preliminary}.
		
	\begin{itemize}
	\item 
	Part I (Sections \ref{Sec-introduction}--\ref{Sec-picture}): \emph{Background and motivations.}	
	This part includes an introduction to Conjecture \ref{conjecture-vhsr}, 
	and an exposition of the key ideas in their initial form.
	In Section \ref{Sec-preliminary}, we review several natural structures 
	on mapping tori of pseudo-Anosov automorphisms.
	In Section \ref{Sec-picture}, 
	we investigate a simplified situation to motivate our actual plan.
	This section also contains a description of Fried's work on homology directions.
	\item 
	Part II (Section \ref{Sec-detection}): \emph{Criterion and plan.}
	For pseudo-Anosov automorphisms of closed surfaces, 
	we provide a criterion for the existence of a virtual homological eigenvalue outside the unit circle.
	The criterion is formulated in terms of so-called homology direction hulls
	and Mahler measure of multivariable Alexander polynomials.
	Our strategy for proving Conjecture \ref{conjecture-vhsr} is explained based on the criterion.
	\item 
	Part III (Sections \ref{Sec-perspective_of_Markov_partitions}--\ref{Sec-covering}): \emph{Covering clusters and their homology direction hulls.} 
	We revisit Fried's work on homology directions and extend the theory to the so-called covering cluster setting.
	In Section \ref{Sec-perspective_of_Markov_partitions}, we obtain the polytope description of homology direction hulls
	through a Markov partition approach. 
	In Section \ref{Sec-cluster}, we introduce what we call clusters and 
	extend the treatments of Section \ref{Sec-perspective_of_Markov_partitions} to those objects.
	In Section \ref{Sec-rcp}, we introduce and study reciprocal characteristic polynomials for clusters,
	which are analogous to the multivariable Alexander polynomials for mapping tori.
	In Section \ref{Sec-covering}, we extend the above theory to covering clusters.	
	\item 
	Part IV (Sections \ref{Sec-diversity_of_dominant_virtual_faces}--\ref{Sec-main_proof}):	\emph{Core arguments of the main proof.}
	We prove Conjecture \ref{conjecture-vhsr}, based on the results of Part II and Part III.
	In Section \ref{Sec-diversity_of_dominant_virtual_faces}, we essentially prove Conjecture \ref{conjecture-vhsr}
	for pseudo-Anosov automorphisms of closed surfaces. This case is summarized in Lemma \ref{vhsr-pA-closed}.
	In Section \ref{Sec-main_proof}, 
	we derive the general case,
	and complete the proof of Conjecture \ref{conjecture-vhsr} (Theorem \ref{main-vhsr}).
	\end{itemize}

	\subsection*{Illustration of key ingredients}
	We explain a central problem that we address in the course of our proof.
	This is roughly the first half of our core argument (Section \ref{Subsec-vertices-diversity}).
	We reformulate the problem below to avoid technical terms.
	For the moment, the reader may simply take it as a method-oriented illustration.
	It should become more clear in Section \ref{Sec-picture}
	how the problem is relevant to Conjecture \ref{conjecture-vhsr}.
	
	For any pseudo-Anosov automorphism $f$ of a connected closed orientable surface $S$,
	the mapping torus $M_f$ is naturally equipped with a suspension flow.
	The homology classes of the periodic trajectories
	is a collection of integral points in $H_1(M_f;\Real)$.
	The radial rays in $H_1(M_f;\Real)$ through those points
	form a dense subset of a unique closed linear cone $C$ in $H_1(M_f;\Real)$.
	Fried shows that $C$ is convex polyhedral, and has codimension $0$ in $H_1(M_f;\Real)$.
	In fact, $C$ is dual to the fibered cone in $H^1(M_f;\Real)$ determined by the fibration with fiber $S$,
	(see Section \ref{Sec-picture} for more details). 
	For any regular finite cover $M'$ of $M_f$ with a deck transformation group $\Gamma'$,
	there is a lifted suspension flow on $M'$, and the similarly defined cone $C'$ in $H_1(M';\Real)$
	is convex polyhedral, of codimension $0$, and is invariant under the induced action of $\Gamma'$.
	We are interested in the number of the $\Gamma'$--orbits of $1$--dimensional faces of $C'$,
	namely, the number of the \emph{extreme-ray orbits}.
	
	\begin{problem}\label{problem-vertex-diversity}
	For any given positive integer $n$, construct some regular finite cover $M'$ of $M_f$,
	such that the cone $C'$ has at least $n$ extreme-ray orbits.
	\end{problem}
	
	\begin{proof}[Outline of Solution]
	To construct $M'$ by induction, we may assume that a regular finite cover $M'_n$ of $M_f$ is given 
	with at least $n$ extreme-ray orbits in its cone $C'_n$. We construct $M'_{n+1}$ 
	with at least one more orbit in $C'_{n+1}$.
	After choosing auxiliary basepoints, we may identify the fundamental group $\Pi'_n$ of $M'_n$
	as a finite-index normal subgroup of the fundamental group $\Pi$ of $M_f$.
	The periodic trajectories in $M'_n$ can be thought of as conjugacy classes in $\Pi'_n$.
	For any quotient group $G'_n$ of $\Pi'_n$ which contains a finite-index free abelian subgroup $G''_n$,
	the preimage of $G''_n$ in $\Pi'_n$ is a finite-index subgroup $\Pi''_n$,
	and there is a finite-index subgroup $\Pi'_{n+1}$ which is contained in $\Pi''_n$ and normal in $\Pi$.
	Denote by $M''_n$ and $M'_{n+1}$ the corresponding finite covers of $M_f$.
	The above construction is of course rather general.
	We have to impose additional conditions on $\Pi'_n\to G'_n$
	to make $M'_{n+1}$ as desired.
	
	Let us assume in addition that $G'_n$ is infinite,
	and that the periodic trajectories in $M'_n$ carried by $\partial C'_n$ 
	(that is, representing homology classes of $\partial C'_n$,)
	are all sent to conjugacy classes of finite-order elements in $G'_n$. 
	Note that there are induced surjective linear maps $C'_{n+1}\to C''_n$ and $C''_n\to C'_n$.
	The additional assumption ensures that the preimage of $\partial C'_n$ in $C''_n$
	is sent to the origin under $H_1(M''_n;\Real)\to H_1(G''_n;\Real)$.
	If all the extreme rays of $C''_n$ were mapped into $\partial C'_n$ under $C''_n\to C'_n$,
	then $C''_n$ would be mapped to the origin under $H_1(M''_n;\Real)\to H_1(G''_n;\Real)$.	
	However, this is impossible because $C''_n$ has codimension $0$ in $H_1(M''_n;\Real)$,
	and $H_1(M''_n;\Real)\to H_1(G''_n;\Real)$ is surjective with positive-dimensional image.
	Therefore, some extreme ray of $C''_n$ must project a ray in the interior of $C'_n$ (plus the origin).
	It follows,
	that under the additional assumption,
	$C'_{n+1}$ must have at least one more extreme-ray orbit than $C'_n$.
	
	To construct some $\Pi'_n\to G'_n$ satisfying the additional assumption,
	we invoke the deep fact that $\Pi$ is word-hyperbolic, nonelementary, and virtually compact special \cite{Agol-VHC}.
	Suppose that there are finitely many infinite-index quasi-convex subgroups $H'_1,\cdots,H'_s$ of $\Pi'_n$
	with the following property:
	For every periodic trajectory $\gamma$ in $M'_n$ carried by $\partial C'_n$,
	the corresponding conjugacy class in $\Pi'_n$ has nonempty intersection with some $H'_i$.
	Then we can apply Wise's Special Quotient Theorem \cite[Theorem 12.7]{Wise-book} 
	 to obtain a nonelementary, word-hyperbolic, virtually compact special quotient
	$\Pi'_n\to G^*_n$, such that the image of every $H'_i$ is finite,
	(see also the argument of Lemma \ref{filling-Q} for some technical clarification).
	In particular, $G^*_n$ has virtually positive first Betti number,
	so there exists an infinite virtually free abelian quotient $G^*_n\to G'_n$.
	Then $\Pi'_n\to G'_n$ is as desired.
	
	It remains to construct a collection of subgroups $H'_i$ as above.
	We describe our construction as follows.
	We take a Markov partition $\mathcal{R}$ of $S$ with respect to $f$,
	and obtain the associated transition graph $T_{f,\mathcal{R}}$,
	which is a finite directed graph.
	In terms of symbolic dynamics,
	the dynamical cycles of $T_{f,\mathcal{R}}$
	(that is, the directly immersed loops) encode the periodic trajectories of $M_f$.
	There is a naturally induced regular finite cover $T'_n$ of $T_{f,\mathcal{R}}$,
	whose dynamical cycles encode the periodic trajectories of $M'_n$.
	With this combinatorial data, 
	we are able to construct a finite collection of (irreducible) subgraphs
	$V'_1,\cdots, V'_s$ of $T'_n$,	
	whose dynamical cycles altogether encode, and only encode, the periodic trajectories carried by $\partial C'_n$.
	(Each $V'_i$ encodes some of the periodic trajectories carried by a closed face of $C'_n$.)
	Besides, there is an induced map $T'_n\to M'_n$, which is canonical up to homotopy.
	To speak properly of fundamental groups,
	fix auxiliary basepoints of $T'_n$ and $V'_i$, and join them by a path.
	Then the subgroup $H'_i$ 
	is constructed as the image of the composite homomorphism 
	$\pi_1(V'_i)\to \pi_1(T'_n)\to \Pi'_n$.
	The detail of this construction 
	is the content of Part III (Sections \ref{Sec-perspective_of_Markov_partitions}--\ref{Sec-covering}).
	The constructed subgroups $H'_i$ are quasiconvex of infinite-index in $\Pi'_n$.
	The basic reason is that
	they lie in the kernel of some nonfibered cohomology class $\psi'_i$
	in $H^1(M'_n;\Integral)\cong \mathrm{Hom}(\Pi'_n,\Integral)$.
	In fact, $\psi'_i$ can be chosen on the boundary of the fibered cone in $H^1(M'_n;\Real)$
	that is dual to $C'_n$ in $H_1(M'_n;\Real)$.
	\end{proof}
	
	If we take a covering $Q'_i\to M'_n$ 
	that corresponds to $H'_i$ in the above construction,
	the group-theoretic conditions on $H'_i$ can be translated as follows:
	Every $Q'_i$ is a geometrically finite hyperbolic $3$--manifold of infinite volume,
	and moreover,	every periodic trajectory $\gamma$ carried by $\partial C'_n$ lifts to some $Q'_i\to M'_n$.
	One may think of each $Q'_i$ as a $\pi_1$--injectively immersed submanifolds of $M'_n$, 
	so the periodic trajectories in $M'_n$ carried by faces of $C'_n$
	are freely homotopic to loops in these submanifolds.
	Intuitively, each $Q'_i$ holds together 
	a collection of periodic trajectories,
	which are encoded by an irreducible subgraph of the covering transition graph.
	In a suitable sense, $Q'_i$ is also minimal with such property.

	This interpretation inspires us to introduce \emph{clusters},
	\footnote{The name does not suggest a connection with cluster algebra, or any other homonymous concepts in existing mathematics.}
	which formalizes the above construction,
	(see Section \ref{Sec-cluster}).
	The above $Q'_i$ are examples of \emph{covering clusters},
	as they arise in a regular finite cover of $M_f$.
	
	In dynamical and homological aspects,
	(covering) clusters behave very much like (covering) pseudo-Anosov mapping tori.
	For example, the periodic trajectories in clusters also give rise to codimension--$0$ convex polyhedral cones
	in the first real homology.
	Moreover, we can extract further interesting clusters from closed faces of a cluster's cone.
	All the clusters form a finite partial-order system,
	parallel to the system of irreducible subgraphs of the transition graph.
	This feature is particularly useful in the second half
	of our core argument, 
	because we can start from the simplest clusters
	and build up inductive arguments.
	
	Just as the Thurston norm is related to the dual fibered cone of a pseudo-Anosov mapping torus,
	the Alexander norm is related to a convex polyhedral cone in the first real homology,
	possibly of positive codimension. 
	The picture is described in Section \ref{Sec-picture} with more details.
	Upon suitable interpretation,
	the second half of our core argument (Section \ref{Subsec-dominant-diversity})
	is essentially about solving a problem 
	analogous to Problem \ref{problem-vertex-diversity} for the cone related to the Alexander norm.
	The construction is based on the result of the first half.
	It invokes separability of quasiconvex subgroups
	in virtually special word-hyperbolic groups.
	
	\subsection*{Change of language}
	Whereas homological convex polyhedral cones are quite intuitive for
	an expository purpose,
	they are not always the most convenient to work with.
	(For example, arguing with such objects would make the exposition of 
	Sections \ref{Sec-detection} and \ref{Sec-rcp} unnecessarily complicated.)
	For this reason,
	we only talk about those objects in Part I (Sections \ref{Sec-introduction}--\ref{Sec-picture}).
	In Parts II--IV (Sections \ref{Sec-detection}--\ref{Sec-main_proof}),
	we adopt some systematically introduced terminology, 
	which is more direct for our arguments.
	The reader may consult Section \ref{Sec-picture}
	for switching between different perspectives.

\subsection*{Note added in proof} 
The author just learned that Asaf Hadari had a completely independent proof of Conjecture \ref{conjecture-vhsr}
for surfaces with nonempty boundary \cite{Hadari-vhsr}. 
Using very different techniques from ours
for constructing regular finite covers,
Hadari's proof works also 
for fully irreducible outer automorphisms 
of finitely generated free groups,
and the asserted covers there are solvable.

\subsection*{Acknowledgement} The author would like to thank 
Ian Agol, Boju Jiang, Thomas Koberda, Curtis T.~McMullen,
Yi Shi, and Hongbin Sun for valuable comments and suggestions.	
The author also thanks the anonymous referees for many great suggestions
that help improving the exposition of this paper.

	%
	
		%

\section{Mapping tori of pseudo-Anosov automorphisms}\label{Sec-preliminary}
	In this preliminary section, 
	we review pseudo-Anosov automorphisms of closed orientable surfaces
	from the perspective of their mapping tori.
	We collect facts that are relevant to our discussion, 
	and set up notations that we adopt.
	Our general references include 
	Fathi--Laudenbach--Po\'enaru \cite{FLP} for Thurston's work on surfaces, 
	and Jiang \cite{Jiang-book} for Nielsen's fixed point theory,
	and Turaev \cite{Turaev-combinatorial} for combinatorial torsions.
	In the literature, 
	it is very common to assume that the surface is connected.
	We restrict ourselves to connected surfaces for this section.
	However, disconnected surfaces arise naturally 
	when we consider finite covers of mapping tori, 
	so we introduce an extension of our terminology 
	to facilitate subsequent discussion.
	This is Convention \ref{covering-mapping-torus}.
	
	Let $f$ be a pseudo-Anosov automorphism of a connected closed orientable surface $S$.
	In other words, $S$ is required to have genus at least $2$, and
	$f\colon S\to S$ is an orientation-preserving homeomorphism,
	and moreover,
	there exist a constant $\lambda>1$ and a pair of measured foliations 
	$(\mathscr{F}^{\mathtt{s}},\mu^{\mathtt{s}})$ and $(\mathscr{F}^{\mathtt{u}},\mu^{\mathtt{u}})$
	of $S$ with the property
	$$f\cdot(\mathscr{F}^{\mathtt{u}},\mu^{\mathtt{u}})=(\mathscr{F}^{\mathtt{u}},\lambda\mu^{\mathtt{u}})
	\textrm{ and }
	f\cdot(\mathscr{F}^{\mathtt{s}},\mu^{\mathtt{s}})=(\mathscr{F}^{\mathtt{s}},\lambda^{-1}\mu^{\mathtt{s}}).$$
	The constant $\lambda$ is usually called the \emph{stretching factor} of $f$.
	The measured foliations $(\mathscr{F}^{\mathtt{s}},\mu^{\mathtt{s}})$ and $(\mathscr{F}^{\mathtt{u}},\mu^{\mathtt{u}})$
	are called the \emph{stable} and the \emph{unstable} measured foliations of $f$, respectively.
	The underlying invariant foliations are transverse to each other,
	except at finitely many common singular points.
	At each singular point, 
	both of the invariant foliations have a $k$--prong singularity,
	for some and the same positive integer $k\geq3$.
		
	The \emph{mapping torus} $M_f$ of $f$ 
	can be constructed as the quotient of the product $S\times \Real$
	by the equivalence relation $(x,r+1)\sim (f(x),r)$.
	(Caution: We follow 
	the dynamicists' convention rather than the topologists' convention
	$(x,r)\sim (f(x),r+1)$.)
	There exists a distinguished (forward) \emph{suspension flow}
	$$\theta_t\colon M_f\to M_f,$$
	which is parametrized by $t\in\Real$.
	It is naturally induced by the parametrized forward flow on $S\times \Real$
	of constant unit velocity in the $\Real$ direction,
	namely, $(x,r)\mapsto(x,r+t)$.
	The product fibration of $S\times\Real$ over $\Real$, $(x,r)\mapsto r$,
	induces a distinguished fibration of $M_f$ over the circle $\Real/\Integral$ 
	and a distinguished inclusion of $S$ as a fiber of $M_f$,
	via the composite map $S\to S\times\{0\}\to S\times \Real\to M_f$.
	Therefore, the fibration map $M_f\to \Real/\Integral$
	represents a distinguished primitive cohomology class
	$$\phi_f\in H^1(M_f;\Integral),$$
	as we naturally identify $H^1(M_f;\Integral)\cong[M_f,\Real/\Integral]$.
	In other words, $\phi_f$ is the first integral cohomology class
	that is the Poincar\'e dual of $[S]\in H_2(M_f;\Integral)$.
	The cohomology class $\phi_f$ induces 
	a distinguished $\Integral$--grading of the commutative group algebra
	$\Integral H_1(M_f;\Integral)_{\mathtt{free}}$,
	where $H_1(M_f;\Integral)_{\mathtt{free}}$ 
	stands for the quotient of $H_1(M_f;\Integral)$ 
	by the submodule of torsion elements $H_1(M_f;\Integral)_{\mathtt{tors}}$.
	To be precise, any element $h\in H_1(M_f;\Integral)_{\mathtt{free}}$
	is assigned with a degree $\phi_f(h)\in\Integral$,
	as we naturally identify $H^1(M_f;\Integral)\cong\mathrm{Hom}(H_1(M_f;\Integral)_{\mathtt{free}},\Integral)$.
	Then any homogeneous element in $\Integral H_1(M_f;\Integral)_{\mathtt{free}}$
	of $\phi_f$--graded degree $m\in\Integral$ is a $\Integral$--linear combination
	of finitely many degree--$m$ elements in $H_1(M_f;\Integral)_{\mathtt{free}}$.
	
	Major algebraic topological information about $M_f$
	can be extracted from the homological action of $f$. 
	We can read off the homology and the cohomology of the mapping torus $M_f$
	from the following split short exact sequences of integral modules:
	\begin{equation}\xymatrix{
	0 \ar[r]& \mathrm{Cofix}\left(H_1(S;\Integral)\stackrel{f_*}\longrightarrow H_1(S;\Integral)\right)
	\ar[r]& H_1(M_f;\Integral) \ar[r]^-{\phi_f}& \Integral \ar[r]& 0
	}\end{equation}
	and
	\begin{equation}\xymatrix{
	0 \ar[r]& \Integral \ar[r]^-{1\mapsto \phi_f}& H^1(M_f;\Integral) \ar[r]& 
	\mathrm{Fix}\left(H^1(S;\Integral)\stackrel{f^*}\longrightarrow H^1(S;\Integral)\right) \ar[r]& 0.
	}\end{equation}
	Here $\mathrm{Fix}(f^*)$ stands for the submodule of $H^1(S;\Integral)$ 
	which is fixed under the induced action $f^*$ of $f$, 
	and $\mathrm{Cofix}(f_*)$
	stands for the quotient of $H_1(S;\Integral)$ 
	by the submodule generated by all the elements 
	$h-f_*(h)$, for all $h\in H^1(S;\Integral)$.	
	The single variable Alexander polynomial 
	of $M_f$ dual to the primitive cohomology class $\phi_f$,
	denoted as $\Delta^{\phi_f}_{M_f}(t)$,
	is determined by the reciprocal characteristic polynomial of the first homological action 
	$f_*\colon H_1(S;\Integral)\to H_1(S;\Integral)$,
	namely:
	$$\Delta^{\phi_f}_{M_f}(t)\doteq \mathrm{det}_{\Integral[t]}\left(\mathbf{1}-t\,f_*\right).$$
	Here the dotted equal symbol stands for an equality in the integral Laurent polynomial ring
	$\Integral[t,t^{-1}]$ up to a unit, 
	namely, a factor of the form $\pm t^n$ with $n\in\Integral$.
	Modulo the indeterminacy, 
	$\Delta^{\phi_f}_{M_f}$ is a monic palindromic polynomial of degree $b_1(S)$.
	
	The $\phi_f$--graded degree of the multivariable Alexander polynomial $\Delta^\fabcover_{M_f}$
	is also determined on the homological action level.
	In this paper, \emph{the multivariable Alexander polynomial} of $M_f$
	refers to the order of the finitely generated $\Integral H_1(M_f;\Integral)_{\mathtt{free}}$--module
	$H_1(M^\fabcover_f;\Integral)$, 
	where $M^\fabcover_f$ stands for (any fixed model of) the maximal free abelian covering space of $M_f$,
	on which $H_1(M_f;\Integral)_{\mathtt{free}}$ acts by deck transformations.
	The order of any finitely generated module over a Noetherian unique factorization domain (UFD)
	is known as the generator of the smallest principal ideal 
	that contains the zeroth elementary ideal	of that module,
	well defined up to a unit.
	By slightly abusing the notation,
	we denote the multivariable Alexander polynomial of $M_f$ using	any of its representatives
	$$\Delta^\fabcover_{M_f}\in \Integral H_1(M_f;\Integral)_{\mathtt{free}}.$$
	Throughout this paper,
	we treat abelian groups as multiplicative groups, rather than additive groups,
	whenever we talk about their group algebras, 
	(such as $H_1(M_f;\Integral)_{\mathtt{free}}$ in $\Integral H_1(M_f;\Integral)_{\mathtt{free}}$).
		
	The \emph{$\phi_f$--graded degree} of $\Delta^\fabcover_{M_f}$ refers to 
	the degree difference between the highest and the lowest homogeneous part of $\Delta^\fabcover_{M_f}$,
	with respect to the $\phi_f$--grading of $\Integral H_1(M_f;\Integral)_{\mathtt{free}}$.
	In other words, if $\Delta^\fabcover_{M_f}$ has a representative $\sum_h a_h h$,
	where $h$ ranges over $H_1(M_f;\Integral)_{\mathtt{free}}$ and 
	where $a_h\in \Integral$ is nonzero for only finitely many $h$,
	the $\phi_f$--graded degree of $\Delta^\fabcover_{M_f}$ is defined as
	$\max\{\phi_f(h)\colon a_h\neq0\}-\min\{\phi_f(h)\colon a_h\neq0\}$.
	This degree is clearly independent of the representative chosen.
	The $\phi_f$--graded degree of $\Delta^\fabcover_{M_f}$
	is completely determined by the Euler characteristic of $S$ and the first Betti number
	of $M_f$:
	\begin{equation}\label{deg-mAp}
	\mathrm{deg}_{\phi_f}\left(\Delta^\fabcover_{M_f}\right)=
		\begin{cases} -\chi(S) & \mbox{if } b_1(M_f)>1 \\ 
		-\chi(S)+2 & \mbox{if } b_1(M_f)=1 \end{cases}
	\end{equation}
	(See \cite[Theorem 10]{FV-survey} or \cite[Theorem 14.12]{Turaev-combinatorial}.)
	
	The multivariable Alexander polynomial $\Delta^\fabcover_{M_f}$ itself 
	reflects a deeper level	of the dynamics of $f$.
	It is intimately related with the periodic points and their indices.
	This is more precisely Theorem \ref{MAP-formula} below.
	We need some notations to state it.
	
	For any positive integer $m\in\Natural$,
	denote by $\mathrm{Per}_m(f)$ the set of the $m$--periodic points of $f$.
	For any $p\in\mathrm{Per}_m(f)$, we denote by
	\begin{equation}\label{ind-m}
	\mathrm{ind}_m(f;p)\in\Integral
	\end{equation}
	the $m$--periodic point index of $f$ at $p$.	
	
	The particular definition of periodic point index is not so much important in this paper,
	because explicit formulas are available for pseudo-Anosov automorphisms.
	However, we recall the following description for the reader's convenience:
	For any fixed point $p\in S$ of $f$, 
	let $U\subset S$ be an open neighborhood of $p$,
	and $\varphi\colon U\to \Complex$ be a local coordinate chart	with $\varphi(p)=0$.
	Denote by $\partial D_\epsilon\subset\Complex$ 
	the circle of radius $\epsilon$ centered at $0$.
	Denote by $g_\epsilon\colon \partial D_\epsilon\to \partial D_\epsilon$
	the `Gaussian' map
	$g_\epsilon(z)=\epsilon\cdot(z-(\varphi\circ f\circ\varphi^{-1})(z))/|z-(\varphi\circ f\circ\varphi^{-1})(z)|$.
	As $f$ has only isolated fixed points,
	$g_\epsilon$ is defined for all sufficiently small $\epsilon>0$.
	The \emph{fixed point index} of $f$ at $p$
	can be defined as the mapping degree of $g_\epsilon$.
	It depends only on $(f,p)$ when $\epsilon$ is sufficiently small.
	For any $m\in\Natural$,
	an \emph{$m$--periodic point} of $f$ refers to a fixed point $p\in S$ of $f^m$.
	The \emph{$m$--periodic point index} of $f$ at an $m$--periodic point $p$
	refers to the fixed point index of $f^m$ at $p$.
	There are generalized versions that works in more general settings,
	see Remark \ref{fixed-point-index} for some information.
	
	As $f$ is pseudo-Anosov, it is known that
	$\mathrm{ind}_m(f;p)$ can be expressed explicitly 
	in terms of either of the invariant foliations:
	For any $p\in\mathrm{Per}_m(f)$
	of a $k$--prong singularity, $k\geq 3$,
	we have $\mathrm{ind}_m(f;p)=1-k$ if $f^m$ 
	preserves every prong of the foliation,	or otherwise $\mathrm{ind}_m(f;p)=1$. 
	For any $p\in\mathrm{Per}_m(f)$ which is regular,
	we have $\mathrm{ind}_m(f;p)=-1$ if $f^m$ preserves any orientation of the leaf
	through $p$, or otherwise $\mathrm{ind}_m(f;p)=1$.
	To put together, the $m$--periodic index $\mathrm{ind}_m(f;p)$ equals
	$1$ minus the number of the prongs at $p$ which are invariant under $f^m$.
	Here we treat the regular case as an artificial $2$--prong singularity.
	(See Remark \ref{fixed-point-index} for some references.)



	\begin{remark}\label{fixed-point-index}
		For continuous self-maps of compact PL spaces in general,
		the \emph{fixed point index} is a (nonzero) integer 
		associated to any (essential) fixed point class,
		(see \cite{Jiang-book} for an expository introduction).
		For the standard form of a pseudo-Anosov automorphism,
		which we have been considering,
		every essential fixed point class corresponds to a unique fixed point,
		(see \cite[Corollary 2.3]{JG} and \cite[Chapter I, Definition 4.1]{Jiang-book}).
		The index formula for pseudo-Anosov periodic points as we explained above
		can be found in \cite[Lemma 2.1]{JG}, 
		(see the two types $(p,0)^+$ and $(p,k)^+,p\nmid k$ for 
		the interior fixed point case thereof).
	\end{remark}
	
	For any $m$--periodic point $p\in\mathrm{Per}_m(f)$,
	we associate a closed path $\gamma_m(f;p)$ of $M_f$ based at $p$,
	which is formed by running along the suspension flow for $m$ time units.
	Namely, $\gamma_m(f;p)(t)=\theta_t(p)$ for all $t\in[0,m]$.
	Therefore, the free abelianization class 
	\begin{equation}\label{gamma-m}
	[\gamma_m(f;p)]\in H_1(M_f;\Integral)_{\mathtt{free}}
	\end{equation}
	satisfies $\phi_f([\gamma_m(f;p)])=m$
	with respect to the grading by $\phi_f$.
	Note that (\ref{gamma-m}) and (\ref{ind-m}) are both invariant 
	for $m$--periodic points of the same $f$--iteration orbit. 

	With the above notations (\ref{ind-m}) and (\ref{gamma-m}),
	we introduce \emph{the multivariable Lefschetz zeta function} of $f$ using the expression
	\begin{equation}\label{mLzeta}
	\zeta^\fabcover_f=\exp\left(\sum_{m\in\Natural}\sum_{p\in\mathrm{Per}_m(f)} \frac{\mathrm{ind}_m(f;p)}{m}\cdot[\gamma_m(f;p)]\right),
	\end{equation}
	where $\exp(a)$ stands for the formal series $\sum_{k=0}^\infty \frac{a^k}{k!}$.
	We consider $\zeta^\fabcover_f$
	as, {\it a priori}, a formal series living in the positive half completion 
	of the commutative group algebra
	$\Rational H_1(M_f;\Integral)_{\mathtt{free}}$
	with respect to the $\phi_f$--grading.
	
	\begin{theorem}\label{MAP-formula}
		Let $S$ be a connected closed orientable surface of genus at least $2$,
		and $f$ be a pseudo-Anosov automorphism of $S$. Denote by $M_f$ the mapping torus of $f$
		and $\phi_f\in H^1(M_f;\Integral)$ the cohomology class of the distinguished fibration
		of $M_f$ over the forward oriented circle $\mathbb{R}/\mathbb{Z}$.
		Then the multivariable Alexander polynomial $\Delta^\fabcover_{M_f}$ of $M_f$ can be computed
		by the following formula:
		\begin{equation*}
			\Delta^\fabcover_{M_f}\doteq\begin{cases} \zeta^\fabcover_{f} & \mbox{if } b_1(M_f)>1 \\ 
			\zeta^\fabcover_{f}\cdot(1-t)^2 & \mbox{if } b_1(M_f)=1 \end{cases}
		\end{equation*}
		In either case,
		the right-hand side is a finite sum of terms in $H_1(M_f;\Integral)_{\mathtt{free}}$ with integral coefficients.
		Hence the dotted equality symbol stands for an equality in $\Integral H_1(M_f;\Integral)_{\mathtt{free}}$
		up to a unit.
		In the case $b_1(M_f)=1$,	
		the notation $t$ stands for the generator of $H_1(M_f;\Integral)_{\mathtt{free}}$ with $\phi_f(t)=1$.
	\end{theorem}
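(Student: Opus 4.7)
The plan is to deduce the formula by identifying $\zeta^\fabcover_f$ with an appropriately normalized Reidemeister--Turaev torsion of $M_f$, and then invoking Turaev's classical identification of the torsion with the multivariable Alexander polynomial (up to the exceptional $(1-t)^2$ factor when $b_1(M_f)=1$). In this way the theorem becomes a mapping-torus incarnation of Fried's principle that torsions of suspension flows equal their dynamical zeta functions.

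First I would set the stage on the algebraic side. Let $R=\Integral H_1(M_f;\Integral)_{\mathtt{free}}$ and let $Q$ be its field of fractions. Turaev's theorem says that the Reidemeister--Turaev torsion $\tau(M_f)\in Q^\times/\mathrm{units}(R)$ is represented, up to a unit in $R$, by $\Delta^\fabcover_{M_f}$ when $b_1(M_f)>1$ and by $\Delta^\fabcover_{M_f}/(1-t)^2$ when $b_1(M_f)=1$. Thus it suffices to prove
\begin{equation*}
\tau(M_f)\doteq\zeta^\fabcover_f
\end{equation*}
as elements of $Q^\times/\mathrm{units}(R)$, after checking \emph{a posteriori} (or by the degree formula (\ref{deg-mAp})) that the right-hand side is an honest Laurent polynomial.

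The computation of $\tau(M_f)$ proceeds through a flow-adapted cell structure obtained from a Markov partition $\{R_1,\dots,R_N\}$ of $(S,f)$. Suspending the rectangles via $\theta_t$ produces a CW decomposition of $M_f$ whose $2$--cells correspond to the Markov rectangles and whose $3$--cells correspond to the suspended flow boxes $R_i\times[0,1]$. Lifting this decomposition to the maximal free abelian cover $M_f^\fabcover$ and trivializing in a way that records the $H_1(M_f;\Integral)_{\mathtt{free}}$--class of each flow segment, the cellular boundary assembles into a "twisted transition matrix" $\mathcal{A}\in M_N(R)$ whose $(i,j)$--entry is the sum over flow connections from $R_i$ to $R_j$ of the group-like elements $[\gamma]$ weighted as in (\ref{gamma-m}). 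An inspection of the chain complex shows that $\tau(M_f)$ is represented, up to sign and units, by $\det(\mathbf{1}-\mathcal{A})^{-1}$, modulo exactly the $(1-t)^2$ correction in the rank-one case that is consistent with the Turaev normalization.

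Finally I would invoke the standard formal identity
\begin{equation*}
\det(\mathbf{1}-\mathcal{A})^{-1}=\exp\left(\sum_{m\in\Natural}\frac{\mathrm{tr}(\mathcal{A}^m)}{m}\right),
\end{equation*}
and check that $\mathrm{tr}(\mathcal{A}^m)$ equals $\sum_{p\in\mathrm{Per}_m(f)}\mathrm{ind}_m(f;p)\cdot[\gamma_m(f;p)]$. The regular orbits contribute $\pm 1$ trace entries matching the regular fixed-point indices, while the main delicate point, which I expect to be the chief obstacle, is bookkeeping at $k$--prong singular periodic orbits: a single geometric periodic orbit carrying index $1-k$ is visible combinatorially as several symbolic orbits on the boundaries of adjoining rectangles, and one must verify that their signed multiplicities coalesce to the asserted $(1-k)$--weighted contribution rather than to a sum of $\pm 1$ regular-type contributions. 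This is the content of Nielsen--Thurston's local analysis of prong-permutation dynamics near singularities and, in a Markov-partition framework, can be handled by choosing the partition so that singular points lie at corners of rectangles and then computing the boundary overcount explicitly. Once these symbolic-to-geometric matches are established, the exponential identity transforms $\det(\mathbf{1}-\mathcal{A})^{-1}$ into $\zeta^\fabcover_f$, completing the proof.
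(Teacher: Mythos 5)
Your high-level plan — identify $\zeta^\fabcover_f$ with a Reidemeister--Turaev torsion, then apply Turaev's torsion-to-Alexander identification, with the $(1-t)^2$ correction for $b_1=1$ — is exactly the route the paper takes. However, your proposed implementation via a Markov-partition CW structure has genuine gaps, and the paper in fact contains results that show the central identity you flag as ``the chief obstacle'' fails in the way you hope to verify it.

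The most serious problem is your claimed reduction $\tau(M_f)\doteq\det(\mathbf{1}-\mathcal{A})^{-1}$. The torsion of a cell structure on a $3$--manifold is computed as an alternating product of determinants arising from \emph{all} chain degrees of the fiber surface; in the paper's notation one obtains $\det(\mathbf{1}-t\tilde{F}'_1)\big/\big(\det(\mathbf{1}-t\tilde{F}'_0)\det(\mathbf{1}-t\tilde{F}'_2)\big)$, and there is no obvious cancellation of the $C_0$ and $C_2$ contributions that would leave a single determinant. You would need to justify why a Markov-partition-adapted cell structure collapses the computation to one matrix, and no such justification is given. Related to this: the Markov rectangles do not actually provide a CW decomposition of $M_f$ --- they overlap along boundary segments, and the resulting polyhedral object (the flow-box complex $X_{f,\mathcal{R}}$ of Definition \ref{abstract_flow-box_complex}) maps \emph{onto} $M_f$ by a non-injective zipping map (\ref{zipping-map}), rather than being homeomorphic to it.

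The deeper obstacle is the trace identity $\mathrm{tr}(\mathcal{A}^m)\overset{?}{=}\sum_{p\in\mathrm{Per}_m(f)}\mathrm{ind}_m(f;p)\cdot[\gamma_m(f;p)]$. For the twisted transition matrix coming from the Markov partition, this is simply false: Lemma \ref{zipping-correspondence} shows that a single primitive geometric orbit $\gamma$ passing through a $k$--prong singularity lifts to $2k/\mathrm{po}(\gamma)$ distinct symbolic orbits, each covering $\gamma$ of degree $\mathrm{po}(\gamma)$, and these do not coalesce back to the index $1-k$. The paper quantifies the discrepancy precisely in Example \ref{kappa-zeta}, where the naive $\det$--$\exp$--$\mathrm{tr}$ computation from the transition matrix produces the cluster reciprocal characteristic polynomial $\kappa^\fabcover_{f,\mathcal{R}}(M_f)$, which equals $\zeta^\fabcover_f$ only up to a nontrivial product of cyclotomic factors $p_\gamma([\gamma])$ supported on exceptional (side and corner) trajectories. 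So the bookkeeping you hope will ``coalesce'' does not, and no choice of partition with singular points at corners will fix this. The paper circumvents the issue entirely by invoking Fried's theorem (\cite[Section 2, Theorem 2]{Fried-pptc}), which is proved for an arbitrary cellular approximation $f'$ of $f$ on $S$ (not a Markov partition) and rests on the classical twisted Lefschetz--Hopf theorem for cellular maps, where the correct identity is the \emph{alternating} trace sum $\sum_i(-1)^i\mathrm{tr}(\tilde{F}'^m_i)=\sum_p\mathrm{ind}_m(f;p)[\gamma_m(f;p)]$ rather than a single trace.
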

	
	In fact, the formula in Theorem \ref{MAP-formula}
	can be derived from well-known connections between
	twisted Alexander polynomials, twisted Reidemeister torsions,
	and twisted Lefschetz zeta functions.
	An exposition of its proof is included in the appendix at the end of this section,
	for the reader's reference.
	
	We mention another formula for the multivariable Lefschetz zeta function
	in terms of the primitive periodic trajectories.
	To any (unparametrized and basepoint-free) primitive periodic trajectory $\gamma$ 
	of the suspension flow, we associate two positive integers,
	\begin{equation}\label{pn-po}
	\mathrm{pn}(\gamma),\mathrm{po}(\gamma)\in\Natural,
	\end{equation}
	as follows.
	For any periodic point $p\in S$, 
	the set of prongs at $p$ is partitioned into orbits, 
	with respect to the action of the smallest power of $f$ that fixes $p$.
	Given any primitive periodic trajectory $\gamma$, 
	take a periodic point $p\in \gamma\cap S$.
	We define $\mathrm{pn}(\gamma)$ to be the number of prongs at $p$, 
	using either of the invariant foliations.
	(Count $\mathrm{pn}(\gamma)$ as $2$ for any regular periodic point.)
	We define $\mathrm{po}(\gamma)$ to be the number of prongs in any prong orbit at $p$,
	as explained above.
	Note that the choice of the periodic point or the prong orbit does not affect 
	the numbers $\mathrm{pn}(\gamma),\mathrm{po}(\gamma)$.
	We also observe that $\mathrm{po}(\gamma)$ always divides $\mathrm{pn}(\gamma)$,
	since $\mathrm{pn}(\gamma)/\mathrm{po}(\gamma)$ equals the number of prong orbits
	at any periodic point in $\gamma\cap S$.
	
	The following formula can be derived by some obvious manipulation of the expression (\ref{mLzeta}):
	\begin{equation}\label{mLzeta-product}
	\zeta^\fabcover_{f}=\prod_{\gamma\textrm{ primitive}}
	\left(1-[\gamma]\right)^{-1}\left(1-[\gamma]^{\mathrm{po}(\gamma)}\right)^{{\mathrm{pn}(\gamma)}/{\mathrm{po}(\gamma)}}
	\end{equation}
	where $(1-a)^{-1}$ stands for the formal series $\sum_{k=0}^\infty a^k$ and where the infinite product is taken over
	all the primitive periodic trajectories $\gamma$.
	
	In fact, the factors in (\ref{mLzeta-product}) that involve a primitive periodic trajectory $\gamma$
	correspond to the exponential of those summands in (\ref{mLzeta}) 
	that involve the periodic points in $\gamma\cap S$. 
	The $m$--periodic point set $\mathrm{Per}_m(f)$ has empty intersection with $\gamma\cap S$ 
	if $\phi_f(\gamma)$ does not divide $m$.
	Otherwise, $\mathrm{Per}_m(f)$ contains $\gamma\cap S$.
	In the latter case, for any $p\in\gamma\cap S$,
	the $m$--index
	$\mathrm{ind}_m(f;p)$ equals $1-\mathrm{pn}(\gamma)$ if $\phi_f(\gamma)\times\mathrm{po}(\gamma)$ divides $m$,
	or $1$ otherwise.
	The cardinality of $\gamma\cap S$ equals $\phi_f(\gamma)$.
	From the above observations,
	we see that the sum of those summands in (\ref{mLzeta})
	that involve periodic points of $\gamma\cap S$
	is given by
	\begin{eqnarray*}
	& &\sum_{m\in\Natural}\sum_{p\in\mathrm{Per}_m\left(f;\gamma^{\Natural}\right)} \frac{\mathrm{ind}_m(f;p)}{m}\cdot[\gamma_m(f;p)]\\
	&=&\phi_f(\gamma)\cdot\left(
	\sum_{j\in\Natural}\frac{1}{j\times\phi_f(\gamma)}\cdot[\gamma]^{j}
	-\sum_{j\in\Natural}\frac{\mathrm{pn}(\gamma)}{j\times\phi_f(\gamma)\times\mathrm{po}(\gamma)}\cdot[\gamma]^{j\times\mathrm{po}(\gamma)}\right)\\
	&=&
	-\log(1-[\gamma])+\frac{\mathrm{pn}(\gamma)}{\mathrm{po}(\gamma)}\cdot\log\left(1-[\gamma]^{\mathrm{po}(\gamma)}\right),
	\end{eqnarray*}
	where $\mathrm{Per}_m(f;\gamma^{\Natural})$ 
	stands for the intersection of $\mathrm{Per}_m(f)$ with $\gamma\cap S$,
	and where	$\log(1-a)$ stands for the formal series $\sum_{k=1}^\infty \frac{a^k}{-k}$.
	Take product over all primitive periodic trajectories $\gamma$,
	then we obtain (\ref{mLzeta-product}).

	In this paper,
	we frequently need to extend the terminology of this section to the covering setting.
	We pose the following Convention \ref{covering-mapping-torus} for subsequent reference.
	Most facts of this section can be suitably generalized, 
	(for example, see Lemma \ref{MAP-formula-covering}).
	However, to avoid confusion,
	we do not assume any unstated generalizations throughout this paper.	
	
	\begin{convention}\label{covering-mapping-torus}\
	Let $f$ be a pseudo-Anosov automorphism of a connected closed orientable surface $S$.
	Denote by $M_f$ the mapping torus of $f$
	and by $\phi_f\in H^1(M_f;\Integral)$ the distinguished cohomology class,
	and	by $\theta_t\colon M_f\to M_f$ the suspension flow with the distinguished parametrization.
	
	\begin{enumerate}
	\item
	Suppose that $\tilde{M}$ is any connected finite covering space of the mapping torus $M_f$.
	Denote by $\tilde{\theta}_t\colon \tilde{M}\to \tilde{M}$ the lifted parametrized suspension flow.
	Let $\tilde{S}$ be the preimage of the distinguished fiber $S$ in $\tilde{M}$,
	and $\tilde{f}\colon\tilde{S}\to \tilde{S}$ be the restriction of $\tilde{\theta}_1$	to $\tilde{S}$.
	By declaring $\tilde{M}$ as a \emph{covering mapping torus} over $M_f$,
	we agree to identify $\tilde{M}$ with the mapping torus $M_{\tilde{f}}$ 
	by the canonical homeomorphism
	so that $\tilde{\theta}_t$	matches 
	with the distinguished parametrized suspension flow of $M_{\tilde{f}}$.
	\item
	Note that for a covering mapping torus $\tilde{M}$ as above, the distinguished fiber
	$\tilde{S}$ is an orientable closed surface, possibly disconnected with finitely many components.
	The lifted automorphism $\tilde{f}$ 
	is an orientation-preserving homeomorphism of $\tilde{S}$
	which permutes the components cyclically and transitively.
	We agree to extend our usual terminology about pseudo-Anosov automorphisms to this setting.
	The measured foliations are understood via pull-back,
	with the stretching factor unchanged.	
	The distinguished cohomology class $\tilde{\phi}$
	in $H^1(\tilde{M};\Integral)$ is understood as the pull-back of $\phi_f$,
	which is dual to $\tilde{S}$.
	Note that $\tilde{\phi}$ is not necessarily primitive, but has divisibility $b_0(\tilde{S})$.
	\end{enumerate}
	\end{convention}

	\subsection*{Appendix to Section \ref{Sec-preliminary}}
	In this appendix,
	we provide a proof of Theorem \ref{MAP-formula}
	based on results of \cite{Turaev-combinatorial} and \cite{Fried-pptc}.
	Let $S$ be a connected closed orientable surface of genus at least $2$,
	and $f$ be a pseudo-Anosov automorphism of $S$.
	
	We start by relating the multivariable Alexander polynomial $\Delta^\fabcover_{M_f}$
	with the multivariable Reidemeister torsion $\tau^\fabcover(M_f)$.
	We mention a quick definition of the latter
	for the reader's reference.
	Given any connected finite cell complex $X$,
	and for any dimension $i\in \Integral$,
	\emph{the $i$--th multivariable Alexander polynomial}
	$X$	is defined to be 
	the order of the $\Integral H_1(X;\Integral)_{\mathtt{free}}$--module
	$H_i(X^\fabcover;\Integral)$, denoted by any representative
	$\Delta^\fabcover_{X,i}\in \Integral H_1(X;\Integral)_{\mathtt{free}}$.
	The notation $X^\fabcover$ stands for the maximal free abelian cover of $X$ as before.
	(It is customary to use $i=1$ as the default dimension,
	so $\Delta^\fabcover_{M_f}$ agrees with $\Delta^\fabcover_{M_f,1}$.)
	Under the hypothesis $\Delta^\fabcover_{X,i}\neq0$ of all $i$,
	\emph{the multivariable Reidemeister torsion} 
	of $X$ can be characterized as the following alternating product:
	$$\tau^\fabcover(X)=\frac{\prod_{i\textrm{ odd}} \Delta^\fabcover_{X,i}}{\prod_{i\textrm{ even}} \Delta^\fabcover_{X,i}}.$$
	It lives in the field of fractions $\mathrm{Frac}(\Integral H_1(X;\Integral)_{\mathtt{free}})$
	up to a unit of $\Integral H_1(X;\Integral)_{\mathtt{free}}$.
	By convention, we assign $\tau^\fabcover(X)$ to be $0$ if the hypothesis does not hold. 
	Note that for any particular $i\in\Integral$, 
	we have
	$\Delta^\fabcover_{X,i}\neq0$ if and only if 
	$H_i(X^\fabcover;\Integral)\otimes_{\Integral H_1(X;\Integral)_{\mathtt{free}}}
	\mathrm{Frac}(\Integral H_1(X;\Integral)_{\mathtt{free}})=0$,
	see \cite[Remark 4.5 (2)]{Turaev-combinatorial}.
	The multivariable Reidemeister torsion is actually naturally invariant 
	under homotopy equivalence of finite cell complexes	
	\cite[Theorem 11.3 and Corollary 11.4]{Turaev-combinatorial}.
	(Note that this invariant is called \emph{the Alexander function} 
	in \cite[Section 11.2]{Turaev-combinatorial}.)
	Therefore, it makes sense to speak of the multivariable Reidemeister torsion
	for any topological space that is homotopy equivalent to a connected finite cell complex.
	
	For (connected) closed $3$--manifolds, 
	the multivariable Reidemeister torsion is known to be completely determined
	by the multivariable Alexander polynomial.
	See \cite[Theorem 14.12]{Turaev-combinatorial} for the general formula.
	For mapping tori, the relation is explicitly as follows:
	\begin{equation}\label{tau_to_alex}
		\tau^\fabcover(M_f)\doteq 
		\begin{cases} \Delta^\fabcover_{M_f} & \mbox{if } b_1(M_f)>1 \\ 
		\Delta^\fabcover_{M_f}\cdot(1-t)^{-2} & \mbox{if } b_1(M_f)=1 \end{cases}
	\end{equation}
	where $t$ stands for the generator of $H_1(M_f)_{\mathtt{free}}$ with $\phi_f(t)=1$ in the $b_1(M_f)=1$ case.
		
	To calculate $\tau^\fabcover(M_f)$,
	we make some auxiliary choices and introduce some notations as follows.
	Choose an auxiliary element $t\in H_1(M_f;\Integral)_{\mathtt{free}}$
	with $\phi_f(t)=1$, (if $b_1(M_f)>1$). 
	Denote by $K$ the kernel of
	$\phi_f\colon H_1(M_f;\Integral)_{\mathtt{free}}\to\Integral$,
	which can be identified as
	the free abelian group 
	$\mathrm{Cofix}(f_*\colon H_1(S;\Integral)\to H_1(S;\Integral))_{\mathtt{free}}$
	of rank $b_1(M_f)-1$.
	We identify the group algebra $\Integral H_1(M_f;\Integral)_{\mathtt{free}}$
	canonically as the Laurent polynomial ring $(\Integral K)[t,t^{-1}]$
	over the commutative group algebra $\Integral K$.
	
	Choose an auxiliary cell decomposition of $S$ and 
	a cellular map $f'\colon S\to S$ that is homotopic to $f$.
	Then the mapping torus $M_{f'}$ of $f'$,
	defined as $S\times[0,1]/\sim$ with	$(x,1)\sim(f(x),0)$ for all $x\in S$,
	is furnished with a canonical cell decomposition.
	This cell decomposition of $M_{f'}$ is the induced 
	from the product cell decomposition of $S\times[0,1]$.
	The image of $S\times\{0\}$ is a distinguished copy of $S'$ embedded in $M_{f'}$,
	and every connected component of the preimage of $S'$ in $M^\fabcover_{f'}$
	is invariant under $K$ and covers $S'$ with deck transformation group exactly $K$.
	Choose a preimage component $\tilde{S}'$ of $S$ in $M^\fabcover_{f'}$.	
	
	The integral cellular chain complex 
	$(C_\bullet(M_{f'}),\partial_\bullet)$ has a canonical direct-sum decomposition
	$$C_\bullet\left(M_{f'}\right)=
	C_{\bullet,0}\left(M_{f'}\right)\oplus C_{\bullet-1,1}\left(M_{f'}\right),$$
	where the direct summands are	spanned by cells
	of the form $e\times\{0\}$ and $e\times(0,1)$, respectively.
	Choose orientation and ordering for the cells of $M_{f'}$ 
	so that they form a basis of $C_\bullet\left(M_{f'}\right)$.
	The integral cellular chain complex	
	$(C_\bullet(M^\fabcover_{f'}),\partial_\bullet)$
	is a finitely generated free $(\Integral K)[t,t^{-1}]$--module on each dimension.
	Moreover, there is a canonically induced direct-sum decomposition
	$$C_\bullet\left(M^\fabcover_{f'}\right)=
	C_{\bullet,0}\left(M^\fabcover_{f'}\right)\oplus C_{\bullet-1,1}\left(M^\fabcover_{f'}\right).$$
	Choose an auxiliary lift $\tilde{e}$ for each cell $e$ of $S'$ to $\tilde{S}'$,
	and lift the adjacent cell $e\times(0,1)\subset M_{f'}$ 
	to the unique cell adjacent to $\tilde{e}$ (on the suspension-flow forward side).
	Then, over the ordered basis formed by the lifted oriented cells,
	chains of $M^\fabcover_{f'}$ of each dimension are represented as column vectors, 
	and the boundary operators are represented as matrices over $(\Integral K)[t,t^{-1}]$.
	With respect to canonical direct-sum decomposition of $C_{\bullet}(M^\fabcover_{f'})$
	the matrices take the following block form:
	$$\xymatrix{
	{\partial_1=\left[\begin{array}{cc} * &  t\tilde{F}'_0-\mathbf{1}\end{array}\right]}, &
	{\partial_2=\left[\begin{array}{cc} * & \mathbf{1}-t\tilde{F}'_1\\ 0& *\end{array}\right]}, &
	{\partial_3=\left[\begin{array}{c} *\\ t\tilde{F}'_2-\mathbf{1}\end{array}\right]},
	}$$
	where $\tilde{F}'_0,\tilde{F}'_1,\tilde{F}'_2$ are square block matrices with entries in $\Integral K$.
	It follows from \cite[Theorem 2.2]{Turaev-combinatorial} and the homotopy invariance that
	the multivariable Reidemeister torsion can be computed by
	\begin{equation}\label{tau_to_det}
	\tau^\fabcover({M_f})\doteq\tau^\fabcover(M_{f'})\doteq\frac{\mathrm{det}_{(\Integral K)[t]}\left(\mathbf{1}-t\tilde{F}'_1\right)}
	{\mathrm{det}_{(\Integral K)[t]}\left(\mathbf{1}-t\tilde{F}'_2\right)\cdot\mathrm{det}_{(\Integral K)[t]}\left(\mathbf{1}-t\tilde{F}'_0\right)},
	\end{equation}
	understood as an equality in $(\Rational K)[t,t^{-1}]$ up to a unit.
	
	The right-hand side of (\ref{tau_to_det}) lives also in the formal series ring $(\Rational K)[[t]]$,
	(and indeed, in the multiplicative subgroup $1+(\Rational K)[[t]]t$).
	It satisfies the following equality in $(\Rational K)[[t]]$:
	\begin{equation}\label{det_expML}
	\frac{\mathrm{det}_{(\Integral K)[t]}\left(\mathbf{1}-t\tilde{F}'_1\right)}
	{\mathrm{det}_{(\Integral K)[t]}\left(\mathbf{1}-t\tilde{F}'_2\right)\cdot\mathrm{det}_{(\Integral K)[t]}\left(\mathbf{1}-t\tilde{F}'_0\right)}
	=\exp\left(\sum_{m\in \Natural} \frac{\mathcal{L}^\fabcover_m(f')}m\right),
	\end{equation}
	where $\exp(a)$ stands for the formal series $\sum_{k=0}^\infty \frac{a^k}{k!}$.
	The terms $\mathcal{L}^\fabcover_m(f')$ are defined by the expression
	\begin{equation}\label{MLN_formula}
	\mathcal{L}^\fabcover_m(f')=
	\left(\mathrm{tr}_{\Integral K}\left(\left(\tilde{F}'_2\right)^m\right)
	-\mathrm{tr}_{\Integral K}\left(\left(\tilde{F}'_1\right)^m\right)
	+\mathrm{tr}_{\Integral K}\left(\left(\tilde{F}'_0\right)^m\right)\right)\cdot t^m,
	\end{equation}
	in $(\Integral K)[t]$ for all $m\in\Natural$.
	(For other choices of the auxiliary component $\tilde{S}'$,
	or the basis of $C_\bullet(M^\fabcover_{f'})$,
	the matrices $\tilde{F}'_\bullet$ change only by conjugation
	with diagonal matrices of diagonal entries in $\{\pm1\}K$,
	so the terms $\mathcal{L}^\fabcover_m(f')$ depend only on $f'$.)
		
	As we have identified $(\Integral K)[t]\cong 
	\Integral H_1(M_{f'};\Integral)_{\mathtt{free}}
	\cong \Integral H_1(M_{f};\Integral)_{\mathtt{free}}$,
	the terms $\mathcal{L}^\fabcover_m(f')$ may also be regarded as
	homogeneous elements of $\Integral H_1(M_f;\Integral)_{\mathtt{free}}$
	of degree $m$, with respect to the $\phi_f$--grading.
	They can be actually interpreted as 
	certain version of multivariable periodic Lefschetz numbers.
	In particular, they ought to be naturally homotopy invariant  
	and	satisfy an index formula,
	in terms of periodic point indices when 
	$f$ has only isolated fixed points,
	(or in terms of indices for periodic orbit classes in general).
	Indeed, we have the identity
	\begin{equation}\label{MLN_index_formula}
	\mathcal{L}^\fabcover_m(f')=
	\sum_{p\in\mathrm{Per}_m(f)} \mathrm{ind}_m(f;p)\cdot[\gamma_m(f;p)],
	\end{equation}
	in $\Integral H_1(M_f;\Integral)_{\mathtt{free}}$ for all $m\in \Natural$.
	
	The identity (\ref{MLN_index_formula}) 
	is essentially
	a special case of
	Fried's equivariant Lefschetz formula \cite[Theorem 1.1]{Fried-pptc}.
	We explain the adaptation for the reader's reference.
	The forward suspension flow $\theta_s\colon M_{f'}\to M_{f'}$
	is a continuous family of continuous self-maps of $M_{f'}$ parametrized by $s\in[0,+\infty)$,
	and is defined as $\theta_s(x,r)=((f')^{\lfloor r+s\rfloor}(x),r+s-\lfloor r+s\rfloor)$,
	where points of $M_{f'}$ are uniquely represented their coordinates in $S\times[0,1)$.
	Denote by $\theta^\fabcover_s\colon M^\fabcover_{f'}\to M^\fabcover_{f'}$
	the lifted forward flow such that $\theta^\fabcover_0$ is the identity.
	Note that $\theta^\fabcover_s$ commutes with action 
	of the deck transformation group $H_1(M_f;\Integral)_{\mathtt{free}}$
	for all $s\in[0,+\infty)$.
	Let $\tilde{f}'\colon \tilde{S}'\to \tilde{S}'$
	be the restriction of
	composite map $t^{-1}\circ\theta^\fabcover_1\colon M^\fabcover_{f'}\to M^\fabcover_{f'}$
	to $\tilde{S}'$. 
	
	The self-map $\tilde{f}'$ of $\tilde{S}'$ is equivariant 
	with the deck transformation action of $K$.
	The integral chain modules $C_\bullet(\tilde{S}')$
	are identified with $C_{\bullet,0}(M^\fabcover_{f'})$,
	and the matrices of the induced action of $(\tilde{f}')^m$ 
	with respect to the chosen basis 
	are exactly $(\tilde{F}'_\bullet)^m$,	for all $m\in\Natural$.
	For any deck transformation $h\in K$, we obtain a subset 
	$\mathrm{Fix}(f';\tilde{f}',h)$	of $\mathrm{Fix}(f')$,
	defined as the image of $\mathrm{Fix}(h^{-1}\circ\tilde{f}')$
	under the covering projection $\tilde{S}'\to S'$.
	It follows from fixed point theory that 
	the subsets $\mathrm{Fix}(f';\tilde{f}',h)$
	are mutually isolated closed subsets of $S'$,
	and are nonempty for at most finitely many $h\in K$.
	There is a well-defined index	$\mathrm{ind}(f';\mathrm{Fix}(f';\tilde{f}',h))\in\Integral$,
	which is $0$ unless $\mathrm{Fix}(f';\tilde{f}',h)$ is nonempty.
	The equivariant Lefschetz formula of \cite[Theorem 1]{Fried-pptc}
	asserts
	\begin{equation*}
	\sum_{h\in K} \mathrm{ind}\left(f';\mathrm{Fix}\left(f';\tilde{f}',h\right)\right)\cdot h
	=
	\mathrm{tr}_{\Integral K}\left(\tilde{F}'_2\right)
	-\mathrm{tr}_{\Integral K}\left(\tilde{F}'_1\right)
	+\mathrm{tr}_{\Integral K}\left(\tilde{F}'_0\right),
	\end{equation*}
	so for $m=1$, (\ref{MLN_formula}) is transformed into 
	\begin{equation*}
	\mathcal{L}^\fabcover_1(f')=\sum_{h\in K} \mathrm{ind}\left(f';\mathrm{Fix}\left(f';\tilde{f}',h\right)\right)\cdot ht
	\end{equation*}
	Working with $(f')^m,(\tilde{f}')^m$ instead of $f',\tilde{f}'$ for any $m\in\Natural$,
	we obtain
	\begin{equation}\label{ELF_formula_m}
	\mathcal{L}^\fabcover_m(f')=\sum_{h\in K} \mathrm{ind}\left((f')^m;\mathrm{Fix}\left((f')^m;(\tilde{f}')^m,h\right)\right)\cdot ht^m,
	\end{equation}
	for all $m$.
	The right-hand side of (\ref{ELF_formula_m}) is in fact naturally invariant
	under homotopy, (as also pointed out by \cite[Section 1]{Fried-pptc},)
	so we can replace $f'$ with $f$ on the right-hand side of (\ref{ELF_formula_m}),
	(identifying $H_1(M_f;\Integral)_{\mathtt{free}}\cong H_1(M_f;\Integral)_{\mathtt{free}}$).
	Note that the involved terms are also defined for $f$ instead of $f'$.
	Since $f$ has only isolated periodic points,
	the indices become sum of indices at periodic points.
	To be precise, 
	that a periodic point $p\in\mathrm{Per}_m(f)$ lies in $\mathrm{Fix}(f^m;\tilde{f}^m,h)$ 
	means by definition $t^{-m}\cdot[\gamma_m(f;p)]=h$,
	or equivalently, $[\gamma_m(f;p)]=ht^m$.
	Moreover,
	$\mathrm{ind}(f^m;\mathrm{Fix}(f^m;\tilde{f}^m,h))$
	equals the sum of $\mathrm{ind}_m(f;p)$ 
	for all $p\in\mathrm{Per}_m(f)$ with $[\gamma_m(f;p)]=ht^m$.
	Therefore, we obtain
	\begin{eqnarray*}
	\mathcal{L}^\fabcover_m(f')&=&\sum_{h\in K} \mathrm{ind}\left(f^m;\mathrm{Fix}\left(f^m;f^m,h\right)\right)\cdot ht^m\\
	&=&\sum_{p\in\mathrm{Per}_m(f)} \mathrm{ind}_m(f;p)\cdot[\gamma_m(f;p)],
	\end{eqnarray*}
	for all $m\in\Natural$, which establishes (\ref{MLN_index_formula}).
	
	Plug (\ref{MLN_index_formula}) into the right-hand side of (\ref{det_expML}).
	Comparing with (\ref{mLzeta}), we obtain
	\begin{equation}\label{zeta_to_det}
		\zeta^\fabcover_f=\frac{\mathrm{det}_{(\Integral K)[t]}\left(\mathbf{1}-t\tilde{F}'_1\right)}
		{\mathrm{det}_{(\Integral K)[t]}\left(\mathbf{1}-t\tilde{F}'_2\right)\cdot\mathrm{det}_{(\Integral K)[t]}\left(\mathbf{1}-t\tilde{F}'_0\right)},
	\end{equation}
	in $(\Rational K)[[t]]$.
	We point out that (\ref{zeta_to_det}) 
	actually follows directly from \cite[Section 2, Theorem 2]{Fried-pptc}.
	Our above derivation only unwraps the proof thereof with notations of our context.
	
	The proof of Theorem \ref{MAP-formula} is complete
	by joining up the formulas (\ref{tau_to_alex}), (\ref{tau_to_det}), and (\ref{zeta_to_det}).

\section{A motivational picture}\label{Sec-picture}
	In this supplementary section, 
	we elaborate some idea behind our actual proof of Theorem \ref{main-vhsr}.
	We review some works of Thurston \cite{Thurston-norm}
	and Fried \cite{Fried-sections} 
	on the Thurston norm, fibered cones, and homology directions.
	See also \cite[Expos\'{e} 14]{FLP} for an exposition on these topics.
	We also mention some work of McMullen \cite{McMullen-norm} on the Alexander norm.
	Our exposition of this section 
	is specialized to the case of pseudo-Anosov mapping tori.
	We illustrate the idea in a simple, hypothetical situation.
	In fact, we prove Proposition \ref{criterion-cones},
	which serves as 
	a prototype of a main criterion (Theorem \ref{criterion-enfeoffed})
	that we use for proving Theorem \ref{main-vhsr}.
	We discuss how it leads to our actual plan of proof
	at the end of this section.
	
	This section is written mostly for an introductory purpose.
	A reader who has seen norms on the cohomology of $3$--manifolds
	might find the conceptual picture instructive
	for understanding the rest of this paper.
	To avoid confusion,
	let us make it clear that 
	our actual proof of Theorem \ref{main-vhsr} is formally independent of 
	the arguments or the notations appearing in this section.
		
	Let $f$ be a pseudo-Anosov automorphism of a connected closed orientable surface $S$.
	Denote by $M_f$ the mapping torus of $f$ and  by $\phi_f$
	the distinguished cohomology class of $M_f$.
		
	Denote by $\|\cdot\|_{\mathtt{Th}}$ the Thurston norm for the mapping torus $M_f$.
	Recall that for any connected compact orientable $3$--manifold $N$
	with empty or tori boundary, $\|\cdot\|_{\mathtt{Th}}$ 
	is a semi-norm on the real vector space	$H^1(N;\Real)$,
	and takes nonnegative integral values on the integral lattice $H^1(N;\Integral)$.
	When $N$ is closed and contains no essential spheres or tori,
	$\|\cdot\|_{\mathtt{Th}}$ is nondegenerate, (and hence a norm).
	(See \cite{Thurston-norm}.)
	In particular, this applies to the pseudo-Anosov mapping torus $M_f$,
	(see \cite[Chapter 1]{AFW-group}).
	
	Let $\mathcal{B}_{\mathtt{Th}}$ 
	be the unit ball of $\|\cdot\|_{\mathtt{Th}}$ in $H^1(M_f;\Real)$.
	It is a compact convex polyhedron (also known as a \emph{polytope}).
	It is symmetric about the origin and of codimension $0$ in $H^1(M;\Real)$.
	The dual ball $\mathcal{B}^*_{\mathtt{Th}}$
	refers to the subset of $H_1(M_f;\Real)$,
	such that $x\in\mathcal{B}^*_{\mathtt{Th}}$ holds if and only if 
	$|\psi(x)|\leq 1$ holds for all $\psi\in\mathcal{B}_{\mathtt{Th}}$.
	(One may recongnize $\mathcal{B}^*_{\mathtt{Th}}$ as the unit ball of the norm on $H_1(M_f;\Real)$
	dual to $\|\cdot\|_{\mathtt{Th}}$.)
	By duality, the $i$--dimensional faces of $\mathcal{B}^*_{\mathtt{Th}}$
	correspond bijectively to the codimension--$i$ faces of $\mathcal{B}_{\mathtt{Th}}$.
		
	Denote by $\mathcal{C}_{\mathtt{Th}}(\phi_f)$ the fibered cone that contains the distinguished cohomology class $\phi_f$.
	Recall that $\mathcal{C}_{\mathtt{Th}}(\phi_f)$ can be constructed 
	as the maximal open subset of $H^1(M_f;\Real)$ that contains $\phi_f$,
	such that the restriction of $\|\cdot\|_{\mathtt{Th}}$ to the closure of $\mathcal{C}_{\mathtt{Th}}(\phi_f)$ is linear.
	An integral cohomology class $\psi\in \mathcal{C}_{\mathtt{Th}}(\phi_f)\cap H^1(M_f;\Integral)$ 
	can be characterized by the property that $\psi$ is dual to an embedded closed subsurface
	which is transverse to the suspension flow everywhere and oriented in the flow direction.
	The closure of $\mathcal{C}_{\mathtt{Th}}(\phi_f)$ in $H^1(M_f;\Real)$ 
	is a convex polyhedral cone formed by radial rays
	(See \cite{Thurston-norm}, or \cite[Expos\'{e} 14]{FLP} for an exposition.)
	
	The dual cone $\mathcal{C}^*_{\mathtt{Th}}(\phi_f)$ refers to the subset of $H_1(M_f;\Real)$,
	such that	$x\in\mathcal{C}^*_{\mathtt{Th}}(\phi_f)$ holds 
	if and only if $\psi(x)>0$ holds for all $\psi\in\mathcal{C}_{\mathtt{Th}}(\phi_f)$.
	It follows that $\mathcal{C}^*_{\mathtt{Th}}(\phi_f)$ is a convex polyhedral closed cone.
	As $f$ is pseudo-Anosov,
	the dual cone $\mathcal{C}^*_{\mathtt{Th}}(\phi_f)$ can be characterized 
	as the smallest scaling-invariant convex closed subset of $H_1(M_f;\Real)$
	that contains all the homology classes of periodic trajectories of the suspension flow,
	\cite[Expos\'{e} 14]{FLP}.
	We also mention that
	the radial rays in $\mathcal{C}^*_{\mathtt{Th}}(\phi_f)$
	correspond to	the homology directions of the suspension flow.
	For any smooth nonsingular flow on a compact smooth manifold,
	a homology direction is roughly an accummulation point
	of normalized homology classes determined by long and nearly closed trajectories,
	as introduced by Fried \cite{Fried-sections}.
	
	The objects $\mathcal{B}_{\mathtt{Th}},\mathcal{B}^*_{\mathtt{Th}},\mathcal{C}_{\mathtt{Th}}(\phi_f),\mathcal{C}^*_{\mathtt{Th}}(\phi_f)$
	are related as follows:
	The fibered cone $\mathcal{C}_{\mathtt{Th}}(\phi_f)$ determines a unique top-dimensional open face
	of $\mathcal{B}_{\mathtt{Th}}$, such that
	$\mathcal{C}_{\mathtt{Th}}(\phi_f)$	is the union of the rays through that face with the origin deleted.
	This top-dimensional face is dual to a unique vertex $v_f$ of $\mathcal{B}^*_{\mathtt{Th}}$.
	In fact, $v_f$ is determined by the property 
	$$\|\psi\|_{\mathtt{Th}}=\psi(v_f)$$
	for all $\psi\in\mathcal{C}_{\mathtt{Th}}(\phi_f)$, (see \cite{Thurston-norm}).
	We identify the tangent space $T_{v_f}H_1(M_f;\Real)$ of $H_1(M_f;\Real)$ at $v_f$ 
	naturally with $H_1(M_f;\Real)$.
	Then the tangent vectors at $v_f$ that point into $\mathcal{B}^*_{\mathtt{Th}}$
	form a cone, which is exactly $\mathcal{C}^*_{\mathtt{Th}}(\phi_f)$ under the identification.
	
	There is a similar picture regarding the Alexander norm
	$\|\cdot\|_{\mathtt{A}}$ for the mapping torus $M_f$, as we describe below.
	For any $\psi\in H^1(M_f;\Real)$,
	$\|\cdot\|_{\mathtt{A}}$ is defined to be the $\psi$--degree of the multivariable Alexander polynomial $\Delta^\fabcover_{M_f}$.
	Namely,
	$$\|\psi\|_{\mathtt{A}}=\max\{\psi(h)\colon a_h\neq0\}-\min\{\psi(h)\colon a_h\neq0\},$$
	where $\sum_{h} a_h h\in \Integral H_1(M_f;\Integral)_{\mathtt{free}}$ 
	is any representative of $\Delta^\fabcover_{M_f}$.
	In general, $\|\cdot\|_{\mathtt{A}}$ is a semi-norm.
	If $b_1(M_f)>1$, the comparison
	$$\|\psi\|_{\mathtt{A}}\leq\|\psi\|_{\mathtt{Th}}$$
	holds for all $\psi\in H^1(M_f;\Real)$,
	by McMullen \cite{McMullen-norm},
	and the equality holds
	if $\psi$ equals $\phi_f$,
	by (\ref{deg-mAp}).
	(For $b_1(M_f)=1$,
	one simply obtains 
	$\|\phi_f\|_{\mathtt{A}}=\|\phi_f\|_{\mathtt{Th}}+2=b_1(S)$
	by (\ref{deg-mAp}).)
		
	Denote by $\mathcal{B}_{\mathtt{A}}$ the unit ball of $\|\cdot\|_\mathtt{A}$.
	Define the dual ball $\mathcal{B}^*_{\mathtt{A}}$ in $H_1(M_f;\Real)$ 
	by the property that	$x\in\mathcal{B}^*_{\mathtt{A}}$ holds if and only if
	$|\psi(x)|\leq 1$ holds for all $\psi\in\mathcal{B}_{\mathtt{A}}$.
	Under the assumption $b_1(M_f)>1$,
	it follows from the above norm comparison that 
	$\mathcal{B}^*_{\mathtt{A}}$ is contained in $\mathcal{B}^*_{\mathtt{Th}}$,
	and moreover,
	the norm equality for $\phi_f$
	implies that the vertex $v_f$ of $\mathcal{B}^*_{\mathtt{Th}}$
	dual to the fibered face determined by $\phi_f$
	is also a vertex of $\mathcal{B}^*_{\mathtt{A}}$.
	Let $\mathcal{C}_{\mathtt{A}}(\phi_f)$ 
	be the open cone of $H^1(M_f;\Real)$
	over the top-dimensional face of $\mathcal{B}_{\mathtt{A}}$
	that is dual to $v_f$.
	Let $\mathcal{C}^*_{\mathtt{A}}(\phi_f)$ 
	be the closed cone of tangent vectors of $H_1(M_f;\Real)$
	at $v_f$ which points into $\mathcal{B}^*_{\mathtt{A}}$.
	We naturally consider $\mathcal{C}^*_{\mathtt{A}}(\phi_f)$ as a closed cone
	in $H_1(M_f;\Real)$ as before.
	
	The dual ball $\mathcal{B}^*_{\mathtt{A}}$ is a polytope
	(possibly of positive codimension) in $H_1(M_f;\Real)$ and symmetric about the origin.
	In fact, 
	it follows from the definition that
	$\mathcal{B}^*_{\mathtt{A}}$
	agrees with the Newton polytope of the multivariable Alexander polynomial $\Delta^\fabcover_{M_f}$,
	up to an integral translation of $H_1(M_f;\Real)$ (as an affined linear space).
	Therefore, 
	$\mathcal{B}_{\mathtt{A}}$ is a (possibly noncompact) convex polyhedron in $H^1(M_f;\Real)$,
	symmetric about the origin.
	The cones $\mathcal{C}^*_{\mathtt{A}}(\phi_f)$ and $\mathcal{C}_{\mathtt{A}}(\phi_f)$ 
	are both convex polyhedral.
	For $b_1(M_f)>1$, 
	the logarithmic multivariable Lefschetz zeta function $\log\zeta^\fabcover_f$
	is related to $\mathcal{C}^*_{\mathtt{A}}(\phi_f)$
	like $\Delta^\fabcover_{M_f}$ to $\mathcal{B}^*_{\mathtt{A}}$.
	(In fact, Theorem \ref{MAP-formula} implies
	that $\mathcal{C}^*_{\mathtt{A}}(\phi_f)$	
	is the smallest scaling-invariant convex closed subset of $H_1(M_f;\Real)$
	that contains all the homology classes in the formal series expansion
	of $\log\zeta^\fabcover_f$ with nonzero coefficients.)
	
	The following proposition is an application 
	of the convex polyhedral cones and balls as mentioned above.
		
	\begin{proposition}\label{criterion-cones}
	Let $f$ be a pseudo-Anosov automorphism of a connected closed orientable surface $S$.
	Identify $\Integral H_1(M_f;\Integral)_{\mathtt{free}}$	
	as a multivariable polynomial ring $\Integral[z_1^{\pm1},\cdots,z_n^{\pm1}]$.
	
	Suppose that there exist at least $-\chi(S)+1$ closed cone faces 
	of the closed cone $\mathcal{C}^*_{\mathtt{Th}}(\phi_f)$,
	mutually disjoint except at the origin,
	and 
	suppose that the intersection of each of them with the closed cone $\mathcal{C}^*_{\mathtt{A}}(\phi_f)$ 
	contains a ray.
	Then (any representative of) the multivariable Alexander polynomial $\Delta^\fabcover_{M_f}$
	is not a product of monomials and cyclotomic polynomials evaluated on monomials.
	\end{proposition}
	
	\begin{proof}
	We observe that the corners of $\mathcal{C}^*_{\mathtt{Th}}(\phi_f)$ and $\mathcal{C}^*_{\mathtt{A}}(\phi_f)$
	at their vertices are isomorphic to the corners of $\mathcal{B}^*_{\mathtt{Th}}(\phi_f)$ and $\mathcal{B}^*_{\mathtt{A}}(\phi_f)$
	at the vertex $v_f$, respectively.
	In particular, $\mathcal{C}^*_{\mathtt{Th}}(\phi_f)$ contains $\mathcal{C}^*_{\mathtt{A}}(\phi_f)$,
	as $\mathcal{B}^*_{\mathtt{Th}}(\phi_f)$ contains $\mathcal{B}^*_{\mathtt{A}}(\phi_f)$.
	Then the condition implies that $\mathcal{C}^*_{\mathtt{A}}(\phi_f)$ 
	has at least $-\chi(S)+1$ closed faces which are mutually disjoint except at the origin.
	It follows that
	$\mathcal{B}^*_{\mathtt{A}}$ has at least $-\chi(S)+1$ edges ($1$--dimensional faces) adjacent to the vertex $v_f$.
	
	Assume on the contrary that $\Delta^\fabcover_{M_f}$ is represented by
	a multivariable Laurent polynomial $\pm w_0 P_1(w_1)\cdots P_r(w_r)$,
	where $w_i$ are monomials $z_1^{e_{i,1}}\cdots z_n^{e_{i,n}}$ and $P_i$ are cyclotomic polynomials.
	We observe that the Newton polytope of each $P_i(w_i)$ is a segment in $H_1(M_f;\Real)$ joining the origin and $w_i$.
	Note that $\mathcal{B}^*_{\mathtt{A}}$ is isomorphic to the Newton polytope of $\Delta^\fabcover_{M_f}$.
	Since $\phi_f$ has a unique minimum point $v_f$ on $\mathcal{B}^*_{\mathtt{A}}$,
	we obtain $\deg_{\phi_f}(P_i(w_i))>0$ for all $i$, 
	(otherwise the Newton polytope of $P_1(w_1)\cdots P_r(w_r)$ would have a closed face of positive dimension
	on which $\phi_f$ is minimized).
	Then we estimate
	$r\leq \deg_{\phi_f}(P_1(w_1))+\cdots+\deg_{\phi_f}(P_r(w_r))\leq \deg_{\phi_f}(\Delta^\fabcover_{M_f})\leq -\chi(S)$.
	However, it follows that the Newton polytope of $w_0 P_1(w_1)\cdots P_r(w_r)$
	has at most $-\chi(S)$ edges adjacent to the vertex $v_f$. This is a contradiction.	
	\end{proof}
	
	Under the assumptions of Proposition \ref{criterion-cones},
	it can be implied that some finite covering and automorphism lift $(S',f')$
	of $(S,f)$ has a homological spectral radius $>1$.
	This follows directly from Kronecker's theorem \cite[Theorem 3.10]{Everest--Ward}
	and a criterion of Sun \cite[Theorem 1.2]{Sun-vhsr}.
	(The former tells that $\Delta^\fabcover_{M_f}$ has Mahler measure $>1$,
	while the latter relates this property with the existence of a virtual homological eigenvalue
	outside the unit circle.)
	
	We make some comments 
	concerning our actual proof of Theorem \ref{main-vhsr}.
	Whereas Proposition \ref{criterion-cones} supplies a sufficient condition 
	for virtual spectral radii $>1$, the condition may very often fail to hold.
	We loosen the condition  (with some reformulation) to obtain a more realistic criterion,
	which works in the covering setting.
	This leads to Theorem \ref{criterion-enfeoffed}.
	The generalized criterion roughly asks for 
	a regularly covering mapping torus $M_{\tilde{f}}$ 
	with a deck transformation group $\Gamma$ as follows:
	With respect to the induced action of $\Gamma$,
	the dual cone $\mathcal{C}^*_{\mathtt{Th}}(\phi_{\tilde{f}})$
	should have at least $-\chi(S)+1$ closed cone face orbits,
	mutually disjoint except at the origin,
	and moreover,
	each of those orbits should meet 
	$\mathcal{C}^*_{\mathtt{A}}(\phi_{\tilde{f}})$ nontrivially.
	Proving the generalized criterion	is 
	mostly about convex geometry of multivariable Laurent polynomials,
	or their formal logarithms.
	This will become clear in Section \ref{Sec-detection} upon some suitable interpretation.
	Making use of the criterion, however,
	requires a systematic generalization of Fried's original work
	\cite{Fried-sections}	on homology directions
	to the covering setting,
	and also to a more technical cluster setting that we come up with. 
	One way to present our generalization of Fried's work 
	is to deal with covering clusters directly.
	Another possible option is to redo the basic case, 
	in a reasonably self-contained, sufficiently detailed, and easily extendable manner,
	and then, to specify necessary modifications for the general setting.
	In this paper, we take the latter approach,
	as it might be more accessible to a general reader.
	We point out related existing works in remarks
	for the reader's reference.

\section{A criterion for nontrivial Mahler measure}\label{Sec-detection}
	In this section, we provide 
	a criterion for a regularly covering pseudo-Anosov mapping torus
	to have a multivariable Alexander polynomial with nontrivial Mahler measure
	(Theorem \ref{criterion-enfeoffed}).
	We state the criterion after introducing 
	some terminology (Definition \ref{homology_directions}), 
	which is repeatedly used in subsequent sections.
	We explain our strategy for proving Conjecture \ref{conjecture-vhsr}
	after stating Theorem \ref{criterion-enfeoffed}.	
	The proof of Theorem \ref{criterion-enfeoffed} forms the body of this section.
	
	\begin{definition}\label{homology_directions}
	Let $f$ be a pseudo-Anosov automorphism of a connected orientable closed surface $S$.
	Denote by $M_f$ the mapping torus of $f$ and  by $\phi_f$
	the distinguished cohomology class of $M_f$.
	\begin{enumerate}
	\item	
	Denote by $\mathbf{P}(H_1(M_f;\Real))$ the projectivization of $H_1(M_f;\Real)$,
	whose points are considered to be 
	the real $1$--dimensional linear subspaces of $H_1(M_f;\Real)$.
	Denote by $\mathbf{A}(M_f,\phi_f)$
	the complement of the (projective) hyperplane $\mathbf{P}(\mathrm{Ker}(\phi_f))$.
	We furnish $\mathbf{A}(M_f,\phi_f)$ with the naturally induced affine linear space structure.
	A projective point $l\in\mathbf{A}(M_f,\phi_f)$ is called a \emph{periodic homology direction}
	for $M_f$
	if $l$ contains the real homology class of	a periodic trajectory $\gamma$ of the suspension flow.
	The \emph{homology direction hull} for $M_f$, denoted as
	$$\mathcal{D}(M_f,\phi_f)\subset \mathbf{A}(M_f,\phi_f),$$
	is defined to be 
	the affine linear convex hull of all the periodic homology directions.
	%
	\item We say that a subset $E$ of $\mathcal{D}(M_f,\phi_f)$
	is \emph{semi-extreme} 
	if there is some convex function $\mathbf{A}(M_f,\phi_f)\to\Real$
	and if $E$ consists of the maximum points
	of the function restricted to $\mathcal{D}(M_f,\phi_f)$.
	For any semi-extreme subset $E$ of $\mathcal{D}(M_f,\phi_f)$,
	the \emph{$E$--part of the multivariable Lefschetz zeta function}
	is defined as 
	$$\zeta^\fabcover_f[E]=1+\sum_{u\textrm{ over }E} \mathrm{coef}\left(\zeta^\fabcover_{f};u\right)\cdot u,$$
	where the summation is taken 
	over all $u\in H_1(M_f;\Integral)_{\mathtt{free}}$ 
	with $\phi_f(u)>0$ and $\Real u\in E$, and 
	where $\mathrm{coef}(\zeta^\fabcover_{f};u)\in\Rational$ 
	stands for 
	the coefficient of 
	the multivariable Lefschetz zeta function $\zeta^\fabcover_{f}$ 
	at $u$ in the formal series expansion,
	(see (\ref{mLzeta})).
	We consider $\zeta^\fabcover_f[E]$
	as living in the $\phi_f$--graded forward-half completion of $\Rational H_1(M_f;\Integral)_{\mathtt{free}}$.
	For any semi-extreme subset $E$ of $\mathcal{D}(M_f,\phi_f)$,
	we say that $E$ is \emph{dominant} if it satisfies
	$$\zeta^\fabcover_f[E]\neq1.$$
	\item More generally, for any connected finite cover $\tilde{M}$ of $M_f$,
	the \emph{homology direction hull} $\mathcal{D}(\tilde{M},\tilde{\phi})$
	for $\tilde{M}$	and \emph{dominant semi-extreme subsets} of $\mathcal{D}(\tilde{M},\tilde{\phi})$
	are defined in the same way as above,
	by decalaring $\tilde{M}$
	as a covering mapping torus with the distinguished cohomology class $\tilde{\phi}$
	according to Convention \ref{covering-mapping-torus}.
	\end{enumerate}
	\end{definition}
	
	\begin{remark}\label{remark_homology_directions}\
	\begin{enumerate}
	\item The complement of any projective hyperplane in a real projective $n$--space
	is naturally an affine linear $n$--space, (that is, a homogeneous space isomorphic
	to $\Real^n$ furnished with the action of affine linear transformations).
	In real affine linear spaces,
	it makes sense to speak of convex subsets.
	A (closed) \emph{convex polyhedron} in a real affine linear space refers to
	a closed convex subset whose sides (namely, maximal convex subsets on the boundary) are locally finite.
	A compact convex polyhedron is a \emph{polytope}.
	From the descriptions in Section \ref{Sec-picture},
	we know that the homology direction hull $\mathcal{D}(M_f,\phi_f)$
	agrees with Fried's set of homology directions,
	and is actually a polytope.
	(It is projective linearly isomorphic to 
	the cross-section of
	the dual cone $\mathcal{C}^*_{\mathtt{Th}}(\phi_f)$, or its reflection about the origin,
	with a generic affine hyperplane of $H_1(M_f;\Real)$.)
	The polytope description of the homology direction hull
	is reproved in Theorem \ref{D-description},
	for the purpose of further generalization,
	(see also Remark \ref{D-of-Fried}).
	\item 
	For any polytope in a real affine linear space, one may check that
	a semi-extreme subset is just a union of (various-dimensional) closed faces.
	Vertices are \emph{extreme} in the usual terminology of convex geometry.
	We introduce the notion of semi-extremity mostly 
	because our argument for Theorem \ref{criterion-enfeoffed}
	does not require a polytope description of the homology direction hull.
	\item
	For any covering mapping torus, every deck transformation must preserve the distinguished
	cohomology class, so it acts on the homology direction hull by an affine linear isomorphism.
	\end{enumerate}
	\end{remark}
	
	
	\begin{theorem}\label{criterion-enfeoffed}
		Let $f$ be a pseudo-Anosov automorphism of a connected closed orientable surface $S$.
		Let $\tilde{M}$ be any connected regular finite cover
		of the mapping torus $M_f$ with deck transformation group $\Gamma$.
		Declare $\tilde{M}$ as a covering mapping torus over $M_f$,
		(Convention \ref{covering-mapping-torus}).
		
		Suppose that the homology direction hull for $\tilde{M}$ contains 
		$-\chi(S)+1$
		or more	mutually disjoint $\Gamma$--invariant dominant semi-extreme subsets.
		Then the Mahler measure of 
		the multivariable Alexander polynomial $\Delta^\fabcover_{\tilde{M}}$ of $\tilde{M}$
		satisfies
		$$\mathbb{M}\left(\Delta^\fabcover_{\tilde{M}}\right)>1.$$
	\end{theorem}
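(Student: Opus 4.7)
The strategy is proof by contradiction. Suppose $\mathbb{M}(\Delta^\fabcover_{\tilde{M}})=1$, and aim to derive that at most $-\chi(S)$ pairwise disjoint $\Gamma$-invariant dominant semi-extreme subsets can exist in $\mathcal{D}(\tilde{M},\tilde{\phi})$, contradicting the hypothesis. By Theorem \ref{MAP-formula}, the polynomials $\Delta^\fabcover_{\tilde{M}}$ and $\zeta^\fabcover_{\tilde{f}}$ differ at most by a factor $(1-t)^{\pm 2}$, which has unit Mahler measure, so it suffices to analyze $\zeta^\fabcover_{\tilde{f}}$. Invoking the multivariable Kronecker--Smyth characterization of integral Laurent polynomials of Mahler measure $1$, one obtains, up to a monomial unit,
\begin{equation*}
\zeta^\fabcover_{\tilde{f}}\doteq\prod_{j}\Phi_{n_j}(v_j),
\end{equation*}
a finite product of cyclotomic polynomials $\Phi_{n_j}$ evaluated at monomials $v_j\in H_1(\tilde{M};\Integral)_{\mathtt{free}}$. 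In particular, the Newton polytope of $\Delta^\fabcover_{\tilde{M}}$ is a zonotope whose generator directions form the finite set $\{\Real v_j\}$ in $\mathbf{A}(\tilde{M},\tilde{\phi})$.

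Next I would use the $\Gamma$-invariance of $\Delta^\fabcover_{\tilde{M}}$ (up to unit), which forces the multiset of factors to be $\Gamma$-invariant and organizes the $\tilde{\phi}$-positive generator directions into $\Gamma$-orbits inside $\mathbf{A}(\tilde{M},\tilde{\phi})$. The total $\tilde{\phi}$-graded degree equals $-\chi(\tilde{S})=[\tilde{M}:M_f]\cdot(-\chi(S))$ by (\ref{deg-mAp}) (the case $b_{1}(\tilde{M})=1$ being ruled out since the numerical hypothesis is then vacuous). Since $\tilde{\phi}$ has divisibility $b_{0}(\tilde{S})$, each $\tilde{\phi}$-positive factor contributes at least this amount to the $\tilde{\phi}$-degree; combined with the fact that a generic $\Gamma$-orbit has size $[\tilde{M}:M_f]$ (smaller stabilizers corresponding to factors pulled back from intermediate subcovers), an orbit-counting argument should bound the number of $\Gamma$-orbits of $\tilde{\phi}$-positive generator directions by $-\chi(S)$.

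Finally, the bound must be transported back to $\mathcal{D}(\tilde{M},\tilde{\phi})$. Any $\Gamma$-invariant dominant semi-extreme subset $E_{i}$ must contain a vertex of $\mathcal{D}(\tilde{M},\tilde{\phi})$, since a convex function on a compact polytope attains its maximum at a vertex. Under the duality of Theorem \ref{D-description} and Remark \ref{D-of-Fried} between $\mathcal{D}(\tilde{M},\tilde{\phi})$ and the fibered face of the Thurston-norm ball, vertices of $\mathcal{D}(\tilde{M},\tilde{\phi})$ correspond to face normals of the zonotope Newton polytope of $\Delta^\fabcover_{\tilde{M}}$, which in the factorization above are dictated by the Smyth generators $v_{j}$. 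Pairwise disjointness of the $E_{i}$ thus forces them to point to pairwise disjoint $\Gamma$-orbits of generator directions, limiting their number to at most $-\chi(S)$, a contradiction. The main obstacle will be the middle step: converting the degree and orbit data into the sharp bound $-\chi(S)$ in the presence of non-free $\Gamma$-orbits will likely require passing through intermediate covers and combining $\tilde{\phi}$-degree estimates at each level with the $\Gamma$-equivariant structure of the zonotope factorization.
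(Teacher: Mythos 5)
Your proposal shares the paper's entry point---argue by contradiction from $\mathbb{M}(\Delta^\#_{\tilde{M}})=1$, invoke Kronecker's theorem to obtain a cyclotomic factorization---but it then veers into a Newton-polytope argument that has a genuine gap, which you yourself acknowledge in your final sentence, and it never touches the topological step that actually drives the paper's proof.

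The missing ingredient is Lemma \ref{dominant-Gamma}. Instead of trying to bound the number of $\Gamma$-orbits of generator directions by $-\chi(S)$ via a degree count on a cyclotomic factorization (the step you flag as the ``main obstacle''), the paper bounds the $\ell^1$-norm of the logarithmic coefficients $L_m(\zeta^\fabcover_{\tilde{f}})$ from below. Lemma \ref{algebraic-nontrivial-Mahler} shows $\mathbb{M}(q)=1$ forces $\|L_m(q)\|_1\leq\deg_t(q)$ for all $m$; Lemma \ref{dominant-Gamma} then shows that each $\Gamma$-invariant dominant semi-extreme $E$ contributes at least $|\Gamma|/b_0(\tilde{S})$ to $\|L_m(\zeta^\fabcover_{\tilde{f}})\|_1$ for suitable $m$. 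The crucial point---which your proposal never uses---is that the fixed point index $\mathrm{ind}_{\tilde{m}}(\tilde{f};-)$ is constant on $\mathrm{Stab}_\Gamma$-orbits of periodic points, so a straightforward orbit-stabilizer count already gives the $|\Gamma|$-factor; no analysis of the multiplicative structure of $\Delta^\#_{\tilde{M}}$ and no passage through intermediate covers is needed. Summing $s>-\chi(S)$ disjoint contributions exceeds $\deg_t(\zeta^\fabcover_{\tilde{f}})=-\chi(\tilde{S})/b_0(\tilde{S})$, contradicting Lemma \ref{algebraic-nontrivial-Mahler}.

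As for your route, the correspondence you posit between dominant semi-extreme subsets and $\Gamma$-orbits of zonotope generators $v_j$ is not established and is problematic in general: vertices of a zonotope are indexed by acyclic (re)orientations of the generators, not by the generators themselves, so the number of $\tilde{\phi}$-positive vertices of the Newton polytope can grow much faster than the number of generator directions. Moreover, dominance (Definition \ref{homology_directions}(2)) is defined by nonvanishing of coefficients of $\zeta^\fabcover_{\tilde{f}}$ in the cone over $E$, not by $E$ picking out a generator direction, so disjointness of the $E_i$ does not translate to disjointness of generator-direction orbits. The normalization issue you name at the end (non-free $\Gamma$-orbits, intermediate covers) is exactly where this would need a new idea, and the paper's fixed-point-index argument is that idea, replacing the zonotope combinatorics entirely.
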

	
	%
		%
	
	\begin{corollary}\label{criterion-enfeoffed-corollary}
		Under the same hypothesis as of Theorem \ref{criterion-enfeoffed},
		and in the same notations,
		there exist a connected finite cover $S'$ of the surface $S$
		and an automorphism lift $f'$ of $f$ to $S'$,
		and moreover,
		the homological spectral radius of $f'$ is strictly greater than $1$.
	\end{corollary}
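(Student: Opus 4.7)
The plan is to derive the corollary from Theorem \ref{criterion-enfeoffed} by invoking Sun's Mahler-measure criterion \cite{Sun-vhsr}, which translates nontrivial Mahler measure of the multivariable Alexander polynomial of a pseudo-Anosov mapping torus into a virtual homological eigenvalue off the unit circle. Theorem \ref{criterion-enfeoffed} immediately yields $\mathbb{M}(\Delta^\#_{\tilde{M}}) > 1$; the task is to extract from this a connected finite cover $S' \to S$ and a lift $f'$ of $f$ with $\rho((f')_*) > 1$ on $H_1(S'; \Complex)$.

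My first step would be to reduce to a connected-fiber setting. The subgroup $K = \pi_1(\tilde{M}) \cap \pi_1(S)$ of $\pi_1(M_f) = \pi_1(S) \rtimes \langle t \rangle$ is $f_*$-invariant, because $\pi_1(\tilde{M})$ is normal in $\pi_1(M_f)$ and conjugation by $t$ realizes $f_*$ on $\pi_1(S)$. Hence $f$ lifts to a pseudo-Anosov automorphism $f_0\colon S_0 \to S_0$ on the connected finite cover $S_0 \to S$ corresponding to $K$, whose mapping torus $M_{f_0}$ is a regular intermediate cover of $M_f$; and $\tilde{M}$ is itself a finite cyclic cover of $M_{f_0}$ in the circle direction. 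Multiplicativity of the multivariable Alexander polynomial under this cyclic cover transfers the Mahler-measure bound, giving $\mathbb{M}(\Delta^\#_{M_{f_0}}) > 1$.

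Next, I would apply Sun's criterion to the connected-fiber pseudo-Anosov $(S_0, f_0)$ to produce a connected finite cover $S' \to S_0$ together with a lift $f'\colon S' \to S'$ of $f_0$ (or, a priori, of a positive iterate $f_0^m$) such that $(f')_*$ on $H_1(S'; \Complex)$ admits an eigenvalue of modulus strictly greater than $1$. A routine further passage to a finite cyclic cover of $M_{f_0}$ in the circle direction promotes any lift of $f_0^m$ to a lift of $f_0$ itself; composing the covers $S' \to S_0 \to S$ and the lifts then exhibits $f'$ as a lift of $f$ to a connected finite cover of $S$ with homological spectral radius $>1$, which is the assertion of the corollary.

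The main obstacle is the Mahler-measure bookkeeping between $\tilde{M}$ and the intermediate cover $M_{f_0}$: one must ensure that the lower bound on Mahler measure survives descent through a finite cyclic cover in the circle direction. This rests on standard factorization formulas for Alexander polynomials under finite cyclic covers, together with the specialization ingredients that power Sun's criterion itself, namely Lawton's density theorem for multivariable Mahler measure and Kronecker's theorem extracting a root off the unit circle from an integer polynomial of Mahler measure $>1$.
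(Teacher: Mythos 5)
Your overall strategy matches the paper's: Theorem \ref{criterion-enfeoffed} gives $\mathbb{M}(\Delta^\#_{\tilde{M}})>1$, and one finishes by invoking Sun's characterization \cite[Theorem~1.2]{Sun-vhsr}. The paper's own proof is exactly this one-line citation. Where you differ is the intermediate ``reduction to a connected fiber'' step, and that step is both unnecessary and, more seriously, contains a gap.

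It is unnecessary because Sun's Theorem~1.2 is formulated for an arbitrary connected finite cover of $M_f$; the fiber over the circle need not be connected for the criterion to apply. This is precisely how the present paper uses it in Lemma~\ref{vhsr-pA-closed}: from $\mathbb{M}(\Delta^\#_{\tilde{M}})>1$ with $\tilde{M}$ the regular cover supplied by Theorem~\ref{dominant-diversity} (whose fiber is typically disconnected), one passes directly to a connected finite cover $S'$ of $S$ and a lift $f'$ of $f$ with homological spectral radius $>1$. No descent to a connected-fiber intermediate mapping torus is required.

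The step that actually fails is the claimed transfer $\mathbb{M}(\Delta^\#_{\tilde{M}})>1\Rightarrow\mathbb{M}(\Delta^\#_{M_{f_0}})>1$ via ``multiplicativity of the multivariable Alexander polynomial under this cyclic cover.'' You correctly identify $\tilde{M}$ as the $b$-fold cyclic cover of $M_{f_0}$ in the circle direction (with $b=b_0(\tilde{S})$, fiber $S_0$ fixed, monodromy replaced by $f_0^b$). For the single-variable Alexander polynomials dual to $\phi$ this indeed behaves well, since $\mathbb{M}(\det(\mathbf 1-tH_1(f_0)^b))=\mathbb{M}(\det(\mathbf 1-tH_1(f_0)))^b$. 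But for the multivariable polynomial the two covers live over different coefficient rings: $b_1(\tilde{M})=1+\dim\mathrm{Fix}(H^1(f_0^b;\Rational))$ can be strictly larger than $b_1(M_{f_0})=1+\dim\mathrm{Fix}(H^1(f_0;\Rational))$, since eigenvalues of $H_1(f_0)$ that are nontrivial $b$-th roots of unity contribute to the former but not the latter. When $b_1(\tilde{M})>b_1(M_{f_0})$ the Alexander polynomial $\Delta^\#_{\tilde{M}}$ genuinely involves extra variables, and there is no specialization or factorization identity that would let you descend a lower bound on Mahler measure from the cover to the base. In fact, descent in that direction is exactly what one should not expect: the entire phenomenon the paper is exploiting is that these invariants can jump from trivial to nontrivial upon passage to a finite cover. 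So this ``transfer'' needs either a precise statement with proof (which I do not believe exists in the needed generality), or, much more simply, to be deleted in favor of citing \cite[Theorem~1.2]{Sun-vhsr} directly on $\tilde{M}$.

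Your final paragraph about promoting a lift of $f_0^m$ to a lift of $f_0$ is also moot once you cite Sun directly, since \cite[Theorem~1.2]{Sun-vhsr} already produces a lift of $f$ itself rather than of an iterate.
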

	
	Corollary \ref{criterion-enfeoffed-corollary} follows from
	Theorem \ref{criterion-enfeoffed} and a characterization of Sun \cite[Theorem 1.2]{Sun-vhsr}.
	See Subsection \ref{Subsec-Mahler} for a brief review of Mahler measure.
	In fact, it can be shown that 
	provided with any covering mapping torus $\tilde{M}$ by Theorem \ref{criterion-enfeoffed},
	a covering mapping torus $M'$	for Corollary \ref{criterion-enfeoffed-corollary} is given by 
	a generic abelian finite cover of $\tilde{M}$.
	When $\tilde{M}$ is taken to be $M_f$, 
	Theorem \ref{criterion-enfeoffed} says essentially
	the same thing as Proposition \ref{criterion-cones}.
	
	Theorem \ref{criterion-enfeoffed} suggests 
	the following strategy for proving Conjecture \ref{conjecture-vhsr}:
	After some known reductions, it suffices to prove the essential case 
	for any pseudo-Anosov automorphism $f$ of a closed orientable surface $S$.
	Then we look for a regular finite cover $\tilde{M}$ of the mapping torus $M_f$ 
	which satisfies the hypothesis of Theorem \ref{criterion-enfeoffed}.
	Our plan consists of two steps:
	First we find some regular finite cover $M'$ of $M_f$
	so that the homology direction hull $\mathcal{D}(M',\phi')$
	possesses at least $-\chi(S)+1$ distinct orbits of vertices,
	which are not necessarily dominant.
	Then we find some further regular finite cover $\tilde{M}$,
	such that the preimage of the vertex orbits 
	are invariant dominant semi-extreme subsets	of $\mathcal{D}(\tilde{M},\tilde{\phi})$.
	Note that the covering projection induces an affine linear projection
	$\mathcal{D}(\tilde{M},\tilde{\phi})\to\mathcal{D}(M',\phi')$,
	so the preimage of any vertex orbit is a union of disjoint closed faces
	which is invariant under deck transformations.
	To carry out the plan, we must establish the polytope description
	of the homology direction hull,
	and moreover, 
	we must understand the topological, dynamical, or geometric	contents of closed faces.	
	In particular, 
	effective calculation for the multivariable Lefschetz zeta function 
	needs to be developed
	and behavior under finite coverings needs to be investigated.
	These materials are prepared by 
	Sections \ref{Sec-perspective_of_Markov_partitions}, \ref{Sec-cluster}, \ref{Sec-rcp}, 
	and \ref{Sec-covering}.
	Following the two-step plan (with a little technical modification),
	we produce the desired $\tilde{M}$
	by Theorem \ref{dominant-diversity} in Section \ref{Sec-diversity_of_dominant_virtual_faces}.
	
	\begin{remark}\label{dominant-enfeoffed-remark}
		In \cite{Hadari-order}, Hadari outlines a proof
		to show	that every pseudo-Anosov automorphism of 
		a compact orientable surface with boundary has virtually infinite order,
		and that the same holds for fully irreducible automorphisms
		of finitely generated free groups.
		Hadari's approach inspires
		Theorem \ref{criterion-enfeoffed} (compare \cite[Corollary 2.6]{Hadari-order})
		and our strategy for Conjecture \ref{conjecture-vhsr}.
		In particular, we point out that
		our dominance condition for semi-extreme subsets of
		the homology direction hull is essentially similar to
		Hadari's notion of enfeoffed vertices \cite[Definition 2.19]{Hadari-order},
		(compare Lemma \ref{dominant-enfeoffed}).
		
		Note that the automorphisms considered by Hadari are
		homologically trivial, possibly after passing to a finite iteration.
		So the first step of our two-step plan is a trivial fact there,
		(see \cite[Proposition 2.9]{Hadari-order}).
		However, for general pseudo-Anosov automorphisms
		the first step requires significantly more work, (compare Proposition \ref{vertices-diversity}).
		The second step in our setting also requires a different proof,
		since lower central series is no longer	suitable for 
		the induction argument,
		(compare Proposition \ref{vertex-dominant}).
	\end{remark}
	
	The rest of this section is devoted to the proof of Theorem \ref{criterion-enfeoffed}.
	Most part of the proof is purely algebraic about multivariable Laurent polynomials,
	using standard techniques.
	We refer the reader to Everest--Ward \cite{Everest--Ward} for the algebraic generalities.
	The only essential input from topology appears in Lemma \ref{dominant-Gamma},
	where indices are counted according to conjugacy classes of periodic points.
	
	\subsection{Mahler measure of multivariable Laurent polynomials}\label{Subsec-Mahler}
	For any complex Laurent polynomial in $n$ variables
	$q(z_1,\cdots,z_n)\in\Complex[z_1^{\pm1},\cdots,z_n^{\pm1}]$ other than zero,
	the (multiplicative) \emph{Mahler measure} of $q$ is defined by the expression
	\begin{equation}\label{Mahler}
	\mathbb{M}(q)=
	\exp\left(\frac{1}{(2\pi)^n}\,\int_0^{2\pi}\cdots\int_0^{2\pi}
	\log\left|q\left(e^{\imunit\theta_1},\cdots,e^{\imunit\theta_n}\right)\right|
	\ud \theta_1\cdots\ud \theta_n\right)
	\end{equation}
	The integral is known to converge absolutely so
	the value of $\mathbb{M}(q)$ is real and positive.
	By convention we set $\mathbb{M}(0)=0$.
	It is also known that $\mathbb{M}(q)\geq1$
	if $q$ is nonzero and over $\Integral$.
	For a complex polynomial $q(t)=Dt^n\prod_{i=1}^l(t-b_i)$
	in a single variable $t$, where $b_i\neq0$,
	the Mahler measure can be computed explicitly by the Jensen formula:
	\begin{equation}\label{Jensen}
	\mathbb{M}(q)=|D|\cdot\prod_{i=0}^l\max(1,|b_i|).
	\end{equation}
	See \cite[Chapters 1 and 3]{Everest--Ward}.	
	
	Given a finitely generated free abelian group, 
	it makes sense to speak of the Mahler measure 
	for multivariable Laurent polynomials 
	over the complex group algebra.
	This is obvious by treating 
	a basis of the free abelian group
	as extra independent variables.
	The resulting value of the Mahler measure
	is invariant under change of bases
	and multiplication by monomials with unit coefficients.
	In particular, it is valid to speak of the Mahler measure
	for multivariable Alexander polynomials,
	such as our $\Delta^\fabcover_{M_f}$.
	We give a characterization 
	for a polynomial in $t$ over 
	$\Integral[\Integral^n]$ with the constant term $1$ to have Mahler measure $1$:
	
	\begin{lemma}\label{algebraic-nontrivial-Mahler}
		Denote by $\Lambda$ the complex group algebra	$\Complex[\Integral^n]$.
		For any polynomial $q(t)$ over $\Lambda$ with the constant term $1$,
		denote by $(L_m(q)\in\Lambda)_{m\in\Natural}$
		the unique sequence	determined by the following equation
		in the formal series completion $\Lambda[[t]]$:
		\begin{equation}\label{L-m-q}
		\log q(t)=\sum_{m\in\Natural}\frac{L_m(q)}{m}\cdot t^m,
		\end{equation}
		where $\log(1-z)=\sum_{k=1}^\infty \frac{z^k}{-k}$.		
		
		Suppose that $q(t)$ is a polynomial over the subalgebra $\Integral[\Integral^n]$ 
		of $\Lambda$ with the constant term $1$.
		If the Mahler measure satisfies $\mathbb{M}(q)=1$,
		then for all $m\in \Natural$ the following inequality holds:
		$$\|L_m(q)\|_1\leq \mathrm{deg}_{t}(q).$$
		Here $\mathrm{deg}_{t}$ stands for the degree of the polynomial 
		with respect to $t$,
		and $\|\cdot\|_1$ stands for the standard $\ell^1$--norm on $\Lambda$,
		namely, $\|\sum_{x}a_xx\|_1=\sum_x|a_x|$ where the summations are taken 
		for all $x\in\Integral^n$.
	\end{lemma}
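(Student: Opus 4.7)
The plan is to exploit the hypothesis $\mathbb{M}(q)=1$ via a structural decomposition of $q$ and then compute $L_m$ factor by factor. Writing $q(t)=\prod_{i=1}^{d}(1-c_i(z)\,t)$ over the algebraic closure of $\mathrm{Frac}(\Complex[\Integral^n])$ with $d=\mathrm{deg}_t(q)$, Newton's identities give $L_m(q)(z)=-\sum_i c_i(z)^m$ as an identity in $\Integral[\Integral^n]$. A standard application of Jensen's formula \eqref{Jensen} in the $t$-variable combined with the constant-term-$1$ normalization shows that $\log\mathbb{M}_t(q(z,\cdot))=\sum_i\log^{+}|c_i(z)|$, so the hypothesis $\mathbb{M}(q)=1$ forces $|c_i(z)|\leq 1$ for almost every $z\in\mathbb{T}^n$.

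The decisive input is the multivariate analog of Kronecker's theorem, due to Smyth: a nonzero element of $\Integral[\Integral^{n+1}]$ of Mahler measure $1$ equals, up to a unit monomial, a product of generalized cyclotomics $\Phi_{k_j}(z^{\alpha_j}t^{\beta_j})$. The polynomial-in-$t$ and constant-term-$1$ hypotheses, read off by specializing $t=0$, force the unit monomial to be trivial and every $\beta_j\geq 1$, and after absorbing each $\Phi_{k_j}(0)\in\{\pm 1\}$ one obtains
\begin{equation*}
q(z,t)=\prod_j\bigl(\Phi_{k_j}(z^{\alpha_j}t^{\beta_j})\big/\Phi_{k_j}(0)\bigr).
\end{equation*}
Using the factorization $\Phi_k(x)/\Phi_k(0)=\prod_{\zeta}(1-x/\zeta)$ over primitive $k$-th roots of unity $\zeta$, a direct expansion of the logarithm identifies the contribution of the $j$-th factor to $L_m(q)$ as the single monomial $-\beta_j\,c_{k_j}(m/\beta_j)\,z^{(m/\beta_j)\alpha_j}$ when $\beta_j\mid m$ (and $0$ otherwise), where $c_k(r)=\sum_{\zeta}\zeta^{-r}$ denotes the Ramanujan sum. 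Since $|c_k(r)|\leq\varphi(k)$, this contribution has $\ell^1$-norm at most $\beta_j\varphi(k_j)=\mathrm{deg}_t\bigl(\Phi_{k_j}(z^{\alpha_j}t^{\beta_j})\bigr)$.

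Additivity of $\log$ over the factorization and the triangle inequality then yield $\|L_m(q)\|_1\leq\sum_j\beta_j\varphi(k_j)=\mathrm{deg}_t(q)$, as claimed. The main obstacle is the Smyth structure theorem: the weaker pointwise bound $|L_m(q)(z)|\leq d$ on $\mathbb{T}^n$, which follows straightforwardly from $|c_i|\leq 1$, does not control the $\ell^1$-norm of the coefficients of an integer Laurent polynomial (for instance, $\|P\|_1$ can exceed $\|P\|_{L^\infty(\mathbb{T}^n)}$), so a classification input forcing each $c_i(z)$ to be essentially a monomial times a root of unity appears indispensable for the stated conclusion.
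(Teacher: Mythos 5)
Your proposal is correct and follows essentially the same route as the paper: both proofs hinge on the same Kronecker-type structure theorem for integer Laurent polynomials of Mahler measure $1$ (cited in the paper as \cite[Theorem 3.10]{Everest--Ward}), followed by a factor-by-factor computation of $L_m$ and the triangle inequality. The only cosmetic difference is that the paper further splits each cyclotomic factor over $\Lambda[t]$ into linear pieces $1-\mu x t^\ell$ (so $L_m$ of each factor is a single monomial with $\ell^1$-norm at most $\ell$), whereas you keep the cyclotomic factors intact and bound via the Ramanujan-sum estimate $|c_k(r)|\leq\varphi(k)$; both give $\|L_m(q_j)\|_1\leq\mathrm{deg}_t(q_j)$ per factor and sum up identically. (Also note the paper's convention $\log(1-z)=\sum_{k\geq1}z^k/k$ drops the customary minus sign, which explains the harmless sign discrepancy in your expression for $L_m$.)
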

	
	\begin{proof}
		We identify $\Lambda$ with $\Complex[x_1^{\pm1},\cdots,x_n^{\pm1}]$.
		By Kronecker's theorem \cite[Theorem 3.10]{Everest--Ward}, any multivariable integral Laurent polynomial 
		$p\in\Integral[x_1^{\pm1},\cdots,x_n^{\pm1},t^{\pm1}]$
		with Mahler measure $\mathbb{M}(p)=1$ is a product of monomials
		and cyclotomic polynomials evaluated on monomials.
		If $q(t)$ is a polynomial in $t$ over $\Integral[x_1^{\pm1},\cdots,x_n^{\pm1}]$
		with the constant term $1$, and with $\mathbb{M}(q)=1$,
		we conclude that $q(t)$ must factorize in $\Lambda[t]$
		as a product
		$$q(t)=q_1(t)\times\cdots\times q_k(t)$$
		where each $q_j\in \Lambda[t]$ takes the form $1-\mu xt^l$ 
		for some monomial $x=x_1^{l_1}\cdots x_n^{l_n}$, some power $l\in\Natural$, 
		and some root of unity $\mu\in\Complex$.
		
		By the definition of $L_m$, we have the formula
		$$L_m(q)=L_m(q_1)+\cdots+L_m(q_k)$$
		for all $m\in\Natural$.
		Moreover, for each factor $q_j(t)=1-\mu xt^l$ and for all $m\in \Natural$,
		we evaluate explicitly that $L_m(1-\mu xt^l)$ equals $l\mu^{m/l} x^{m/l}$ if $l$ divides $m$,
		or $0$ otherwise.
		This yields the estimate
		$$\|L_m(q_j)\|_1\leq l= \mathrm{deg}_t(q_j),$$
		for each factor $q_j$ and for all $m\in\Natural$.
		Therefore, for all $m\in\Natural$,
		we have
		$$\|L_m(q)\|_1\leq\sum_{j=1}^k\|L_m(q_j)\|_1\leq\sum_{j=1}^k\mathrm{deg}_t(q_j)=\mathrm{deg}_t(q),$$
		as asserted.
		(Compare the proof of Proposition \ref{criterion-cones}.)
	\end{proof}

	\begin{lemma}\label{algebraic-nontrivial-polynomial}
		Adopting the same notations as of Lemma \ref{algebraic-nontrivial-Mahler},
		suppose that $q(t)$ is a polynomial over $\Lambda$ with the constant term $1$.
		If $q(t)$ is not the constant polynomial $1$,
		then there exists some positive integer $m_q\in\Natural$
		such that the inequality 
		$$L_m(q)\neq0$$
		holds in $\Lambda$ for all positive integral multiples $m$ of $m_q$.
	\end{lemma}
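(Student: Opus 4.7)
My plan is to factor $q(t)$ over the algebraic closure of $\mathrm{Frac}(\Lambda)$ and reduce to the non-degenerate case of the Skolem--Mahler--Lech theorem, which asserts that a linear recurrence sequence over a characteristic-zero field whose characteristic-root ratios are not roots of unity has only finitely many zeros.

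First I would identify $\Lambda$ with the Laurent polynomial algebra $\Complex[x_1^{\pm1}, \ldots, x_n^{\pm1}]$ and set $K := \overline{\mathrm{Frac}(\Lambda)}$, a characteristic-zero field. Any root of unity $\zeta \in K$ satisfies $\zeta^N = 1$ for some $N \geq 1$, hence is a root of $T^N - 1 \in \Complex[T]$ and therefore already lies in $\Complex$. Since $q(t) \in \Lambda[t]$ has constant term $1$ and $\mathrm{deg}_t(q) = d \geq 1$, I would factor $q(t) = \prod_{i=1}^{r} (1 - \alpha_i t)^{n_i}$ in $K[t]$, with distinct nonzero $\alpha_i \in K$ and positive integer multiplicities $n_i$ satisfying $\sum n_i = d$. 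Using the convention $\log(1-z) = \sum_{k\geq 1} z^k/k$, a direct formal expansion yields
\begin{equation*}
L_m(q) \;=\; \sum_{i=1}^{r} n_i\, \alpha_i^m \;\in\; \Lambda \;\subseteq\; K,
\end{equation*}
so it suffices to exhibit $m_q \in \Natural$ for which this expression is nonzero in $K$ whenever $m$ is a positive integral multiple of $m_q$.

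The next step is to remove multiplicative degeneracies among the $\alpha_i$. Declare $\alpha_i \sim \alpha_j$ iff $\alpha_i/\alpha_j$ is a root of unity (equivalently, an element of $\Complex$ of well-defined finite order by the above). Since there are only finitely many pairs $(i,j)$, I let $m_0$ be the least common multiple of the orders involved, or $m_0 = 1$ if no two $\alpha_i$ are equivalent. Raising to the $m_0$-th power identifies elements of a common class $C_\ell$, so setting $\beta_\ell := \alpha_i^{m_0}$ for any $i \in C_\ell$ and $N_\ell := \sum_{i\in C_\ell} n_i \geq 1$ gives
\begin{equation*}
L_{m_0 k}(q) \;=\; \sum_{\ell} N_\ell\, \beta_\ell^k
\end{equation*}
for every $k \geq 0$. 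The $\beta_\ell$ are distinct and nonzero, and for $\ell \neq \ell'$ the ratio $\beta_\ell/\beta_{\ell'} = (\alpha_{i_\ell}/\alpha_{i_{\ell'}})^{m_0}$ is not a root of unity, for otherwise some further power of $\alpha_{i_\ell}/\alpha_{i_{\ell'}}$ would be a root of unity, forcing $\ell = \ell'$.

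Hence the right-hand side is a non-degenerate linear recurrence sequence in $k$ over $K$, and its zero set $Z' := \{k \geq 1 : L_{m_0 k}(q) = 0\}$ is finite by the non-degenerate Skolem--Mahler--Lech theorem (see Everest--Ward \cite{Everest--Ward}). Taking $m_q := m_0 \cdot (\max(Z' \cup \{0\}) + 1)$ then works: every positive multiple of $m_q$ corresponds to an index $m_0 k$ with $k > \max Z'$, so $k \notin Z'$ and therefore $L_{k' m_q}(q) \neq 0$ in $\Lambda$ for every $k' \geq 1$. The main substantive obstacle I anticipate is the passage from the raw Skolem--Mahler--Lech conclusion (zero set as a finite union of arithmetic progressions together with a finite exceptional set) to outright finiteness; the $m_0$-th power normalization above is precisely what arranges the non-degeneracy required for this strengthening.
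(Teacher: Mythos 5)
Your proof is correct, but it takes a genuinely different route from the paper's. The paper works entirely in positive characteristic: using the Newton--Girard recurrence $L_{m-1}(q)+c_1L_m(q)+\cdots+c_dL_{d+m-1}(q)=0$, it observes that all $L_m(q)$ lie in the finitely generated $\Integral$-subalgebra $A=\Integral[c_1,\ldots,c_d]\subset\Lambda$, chooses a maximal ideal $\mathfrak{m}\subset A$ with finite residue field and $dc_d\not\in\mathfrak{m}$ (so the recurrence is reversible mod $\mathfrak{m}$), and then uses pigeonhole to conclude that the residual sequence is \emph{genuinely} periodic starting from $L_0(q)\equiv d\not\equiv 0\bmod\mathfrak{m}$; any period serves as $m_q$. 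You instead factor $q(t)=\prod_i(1-\alpha_it)^{n_i}$ over $K=\overline{\mathrm{Frac}(\Lambda)}$, normalize by an $m_0$-th power to kill all root-of-unity ratios among the $\alpha_i$, and invoke the nondegenerate case of Skolem--Mahler--Lech. Both arguments are sound, and your observation that roots of unity in $K$ already live in $\Complex$, together with the $m_0$-th power trick to force nondegeneracy, is exactly what is needed to make the SML route work. The trade-off: the paper's argument is elementary and self-contained (the only nontrivial input is the Jacobson/Nullstellensatz fact that a finitely generated $\Integral$-domain has maximal ideals of positive characteristic avoiding any prescribed nonzero element, which the paper spells out directly in the case at hand and gives in principle an effective $m_q$), whereas yours is conceptually slicker but rests on a substantially deeper theorem. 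A small bibliographic correction: Skolem--Mahler--Lech and its nondegenerate corollary are not in Everest--Ward, \emph{Heights of Polynomials and Entropy in Algebraic Dynamics}; a suitable reference is Everest, van der Poorten, Shparlinski, Ward, \emph{Recurrence Sequences}, AMS Surveys and Monographs 104 (2003), Theorem 2.1 (Lech's characteristic-zero form is what is needed here, together with the standard restriction-to-arithmetic-progressions argument you allude to for deducing finiteness of the zero set from nondegeneracy).
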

	
	\begin{proof}
		Suppose 
		$q(t)=1+c_1t+\cdots+c_{d-1}t^{d-1}+c_dt^d$,
		where $c_j\in\Lambda$, and $c_d\neq0$.
		We assume $q(t)\neq1$ so the degree $d$ is nonzero.
		We obtain a recurrence relation in $\Lambda$:
		\begin{equation}\label{recurrence-L}
		L_{m+d}(q)+c_1L_{m+d-1}(q)+\cdots+c_{d-1}L_{m+1}(q)+c_dL_m(q)=0
		\end{equation}
		for all $m\in\Natural$. 
		It also holds for $m=0$ if we define $L_0(q)=d$ by convention.
		This can be seen by a standard formal splitting argument,
		that is, working with $q(t)=(1-\lambda_1t)\times\cdots\times(1-\lambda_dt)$	in a splitting field over $\Lambda$.
		For all $m\in\Natural$, we observe $L_m(q)=-(\lambda_1^m+\cdots+\lambda_d^m)$.
		The left-hand side of (\ref{recurrence-L}) equals
		the sum of $-\lambda_i^{m+d}q(1/\lambda_i)$ for $i=1,\cdots,d$,
		which is $0$ because $q(1/\lambda_i)$ are all $0$.
		%
		%
		
		We also observe that $L_m(q)$ can be expressed as 
		a polynomial function of $c_1,\cdots,c_d$ with coefficients in $\Integral$.
		To be precise, 
		denote by $\sigma_i$ the $i$--th elementary symmetric polynomial in $d$ indeterminants
		and $p_i$ the $i$--th power sum polynomial in (the same) $d$ indeterminants.
		As a classical fact in polynomial algebra,
		there are universal polynomials $s_m$ over $\Integral$ in $m$ indeterminants,
		such that the Newton--Girard identity	$s_m(\sigma_1,\cdots,\sigma_m)=p_m$	holds
		regardless of $d$ or the indeterminants in $s_m$ and $p_m$.
		(See \cite[\S 16]{Milnor--Stasheff}, and 
		Problem 16-A thereof for an explicit formula for $s_m$.)
		Then we work out explicitly
		$L_m(q)=-p_m(\lambda_1,\cdots,\lambda_d)=
		-s_m(\sigma_1(\lambda_1,\cdots,\lambda_d),\cdots,
		\sigma_m(\lambda_1,\cdots,\lambda_d))
		=-s_m(-c_1,c_2,\cdots,(-1)^mc_m)$, 
		setting $c_j=0$ for all $j>d$.
		
		Denote by $A$ the $\Integral$--subalgebra of $\Lambda$	
		generated by the coefficients $c_1,\cdots,c_d\in \Lambda$. 
		Then for all $m\in\Natural$,
		$L_m(q)$ are indeed elements of $A$ and 
		(\ref{recurrence-L}) holds in $A$.
		
		As $A$ is an integral domain algebraically finitely generated over $\Integral$,
		there exists a maximal ideal $\mathfrak{m}$ of $A$ so that $A/\mathfrak{m}$ 
		is a field of some positive characteristic $p$, and moreover,
		we may require 
		$$dc_d\not\equiv 0\bmod\mathfrak{m}.$$
		In fact, for the special case when $A$ 
		is contained in $\Integral[\Integral^n]\cong\Integral[t_1^{\pm1},\cdots,t_n^{\pm1}]$,
		one may construct the maximal ideal $\mathfrak{m}$ of $A$ as follows:
		First take a ring homomorphism
		$\Integral[t_1^{\pm1},\cdots,t_n^{\pm1}]\to \Integral[\tau_1^{-1},\cdots,\tau_n^{-1}]$,
		such that each $t_i$ is sent to some nonzero $\tau_i\in\Integral$.
		Choose $\tau_i$ suitably so that $c_d(t_1,\cdots,t_n)$ has nonzero image $c_d(\tau_1,\cdots,\tau_n)$.
		Next take a suitable congruence quotient 
		$\Integral[\tau_1^{-1},\cdots,\tau_n^{-1}]\to \Integral/p\Integral$,
		requiring the prime $p\in\Natural$ not to divide any $\tau_i$, 
		or $d$, or the nominator of $c_d(\tau_1,\cdots,\tau_d)$.
		Then by composition we obtain a quotient $A\to\Integral/p\Integral$
		and the maximal ideal $\mathfrak{m}$ is given by the kernel.
		The general case is not necessary for proving our main theorem
		but it is a well-known fact in commutative algebra,
		(see \cite[Chapter 7, Theorem 7.6.6]{Ratcliffe-book}).
		Note that $A/\mathfrak{m}$ is finite 
		because it is a finitely generated field over $\Integral/p\Integral$.
		
		By the recurrence relation (\ref{recurrence-L}),
		the residual sequence $(L_m(q)\bmod\mathfrak{m})_{m\in\Natural}$ 
		must be eventually periodic,
		since every $d$ consecutive terms determine all their succeeding terms
		and since there are at most $|A/\mathfrak{m}|^d$ possible patterns for them.
		Every $d$ consecutive terms also determine all their preceding terms,
		by (\ref{recurrence-L}) and	by the required invertibility of $c_d\bmod\mathfrak{m}$ in $A/\mathfrak{m}$,
		so the residual sequence must be periodic starting from the term $L_0(q)\bmod\mathfrak{m}$.
		Take $m_q\in\Natural$ to be any positive period of the residual sequence.
		Then for all $m\in\Natural$ divisible by $m_q$, we have
		$$L_m(q)\equiv L_0(q)=d\bmod\mathfrak{m}.$$
		By the required invertibility of $d\bmod\mathfrak{m}$ in $A/\mathfrak{m}$,
		we conclude that $L_m(q)\neq0$ holds in $A$, and hence in $\Lambda$,
		for all positive integral multiples $m$ of $m_q$.
	\end{proof}
	
	\subsection{Dominant semi-extreme subsets}\label{Subsec-contribution}
	Let $M_f$ be the mapping torus of a pseudo-Anosov automorphism of
	a connected closed orientable surface $S$ of genus at least $2$.
	Let $\tilde{M}$ be a connected regular finite cover of $M_f$
	with the deck transformation group denoted by $\Gamma$.
	Declare $\tilde{M}$ as a covering mapping torus $M_{\tilde{f}}$ over $M_f$,
	(Convention \ref{covering-mapping-torus}).
	Denote by $\tilde{S}$ the distinguished fiber of $M_{\tilde{f}}$.
	Denote by $\phi_{\tilde{f}}\in H^1(M_{\tilde{f}};\Integral)$
	the distinguished cohomology class of $M_{\tilde{f}}$,
	which has divisibility $b_0(\tilde{S})$.
		
	Choose a homology class $t\in H_1(M_{\tilde{f}};\Integral)_{\mathtt{free}}$ 
	with $\phi_{\tilde{f}}(t)=b_0(\tilde{S})$,
	and choose a basis 
	for the $\Integral$--submodule of $H_1(M_{\tilde{f}};\Integral)_{\mathtt{free}}$ annihilated by $\phi_{\tilde{f}}$.
	We identify henceforth 
	the complex group algebra $\Complex H_1(M_{\tilde{f}};\Integral)_{\mathtt{free}}$ 
	with a Laurent polynomial ring $\Lambda[t,t^{-1}]$ 
	over another multivariable Laurent polynomial ring
	$\Lambda=\Complex[\Integral^n]$, where $n=b_1(M_{\tilde{f}})-1$.		
		
	\begin{lemma}\label{MAP-formula-covering}
		Suppose $b_1(M_{\tilde{f}})>1$.
		Then the multivariable Lefschetz zeta function 
		$\zeta^\fabcover_{\tilde{f}}$ is a polynomial in $t$ 
		over the subalgebra $\Integral[\Integral^n]$ of $\Lambda$
		with the constant term $1$ and of degree $-\chi(\tilde{S})/b_0(\tilde{S})$.
		In fact, it is a representative of multivariable Alexander polynomial
		$\Delta^\fabcover_{M_{\tilde{f}}}$.
	\end{lemma}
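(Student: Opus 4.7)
The plan is to reduce the assertion to the connected-surface case already established in Theorem~\ref{MAP-formula}. Set $c = b_0(\tilde{S})$, fix one component $\tilde{S}_0$ of $\tilde{S}$, and consider $g = \tilde{f}^c|_{\tilde{S}_0}$, which is a pseudo-Anosov automorphism of the connected closed orientable surface $\tilde{S}_0$ (of genus at least $2$, since $\tilde{S}_0$ finitely covers $S$). First I would unfold the definition of $M_{\tilde{f}}$ to exhibit a canonical identification $M_{\tilde{f}} \cong M_g$ of oriented $3$-manifolds, obtained by rewriting $\tilde{S} \times [0,1]$ as $\tilde{S}_0 \times [0,c]$ via the components $\tilde{S}_i = \tilde{f}^i(\tilde{S}_0)$. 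Under this identification the distinguished cohomology class $\phi_{\tilde{f}}$ corresponds to $c \cdot \phi_g$, so the element $t$ chosen with $\phi_{\tilde{f}}(t) = c$ satisfies $\phi_g(t) = 1$.

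The core of the argument is to verify the formal-series identity $\zeta^\fabcover_{\tilde{f}} = \zeta^\fabcover_g$ in the common complex group algebra. Because $\tilde{f}$ permutes the components cyclically, $\mathrm{Per}_m(\tilde{f})$ is empty whenever $c \nmid m$, and for $m = cm'$ the conjugation relation $\tilde{f}^c|_{\tilde{S}_i} = \tilde{f}^i \circ g \circ \tilde{f}^{-i}$ gives a disjoint decomposition
\[
\mathrm{Per}_{cm'}(\tilde{f}) \;=\; \bigsqcup_{i=0}^{c-1} \tilde{f}^i\!\left(\mathrm{Per}_{m'}(g)\right).
\]
The key observation is that for each $q \in \mathrm{Per}_{m'}(g)$ and each $i$, the local fixed point index $\mathrm{ind}_{cm'}(\tilde{f}; \tilde{f}^i(q))$ equals $\mathrm{ind}_{m'}(g; q)$ by conjugation invariance, and the suspension trajectories $\gamma_{cm'}(\tilde{f}; \tilde{f}^i(q))$ and $\gamma_{m'}(g; q)$ trace the same closed loop in $M_{\tilde{f}} \cong M_g$ and hence represent the same class in $H_1(M_{\tilde{f}};\Integral)_{\mathtt{free}}$. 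Thus the $m = cm'$ slice of the sum in (\ref{mLzeta}) for $\tilde{f}$ consists of $c$ copies of the $m'$-slice for $g$, and the prefactor $1/m = 1/(cm')$ precisely absorbs this multiplicity; exponentiating yields the asserted equality.

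With this identity in hand, I would apply Theorem~\ref{MAP-formula} to $g$. Since $b_1(M_g) = b_1(M_{\tilde{f}}) > 1$ by hypothesis, the theorem gives $\Delta^\fabcover_{M_g} \doteq \zeta^\fabcover_g$ as a finite integral sum, and via $M_{\tilde{f}} \cong M_g$ this becomes $\Delta^\fabcover_{M_{\tilde{f}}} \doteq \zeta^\fabcover_{\tilde{f}}$, which is the last assertion of the lemma. The constant term of $\zeta^\fabcover_{\tilde{f}}$ equals $1$ because every term in the exponent of (\ref{mLzeta}) has strictly positive $\phi_{\tilde{f}}$-grading. The degree in $t$ equals the $\phi_g$-graded degree of $\zeta^\fabcover_g$, which by formula (\ref{deg-mAp}) equals $-\chi(\tilde{S}_0) = -\chi(\tilde{S})/b_0(\tilde{S})$, using that the $c$ components of $\tilde{S}$ are pairwise homeomorphic and that $\phi_g(t) = 1$. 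The main subtlety lies in the bookkeeping of the second paragraph—carefully tracking how indices, trajectories, and orbit structure transform when the $c$ components of $\tilde{S}$ are collapsed to $\tilde{S}_0$—but once the factor-of-$c$ cancellation is confirmed the rest follows routinely from Theorem~\ref{MAP-formula}.
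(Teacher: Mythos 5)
Your proposal is correct and follows essentially the same route as the paper's proof: fix a component $\tilde{S}_0$, consider the restriction of $\tilde{f}^{b_0(\tilde{S})}$ to it, identify the two mapping tori, observe the Lefschetz zeta functions coincide, and apply Theorem~\ref{MAP-formula} and the degree formula~(\ref{deg-mAp}) to the connected case. The only difference is that you spell out the orbit-decomposition and conjugation-invariance bookkeeping that the paper summarizes in a single sentence.
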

	
	\begin{proof}		
		Choose a connected component $\tilde{S}_0$ of $\tilde{S}$.
		We observe $\chi(\tilde{S})=b_0(\tilde{S})\cdot \chi(\tilde{S}_0)$.
		For $b=b_0(\tilde{S})$,
		denote by $\tilde{f}_0\colon \tilde{S}_0\to \tilde{S}_0$
		the restriction of the iterated automorphism $\tilde{f}^b$ to $\tilde{S}_0$.
		We observe that the mapping torus of $\tilde{f}_0$
		gives rise to the same mapping torus as $\tilde{M}$,
		only with the suspension flow running $b$ times as fast as before.
		This also means $\phi_{\tilde{f}}=b\cdot \phi_{\tilde{f}_0}$ 
		in $H^1(\tilde{M};\Integral)$.
		Identify these mapping tori topologically.
		It follows that $\zeta^\fabcover_{\tilde{f}_0}$	is the same as $\zeta^\fabcover_{\tilde{f}}$.	
		In fact, $\mathrm{Per}_{\tilde{m}}(\tilde{f})$ is nonemepty only if $\tilde{m}$ equals $mb$ for some $m\in\Natural$,
		and in that case, $\sum_{p\in \mathrm{Per}_{\tilde{m}}(\tilde{f})}\mathrm{ind}_{\tilde{m}}(\tilde{f};p)\cdot[\gamma_{\tilde{m}}(\tilde{f};p)]$
		equals $b$ times $\sum_{p\in \mathrm{Per}_{m}(\tilde{f}_0)}\mathrm{ind}_m(\tilde{f}_0;p)\cdot[\gamma_{m}(\tilde{f}_0;p)]$.
		Then the asserted properties follow from Theorem \ref{MAP-formula} and (\ref{deg-mAp}), applied to $\tilde{f}_0$.
		To be precise,
		we see that the $\phi_{\tilde{f}_0}$--degree of $\zeta^\fabcover_{\tilde{f}_0}$ equals 
		$-\chi(\tilde{S}_0)$.
		On the other hand, $\zeta^\fabcover_{\tilde{f}_0}$ is a polynomial in $t$ over $\Integral[\Integral^n]$
		where the $\phi_{\tilde{f}_0}$--degree of $t$ equals $1$.
		So $\zeta^\fabcover_{\tilde{f}_0}$, or equally $\zeta^\fabcover_{\tilde{f}}$,	
		is a polynomial in $t$ over $\Integral[\Integral^n]$ of degree $-\chi(\tilde{S}_0)$,
		which equals $-\chi(\tilde{S})/b_0(\tilde{S})$.		
	\end{proof}
		
	By comparing the expression (\ref{mLzeta}) for $\tilde{f}$ and
	the formal series expansion (\ref{L-m-q}) for 
	$\log\zeta^\fabcover_{\tilde{f}}$,
	we obtain 
	\begin{equation*}
	\sum_{m\in\Natural} \frac{L_m\left(\zeta^\fabcover_{\tilde{f}}\right)}{m}\cdot t^m=
	\sum_{\tilde{m}\in\Natural}
	\left(\sum_{p\in\mathrm{Per}_{\tilde{m}}\left({\tilde{f}}\right)}
	\frac{\mathrm{ind}_{\tilde{m}}({\tilde{f}};p)}{\tilde{m}}
	\cdot[\gamma_{\tilde{m}}({\tilde{f}};p)]\right).
	\end{equation*}
	The homogeneous part of $\phi_{\tilde{f}}$--degree $m\cdot b_0(\tilde{S})$
	yields the following explicit relations for all $m\in\Natural$:
	\begin{equation}\label{L-m-ind-m}
	L_m\left(\zeta^\fabcover_{\tilde{f}}\right)\,t^m=
	\frac{1}{b_0(\tilde{S})}\cdot
	\left(\sum_{p\in\mathrm{Per}_{\tilde{m}}\left({\tilde{f}}\right)}
	\mathrm{ind}_{\tilde{m}}({\tilde{f}};p)
	\cdot[\gamma_{\tilde{m}}({\tilde{f}};p)]\right),
	\end{equation}
	where $\tilde{m}$ equals $m\cdot b_0(\tilde{S})$.
	For any semi-extreme subset $E$ of the homology direction hull 
	$\mathcal{D}(M_{\tilde{f}},\phi_{\tilde{f}})$,
	we introduce the $E$--part $L_m(\zeta^\fabcover_{\tilde{f}};E)$
	for $L_m(\zeta^\fabcover_{\tilde{f}})$ as determined by the relation
	\begin{equation}\label{L-m-E}
	L_m\left(\zeta^\fabcover_{\tilde{f}};E\right)\,t^m=
	\frac{1}{b_0(\tilde{S})}\cdot
	\left(\sum_{p\in\mathrm{Per}_{\tilde{m}}\left({\tilde{f}};E\right)}
	\mathrm{ind}_{\tilde{m}}({\tilde{f}};p)
	\cdot[\gamma_{\tilde{m}}({\tilde{f}};p)]\right).
	\end{equation}
	Here $\tilde{m}$ stands for $m\cdot b_0(\tilde{S})$, and
	$\mathrm{Per}_{\tilde{m}}({\tilde{f}};E)$ stands for the set of the $\tilde{m}$--periodic points
	of $\tilde{f}$ 
	with the property that $[\gamma_{\tilde{m}}({\tilde{f}};p)]$ lie in the linear cone over $E$.

	\begin{lemma}\label{dominant-enfeoffed}
	Suppose $b_1(M_{\tilde{f}})>1$.
	Then for any dominant semi-extreme subset $E$ of $\mathcal{D}(M_{\tilde{f}},\phi_{\tilde{f}})$,
	there exists some positive integer $m_E\in\Natural$ such that the inequality
	$$L_m\left(\zeta^\fabcover_{\tilde{f}};E\right)\neq0$$
	holds for all positive integral multiples $m$ of $m_E$.
	\end{lemma}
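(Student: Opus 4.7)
The plan is to reduce to the case of a single closed face of $\mathcal{D}(M_{\tilde{f}}, \phi_{\tilde{f}})$ and then invoke Lemma \ref{algebraic-nontrivial-polynomial}. Granting the polytope description of the homology direction hull (Theorem \ref{D-description}), any semi-extreme subset $E$ is a union of closed faces: if an argmax point of the defining convex function lies in the relative interior of a face, then convexity forces the function to be constant on that face, so the whole face is contained in $E$. The dominance hypothesis $\zeta^\fabcover_{\tilde{f}}[E] \neq 1$ then guarantees a monomial $u$ with $\mathrm{coef}(\zeta^\fabcover_{\tilde{f}}; u) \neq 0$ whose direction lies in the relative interior of some closed face $F \subseteq E$; in particular $\zeta^\fabcover_{\tilde{f}}[F] \neq 1$.

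The central algebraic observation is that for any closed face $F$ of $\mathcal{D}(M_{\tilde{f}}, \phi_{\tilde{f}})$, the extraction operation $P \mapsto P[F]$ is a ring endomorphism on $\Complex H_1(M_{\tilde{f}}; \Integral)_{\mathtt{free}}$. Indeed, if $\xi$ is a linear functional realizing $F$ as the argmax of $\xi$ restricted to $\mathcal{D}$, then for any $u, v$ with $\phi_{\tilde{f}}(u), \phi_{\tilde{f}}(v) > 0$, the class $u+v$ has direction in $F$ if and only if both summands do, because the linear inequality defining $F$ is sharp precisely on the cone over $F$. Since the formal logarithm converges on $1$ plus the positive-$\phi$ part of the algebra and commutes with every such ring endomorphism, we obtain
\begin{equation*}
\bigl(\log \zeta^\fabcover_{\tilde{f}}\bigr)[F] \;=\; \log\bigl(\zeta^\fabcover_{\tilde{f}}[F]\bigr).
\end{equation*}
Comparing coefficients of $t^m$ with the definitions (\ref{L-m-ind-m}) and (\ref{L-m-E}) yields the identity $L_m(\zeta^\fabcover_{\tilde{f}}; F) = L_m(\zeta^\fabcover_{\tilde{f}}[F])$ for every $m \in \Natural$.

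To conclude, Lemma \ref{MAP-formula-covering} ensures that $\zeta^\fabcover_{\tilde{f}}[F]$, as a subpolynomial of $\zeta^\fabcover_{\tilde{f}}$, is a polynomial in $t$ over $\Integral[\Integral^n]$ with constant term $1$, and it differs from the constant polynomial $1$ by the choice of $F$. Lemma \ref{algebraic-nontrivial-polynomial} then produces some $m_F \in \Natural$ with $L_m(\zeta^\fabcover_{\tilde{f}}[F]) \neq 0$ for every positive multiple $m$ of $m_F$, and hence $L_m(\zeta^\fabcover_{\tilde{f}}; F) \neq 0$ for such $m$. Finally, from (\ref{L-m-E}) we read off
\begin{equation*}
L_m\bigl(\zeta^\fabcover_{\tilde{f}}; E\bigr) \;=\; L_m\bigl(\zeta^\fabcover_{\tilde{f}}; F\bigr) \;+\; L_m\bigl(\zeta^\fabcover_{\tilde{f}}; E \setminus F\bigr),
\end{equation*}
and since a given projective direction belongs to exactly one of the disjoint sets $F$ and $E \setminus F$, the two summands have mutually disjoint supports as elements of $\Lambda$; the sum is therefore nonzero whenever its first term is. Setting $m_E := m_F$ yields the desired conclusion.

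The principal technical subtlety is that the ring endomorphism property of $[F]$ relies essentially on $F$ being a face, i.e., on the linearity of the defining functional $\xi$; for a general semi-extreme subset cut out by a genuinely convex (nonlinear) function, the $E$--part operation fails to commute with products, so $\log(\zeta^\fabcover_{\tilde{f}}[E])$ would not compute $L_m(\zeta^\fabcover_{\tilde{f}}; E)$ directly. This is exactly why the reduction to a single face in the first step is indispensable, and why the disjoint-support argument in the last step cannot be replaced by a naive invocation of Lemma \ref{algebraic-nontrivial-polynomial} applied to $\zeta^\fabcover_{\tilde{f}}[E]$ itself.
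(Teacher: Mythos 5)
Your proof is correct and follows essentially the same route as the paper's. The paper also reduces from the semi-extreme subset $E$ to a convex, linearly-witnessed subset contained in $E$ (the paper constructs it as the argmax $K$ of a supporting linear functional touching $h$ at a witnessing direction $l_0$, which is therefore a closed face of the polytope; you instead observe that $E$ is a union of closed faces and pick the unique open face whose relative interior contains the witnessing direction — two mildly different ways of producing the same kind of object), then uses the multiplicativity of the extraction operator over a linearly-defined cone to commute it with the formal logarithm and deduce $L_m(\zeta^\fabcover_{\tilde{f}}; F) = L_m(\zeta^\fabcover_{\tilde{f}}[F])$, and finally applies Lemma \ref{algebraic-nontrivial-polynomial} and the disjoint-support observation to pass from $F$ to $E$; your closing remark about the obstruction for nonlinear witnessing functions is exactly what the paper records in Remark \ref{o_m_E} via the correction term $o_m(E)$.
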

	
	\begin{proof}
		We claim that 
		there exists a dominant semi-extreme subset $K$ of
		$\mathcal{D}(M_{\tilde{f}},\phi_{\tilde{f}})$
		which is contained in $E$
		and which is witnessed by some linear function $k$ 
		on $\mathbf{A}(M_{\tilde{f}},\phi_{\tilde{f}})$.
		The latter condition means that $K$ is the set of maximum points
		of $k$ restricted to $\mathcal{D}(M_{\tilde{f}},\phi_{\tilde{f}})$.
		If one assumed the polytope description of $\mathcal{D}(M_{\tilde{f}},\phi_{\tilde{f}})$
		and the characterization of semi-extreme subset in that case
		(Remark \ref{remark_homology_directions}),
		one would know that $E$ is a union of closed faces,
		and must contain at least one dominant closed face $K$,
		so the claim would be clear.
		To prove the claim by definition, 
		take a witnessing convex function for the semi-extreme subset $E$,
		namely, a convex function $h\colon \mathbf{A}(M_{\tilde{f}},\phi_{\tilde{f}})\to \Real$
		that is maximized over $\mathcal{D}(M_{\tilde{f}},\phi_{\tilde{f}})$
		exactly on $E$, (Definition \ref{homology_directions}).
		As $E$ is dominant, 
		there exists some witnessing direction $l_0\in E$,
		namely, a projective point $\Real u\in\mathbf{P}(H_1(M_{\tilde{f}};\Real))$
		given by some $u\in H_1(M_{\tilde{f}};\Integral)_{\mathtt{free}}$
		with the properties $\mathrm{coef}(\zeta^\fabcover_{\tilde{f}};u)\neq0$ and $\phi_{\tilde{f}}(u)>0$,
		(Definition \ref{homology_directions}).
		Take a linear function $k\colon \mathbf{A}(M_{\tilde{f}},\phi_{\tilde{f}})\to \Real$
		with the properties 
		$k(l_0)=h(l_0)$ and $k(l)\leq h(l)$ for all $l\in\mathcal{D}(M_{\tilde{f}},\phi_{\tilde{f}})$.
		It is clear that the maximum-point set for $k$ is a subset $K$ of $E$ which contains $l_0$.
		So $K$ is as claimed.
		
		Take any $K$ as claimed above.
		The claimed witnessing linear function $k$ induces a real-valued new grading for $\Lambda[t]$, namely,
		$\mathrm{deg}_k(xt^m)=k(\Real xt^m)$
		for all $x\in\Integral^n$ and $m\in\Natural$.
		The $K$--part $\zeta^\fabcover_{\tilde{f}}[K]$ 
		is the leading part	for $\zeta^\fabcover_{\tilde{f}}$
		with respect to the $k$--grading, 
		and it is a polynomial in $\Lambda[t]$ other than the constant polynomial $1$.
		It follows that 
		$L_m(\zeta^\fabcover_{\tilde{f}}[K])=L_m(\zeta^\fabcover_{\tilde{f}};K)$
		holds for all $m\in\Natural$, (see (\ref{L-m-q}) and (\ref{L-m-E})).
		By Lemma \ref{algebraic-nontrivial-polynomial},
		there exists some $m_E\in\Natural$, 
		and $L_m(\zeta^\fabcover_{\tilde{f}}[K])\neq0$ holds 
		for all positive integral multiples $m$ of $m_E$.
		For all such $m$, we obtain the inequality
		$L_m(\zeta^\fabcover_{\tilde{f}};K)\neq0$,
		and hence the asserted inequality
		$L_m(\zeta^\fabcover_{\tilde{f}};E)\neq0$.
	\end{proof}	
		
	\begin{remark}\label{o_m_E}
	Under the assumptions of Lemma \ref{dominant-enfeoffed},
	$\zeta^\fabcover_{\tilde{f}}[E]$ is also a polynomial in $\Lambda[t]$ with constant term $1$.
	We point out the following relation:
	$$L_m\left(\zeta^\fabcover_{\tilde{f}}[E]\right)=L_m\left(\zeta^\fabcover_{\tilde{f}};E\right)+o_m(E).$$
	Here $o_m(E)\in\Lambda$ stands for some Laurent polynomial of the form $\sum_x a_xx$,
	where $a_x\neq0$ holds only if $xt^m$ lies in the linear cone over $\mathcal{D}(M_{\tilde{f}},\phi_{\tilde{f}})\setminus E$.
	The correction term $o_m(E)$ 
	is uniquely determined by the sequence 
	$(L_m(\zeta^\fabcover_{\tilde{f}};E))_{m\in\Natural}$,
	and it disappears when $E$ is convex and semi-extreme.
	\end{remark}

\begin{lemma}\label{dominant-Gamma}
		Suppose $b_1(M_{\tilde{f}})>1$.
		Then for any $\Gamma$--invariant dominant semi-extreme subset $E$ of $\mathcal{D}(M_{\tilde{f}},\phi_{\tilde{f}})$,
		there exists some positive integer $m_E\in\Natural$ such that the estimate
		$$\left\|L_m\left(\zeta^\fabcover_{\tilde{f}};E\right)\right\|_1\,\geq\,\frac{1}{b_0(\tilde{S})}\cdot|\Gamma|$$
		holds for all positive integral multiples $m$ of $m_E$.
		Here $\|\cdot\|_1$ stands for the standard $\ell^1$--norm on $\Lambda$,
		and $|\Gamma|$ stands for the cardinality of the deck transformation group $\Gamma$.
	\end{lemma}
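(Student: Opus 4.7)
The plan is to deduce the $\ell^1$-norm bound from the nonvanishing established in Lemma \ref{dominant-enfeoffed}, using the freeness of the $\Gamma$-action on the covering mapping torus. First I would invoke Lemma \ref{dominant-enfeoffed} to obtain a positive integer $m_E\in\Natural$ such that $L_m(\zeta^\fabcover_{\tilde{f}};E)\neq0$ for every positive integer multiple $m$ of $m_E$; this is the only input from dominance that the rest of the argument will need, and everything that follows is an arithmetic amplification using the deck action.

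Next I would unwind the defining formula (\ref{L-m-E}) and regard
\[
b_0(\tilde{S})\cdot L_m\left(\zeta^\fabcover_{\tilde{f}};E\right)\cdot t^m\,=\,\sum_{p\in\mathrm{Per}_{\tilde{m}}(\tilde{f};E)}\mathrm{ind}_{\tilde{m}}(\tilde{f};p)\cdot[\gamma_{\tilde{m}}(\tilde{f};p)]
\]
as an element of $\Integral H_1(M_{\tilde{f}};\Integral)_{\mathtt{free}}$, whose $\ell^1$-norm agrees with that of $b_0(\tilde{S})\cdot L_m(\zeta^\fabcover_{\tilde{f}};E)$ because multiplication by the monomial $t^m$ is $\ell^1$-preserving. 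Since $\tilde{M}\to M_f$ is a regular cover, $\Gamma$ acts freely on $M_{\tilde{f}}$ and commutes with the lifted suspension flow, so it permutes $\mathrm{Per}_{\tilde{m}}(\tilde{f})$ freely, with each orbit of size exactly $|\Gamma|$. Also, $\Gamma$ preserves fixed point indices (being a group of flow-commuting homeomorphisms), acts on the classes $[\gamma_{\tilde{m}}(\tilde{f};p)]$ by the induced $H_1$-action, and preserves the subset $\mathrm{Per}_{\tilde{m}}(\tilde{f};E)$ because $E$ is $\Gamma$-invariant. Hence the right-hand side above is $\Gamma$-invariant in $\Integral H_1(M_{\tilde{f}};\Integral)_{\mathtt{free}}$.

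The third step is to decompose the sum by the $\Gamma$-orbits $\mathcal{P}$ of homology classes lying in the coset $\phi_{\tilde{f}}^{-1}(\tilde{m})$. For such an orbit $\mathcal{P}$ of size $k_\mathcal{P}$, the $\Gamma$-stabilizer of any representative has order $|\Gamma|/k_\mathcal{P}$, so any single $\Gamma$-orbit of periodic points whose image orbit in $H_1$ equals $\mathcal{P}$ covers $\mathcal{P}$ as a $(|\Gamma|/k_\mathcal{P})$-to-one map, with all of its members sharing a common fixed point index. Summing the contributions of all such periodic orbits mapping to $\mathcal{P}$, the coefficient of every $y\in\mathcal{P}$ in the above sum equals $(|\Gamma|/k_\mathcal{P})\cdot I_\mathcal{P}$, where $I_\mathcal{P}\in\Integral$ is the sum of the common indices taken over those periodic orbits. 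Since distinct $\Gamma$-orbits $\mathcal{P}$ have disjoint supports in the group algebra, this yields
\[
\left\|b_0(\tilde{S})\cdot L_m\left(\zeta^\fabcover_{\tilde{f}};E\right)\right\|_1\,=\,\sum_\mathcal{P}\frac{|\Gamma|}{k_\mathcal{P}}\cdot|I_\mathcal{P}|\cdot k_\mathcal{P}\,=\,|\Gamma|\cdot\sum_\mathcal{P}|I_\mathcal{P}|.
\]
Since $L_m(\zeta^\fabcover_{\tilde{f}};E)\neq0$ by the first step, at least one $I_\mathcal{P}$ is nonzero, so the right-hand side is at least $|\Gamma|$, which gives the asserted inequality.

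The main delicate point I expect is to keep separate the $\Gamma$-orbit of a periodic point $p$, which always has size $|\Gamma|$ by freeness of the deck action on $M_{\tilde{f}}$, from the $\Gamma$-orbit of its associated flow class $[\gamma_{\tilde{m}}(\tilde{f};p)]$ in $H_1$, whose size $k_\mathcal{P}$ may be strictly smaller. The crucial divisibility $k_\mathcal{P}\mid|\Gamma|$ and the identification of the fiber size as $|\Gamma|/k_\mathcal{P}$ are exactly what absorb $k_\mathcal{P}$ back into a matching factor in the coefficient, so that regardless of possible cancellations between different periodic orbits mapping to the same $\mathcal{P}$, each nonvanishing $\mathcal{P}$-contribution reintroduces a full factor of $|\Gamma|$ into the total $\ell^1$-norm. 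Everything else is bookkeeping from the definitions.
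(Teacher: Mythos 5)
Your proof is correct and follows essentially the same approach as the paper: combine Lemma \ref{dominant-enfeoffed} with the $\Gamma$-equivariance of the sum in (\ref{L-m-E}), the freeness of the deck action, and orbit--stabilizer counting, so that each nonvanishing $\Gamma$-orbit of classes forces an $\ell^1$-contribution of at least $|\Gamma|/b_0(\tilde{S})$. The only difference is bookkeeping: the paper restricts to one component $\tilde{S}_0$ of the distinguished fiber and its index-$b_0(\tilde{S})$ stabilizer $\Gamma_0$, estimating the coefficient at a single direction $l_{\tilde m}$ and multiplying by its orbit size, whereas you keep the $1/b_0(\tilde{S})$ factor from (\ref{L-m-E}) and decompose the whole sum by $\Gamma$-orbits of homology classes at once.
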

	
	\begin{proof}
		As $E$ is semi-extreme and dominant,
		we obtain some $m_E\in\Natural$ as asserted by Lemma \ref{dominant-enfeoffed}.
		In particular, for every ${\tilde{m}}\in\Natural$ divisible by $m_E\cdot b_0(\tilde{S})$,
		there exists some periodic homology direction $l_{\tilde{m}}\in E$
		such that the inequality 
		$\sum_{p\in\mathrm{Per}_{\tilde{m}}({\tilde{f}};\{l_{\tilde{m}}\})}\,\mathrm{ind}_{\tilde{m}}({\tilde{f}};p)\neq0$
		holds.
		Here
		$\mathrm{Per}_{\tilde{m}}({\tilde{f}};\{l_{\tilde{m}}\})$ stands for the set of ${\tilde{m}}$--periodic points
		for $\tilde{f}$ with $[\gamma_{\tilde{m}}(\tilde{f};p)]\in l_{\tilde{m}}$,
		(see (\ref{L-m-E}), (\ref{gamma-m}), and (\ref{ind-m})).
		Note that for all $p\in \mathrm{Per}_{\tilde{m}}({\tilde{f}};\{l_{\tilde{m}}\})$,
		the homology class $[\gamma_{\tilde{m}}(\tilde{f};p)]$ does not vary.
		Let $\tilde{S}_0$ be a component of $\tilde{S}$.
		Since every periodic trajectory intersects the components of $\tilde{S}$ in cyclical order,
		we have the relation
		$$\frac{1}{b_0(\tilde{S})}\cdot\left(\sum_{p\in\mathrm{Per}_{\tilde{m}}\left({\tilde{f}};\,\{l_{\tilde{m}}\}\right)}
		\,\mathrm{ind}_{\tilde{m}}({\tilde{f}};p)\right)
		\,=\,\sum_{p\in\mathrm{Per}_{\tilde{m}}\left({\tilde{f}};\,\{l_{\tilde{m}}\}\right)\cap\tilde{S}_0}\,\mathrm{ind}_{\tilde{m}}({\tilde{f}};p).$$
		Denote by $\Gamma_0$ the subgroup of $\Gamma$ that preserves $\tilde{S}_0$,
		which is normal of index $b_0(\tilde{S})$.
		We observe the following estimate:
		$$
		\left|\sum_{p\in\mathrm{Per}_{\tilde{m}}\left({\tilde{f}};\,\{l_{\tilde{m}}\}\right)\cap\tilde{S}_0}\,\mathrm{ind}_{\tilde{m}}({\tilde{f}};p)\right|
		\geq|\mathrm{Stab}_{\Gamma_0}(\{l_{\tilde{m}}\})|.
		$$
		This follows from the fact that 
		$\sum_{p\in\mathrm{Per}_{\tilde{m}}({\tilde{f}};\{l_{\tilde{m}}\})\cap\tilde{S}_0}
		\,\mathrm{ind}_{\tilde{m}}({\tilde{f}};p)$
		is a nonzero integer divisible by $|\mathrm{Stab}_{\Gamma_0}(\{l_{\tilde{m}}\})|$,
		because $\mathrm{ind}_{\tilde{m}}(\tilde{f};-)$ is constant 
		over any $\mathrm{Stab}_{\Gamma}(\{l_{\tilde{m}}\})$--orbit of ${\tilde{m}}$--periodic points
		in $\mathrm{Per}_{\tilde{m}}({\tilde{f}};\{l_{\tilde{m}}\})$.
		As $E$ is $\Gamma$--invariant,
		for all positive integral multiples $m$ of $m_E$,
		the $\ell^1$--norm of $L_m(\zeta^\fabcover_{M_{\tilde{f}}};E)$
		can be estimated by the following
		\begin{eqnarray*}
		\left\|L_m\left(\zeta^\fabcover_{M_{\tilde{f}}};E\right)\right\|_1
		&\geq&\left|\mathrm{Orb}_{\Gamma}(\{l_{\tilde{m}}\})\right|\cdot
		\left|\sum_{p\in\mathrm{Per}_{\tilde{m}}\left({\tilde{f}};\{l_{\tilde{m}}\}\right)}
		\frac{\mathrm{ind}_{\tilde{m}}({\tilde{f}};p)}{b_0(\tilde{S})}\right|\\
		&\geq&\left|\mathrm{Orb}_{\Gamma_0}(\{l_{\tilde{m}}\})\right|\cdot\left|\mathrm{Stab}_{\Gamma_0}(\{l_{\tilde{m}}\})\right|
		=|\Gamma_0|={|\Gamma|}/{b_0(\tilde{S})},
		\end{eqnarray*}
		where $\tilde{m}$ stands for $m\cdot b_0(\tilde{S})$.
		Here $\mathrm{Orb}_{\Gamma}(\{l_{\tilde{m}}\})$ stands for the $\Gamma$--orbit of $l_{\tilde{m}}$
		in $E$, which coincides with the $\Gamma_0$--orbit.
		This establishes the asserted estimate.
	\end{proof}

	\subsection{Proof of Theorem \ref{criterion-enfeoffed}}\label{Subsec-proof-criterion-enfeoffed}
	We continue to adopt the assumptions at the beginning of Subsection \ref{Subsec-contribution}.
	Suppose that there exist sufficiently many
	mutually disjoint $\Gamma$--invariant dominant semi-extreme subsets,
	say $E_1,E_2,\cdots,E_s\subset\mathcal{D}(M_{\tilde{f}},\phi_{\tilde{f}})$,	
	for some $s>-\chi(S)$, according to the assumption of Theorem \ref{criterion-enfeoffed}.
	In particular, observe $b_1(M_{\tilde{f}})>1$.
	
	For each $E_i$, let $m_i\in\Natural$ be the positive integer as asserted by Lemma \ref{dominant-Gamma}.
	Then for any $m\in\Natural$ divisible by the least common multiple of $m_1,m_2,\cdots,m_s$,
	we estimate
	$$
	\left\|L_m\left(\zeta^\fabcover_{{\tilde{f}}}\right)\right\|_1
	\,\geq\,
	\sum_{i=1}^s\left\|L_m\left(\zeta^\fabcover_{{\tilde{f}}};E_i\right)\right\|_1
	\,\geq\,
	s\cdot\frac{|\Gamma|}{b_0(\tilde{S})}
	\,>\,-\chi(S)\cdot\frac{|\Gamma|}{b_0(\tilde{S})}
	\,=\,-\frac{\chi(\tilde{S})}{b_0(\tilde{S})}.
	$$
	This yields the strict comparison
	$$\left\|L_m\left(\zeta^\fabcover_{{\tilde{f}}}\right)\right\|_1\,>\,
	\mathrm{deg}_t\left(\zeta^\fabcover_{\tilde{f}}\right)$$
	for infinitely many $m\in\Natural$, by Lemma \ref{MAP-formula-covering}.
	Therefore, we obtain the asserted Mahler measure estimate
	for the multivariable Alexander polynomial $\Delta^\fabcover_{M_{\tilde{f}}}$
	of the covering mapping torus $M_{\tilde{f}}$:
	$$\mathbb{M}\left(\Delta^\fabcover_{M_{\tilde{f}}}\right)=\mathbb{M}\left(\zeta^\fabcover_{{\tilde{f}}}\right)>1,$$
	by Lemmas \ref{algebraic-nontrivial-Mahler} and \ref{MAP-formula-covering}.
	This completes the proof of Theorem \ref{criterion-enfeoffed}.

\section{Mapping tori from a perspective of Markov partitions}\label{Sec-perspective_of_Markov_partitions}
	In this section, we study mapping tori for pseudo-Anosov automorphisms of closed orientable surfaces
	using an auxiliary Markov partition.
	In particular, we recover Fried's description of the homology direction hull
	as a polytope (Theorem \ref{D-description} and Remark \ref{D-of-Fried}).
	The approach of this section is 
	extended to the cluster version and the covering version in subsequent sections.
	We remind the reader that the real task of this section is 
	to re-organize relevant materials, and to expose necessary details,
	so as to produce a basic piece of text 
	to be updated in subsequent sections.
	Related works and original ideas in the literature are 
	collected and discussed in Section \ref{Subsec-notes}.
	
	Throughout this section, 
	we assume that $S$ is a connected closed surface and 
	that $f\colon S\to S$	is a pseudo-Anosov automorphism,
	(see Section \ref{Sec-preliminary}).
	
	%
	%
	%
%
	
	\subsection{Markov patitions for pseudo-Anosov automorphisms}
	We recall some terminology from \cite[Expos\'e 10]{FLP}. 
	With respect to the pair of invariant foliations $(\mathscr{F}^{\mathtt{s}},\mathscr{F}^{\mathtt{u}})$
	of the pseudo-Anosov $f$,
	a (good) \emph{birectangle} refers to an embedded compact rectangular region
	whose product structure is inherited from the invariant foliations.
	In other words, a birectangle is the image  $R\subset S$
	of an inclusion $[0,1]\times[0,1]\to S$ such that
	for any $x\in[0,1]$ the vertical segment
	$\{x\}\times [0,1]$ is contained in 
	a finite union of leaves or singular points of $\mathscr{F}^{\mathtt{s}}$, 
	and that for any $y\in[0,1]$ the horizontal segment $[0,1]\times \{y\}$ 
	is compatible with $\mathscr{F}^{\mathtt{u}}$ in the same fashion.
	In this paper, we keep the above arrangement convention 
	so that birectangles are 
	vertically stretching and horizontally shrinking under the pseudo-Anosov automorphism $f$.
	Note that any vertical or horizontal segment must be contained in 
	a unique leaf, unless it is a side of the birectangle.
	Moreover, any side of the birectangle contains
	at most one singular point of the invariant foliations.	
	This is because for invariant foliations of pseudo-Anosov automorphisms,
	there are no leaves connecting pairs of singular points \cite[Expos\'e 9, Lemma 9.17]{FLP}.
		
	A \emph{Markov partition} of $S$ with respect to $f$
	is a finite partition of $S$ into birectangles with respect to $f$.
	Moreover,
	every birectangle of the partition is the vertical juxtaposition
	of finitely many horizontal blocks,
	which are mapped under $f$ onto vertical blocks
	in mutually distinct birectangles of the partition.
	To be more precise,
	a Markov partition is a finite collection of birectangles
	\begin{equation}\label{MarkovPartition}
	\mathcal{R}=\left\{R_1,\cdots,R_k\right\},
	\end{equation}
	with all the following properties:
	\begin{itemize}
	\item For any $p\in S$, there is some $R_i$ that contains $p$,
	which is unique if $p$ lies in $\interior(R_i)$.
	\item For any $p\in \interior(R_i)\cap f^{-1}(\interior(R_j))$, we have
	\begin{equation*}
	f\left(\mathscr{F}^{\mathtt{u}}(p,R_i)\right)\subset \mathscr{F}^{\mathtt{u}}(f(p),R_j)
	\textrm{ and }
	f^{-1}\left(\mathscr{F}^{\mathtt{s}}(f(p),R_j)\right)\subset \mathscr{F}^{\mathtt{s}}(p,R_i).
	\end{equation*}
	\item For any $p\in \interior(R_i)\cap f^{-1}(\interior(R_j))$, we have
	\begin{equation*}
	f\left(\mathscr{F}^{\mathtt{s}}(p,R_i)\right)\cap R_j= \mathscr{F}^{\mathtt{s}}(f(p),R_j)
	\textrm{ and }
	f^{-1}\left(\mathscr{F}^{\mathtt{u}}(f(p),R_j)\right)\cap R_i= \mathscr{F}^{\mathtt{u}}(p,R_i).
	\end{equation*}
	\end{itemize}
	Here the notation $\mathscr{F}^{\mathtt{s}}(p,R_i)$ 
	stands for the component containing $p$
	of the intersection between $R_i$ and the leaf of $\mathscr{F}^{\mathtt{s}}$ through $p$,
	and other similar notations are understood likewise.
	(The assumptions agree with \cite[Expos\'e 10]{FLP}.)
	Note that any $\interior(R_i)\cap f^{-1}(\interior(R_j))$ is necessarily connected.
	When it is nonempty, we call $R_i\cap f^{-1}(R_j)$ a \emph{horizontal block} in $R_i$,
	and $f(R_i)\cap R_j$ a \emph{vertical block} in $R_j$.
	
	It is known that 	
	every pseudo-Anosov automorphism admits a Markov partition \cite[Expos\'e 10, Proposition 10.17]{FLP}.

	\subsection{Objects derived from a Markov partition}
	Fix a Markov partition $\mathcal{R}$ of any given connected closed orientable surface $S$
	with respect to a given pseudo-Anosov automorphism $f$.
	We derive combinatorial objects at three different levels:
	The flow-box complex $X_{f,\mathcal{R}}$ is a polyhedral complex
	obtained by gluing up abstract flow boxes, which remembers all the information
	of the triple $(S,f,\mathcal{R})$; 
	the transition graph $\tdigraph_{f,\mathcal{R}}$
	is the directed graph presentation of the induced symbolic dynamical system, as usual, 
	which forgets about the topology of $S$;
	the space of projective currents $\mathcal{P}_{f,\mathcal{R}}$
	is an affine linear polytope whose rational points are 
	all the projectivized abelianized directed multi-cycles of the transition graph.
	Below we construct these objects concretely, 
	and obtain descriptions about their key features.
		
	\subsubsection{The flow-box complex}\label{Subsubsec-flow-box_complex}
	%
	\begin{definition}\label{abstract_flow-box_complex}
	Suppose that $\mathcal{R}$ is a Markov partition of a closed orientable surface $S$
	with respect to a pseudo-Anosov automorphism $f$.
	The \emph{flow-box complex} $X_{f,\mathcal{R}}$
	associated to $(f,\mathcal{R})$ 
	is defined to be the compact polyhedral complex
	constructed by the following procedure:
	\begin{itemize}
	\item	Take an abstract disjoint union of the birectangles
	$R_1\sqcup\cdots\sqcup R_k$ of $\mathcal{R}$, and
	for every ordered pair $(R_i,R_j)$,
	if $\interior(R_i)\cap f^{-1}(\interior(R_j))$ is nonempty in $S$,
	take a product $R_{ij}\times [0,1]$
	with $R_{ij}=R_i\cap f^{-1}(R_j)$.
	Identify $R_{ij}\times\{0\}$
	with the subset $R_{ij}$ of $R_i$ and
	$R_{ij}\times\{1\}$ with the subset $f(R_{ij})$ of $R_j$,
	in the obvious canonical fashion.
	The resulting compact polyhedral complex 
	is recorded as the declared $X_{f,\mathcal{R}}$.
	\end{itemize}
	\end{definition}
	
	The distinguished subsets	of the flow-box complex $X_{f,\mathcal{R}}$
	parametrized by $R_i$ and $R_{ij}\times[0,1]$
	are called the \emph{abstract birectangles}
	and the \emph{abstract flow boxes}, respectively.
	By an \emph{abstract flow segment} 
	we mean a forward directed segment in $X_{f,\mathcal{R}}$
	of the form $\{p\}\times [0,1]$
	as contained in some abstract flow box	$R_{ij}\times[0,1]$.
	An \emph{abstract trajectory} therefore refers to 
	an immersed oriented $1$--submanifold of $X_{f,\mathcal{R}}$
	obtained by concatenating	a sequence of directly consecutive abstract flow segments, 
	(possibly infinitely many and with repetition).
	An \emph{abstract periodic trajectory} is moreover an immersed oriented loop, 
	(unparametrized and with no basepoint).
		
	Note that there are five mutually exclusive intrinsic \emph{types} of points in any abstract birectangle:
	interior (I), side horizontal (SH), side vertical (SV), corner own-left (KL), and corner own-right (KR).
	\footnote{Imagine two Go players sitting at the bottom and the top facing each other.}
	Using a parametrization $[0,1]\times[0,1]$ of the abstract birectangle, 
	these types are given by the subsets
	$(0,1)\times(0,1)$, $(0,1)\times\{0,1\}$, $\{0,1\}\times(0,1)$, $\{(0,0),(1,1)\}$, and $\{(0,1),(1,0)\}$,
	respectively. The type of an abstract flow segment is given by its endpoint types,
	the possibilities being SV to any, I to I, KL to KL, KR to KR, or any to SH,
	and hence twelve in all.	
	It follows that any abstract periodic trajectory must visit 
	the abstract birectangles	of the flow-box complex	in only one type of points.
	Therefore, we have five types of abstract periodic trajectories in total, 
	which are again I, SH, SV, KL, and KR.
	We call the interior type \emph{ordinary}, and the rest four types \emph{exceptional}.
	
	There is a canonical quotient map
	\begin{equation}\label{zipping-map}
		q_{f,\mathcal{R}}\colon X_{f,\mathcal{R}}\to M_f,
	\end{equation}
	which we call the \emph{zipping map}.
	The zipping map includes every abstract birectangle $R_i$ of $X_{f,\mathcal{R}}$
	into the distinguished fiber $S$ of $M_f$, 
	and projects every abstract flow box $R_{ij}\times[0,1]$
	onto the flow box $\theta_{[0,1]}(R_{ij})$.
	Here $\theta_t\colon M_f\to M_f$ stands for the distinguished
	suspension flow parametrized by $t\in\Real$.
	
	\begin{lemma}\label{X-description}
		The flow-box complex $X_{f,\mathcal{R}}$ is connected.
		All primitive abstract periodic trajectories of $X_{f,\mathcal{R}}$ 
		are embedded and mutually disjoint,
		and only finitely many of them are exceptional.
	\end{lemma}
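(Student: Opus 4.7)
The plan is to establish the three assertions in turn, using the symbolic dynamics that the Markov partition $\mathcal{R}$ furnishes.

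First, I will address connectedness of $X_{f,\mathcal{R}}$. Since $f$ is pseudo-Anosov, the transition matrix recorded by $\mathcal{R}$ is irreducible — in fact some power is strictly positive, by a standard Perron--Frobenius argument on the unstable train-track weights. Irreducibility supplies, for any two abstract birectangles $R_i$ and $R_j$, a directed chain of abstract flow boxes joining $R_i$ to $R_j$; since each abstract birectangle is itself path-connected, the entire flow-box complex inherits path-connectivity.

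Next, I will address embeddedness and mutual disjointness of primitive abstract periodic trajectories via the coding principle that a point of an abstract birectangle is uniquely determined by its bi-infinite itinerary $(v_k)_{k\in\Integral}$ of abstract birectangles visited by the forward and backward $f$-iterates, once its intrinsic type (one of I, SH, SV, KL, KR) is specified. The type is constant along an abstract trajectory because the twelve admissible endpoint-type combinations listed for abstract flow segments force it to be preserved from segment to segment. A primitive abstract periodic trajectory of period $m$ corresponds to a primitive cyclic itinerary of length $m$; a self-intersection at two distinct visits to the same abstract birectangle would equate the associated bi-infinite itineraries and make the cycle a proper power of a shorter cycle, contradicting primitivity. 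The same uniqueness argument forbids a common point between two distinct primitive abstract periodic trajectories, whose cycles would otherwise coincide up to rotation.

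Finally, I will show that only finitely many primitive abstract periodic trajectories are exceptional. In a standard Markov partition of a pseudo-Anosov automorphism, the horizontal and vertical sides of the birectangles lie on the finite collection of singular leaves of $\mathscr{F}^{\mathtt{u}}$ and $\mathscr{F}^{\mathtt{s}}$, and the corners of the birectangles coincide with prong singularities of the invariant foliations. The map $f$ expands along each singular unstable leaf by $\lambda$ and contracts along each singular stable leaf by $\lambda^{-1}$, so, after passing to a suitable $f$-iterate that fixes a given singular leaf, the only periodic point on that leaf is the prong singularity at its endpoint; consequently no interior SH or SV point is periodic for $f$, and trajectories of side types SH and SV do not arise at all. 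The corner types KL and KR thus account for every exceptional trajectory, and since the prong singularities form a finite set and the zipping map $q_{f,\mathcal{R}}$ admits only finitely many lifts over each point of $M_f$, finiteness follows.

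The hardest part will be carefully formalizing type-preservation along abstract trajectories in the unzipped setting — where every boundary point of $S$ is duplicated into several distinct instances across the various abstract birectangles — and ensuring that the symbolic-coding uniqueness principle of the interior case I carries over cleanly to SH, SV, KL, and KR, including the precise counting of lifts through $q_{f,\mathcal{R}}$ needed to promote finite orbits of prong singularities in $S$ to finite collections of abstract corner trajectories in $X_{f,\mathcal{R}}$.
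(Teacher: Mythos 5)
Your arguments for the first two assertions are essentially sound, though phrased differently from the paper. For connectedness, the paper invokes ergodicity of $f$ (density of $\bigcup_m f^m(\interior R_i)$), which is equivalent to the irreducibility of the transition matrix you cite; the further claim that some power is strictly positive is true but not needed. For embeddedness and disjointness, you replace the paper's "no-bifurcation" observation with a symbolic-coding uniqueness argument; this works, but note one subtlety in your phrasing: the type is \emph{not} preserved along an arbitrary abstract trajectory (the twelve admissible segments include SV $\to$ SH, SV $\to$ I, I $\to$ SH, etc.), so your claim that "the type is constant along an abstract trajectory" is false as stated. It \emph{is} constant along an abstract \emph{periodic} trajectory, because a cycle in the type-transition digraph must stay in a single node, but you should state and use that weaker fact.

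The real gap is in the finiteness argument. You reason under the assumption that the horizontal and vertical sides of the birectangles lie on singular leaves of the invariant foliations and that the corners coincide with prong singularities. Neither assumption appears in the paper's definition of a Markov partition: the definition only requires the partition and Markov properties, and explicitly allows a side to contain at most one singular point (which may or may not be there, and even then may be an interior point of a side rather than a corner). Your stronger conclusion that "trajectories of side types SH and SV do not arise at all" therefore does not follow, and indeed the paper's Lemma~\ref{zipping-correspondence} lists these type combinations as among the possible ones. The paper's actual finiteness argument is both more general and more elementary: if two periodic points $p,q$ lay on the same side $\sigma$ of an abstract birectangle, some iterate $f^N$ would fix the leaf containing $\sigma$ as well as both endpoints of the subarc $[p,q]$ of that leaf; but $f^N$ multiplies the transverse measure along that leaf by $\lambda^{\pm N}\neq 1$, forcing $[p,q]$ to have measure zero, i.e.\ $p=q$. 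Thus each side carries at most one periodic point, and since there are only finitely many sides, there are only finitely many exceptional primitive abstract periodic trajectories. You should replace your special-construction reasoning with a measure-stretching argument of this type if you want the lemma to hold for an arbitrary Markov partition in the paper's sense.
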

	
	\begin{proof}
		The connectedness of $X_{f,\mathcal{R}}$ is a consequence of 
		the ergodicity of pseudo-Anosov automorphisms. 
		In fact, for any pair of birectangles $R_i,R_j\in\mathcal{R}$,
		there exists some $n\in\Integral$,
		such that $f^n(\interior(R_i))\cap \interior(R_j)$ is nonempty.
		This is because the open subset $\bigcup_{m\in\Integral}f^m(\interior(R_i))\subset S$
		is invariant under $f$ and hence must be dense in $S$, 
		see \cite[Expos\'e 9, Proposition 9.18]{FLP}. 
		In particular, any pair of abstract birectangles can be connected through an abstract trajectory path,
		(that is, a path in $X_{f,\mathcal{R}}$	obtained 
		by concatenating abstract flow segments directed-consecutively).
		So $X_{f,\mathcal{R}}$ is connected.
		
		For any point of an abstract birectangle,
		if the point occurs as the initial point for two distinct abstract flow segments,
		the only possible types of the abstract flow segments are SV to SH/KL/KR, or I to SH.
		Such points never occur on abstract periodic trajectories,
		as the above abstract flow segments all connect distinct types of points.
		Similarly, no point on an abstract periodic trajectory occurs as the terminal point
		of two distinct abstract flow segments.
		Intuitively this means that primitive abstract periodic trajectories have no chance
		to bifurcate in either the forward or the backward direction.
		Therefore, primitive abstract periodic trajectories must be embedded and disjoint 
		from any other abstract periodic trajectories.
		Moreover, the stretching property of $f$ and $f^{-1}$ 
		forbids	any side of an abstract birectangle	from containing more than one point 
		of a primitive abstract periodic trajectory. 
		So there are at most finitely many exceptional primitive abstract periodic trajectories.
	\end{proof}
	
	\begin{lemma}\label{zipping-correspondence}
		The zipping map (\ref{zipping-map}) is $\pi_1$--surjective.
		Every abstract periodic trajectory of $X_{f,\mathcal{R}}$
		is immersed under the zipping map	as a periodic trajectory of the mapping torus $M_f$.
		For any primitive periodic trajectory $\gamma$ of $M_f$,
		the preimage of $\gamma$ is 
		a union of mutually disjoint primitive abstract periodic trajectories,
		each covering $\gamma$ of degree $\mathrm{po}(\gamma)$.
		All the (not necessarily occuring) possible type combinations of components and covering degrees 
		are classified by Table \ref{degree-table} below,
		(see (\ref{pn-po})).
	\end{lemma}
	
	\begin{table}[h]
		{\footnotesize
		\begin{tabular}{l c c c}
		\hline
		type combination of components & total degree &  $\mathrm{pn}(\gamma)$ &$\mathrm{po}(\gamma)$  \\
		\hline
		$\mathrm{I}^{\times1}$ & $1$ & $2$ & $1$ \\
		$\mathrm{SH}^{\times m}$ &  $md$ & $md$ & $d$\\
		$\mathrm{SV}^{\times n}$ &  $nd$ & $nd$ & $d$\\
		$\mathrm{KL}^{\times l}+\mathrm{KR}^{\times l}$
			 &  $2ld$ & $ld$ & $d$\\
		$\mathrm{KL}^{\times l}+\mathrm{KR}^{\times l}+\mathrm{SH}^{\times m}$
			 &  $(2l+m)d$ & $(l+m)d$ & $d$\\
		$\mathrm{KL}^{\times l}+\mathrm{KR}^{\times l}+\mathrm{SV}^{\times n}$
			 &  $(2l+n)d$ & $(l+n)d$ & $d$\\
		$\mathrm{KL}^{\times l}+\mathrm{KR}^{\times l}+\mathrm{SH}^{\times m}+\mathrm{SV}^{\times n}$
			 &  $(2l+m+n)d$ & $(l+m+n)d$ & $d$\\
		\hline
		\end{tabular}
		}
		\bigskip
		\caption{The numbers $m,n,l,d$ are all positive integers.
		The notation $\mathrm{pn}(\gamma)$ stands for 
		the number of prongs on $S$ at any point in $\gamma\cap S$;
		the notation $\mathrm{po}(\gamma)$ stands for the number of prongs 
		in any prong orbit at that point,
		(see (\ref{pn-po})).			
		For example, the KL+KR row of the table means that the preimage of $\gamma$ in $X_{f,\mathcal{R}}$
		consists of $2l$ primitive abstract periodic trajectories,
		half of type KL and half KR. 
		The total covering degree is $2ld$, and each component covers of degree $\mathrm{po}(\gamma)=d$.
		When $\gamma$ is a regular primitive periodic trajactory,	
		which means $\mathrm{pn}(\gamma)=2$, 
		the possible type combinations can be listed as follows:
		$\mathrm{I}$, $\mathrm{SH}^{\times 1}$, $\mathrm{SH}^{\times 2}$,
		$\mathrm{SV}^{\times 1}$, $\mathrm{SV}^{\times 2}$, 
		$\mathrm{KL}^{\times 1}+\mathrm{KR}^{\times 1}$,
		$\mathrm{KL}^{\times 2}+\mathrm{KR}^{\times 2}$,
		$\mathrm{KL}^{\times 1}+\mathrm{KR}^{\times 1}+\mathrm{SH}^{\times 1}$,
		$\mathrm{KL}^{\times 1}+\mathrm{KR}^{\times 1}+\mathrm{SV}^{\times 1}$.		
		}\label{degree-table}
	\end{table}	
	
	For our application, it is more important to remember	the qualitative conclusion 
	that there are only finitely many exceptional primitive abstract periodic trajectories
	in the flow-box complex (Lemma \ref{X-description}).
	The details of the classification table is only necessary for 
	some explicit calculation of the multivariable Lefschetz zeta function,
	(see Example \ref{kappa-zeta}). 
	
	\begin{proof}
		To see the $\pi_1$--surjectivity of the zipping map	(\ref{zipping-map}),
		we claim that 
		any loop $\alpha\colon S^1\to M_f$ 
		can be modified by homotopy to avoid the exceptional locus,
		(that is, the image of all $\partial R_i$ and $\partial R_{ij}\times(0,1)$
		under the zipping map). 
		The $\pi_1$--surjectivity follows immediately from the claim because
		the complement of the exceptional locus lifts	homeomorphically
		into $X_{f,\mathcal{R}}$.
		To prove the claim, clearly we can first homotope $\alpha$ in $M_f$ 
		so that	it is composed of 
		leaf segments of the invariant foliations in the distinguished
		fiber $S\subset M_f$ 
		and trajectory segments of the suspension flow (in either directions).
		We may also require the trajectory segments 
		to avoid the image of any $\partial R_{ij}\times (0,1)$,
		and the leaf segments to avoid or transversely intersect the image of any $\partial R_i$.
		Therefore, it suffices to argue 
		that for any open interval $b\subset \partial R_i\cap \partial R_j$, 
		and any short leaf-segment subpath $a$ of $\alpha$ 
		contained in $\interior(R_i)\cup\interior(R_j)\cup b$	
		and transverse to $b$ at a single point of intersection $p$,
		it is possible to push $a$ off the exceptional locus,
		by homotopy in $M_f$ relative to its endpoints.
		To this end, 
		we observe that for some $n\in\Integral$ and some $R_h\in\mathcal{R}$,
		the intersection $f^n(b)\cap\interior(R_h)$ is nonempty.
		This is because for any sufficiently large $m\in\Natural$,	
		one of the leaf segments $f^m(b)$ and $f^{-m}(b)$ must be stretched 
		so much that its length exceeds the total length of all $\partial R_i$,
		measured in the (defining) transverse measures of the invariant foliations.
		Take some compact subsegment $[p,p']\subset b$,
		which has $p$ as one endpoint, and some $p'$ in $b\cap f^{-n}(\interior(R_h))$
		as the other endpoint.
		Denote by $[p',q']$ the trajectory segment in $M_f$ that connects $p'$ and $q'=f^n(p')$.
		Intuitively, we modify $a$ by a finger move, 
		pushing it first along $[p,p']$, and then along $[p',q']$.
		To be precise, 
		first, we replace some small subsegment $a'\subset a$ that contains $p$
		with the other three sides of the thin birectangle $a'\times [p,p']$ on $S$,
		(namely, the union of $(\partial a')\times[p,p']$ and $a'\times\{p'\}$).
		Then, we replace some even smaller subsegment $a''$ of $a'\times\{p'\}$,
		with the other three sides of the trajectory band $a''\times[p',q']$ in $M_f$,
		(namely, the union of $(\partial a'')\times[p',q']$ and $a''\times\{q'\}$).
		The above construction gives rise to a new path $a^*$, 
		which is homotopic to the original $a$ relative to the endpoints.
		Moreover, $a^*$ avoids the exceptional locus.
		By performing the above modification for every bad point $p$ of $\alpha$ as above, 
		we obtain a new loop homotopic to $\alpha$ avoiding the exceptional locus.
		This proves the claim and therefore the $\pi_1$--surjectivity of the zipping map.
		
		It is clear that under the zipping map,
		every abstract periodic trajectory of $X_{f,\mathcal{R}}$
		becomes a periodic trajectory of $M_f$.
		For any primitive periodic trajectory $\gamma$ of $M_f$,
		the preimage of $\gamma$ is a finite cell $1$--complex form by abstract flow segments.
		It is a union of primitive abstract periodic trajectories,
		so every connected component 
		must be a primitive abstract periodic trajectory by Lemma \ref{X-description}.
		
		We figure out Table \ref{degree-table} as follows.
		If $\gamma$ avoids the exceptional locus, there is a unique ordinary abstract periodic trajectory
		which projects $\gamma$ homeomorphically,	so we obtain the first row of the classification table.
		Otherwise, the primitive abstract periodic trajectories are all exceptional by Lemma \ref{X-description}.
		At any periodic point $p\in\gamma\cap S$, 
		we read counterclockwise the birectangles around $p$,
		so we obtain a cyclic word of types $\tau=[\tau_0,\tau_1,\cdots,\tau_{s-1}]$.
		Each $\tau_i\in\{\mathrm{SH},\mathrm{SV},\mathrm{KL},\mathrm{KR}\}$
		indicates the type of $p$ in the $i$--th birectangle in the cyclical ordering.
		By definition, the smallest power of $f$ that fixes $p$ 
		permutes cyclically	the prongs of $\mathscr{F}^{\mathtt{s}}$ at $p$,
		and also the prongs of $\mathscr{F}^{\mathtt{u}}$ at $p$,
		both of order $\mathrm{po}(\gamma)=d$.
		This means that $\tau$ is $d$--periodic, and 
		the first $s/d$ terms of $\tau$	correspond bijectively to the preimage components of $\gamma$ in $X_{f,\mathcal{R}}$.
		Moreover, 
		the allowable patterns for any neighboring pair $(\tau_i,\tau_{i+1})$ in $\tau$
		can be listed as follows, (setting $\tau_{s}=\tau_0$): 
		$(\mathrm{SH},\mathrm{SH})$, $(\mathrm{SH},\mathrm{KL})$,
		$(\mathrm{SV},\mathrm{SV})$, $(\mathrm{SV},\mathrm{KR})$,
		$(\mathrm{KL},\mathrm{KR})$, $(\mathrm{KL},\mathrm{SV})$,
		and $(\mathrm{KR},\mathrm{KL})$, $(\mathrm{KR},\mathrm{SH})$.
		This follows directly from the definition of Markov partition.
		If we construct a directed graph $\Gamma$ 
		whose vertices are $\{\mathrm{SH},\mathrm{SV},\mathrm{KL},\mathrm{KR}\}$
		and whose directed edges are the allowable neighboring patterns,
		then $\tau$ can be treated as a directly immersed loop in $\Gamma$.
		By simple observation, 
		$\tau$ has to pass through the vertices $\mathrm{KL}$ and $\mathrm{KR}$ 
		the same number of times,
		unless the terms of $\tau$ are constantly $\mathrm{SH}$ or $\mathrm{SV}$.
		Therefore, if we count the terms in $\tau$ according to their types,
		there should be 
		$md$ $\mathrm{SH}$, $nd$ $\mathrm{SV}$, $ld$ $\mathrm{KL}$, and $ld$ $\mathrm{KR}$,
		where $m,n,l$ are nonnegative integers.
		Except the triples $(m>0,n>0,l=0)$ and $(m=0,n=0,l=0)$, 
		any triple $(m\geq0,n\geq0,l\geq0)$ can be realized by a directly immersed loop in $\Gamma$.
		Therefore, they can also be realized 
		by a local configuration of some Markov partition near a point.
		(In this sense, we say such triples are possible.)
		The number of pre-zipping components of $\gamma$ is $s/d=2l+m+n$.
		Each component covers $\gamma$ of degree $d$, as explained above.
		This gives the total covering degree $s=(2l+m+n)d$.
		The prongs of $\mathscr{F}^{\mathtt{s}}$ and $\mathscr{F}^{\mathtt{u}}$ together
		cut out $2\mathrm{pn}(\gamma)$ sectors near $p$.
		This number equals $(2m+2n+2l)d$ by the definition of the Markov partition and the types.
		So we obtain $\mathrm{pn}(\gamma)=(l+m+n)d$.
		Therefore, Table \ref{degree-table} follows.
	\end{proof}
	
	\subsubsection{The transition graph}\label{Subsubsec-transition_graph}	
	\begin{definition}\label{transition_digraph}
		Let $X_{f,\mathcal{R}}$ be the flow-box complex
		associated to a pseudo-Anosov automorphism $f$ of
		a closed orientable surface and a Markov partition $\mathcal{R}$.
		The \emph{transition graph} $\tdigraph_{f,\mathcal{R}}$
		associated to $(f,\mathcal{R})$ is defined to be the finite cell $1$--complex
		constructed from $X_{f,\mathcal{R}}$
		by projecting every abstract birectangle $R_i$ to a distinct $0$--cell $v_i$
		and every abstract flow box $R_{ij}\times[0,1]$ to a distinct $1$--cell $e_{ij}$
		parametrized by $[0,1]$, via the projection to the second factor.
	\end{definition}
		
	The $0$--cells of the transition graph $\tdigraph_{f,\mathcal{R}}$ 
	are called the \emph{vertices}.
	The canonically oriented $1$--cells are called the \emph{directed edges}.
	By a \emph{dynamical cycle} of $\tdigraph_{f,\mathcal{R}}$, we mean
	an immersed oriented loop, unparametrized and without a base point,
	obtained by concatenating	finitely many cyclically directly consecutive directed edges.
	It is a \emph{simple dynamical cycle}, 
	if the loop is embedded without self-intersection.
	
	By a \emph{subgraph} of $\tdigraph_{f,\mathcal{R}}$, 
	we mean a cell subcomplex with inherited edge directions.
	A subgraph is said to be \emph{irreducible},
	if every pair of vertices occurs in some dynamical cycle.
	It is said to be \emph{nonwandering},
	if every vertex occurs in some dynamical cycle,
	and if every topologically connected component is irreducible.
	Note that every directed graph contains a collection of maximal irreducible subgraphs
	(also known as the \emph{strongly connected components} in directed graph theory),
	which are mutually disjoint. 
	Any directed edge that transits between different maximal irreducible subgraphs 
	never occurs in any dynamical cycle.
	Therefore, a subgraph of $\tdigraph_{f,\mathcal{R}}$  is nonwandering
	if and only if every vertex or directed edge occurs in some dynamical cycle.
		
	\begin{remark}\label{remark_transition_graph}
	In terms of symbolic dynamics, 
	the transition graph is the directed graph model of 
	the one-sided subshift of finite type	that is determined by the Markov partition.
	Every subgraph $W$ of $T_{f,\mathcal{R}}$ can be represented as a square matrix,
	whose columns and rows are labeled by the vertices of $W$,
	and whose entries are $1$ or $0$ indicating the presence or absence
	of a directed edge between the vertices.
	This matrix is called the \emph{transition matrix} associated to $W$,
	which we denote as $A_W$.
	The \emph{one-sided subshift} determined by $W$ is 
	the subset $\mathfrak{X}_W$ of $\mathrm{Vertex}(W)^\Natural$ 
	together with a canonical shift map $\sigma\colon \mathfrak{X}_W\to \mathfrak{X}_W$:
	The points $x=(x_i)_{i\in\Natural}$ in $\mathfrak{X}_W$ 
	are determined by the relation $(A_W)_{x_ix_{i+1}}=1$ for all $i\in\Natural$.
	The map $\sigma$ is defined as $\sigma(x)_i=x_{i+1}$ for all $i\in\Natural$.
	According to \cite[Section 1.4]{Kitchens-book},
	$(\mathfrak{X}_W,\sigma)$ is said to be \emph{irreducible} if $A_W$ is irreducible,
	or \emph{nonwandering} if there are no wandering points.
	Our terminology comes from characterizations of those properties in terms of the subgraphs.
	For details, see \cite[Sections 1.1 and 5.1]{Kitchens-book}, 
	and in particular, Corollary 5.13 thereof.
	We adopt the directed graph model rather than the matrix model in this paper,
	because it is more convenient in the covering setting.
	\end{remark}
	
	There is by definition a canonical quotient map
	\begin{equation}\label{collapse-map}
	X_{f,\mathcal{R}}\to \tdigraph_{f,\mathcal{R}}
	\end{equation}
	which we call the \emph{collapse map}.
	For any subgraph $W$ of $\tdigraph_{f,\mathcal{R}}$,
	we denote by $X_{f,\mathcal{R}}(W)$ the preimage of $W$ in $X_{f,\mathcal{R}}$.
	As it is the union of the abstract birectangles and the abstract flow-boxes
	which collapse to the vertices and the directed edges $W$,
	we call $X_{f,\mathcal{R}}(W)$ the \emph{flow-box subcomplex} over $W$.
	
	\begin{lemma}\label{T-description}
		The transition graph $\tdigraph_{f,\mathcal{R}}$ is irreducible.
	\end{lemma}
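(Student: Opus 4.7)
The plan is to reduce the claim to the ergodicity input already used in the proof of Lemma \ref{X-description}, via the observation that the transition graph records exactly which birectangles can be dynamically chained. Specifically, a directed edge $e_{ij}$ of $\tdigraph_{f,\mathcal{R}}$ exists precisely when $R_{ij}=R_i\cap f^{-1}(R_j)$ is nonempty in $S$, so a directed edge path from $v_i$ to $v_j$ corresponds to a finite sequence of birectangles $R_{i_0}=R_i,R_{i_1},\ldots,R_{i_n}=R_j$ such that each consecutive pair admits a nonempty dynamical overlap.

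First I would invoke the ergodicity property used in the proof of Lemma \ref{X-description}: for any ordered pair $(R_i,R_j)$ of birectangles of $\mathcal{R}$, there exists $n\in\Natural$ with $f^n(\interior(R_i))\cap\interior(R_j)\neq\varnothing$, because the $f$--invariant open set $\bigcup_{m\in\Integral}f^m(\interior(R_i))$ is dense in $S$ by \cite[Expos\'e 9, Proposition 9.18]{FLP}. Unfolding this intersection through the iterates $f,f^2,\ldots,f^{n-1}$ yields a sequence $R_{i_0}=R_i,R_{i_1},\ldots,R_{i_n}=R_j$ of birectangles such that $\interior(R_{i_{k-1}})\cap f^{-1}(\interior(R_{i_k}))\neq\varnothing$ for every $k$, and hence a directed edge path from $v_i$ to $v_j$ in $\tdigraph_{f,\mathcal{R}}$.

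Next I would apply this with the roles of $i$ and $j$ reversed to obtain a directed edge path from $v_j$ back to $v_i$. Concatenating the two paths produces an immersed oriented loop in $\tdigraph_{f,\mathcal{R}}$ passing through both $v_i$ and $v_j$, that is, a dynamical cycle containing this pair of vertices. Since $(R_i,R_j)$ was an arbitrary ordered pair of birectangles of $\mathcal{R}$, this verifies the irreducibility condition in Definition \ref{transition_digraph}.

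There is essentially no substantive obstacle here; the only subtle point is the translation between the ergodic-density statement and a finite edge-path in the combinatorial graph, which is immediate from the definition of the transition graph. In fact, the same argument shows that $\tdigraph_{f,\mathcal{R}}$ is strongly connected as a directed graph, which is equivalent to the stated irreducibility.
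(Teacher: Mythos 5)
Your argument is correct and takes essentially the same route as the paper: both reduce the irreducibility of $\tdigraph_{f,\mathcal{R}}$ to the ergodicity of the pseudo-Anosov automorphism, citing the same proposition from FLP and echoing the argument already given for Lemma \ref{X-description}. The paper's proof is a one-liner (``immediately implied by ergodicity''), so your write-up simply fills in the translation from a point of $\interior(R_i)$ whose forward orbit enters $\interior(R_j)$ to a directed edge path in the transition graph.

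One small subtlety worth noting: the density of the $f$--invariant open set $\bigcup_{m\in\Integral}f^m(\interior(R_i))$, which is the justification you quote, only yields some $n\in\Integral$ (possibly zero or negative) with $f^n(\interior(R_i))\cap\interior(R_j)\neq\varnothing$. For Lemma \ref{X-description} that sufficed, because connectedness of $X_{f,\mathcal{R}}$ is an unoriented statement; but irreducibility of the directed graph requires a \emph{positive} $n$. To get $n\in\Natural$ one should invoke the quantitative consequence of ergodicity (Poincar\'e recurrence or the Birkhoff ergodic theorem applied to the finite invariant Lebesgue measure), which gives $\mu\bigl(f^n(\interior(R_i))\cap\interior(R_j)\bigr)>0$ for some $n\geq 1$. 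This is standard and is clearly what is meant by ``ergodicity'' in the paper's citation, but it is not literally a consequence of density of the full two-sided orbit. The rest of your argument --- choosing $x\in\interior(R_i)\cap f^{-n}(\interior(R_j))$ generically so that each $f^k(x)$ lands in the interior of a unique birectangle, reading off the directed edge path, and symmetrizing to close up a dynamical cycle through $v_i$ and $v_j$ --- is exactly right.
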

	
	\begin{proof}
		This is immediately implied by the ergodicity of pseudo-Anosov automorphisms,
		\cite[Expos\'e 9, Proposition 9.18]{FLP}. 
		(See also the argument of Lemma \ref{X-description}.)
	\end{proof}
	
	\begin{lemma}\label{collapse-correspondence}
		For any subgraph $W$ of the transition graph $\tdigraph_{f,\mathcal{R}}$,
		restriction of the collapse map  (\ref{collapse-map}) 
		yields a homotopy equivalence between the flow-box subcomplex
		$X_{f,\mathcal{R}}(W)$ and $W$.
		Composition with the collapse map 
		yields a bijective correspondence	between the abstract periodic trajectories of $X_{f,\mathcal{R}}(W)$
		and the dynamical cycles of $W$.
	\end{lemma}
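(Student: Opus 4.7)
The plan is to treat the restricted collapse map $X_{f,\mathcal{R}}(W)\to W$ as the natural projection of a graph of spaces with contractible pieces; both conclusions then follow from general principles plus the standard symbolic coding for Markov partitions.

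For the homotopy equivalence, I would observe that $X_{f,\mathcal{R}}(W)$ inherits the structure of a graph of spaces modeled on $W$: each vertex $v_i$ of $W$ has vertex space the abstract birectangle $R_i$, a contractible $2$-disk, and each directed edge $e_{ij}$ of $W$ has edge space the abstract flow box $R_{ij}\times[0,1]$, also contractible. The attaching maps $R_{ij}\times\{0\}\hookrightarrow R_i$ and $R_{ij}\times\{1\}\hookrightarrow R_j$ from Definition \ref{abstract_flow-box_complex} are subcomplex inclusions, hence cofibrations. Since every vertex and edge space is contractible, collapsing to the underlying graph $W$ is a homotopy equivalence by a standard argument. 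Concretely, I would choose a basepoint $*_i\in R_i$ for each vertex $v_i$ of $W$ together with a distinguished transverse arc $\{*_{ij}\}\times[0,1]$ in each abstract flow box, form a copy of a subdivision of $W$ embedded in $X_{f,\mathcal{R}}(W)$ using auxiliary arcs inside the birectangles, and construct a simultaneous deformation retraction by linearly contracting each $R_i$ onto the finite tree inside it and each $R_{ij}$-factor onto $\{*_{ij}\}$, compatibly along the attaching faces.

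For the bijective correspondence of periodic trajectories, one direction is immediate from the definitions: each abstract flow segment $\{p\}\times[0,1]\subset R_{ij}\times[0,1]$ projects under the collapse map onto the parametrizing interval of the directed edge $e_{ij}$, so any concatenation of abstract flow segments projects to a concatenation of directed edges of $W$. For the inverse, consider a dynamical cycle $v_{i_0}\to v_{i_1}\to\cdots\to v_{i_n}=v_{i_0}$ of $W$ via directed edges $e_{i_ki_{k+1}}$. A lift to an abstract periodic trajectory amounts to a choice of starting points $p_k\in R_{i_ki_{k+1}}$ with the concatenation constraint $f(p_k)=p_{k+1}$ in cyclic indices, so $p_0$ must be an $n$-periodic point of $f$ satisfying $f^k(p_0)\in R_{i_ki_{k+1}}$ for every $k$. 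The defining properties of the Markov partition $\mathcal{R}$, together with the uniform stretching of $f$ along the unstable foliation and shrinking along the stable foliation, make the standard symbolic coding apply: each admissible cyclic word in the transition graph is realized by exactly one $f$-orbit of periodic points with the prescribed itinerary, which gives the unique lift.

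The main subtlety I anticipate is the treatment of periodic orbits that visit common boundaries of birectangles in the surface $S$, where the Markov coding on $S$ itself is ambiguous and gives rise to the multiple abstract periodic trajectories described in Lemma \ref{zipping-correspondence}. At the level of $X_{f,\mathcal{R}}(W)$, however, each abstract birectangle carries its own disjoint copy of the boundary by construction, so the apparent boundary ambiguity translates cleanly into distinct admissible cyclic words, hence distinct lifts, one per dynamical cycle. This separation is what makes the bijection with dynamical cycles of $W$ clean at the flow-box level, even though the zipping map realizes it as only a surjection at the level of periodic trajectories of $M_f$.
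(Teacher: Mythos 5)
Your proposal is correct and takes essentially the same route as the paper: the homotopy equivalence is obtained by deformation-retracting the contractible birectangles and flow boxes onto the underlying graph, and the bijection on periodic trajectories comes from the Markov/symbolic-coding argument (the paper spells this out by noting that a prescribed itinerary determines a horizontal block of $R_{i_0}$ whose image under $f^m$ is a vertical block, so the pseudo-Anosov contraction/expansion yields a unique fixed point — exactly the content of your "standard symbolic coding" step). Your closing remark about the abstract birectangles carrying disjoint copies of their boundaries, which disambiguates the coding at the flow-box level, is also in line with the paper's framing via Lemmas \ref{X-description} and \ref{zipping-correspondence}.
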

	
	\begin{proof}
		Note that $X_{f,\mathcal{R}}$ is topologically a polyhedral complex,
		obtained by gluing contractible pieces (abstract birectangles and flow-boxes)
		nicely along contractible sub-pieces (horizontal or vertical blocks).
		It is glued up according to the cell $1$--complex structure of $\tdigraph_{f,\mathcal{R}}$.
		(Definitions \ref{abstract_flow-box_complex} and \ref{transition_digraph}.)
		Therefore, the homotopy equivalence property of the collapse map (\ref{collapse-map})
		is readily observed from the defining construction.
		
		It is also clear that every abstract periodic trajectory of $X_{f,\mathcal{R}}$ gives rise to 
		a dynamical cycle
		of the transition graph $\tdigraph_{f,\mathcal{R}}$ via the collapse map.
		Conversely, given a dynamical cycle of the transition graph $\tdigraph_{f,\mathcal{R}}$,
		by choosing an auxiliary base vertex $R_{i_0}$ of that dynamical cycle,
		we obtain a sequence of partial maps
		$R_{i_0}\dasharrow R_{i_1}\dasharrow\cdots\dasharrow R_{i_{m-1}}\dasharrow R_{i_0}$
		where $R_{i_j}\dasharrow R_{i_{j+1}}$ 
		stands for the restricted map $f|\colon R_{i_ji_{j+1}}\to f(R_{i_ji_{j+1}})$
		for $j=0,\cdots,m-1$ and $i_m=i_0$.
		The Markov partition property implies that the set of points	$E\subset R_{i_0}$ 
		with a prescribed $m$--itinerary $R_{i_1},\cdots,R_{i_{m-1}},R_{i_0}$
		form a horizontal block of the birectangle $R_{i_0}$.
		Namely, $E$ consists of those $p\in R_{i_0}$ 
		with $f^j(p)\in R_{i_j}$ for $j=1,2,\cdots,m$,
		and $E$ is a birectangle formed by horizontal fibers of $R_{i_0}$.
		Note that $f^m(E)$ is a vertical block of $R_{i_0}$.
		The pseudo-Anosov local picture therefore 
		forces the existence of a unique point $p\in E$ with $f^m(p)=p$.
		We obtain an abstract periodic trajectory of $X_{f,\mathcal{R}}$
		by cyclically concatenating the abstract flow segments $f^j(p)\times[0,1]$
		of the abstract flow boxes $R_{i_ji_{j+1}}\times[0,1]$. 
		The uniqueness argument above implies that the constructed
		abstract periodic trajectory does not depend on 
		the auxiliary choice of the base vertex $R_{i_0}$.
		So any dynamical cycle of the transition graph $\tdigraph_{f,\mathcal{R}}$ 
		determines a unique abstract periodic trajectory of the flow-box complex $X_{f,\mathcal{R}}$.
		It is straightforward to see that the above constructions establish the claimed
		bijective correspondence.
	\end{proof}
		
	\begin{notation}\label{dc-apt-pt}
		For any dynamical cycle $z$ of the transition graph $\tdigraph_{f,\mathcal{R}}$,
		we denote by $\hat{\gamma}_z\colon S^1\to X_{f,\mathcal{R}}$ 
		the abstract periodic trajectory of the abstract flow box complex which collapses to $z$,
		and by $\gamma_z\colon S^1\to M_f$
		the zipped periodic trajectory $q_{f,\mathcal{R}}\circ\hat{\gamma}_z$.
		We treat $\hat{\gamma}_z$ and $\gamma_z$ 
		as immersed oriented loops without any preferred parametrization or any basepoint.
		In this way they are uniquely determined by $z$.
		(See Lemmas \ref{zipping-correspondence} and \ref{collapse-correspondence}.)
	\end{notation}
	
	\subsubsection{The space of projective currents}
			
	\begin{definition}\label{space_of_projective_currents}
		Let $\tdigraph_{f,\mathcal{R}}$ be the transition graph
		associated to a pseudo-Anosov automorphism $f$ of
		a closed orientable surface and a Markov partition $\mathcal{R}$.
		A \emph{projective current} on $\tdigraph_{f,\mathcal{R}}$ 
		is defined to be a probability measure on
		the finite discrete set of the directed edges of $\tdigraph_{f,\mathcal{R}}$,
		and moreover,
		it is required to be balanced at every vertex of $\tdigraph_{f,\mathcal{R}}$,
		in the sense that	the total measure of the incoming edges should be equal to
		the total measure of the outgoing edges.
		Any projective current is denoted as a formal convex combination	
		$\mu=\sum_{e}\mu_e\,e$.
		The summation here is taken over all the directed edges of $\tdigraph_{f,\mathcal{R}}$,
		and the coefficients $\mu_e\in\Real$ 
		satisfy $\sum_e\mu_e=1$, and $\mu_e\geq0$, and also the required balance condition.
		The \emph{space of projective currents} $\mathcal{P}_{f,\mathcal{R}}$ 
		is defined to be the set of all the projective currents on $\tdigraph_{f,\mathcal{R}}$.
		We treat $\mathcal{P}_{f,\mathcal{R}}$ 
		as an affine linear convex subset 
		of the abstract simplex $\mathbf{\Delta}(\tdigraph_{f,\mathcal{R}})$
		spanned by the set of directed edges of $\tdigraph_{f,\mathcal{R}}$.
	\end{definition}
	
	The \emph{support subgraph} of a projective current $\mu\in\mathcal{P}_{f,\mathcal{R}}$
	refers to the unique nonwandering subgraph of $\tdigraph_{f,\mathcal{R}}$
	whose set of directed edges is the support of the probability measure $\mu$.
	A projective current is said to be \emph{irreducible} 
	if its support subgraph is irreducible,
	and \emph{elementary} if the support subgraph is a simple dynamical cycle of
	$\tdigraph_{f,\mathcal{R}}$.
	
	There is a canonical affine linear map
	\begin{equation}\label{hd-map}
		\mathrm{hd}_{f,\mathcal{R}}\colon \mathcal{P}_{f,\mathcal{R}}\to \mathbf{A}(M_f,\phi_f),
	\end{equation}
	which we call the \emph{homology direction map}, (see Definition \ref{homology_directions}).
	For any projective current $\mu\in \mathcal{P}_{f,\mathcal{R}}$, 
	identify $\mu$ naturally as a real cellular $1$--cycle of $\tdigraph_{f,\mathcal{R}}$
	in the usual chain complex context.
	Then $\mathrm{hd}_{f,\mathcal{R}}(\mu)$ in $\mathbf{A}(M_f,\phi_f)$
	is defined to be the image of 
	the projective point $\Real\cdot[\mu]\in \mathbf{P}(H_1(\tdigraph_{f,\mathcal{R}};\Real))$
	under the projective linear composite map
	$\mathbf{P}(H_1(T_{f,\mathcal{R}};\Real))\to\mathbf{P}(H_1(X_{f,\mathcal{R}};\Real))
	\to\mathbf{P}(H_1(M_f;\Real))$
	induced by a homotopy inverse of the collapse map (\ref{collapse-map})
	and the zipping map (\ref{zipping-map}).

	\begin{lemma}\label{P-description}
		The space of projective currents $\mathcal{P}_{f,\mathcal{R}}$
		is a polytope.
		For every nonwandering subgraph $W$ of $\tdigraph_{f,\mathcal{R}}$,
		there exists a unique open face $F_W$ of $\mathcal{P}_{f,\mathcal{R}}$
		with the following characterization:
		A projective current lies in $F_W$ if and only if its support subgraph is $W$.
		This correspondence sets up an isomorphism between the lattice of nonwandering subgraphs
		of $\tdigraph_{f,\mathcal{R}}$
		and the face lattice of $\mathcal{P}_{f,\mathcal{R}}$.
	\end{lemma}
	
	Recall that a \emph{polytope} refers to a compact convex polyhedron 
	in the sense of real affine geometry,
	(see Remark \ref{remark_homology_directions} for our terminology).
	For any polytope, an open face of dimension $0$ is a vertex,
	and an open face of codimension $0$ is the point-set interior.	
	
	\begin{proof}
		Denote by $C_1(\tdigraph_{f,\mathcal{R}};\Real)$
		the $\Real$--module of cellular $1$--chains,
		with a standard unordered basis given by the directed edges.
		We identify $\mathbf{\Delta}(\tdigraph_{f,\mathcal{R}})$ 
		as the standard simplex in $C_1(\tdigraph_{f,\mathcal{R}};\Real)$,
		which consists of the convex combinations of the basis vectors,
		(namely, $\sum_e \mu_e e$ with $\sum_e\mu_e=1$ and $\mu_e\geq0$).
		The balance conditions at the vertices give rise 
		a linear system of equations on $C_1(\tdigraph_{f,\mathcal{R}};\Real)$,
		whose solution space is exactly	the $\Real$--submodule of cellular $1$--cycles, 
		$Z_1(\tdigraph_{f,\mathcal{R}};\Real)$.
		The space of projective currents $\mathcal{P}_{f,\mathcal{R}}$ 
		is the intersection of $\mathbf{\Delta}(\tdigraph_{f,\mathcal{R}})$
		with $Z_1(\tdigraph_{f,\mathcal{R}};\Real)$,
		(Definition \ref{space_of_projective_currents}).
		Therefore, $\mathcal{P}_{f,\mathcal{R}}$ is a polytope.
		It remains to establish the asserted correspondence.
		
		For any $\mu\in\mathcal{P}_{f,\mathcal{R}}$, 
		denote by $\tdigraph_{f,\mathcal{R}}(\mu)$ the support subgraph of $\mu$.
		Observe that the assignment $\mu\mapsto\tdigraph_{f,\mathcal{R}}(\mu)$
		is constant on any open face of $\mathcal{P}_{f,\mathcal{R}}$.
		(This follows from the fact that 
		faces of $\mathcal{P}_{f,\mathcal{R}}$ all arise from
		the intersection of $Z_1(\tdigraph_{f,\mathcal{R}};\Real)$
		with faces of $\mathbf{\Delta}(\tdigraph_{f,\mathcal{R}})$.)
		Therefore, we obtain a face-to-subgraph assignment
		\begin{equation}\label{face-to-subgraph}
		F\mapsto \tdigraph_{f,\mathcal{R}}(F).
		\end{equation}
		For any open face $F$ of $\mathcal{P}_{f,\mathcal{R}}$,
		$\tdigraph_{f,\mathcal{R}}(F)$ is the subgraph $\tdigraph_{f,\mathcal{R}}(\mu)$ 
		for any $\mu\in F$.
		Below we show that the assignment (\ref{face-to-subgraph}) realizes
		the asserted correspondence.
		
		The assignment (\ref{face-to-subgraph}) 
		sends any open face of $\mathcal{P}_{f,\mathcal{R}}$
		to a nonwandering subgraph of $\tdigraph_{f,\mathcal{R}}$.
		To see this, we observe that the vertices of $\mathcal{P}_{f,\mathcal{R}}$
		all have nonnegative rational coefficients at the directed edges of $\tdigraph_{f,\mathcal{R}}$.
		For any open face $F$, we take a rational convex combination of the vertices of $F$
		to obtain a projective current $\mu\in F$ with rational coefficients.
		By taking some multiple of $\mu$, we obtain a cellular $1$--cycle $\nu$
		of $\tdigraph_{f,\mathcal{R}}(F)$.
		The coefficients of $\nu$ are strictly positive 
		for the directed edges of $\tdigraph_{f,\mathcal{R}}(F)$.
		Treat $\nu$ as a collection of directed edges, with multiplicity indicated by the coefficients.
		Then the balance condition allows us to pair them up at the vertices of $\tdigraph_{f,\mathcal{R}}(F)$.
		This construction results in a collection of dynamical cycles $\tdigraph_{f,\mathcal{R}}(F)$.
		Every directed edge or vertex of $\tdigraph_{f,\mathcal{R}}(F)$ lies in
		at least one of these dynamical cycles. 
		Therefore, $\tdigraph_{f,\mathcal{R}}(F)$ must be nonwandering.	
		
		The assignment (\ref{face-to-subgraph}) 
		sends different open faces of $\mathcal{P}_{f,\mathcal{R}}$
		to different subgraphs of $\tdigraph_{f,\mathcal{R}}$.
		In fact, if $F$ and $F'$ are different open faces,
		then there must be some directed edge $e$ of $\tdigraph_{f,\mathcal{R}}(F)$,
		such that	any $\mu\in F$ and $\mu'\in F'$ have different triviality at $e$.
		In other words, 
		$\mu_e$ and $\mu'_e$ are neither simutaneously zero nor simutanenous strictly positive.
		In particular, $\mu$ and $\mu'$ have different support subgraphs.
		
		The assignment (\ref{face-to-subgraph}) 
		gives rise to every nonwandering subgraph of $\tdigraph_{f,\mathcal{R}}$.
		To see this, suppose $W$ is any nonwandering subgraph of $\tdigraph_{f,\mathcal{R}}$.
		We take the sum of all the simple dynamical cycles of $W$
		(regarded naturally as a cellular $1$--cycles),
		and then normalize to make the coefficient sum $1$.
		This construction yields to a unique projective current
		$\mu_W\in \mathcal{P}_{f,\mathcal{R}}$.
		As $W$ is nonwandering,
		any directed edge or vertex of $W$ lies in some dynamical cycle of $W$,
		which must be simple if the length is minimized among all such dynamical cycles.
		We see that $\tdigraph_{f,\mathcal{R}}(\mu_W)$ equals the given nonwandering subgraph $W$.
		So $W$ is assigned to the open face that contains $\mu_W$.

		From the above discussions, we see that the assignment (\ref{face-to-subgraph}) 
		is a bijective correspondence between the open faces of $\mathcal{P}_{f,\mathcal{R}}$
		and the nonwandering subgraphs of $\tdigraph_{f,\mathcal{R}}$.
		It preserves the (order-theoretic) lattice structure of both sides obviously.
		The inverse of the assignment (\ref{face-to-subgraph}) 
		determines a unique open face $F_W$ of $\mathcal{P}_{f,\mathcal{R}}$
		for any nonwandering subgraph $W$ of $\tdigraph_{f,\mathcal{R}}$, as asserted.
		The characterization also follows from our construction of (\ref{face-to-subgraph}).
	\end{proof}

	\begin{corollary}\label{P_face_dimension}
		For any nonwandering subgraph $W$ of $\tdigraph_{f,\mathcal{R}}$,
		the corresponding open face $F_W$ of $\mathcal{P}_{f,\mathcal{R}}$ 
		has dimension $b_1(W)-1$.
	\end{corollary}
	
	\begin{proof}
		Given any nonwandering subgraph $W$ of $\tdigraph_{f,\mathcal{R}}$,
		let $W_1\subsetneq W_2\subsetneq\cdots\subsetneq W_l=W$ 
		be any ascending chain of nonwandering subgraphs of $W$.
		Denote by $W_i/W_{i-1}$ the quotient direct graph obtained from $W_i$ 
		by collapsing	every component of $W_{i-1}$ to a distinct point.				
		We claim that if the length $l$ is maximized, 
		then $W_i/W_{i-1}$ is a simple dynamical cycle for $i=1,\cdots,l$,	
		setting $W_0$ to be a vertex in $W_1$.
		Observe that $W_i/W_{i-1}$ is nonwandering, as $W_i$ is nonwandering.
		Observe also that	the preimage in $W_i$
		of any nonwandering subgraph of $W_i/W_{i-1}$ is nonwandering, 
		as $W_{i-1}$ is either nonwandering or a single vertex.
		If $W_i/W_i$ was not a simple dynamical cycle, 
		the preimage of a simple dynamical cycle of $W_i/W_{i-1}$
		would be a nonwandering subgraph $W'$ such that $W_{i-1}\subsetneq W'\subsetneq W_i$,
		contrary to the maximality of $l$.
		Therefore, we obtain $b_1(W_i/W_{i-1})=1$ and $l=b_1(W)-1$.
		It follows from Lemma \ref{P-description}
		that the open face $F_W$ has dimension $b_1(W)-1$. 
	\end{proof}
		
	\begin{remark}\label{nonwandering_subgraph_homology}\
		\begin{enumerate}
		\item
		It follows from Corollary \ref{P_face_dimension} that
		the naturally induced projective linear map $F_W\to \mathbf{P}(H_1(W;\Real))$
		is an embedding of codimension $0$.
		Here we point out another easy fact,
		although it is not quite needed for our whole argument:
		The simple dynamical cycles of any nonwandering subgraph $W$ generates $H_1(W;\Integral)$.
		This fact also implies the above codimension--$0$ embedding property immediately.
		\item
		One may introduce the \emph{abstract homology direction hull} 
		$\mathcal{D}(X_{f,\mathcal{R}},\phi_{f,\mathcal{R}})$
		mimicking the definition of $\mathcal{D}(M_f,\phi_f)$.
		It lives in the affine subset $\mathbf{A}(X_{f,\mathcal{R}},\phi_{f,\mathcal{R}})$ 
		of the projective space $\mathbf{P}(H_1(X_{f,\mathcal{R}};\Real))$,
		complementary to the projective hyperplane $\mathbf{P}(\mathrm{Ker}(\phi_{f,\mathcal{R}}))$,
		where $\phi_{f,\mathcal{R}}$ stands for 
		$q_{f,\mathcal{R}}^*(\phi_f)\in H^1(X_{f,\mathcal{R}};\Integral)$.
		The homology direction map (\ref{hd-map}) is the zipping of the abstract homology direction map,
		in the apparent sense as indicated by the composition
		$\mathcal{P}_{f,\mathcal{R}}\to\mathbf{A}(X_{f,\mathcal{R}},\phi_{f,\mathcal{R}})\to
		\mathbf{A}(M_f,\phi)$.
		We point out that under the abstract homology direction map,
		$\mathcal{P}_{f,\mathcal{R}}$ projects isomorphically onto
		$\mathcal{D}(X_{f,\mathcal{R}},\phi_{f,\mathcal{R}})$,
		and that the latter has codimension $0$ 
		in $\mathbf{A}(X_{f,\mathcal{R}},\phi_{f,\mathcal{R}})$.
	\end{enumerate}
	\end{remark}
		
	\begin{lemma}\label{hd-description}
		Under the homology direction map (\ref{hd-map})
		the space of projective currents $\mathcal{P}_{f,\mathcal{R}}$
		projects onto the homology direction hull $\mathcal{D}(M_f,\phi_f)$.
		(See Definition \ref{homology_directions}.)
	\end{lemma}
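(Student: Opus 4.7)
The plan is to show equality by establishing containment in both directions, exploiting the affine linearity of $\mathrm{hd}_{f,\mathcal{R}}$ and the convex-hull description of $\mathcal{D}(M_f,\phi_f)$. Before tackling either inclusion I would verify that $\mathrm{hd}_{f,\mathcal{R}}$ really lands in the affine chart $\mathbf{A}(M_f,\phi_f)$, namely that $\phi_f$ pairs nontrivially with the image of every projective current. Since each directed edge of $\tdigraph_{f,\mathcal{R}}$ descends from an abstract flow box traversing the distinguished fiber exactly once, the pairing of $\phi_f$ against the $H_1(M_f;\Real)$--image of any cellular $1$--cycle equals its total edge mass; for a projective current this is $1$, so the map takes values in $\mathbf{A}(M_f,\phi_f)$.

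For the inclusion $\mathrm{hd}_{f,\mathcal{R}}(\mathcal{P}_{f,\mathcal{R}})\subseteq \mathcal{D}(M_f,\phi_f)$, I would show that every vertex of $\mathcal{P}_{f,\mathcal{R}}$ maps to a periodic homology direction. By Lemma \ref{P-description} the vertices are the elementary projective currents $\mu_C$, each supported on a simple dynamical cycle $C$ of $\tdigraph_{f,\mathcal{R}}$. Lemma \ref{collapse-correspondence} promotes $C$ to a unique primitive abstract periodic trajectory $\hat{\gamma}_C$ in $X_{f,\mathcal{R}}$, and Lemma \ref{zipping-correspondence} zips this to a periodic trajectory $\gamma_C$ in $M_f$. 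A chain-level check shows $|C|\cdot[\mu_C]$ in $H_1(\tdigraph_{f,\mathcal{R}};\Real)$ corresponds, under the collapse-homotopy-inverse composed with the zipping map, to $[\gamma_C]$ in $H_1(M_f;\Real)$, where $|C|$ denotes the number of directed edges of $C$. Therefore $\mathrm{hd}_{f,\mathcal{R}}(\mu_C)=\Real\cdot[\gamma_C]$ is a periodic homology direction. Since $\mathcal{D}(M_f,\phi_f)$ is convex and $\mathrm{hd}_{f,\mathcal{R}}$ is affine linear, the image of all of $\mathcal{P}_{f,\mathcal{R}}$ is contained in $\mathcal{D}(M_f,\phi_f)$.

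For the reverse inclusion $\mathcal{D}(M_f,\phi_f)\subseteq \mathrm{hd}_{f,\mathcal{R}}(\mathcal{P}_{f,\mathcal{R}})$, I would show that every periodic homology direction is realized as an image. Given such a direction, write it as $\Real\cdot[\gamma]$ for a primitive periodic trajectory $\gamma$ of $M_f$. By Lemma \ref{zipping-correspondence} the preimage $q_{f,\mathcal{R}}^{-1}(\gamma)$ is a nonempty union of primitive abstract periodic trajectories, each covering $\gamma$ with some positive degree. Selecting any such component $\hat{\gamma}$ and applying Lemma \ref{collapse-correspondence}, I obtain a dynamical cycle $z$ of $\tdigraph_{f,\mathcal{R}}$ whose normalized edge-counting measure $\mu_z$ is a (rational) projective current. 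Tracing the chain-level identifications exactly as above gives $\mathrm{hd}_{f,\mathcal{R}}(\mu_z)=\Real\cdot[\gamma]$. Because the image is itself convex, it therefore contains the convex hull of all periodic homology directions, which is $\mathcal{D}(M_f,\phi_f)$ by definition.

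The main subtlety, rather than an obstacle, is the careful bookkeeping of positive scalars when transporting classes through the collapse map, the zipping map, and the projectivization. Once the key fact that $\phi_f$ evaluates to the total edge mass on such images is isolated, the scalars become harmless and the two correspondence lemmas supply everything needed for the vertex-by-vertex matching.
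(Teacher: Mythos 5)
Your proof is correct and follows essentially the same route as the paper: both directions of the inclusion are established by combining Lemma \ref{P-description} (vertices are elementary currents) with the two correspondence lemmas (\ref{zipping-correspondence} and \ref{collapse-correspondence}), using affine linearity of $\mathrm{hd}_{f,\mathcal{R}}$ and convexity of $\mathcal{P}_{f,\mathcal{R}}$ and $\mathcal{D}(M_f,\phi_f)$. The paper's version is terser, but the argument structure is identical; your preliminary check that $\phi_f$ evaluates to the total edge mass is a sensible unspoken step the paper leaves implicit.
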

	
	\begin{proof}
		For any dynamical cycle $C$ of $\tdigraph_{f,\mathcal{R}}$,
		the corresponding cellular $1$--cycle can be regarded as a counting measure
		on the set of the directed edges,
		so its normalization is a projective current $\mu_C$.
		We observe that
		every periodic homology direction in $\mathcal{D}(M_f,\phi_f)$
		can be realized as $\mathrm{hd}_{f,\mathcal{R}}(\mu_C)$ 
		for some dynamical cycle $C$ of $\tdigraph_{f,\mathcal{R}}$.
		This follows from Definition \ref{homology_directions} and
		Lemmas \ref{zipping-correspondence} and \ref{collapse-correspondence}.
		Then the convexity of $\mathcal{P}_{f,\mathcal{R}}$
		implies that $\mathrm{hd}_{f,\mathcal{R}}(\mathcal{P}_{f,\mathcal{R}})$ contains $\mathcal{D}(M_f,\phi_f)$.
		Note that the vertices of $\mathcal{P}_{f,\mathcal{R}}$ 
		are precisely the elementary projective currents (Lemma \ref{P-description}),
		which all occur in $\mathcal{D}(M_f,\phi_f)$ under $\mathrm{hd}_{f,\mathcal{R}}$.
		Then the convexity of $\mathcal{D}(M_f,\phi_f)$
		implies that $\mathcal{D}(M_f,\phi_f)$ contains $\mathrm{hd}_{f,\mathcal{R}}(\mathcal{P}_{f,\mathcal{R}})$.
		Therefore, $\mathrm{hd}_{f,\mathcal{R}}(\mathcal{P}_{f,\mathcal{R}})$ equals $\mathcal{D}(M_f,\phi_f)$,
		as asserted.
	\end{proof}

	\subsection{Combinatorial description of the homology direction hull}
	As an application of our discussion so far, 
	we obtain a basic description 
	of the homology direction hull:
	
	\begin{theorem}\label{D-description}
		Let $f$ be a pseudo-Anosov automorphism of a connected orientable closed surface $S$.
		Then the homology direction hull $\mathcal{D}(M_f,\phi_f)$ is 
		a polytope
		of codimension $0$ in $\mathbf{A}(M_f,\phi_f)$.
		It coincides with the point-set closure
		of all the periodic homology directions 
		in $\mathbf{A}(M_f,\phi_f)$.
		(See Definition \ref{homology_directions} and Remark \ref{remark_homology_directions}.)
	\end{theorem}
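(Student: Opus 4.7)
The plan is to fix an auxiliary Markov partition $\mathcal{R}$ of $S$ with respect to $f$ and then assemble Lemmas \ref{T-description}, \ref{zipping-correspondence}, \ref{collapse-correspondence}, \ref{P-description}, and \ref{hd-description}.

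First, by Lemma \ref{hd-description}, $\mathcal{D}(M_f,\phi_f)$ coincides with the image $\mathrm{hd}_{f,\mathcal{R}}(\mathcal{P}_{f,\mathcal{R}})$. Since $\mathcal{P}_{f,\mathcal{R}}$ is a compact convex polytope with finitely many vertices (Lemma \ref{P-description}) and $\mathrm{hd}_{f,\mathcal{R}}$ is affine linear, its image is a compact convex polytope with finitely many vertices in $\mathbf{A}(M_f,\phi_f)$. For the codimension $0$ assertion, it suffices to show that the real homology classes $[\gamma]\in H_1(M_f;\Real)$ ranging over all periodic trajectories $\gamma$ span $H_1(M_f;\Real)$ as a real vector space. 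This I obtain by combining: the zipping map $q_{f,\mathcal{R}}$ is $\pi_1$--surjective, hence $H_1$--surjective (Lemma \ref{zipping-correspondence}); the collapse map restricts to a homotopy equivalence (Lemma \ref{collapse-correspondence}); and $H_1(\tdigraph_{f,\mathcal{R}};\Real)$ is generated by classes of dynamical cycles, which zip to periodic trajectories of $M_f$. The generation statement for the transition graph follows because, by the irreducibility of $\tdigraph_{f,\mathcal{R}}$ (Lemma \ref{T-description}), every directed edge is traversed by some dynamical cycle, and the standard cycle space of a finite graph is spanned by its closed walks.

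For the point-set closure statement, the inclusion $\overline{\{\text{periodic homology directions}\}}\subseteq \mathcal{D}(M_f,\phi_f)$ is immediate since $\mathcal{D}(M_f,\phi_f)$ is closed and contains all periodic homology directions. For the reverse inclusion, by continuity of $\mathrm{hd}_{f,\mathcal{R}}$ and density of rational points in $\mathcal{P}_{f,\mathcal{R}}$, it is enough to approximate any rational projective current $\mu=\tfrac{1}{N}\sum_{e}n_{e}\,e$ of $\tdigraph_{f,\mathcal{R}}$ by elementary projective currents supported on dynamical cycles. The integer $1$--chain $N\mu$ is balanced at every vertex, but its support subgraph $W$ may be disconnected; write $W=W_{1}\cup\cdots\cup W_{r}$ for its connected components. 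Using the irreducibility of $\tdigraph_{f,\mathcal{R}}$, choose $v_{i}\in W_{i}$ and a directed path $P_{i}$ from $v_{i}$ to $v_{i+1\bmod r}$ in $\tdigraph_{f,\mathcal{R}}$. The chain $\sum_{i}P_{i}$ is balanced at every vertex, and for any large $M\in\Natural$ the chain $MN\mu+\sum_{i}P_{i}$ is a balanced nonnegative integer $1$--chain with connected support. By the classical Eulerian-circuit theorem for balanced connected directed multigraphs, this chain is realized by a single dynamical cycle $z_{M}$ of $\tdigraph_{f,\mathcal{R}}$, whose associated elementary projective current $\mu_{z_{M}}$ tends to $\mu$ as $M\to\infty$. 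Applying $\mathrm{hd}_{f,\mathcal{R}}$, we see that $\mathrm{hd}_{f,\mathcal{R}}(\mu)$ is a limit of periodic homology directions, as required.

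The most delicate step is the density argument in the third paragraph: it is the only place where one must combine the Euler-circuit construction with the irreducibility of $\tdigraph_{f,\mathcal{R}}$ to convert a general rational projective current into a limit of dynamical-cycle currents. The polytope structure and the codimension $0$ statement follow more directly from the already-established lemmas and a routine graph-homology computation.
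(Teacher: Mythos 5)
Your proof is correct and follows essentially the same route as the paper: the polytope structure and the codimension-$0$ claim come from Lemmas \ref{P-description}, \ref{hd-description}, \ref{zipping-correspondence}, \ref{collapse-correspondence}, and \ref{T-description}, and the closure statement comes from realizing projective currents by dynamical cycles. The only notable difference is in the density step: the paper observes that \emph{interior} rational projective currents are realized \emph{exactly} as normalized edge-count measures of dynamical cycles (full support makes strong connectedness automatic from Lemma \ref{T-description}, so the Euler-circuit fact applies at once), and then invokes density of interior rational points, whereas you handle an arbitrary rational current, repair possible disconnectedness of its support by adding irreducibility-supplied connecting paths $P_i$, and pass to the limit $M\to\infty$. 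Both hinge on the same Euler-circuit theorem for balanced strongly connected directed multigraphs; the paper's variant is a bit slicker, while yours covers all rational points directly, so the trade-off is purely cosmetic.
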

	
	\begin{proof}
		By Lemma \ref{P-description} and Corollary \ref{P_face_dimension},
		the naturally induced projective linear map 
		$\mathcal{P}_{f,\mathcal{R}}\to \mathbf{P}(H_1(T_{f,\mathcal{R}};\Real))$
		is a codimension--$0$ embedding of a polytope.
		The induced projective linear map 
		$\mathbf{P}(H_1(T_{f,\mathcal{R}};\Real))\to\mathbf{P}(H_1(X_{f,\mathcal{R}};\Real))\to\mathbf{P}(H_1(M_f;\Real))$
		as appeared in the definition of the homology direction map (\ref{hd-map})
		is surjective because of Lemmas \ref{zipping-correspondence} and \ref{collapse-correspondence}.
		By Definition \ref{homology_directions} and Lemma \ref{hd-description}, 
		$\mathcal{D}(M_f,\phi_f)$ is the image of $\mathcal{P}_{f,\mathcal{R}}$ of the composition of the above maps,
		and is contained in $\mathbf{A}(M_f,\phi_f)$.
		Therefore, 
		$\mathcal{D}(M_f,\phi_f)$ is a polytope of codimension $0$ in $\mathbf{A}(M_f,\phi_f)$.
				
		As $\tdigraph_{f,\mathcal{R}}$ is irreducible (Lemma \ref{T-description}),
		the dynamical cycles of $\tdigraph_{f,\mathcal{R}}$ give rise to all the
		rational points in the interior of $\mathcal{P}_{f,\mathcal{R}}$,
		by taking the normalization of their edge-count measures.
		Therefore, the periodic homology directions include all the rational points in the interior of $\mathcal{D}(M_f,\phi_f)$.
		So their point-set closure in $\mathbf{A}(M_f,\phi_f)$ 
		is the same as $\mathcal{D}(M_f,\phi_f)$,
		by the convexity, and the codimension $0$, and the compactness of $\mathcal{D}(M_f,\phi_f)$.
	\end{proof}
	
	\begin{remark}\label{D-of-Fried}
	Using Theorem \ref{D-description} one can identify
	the homology direction hull $\mathcal{D}(M_f,\phi_f)$ 
	with the set of homological directions
	as introduced by Fried \cite[Section 2]{Fried-sections}.
	In fact, for the suspension flow of a pseudo-Anosov automorphism,
	any homology direction occurs as an accumulation point
	of the periodic homology directions.
	It follows that $\mathcal{D}(M_f,\phi_f)$ is the same as 
	the set of homology directions in Fried's sense.
	(See also Section \ref{Sec-picture}.)
	\end{remark}
	

	\subsection{Notes}\label{Subsec-notes}
	\begin{enumerate}
	\item 
	Theorem \ref{D-description} is well-known.
	For example, it follows from Fried's characterization of homology directions using flow cross-sections \cite{Fried-sections},
	and Thurston's combinatorial description of the fibered cones 
	for $3$--manifolds \cite{Thurston-norm}.
	See \cite[Expos\'e 14]{FLP} for an exposition in that approach.
	Apart from technical details, the method we use in this section also appears in Fried \cite{Fried-flowEqv}.
	In particular, Theorem \ref{D-description} is essentially covered by \cite[Theorem H]{Fried-flowEqv} and its proof.
	\item 
	Section \ref{Subsubsec-transition_graph} is basically symbolic coding in term of the directed graph model.
	Points of the subshift associated to the Markov partition 
	encodes dynamical paths in the transition graph as their itineraries.
	The idea of using subshifts of finite types to study suspension flow dynamics,
	(such as periodic trajectories, zeta functions, entropy, etc.,)
	has been well developed, see \cite{Bowen-symbolic,Fried-flowEqv} and \cite[Expos\'e 10]{FLP}, for example.
	Our Lemmas \ref{zipping-correspondence} and \ref{collapse-correspondence} follow the idea,
	and clarify some details of the coding in the case of pseudo-Anosov suspension flows.
	The reader is referred to Kitchens' textbook \cite{Kitchens-book} for a systematic introduction to symbolic dynamics.
	\item Flow boxes are familar objects in the study of Axiom A flows, see \cite{Fried-flowEqv,Fried-zetaRS}, for example.
	The flow-box complex that appears in Section \ref{Subsubsec-flow-box_complex} 
	seems to be a new construction for organizing the Markov partition data.
	\end{enumerate}

\section{Clusters via the approach of Markov patitions}\label{Sec-cluster}
	In this section, we introduce clusters 
	for a given pseudo-Anosov automorphism of a closed orientable surface 
	with respect to a chosen Markov partition (Definition \ref{cluster}).
	For any individual cluster,	we obtain derived objects and the cluster homology direction hull,
	which are analogous to the mapping torus case.
	However, 
	there are two major reasons which make clusters more useful:
	First, the clusters form a finite partial-order system 
	with respect to the cluster subordination maps
	(Definition \ref{cluster}).
	Secondly, dimension of cluster homology direction hulls reflects
	the elementary-versus-nonelementary dichotomy in $3$--manifold topology,
	(see Theorem \ref{cluster_D-description}).
	
	It is possible to work with the covering setting directly. However, for this section,
	we keep the same setting as with Section \ref{Sec-perspective_of_Markov_partitions}
	to avoid distraction.
		
	
	\subsection{Clusters and their homology direction hulls}
	\begin{definition}\label{cluster}
		Let $f$ be a pseudo-Anosov automorphism of a connected closed orientable surface $S$,
		and $\mathcal{R}$ be a Markov partition of $S$ with respect to $f$.
		\begin{enumerate}
		\item 
		For any irreducible subgraph $V$ of the transition graph $\tdigraph_{f,\mathcal{R}}$,
		the \emph{cluster} subordinate to the mapping torus $M_f$ and associated to $V$
		is defined to be a connected Eilenberg--MacLane space $Q_V$ together with
		 a $\pi_1$--surjective map $q_V$ and a $\pi_1$--injective map $i_V$
		which make the following diagram commutative up to homotopy:
		$$\xymatrix{
		X_{f,\mathcal{R}}(V) \ar[r]^-{\mathrm{incl.}} \ar[d]_-{q_V} & X_{f,\mathcal{R}} \ar[d]^-{q_{f,\mathcal{R}}}\\
		Q_V \ar[r]^-{i_V} & M_f\\
		}$$
		Note that the triple $(Q_V,q_V,i_V)$ 
		is unique up to homotopy equivalence and homotopy.
		The irreducible subgraph $V$ is called the \emph{cluster transition graph},
		and $q_V$ the \emph{cluster zipping map}, and $i_V$ the \emph{subordination map}
		to the mapping torus.
		For brevity we often denote a cluster only by $Q_V$,
		assuming other data implicitly prescribed.
		(See Definitions \ref{abstract_flow-box_complex} and \ref{transition_digraph}.)
		\item For any pair of irreducible subgraphs $U,V$ of the transition graph $\tdigraph_{f,\mathcal{R}}$,
		the cluster $Q_U$ is said to be \emph{subordinate to} the cluster $Q_V$
		if $U$ is contained in $V$.
		In this case, the \emph{cluster subordination map} 
		is defined to be a $\pi_1$--injective map
		$$j_{U,V}\colon Q_U\to Q_V$$
		with the property $i_U\simeq i_V\circ j_{U,V}$.
		Note that $j_{U,V}$ is unique up to homotopy.
		\end{enumerate}
	\end{definition}
	
	The distinguished cohomology class for $Q_V$ is denoted as
	\begin{equation}\label{cluster_phi}
	\phi_V\in H^1(Q_V;\Integral),
	\end{equation}
	and it is defined by $\phi_V=i_V^*\phi_f$
	using the distinguished cohomology class $\phi_f$ of the mapping torus $M_f$.

	\begin{lemma}\label{Q-description}
		For any irreducible subgraph $V$ of the transition graph $\tdigraph_{f,\mathcal{R}}$,
		the cluster $Q_V$ associated to $V$ exists, and in fact,
		a model of $Q_V$ can be taken as an orientable connected compact $3$--manifold which is aspherical and atoroidal.
		The distinguished cohomology class $\phi_V$ is nontrivial.
	\end{lemma}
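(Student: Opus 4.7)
The plan is to realize $Q_V$ as a compact core of the covering space of $M_f$ corresponding to the image subgroup $H = \mathrm{image}(\pi_1(X_{f,\mathcal{R}}(V)) \to \pi_1(M_f))$, where the arrow is induced by the composition $X_{f,\mathcal{R}}(V) \hookrightarrow X_{f,\mathcal{R}} \xrightarrow{q_{f,\mathcal{R}}} M_f$. The key input is Thurston's hyperbolization theorem for pseudo-Anosov mapping tori: $M_f$ is a closed hyperbolic $3$-manifold, so $\pi_1(M_f)$ is word-hyperbolic with universal cover $\mathbb{H}^3$. Since $X_{f,\mathcal{R}}(V)$ is a finite polyhedral complex, the subgroup $H$ is automatically finitely generated.

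Let $\tilde{M}_H \to M_f$ denote the covering space corresponding to $H$. By the tameness theorem of Agol and Calegari--Gabai applied to the finitely generated Kleinian group $H$, the manifold $\tilde{M}_H$ is homeomorphic to the interior of a compact orientable $3$-manifold; I take $Q_V$ to be this compact manifold, so that $\tilde{M}_H$ deformation retracts onto $Q_V$ and the inclusion induces $\pi_1(Q_V) \cong H$. Define $i_V \colon Q_V \to M_f$ to be the composition $Q_V \hookrightarrow \tilde{M}_H \to M_f$, which is $\pi_1$-injective with image $H$. For the cluster zipping map, the defining composition $X_{f,\mathcal{R}}(V) \to M_f$ lifts to $\tilde{M}_H$ by the definition of $H$, and because $X_{f,\mathcal{R}}(V)$ is compact the lift may be deformed into $Q_V$, yielding a map $q_V \colon X_{f,\mathcal{R}}(V) \to Q_V$ which is $\pi_1$-surjective onto $H$ and makes the defining triangle commute up to homotopy. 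Uniqueness of $(Q_V, q_V, i_V)$ up to homotopy equivalence follows from the universal property of the cover $\tilde{M}_H$ and from standard uniqueness of compact cores up to isotopy.

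Now I verify the four asserted properties. Asphericity is immediate: $Q_V \simeq \tilde{M}_H$, and the universal cover of $\tilde{M}_H$ is $\mathbb{H}^3$, so $Q_V$ is a $K(H,1)$. For atoroidality, any essential embedded torus in $Q_V$ would embed $\mathbb{Z}^2$ into $H \leq \pi_1(M_f)$, contradicting word-hyperbolicity of $\pi_1(M_f)$. Orientability is inherited from $M_f$, and connectedness from that of $V$ (hence of $X_{f,\mathcal{R}}(V)$). Finally, for the nontriviality of $\phi_V$, since $V$ is irreducible it contains at least one dynamical cycle $z$; by Lemmas \ref{zipping-correspondence} and \ref{collapse-correspondence} the associated periodic trajectory $\gamma_z$ of $M_f$ satisfies $\phi_f([\gamma_z]) > 0$, and pushing $\gamma_z$ back through $q_V$ yields a class on $Q_V$ on which $\phi_V = i_V^* \phi_f$ evaluates non-trivially.

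The principal technical step is the upgrade from a compact core carrying only a $\pi_1$-isomorphism (Scott's theorem) to a genuine deformation retract, which in turn underpins asphericity; this is precisely the topological tameness of finitely generated subgroups of $\pi_1(M_f)$. If one wished to avoid invoking tameness, asphericity could alternatively be extracted from irreducibility of $\tilde{M}_H$ together with the classical fact that an irreducible compact $3$-manifold with infinite fundamental group is a $K(\pi,1)$, but the route through tameness is cleaner and fits the paper's standing reliance on hyperbolization.
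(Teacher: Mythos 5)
Your proposal is correct and follows the same high-level plan as the paper: take the cover $\tilde M_H$ of $M_f$ corresponding to the image subgroup $H=G_V$, take a compact model inside it, and transfer the properties of $\tilde M_H$ (orientable, aspherical, atoroidal) to the compact model; the nontriviality of $\phi_V$ is checked on the class of a periodic trajectory exactly as in the paper. The one substantive difference is the choice of compactification tool. You invoke the tameness theorem (Agol, Calegari--Gabai) to realize $\tilde M_H$ as the interior of a compact manifold with a genuine deformation retract onto $Q_V$; the paper gets away with Scott's compact core theorem, which only supplies a compact submanifold whose inclusion is a homotopy equivalence, and then notes that asphericity and atoroidality pass along that homotopy equivalence. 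Tameness is a considerably deeper input than Scott's theorem and is not needed here: Scott core already gives $Q_V\simeq\tilde M_H$, and asphericity of $\tilde M_H$ (hence of $Q_V$) comes from the asphericity of $M_f$ via covering theory, with no need for the universal cover to literally be $\mathbb H^3$. Your atoroidality argument via word-hyperbolicity of $\pi_1(M_f)$ (no $\mathbb Z^2$ subgroups, hence no essential tori) is a clean substitute for the paper's appeal to homotopy invariance of atoroidality for $3$--manifolds, and is actually a bit more transparent than that appeal, which is true but requires care to state precisely. In short: same route, heavier hammer for compactness (which you acknowledge), slightly more explicit group-theoretic justification for atoroidality.
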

	
	For an orientable connected compact $3$--manifold,
	being aspherical is equivalent to the property
	that every embedded sphere bounds an embedded $3$--ball,
	and meanwhile, that the universal cover is not a $3$--sphere.
	Being atoroidal requires moreover that 
	any embedded incompressible torus should be 
	parallel to a boundary component.
	We refer the reader to Hempel \cite{Hempel-book} for terminology and standard facts in $3$--manifold topology.
	
	\begin{proof}
		Choose an auxiliary basepoint $x\in X_{f,\mathcal{R}}(V)$ and denote by $p\in M_f$ its image under zipping.
		Denote by $G_V$ the image of the induced homomorphism
		$\pi_1(X_{f,\mathcal{R}}(V),x)\to \pi_1(M_f,p)$ 
		for the restricted zipping map.
		Denote by $(\tilde{M}_V,\tilde{p})$ the covering space of $M_f$
		together with a lifted point $\tilde{p}$ of $p$
		so that $\pi_1(\tilde{M}_V,\tilde{p})$ is isomorphic to
		the subgroup $G_V$ of $\pi_1(M_f,p)$ via the induced homomorphism of the fundamental group.
		
		Note that $X_{f,\mathcal{R}}(V)\simeq V$ has a finitely generated free fundamental group, 
		so $G_V$ is finitely generated. 
		In general, 
		for any connected $3$--manifold with a finitely generated fundamental group,
		there exists an embedded compact submanifold 
		with the property that the inclusion map is a homotopy equivalence.
		This is a theorem due to P.~Scott \cite{Scott-core}, 
		and a compact submanifold  with such property
		is usually called a \emph{Scott core} for the considered $3$--manifold.
		We take any Scott core $Q_V$ for $\tilde{M}_V$
		as a model of the cluster associated to $V$.
		Since $M_f$ is the mapping torus of a pseudo-Anosov automorphism,
		it is well known that any covering space of $M_f$ is orientable, aspherical, and atoroidal,
		(see \cite[Chapter 1]{AFW-group}).
		These properties are all preserved under homotopy equivalence between $3$--manifolds,
		so the connected compact model $Q_V$ possesses the same properties as asserted.
		
		The distinguished cohomology class $\phi_V$ is nontrivial 
		because $V$ contains at least one dynamical cycle $z$.
		Indeed, the corresponding abstract periodic trajectory $\hat{\gamma}_z$ of $X_{f,\mathcal{R}}(V)$
		gives rise to a nontrivial value $\phi_V(q_{V*}[\hat{\gamma}_z])>0$, (see Notation \ref{dc-apt-pt}).
	\end{proof}
	
	
	\begin{definition}\label{cluster_homology_direction_hull}
		Let $f$ be a pseudo-Anosov automorphism of a connected closed orientable surface $S$,
		and $\mathcal{R}$ be a Markov partition of $S$ with respect to $f$.
		\begin{enumerate}
		\item
		For any cluster $Q_V$ associated to 
		an irreducible subgraph $V$ of a transition graph $\tdigraph_{f,\mathcal{R}}$,
		denote by $\mathbf{A}(Q_V,\phi_V)$
		the complement of the (projective) hyperplane $\mathbf{P}(\mathrm{Ker}(\phi_V))$	
		in the projectivization $\mathbf{P}(H_1(Q_V;\Real))$,
		where $\phi_V$ stands for the distinguished cohomology class (\ref{cluster_phi}).
		We furnish $\mathbf{A}(Q_V,\phi_V)$ 
		with the naturally induced affine linear space structure,
		(see Remark \ref{remark_homology_directions}).
		The \emph{cluster homology direction hull} 
		$$\mathcal{D}(Q_V,\phi_V)\subset\mathbf{A}(Q_V,\phi_V)$$
		is defined to be the affine linear convex hull of all the projective points
		$\Real\cdot q_{V*}[\hat{C}]\in \mathbf{A}(Q_V,\phi_V)$,
		where $\hat{C}$ ranges over all the abstract periodic trajectories of $X_{f,\mathcal{R}}(V)$.
		\item
		For any cluster $Q_U$ subordinate to $Q_V$, 
		we define the \emph{subordination map} 
		between the cluster homology direction hulls
		$$\mathcal{D}(j_{U,V})\colon \mathcal{D}(Q_U,\phi_U)\to \mathcal{D}(Q_V,\phi_V)$$
		to be the affine linear map 
		which is naturally induced by the cluster subordination map 
		$j_{U,V}\colon Q_U\to Q_V$.
		\end{enumerate}
	\end{definition}

	\subsection{Derived objects for clusters}
	For any cluster $Q_V$, 
	we call the subcomplex $X_{f,\mathcal{R}}(V)$ over $V$
	the \emph{cluster flow-box complex} for $Q_V$.
	The \emph{space of cluster projective currents} for $Q_V$
	refers to the affine linearly convex set of all the projective currents on $V$,
	which can be treated as 
	a closed face of the polytope $\mathcal{P}_{f,\mathcal{R}}$
	via the obvious inclusion (Lemma \ref{P-description}),
	so we denote it as
	$\mathcal{P}_{f,\mathcal{R}}(V)\subset \mathcal{P}_{f,\mathcal{R}}$.
	Note that the codimension--$0$ open face of $\mathcal{P}_{f,\mathcal{R}}(V)$
	is precisely the open face $F_V$ as appears in Lemma \ref{P-description}.
		
	For any cluster $Q_U$ subordinate to $Q_V$,
	there are naturally determined inclusions between the cluster transition graphs
	$U\subset V$, and between the cluster flow-box complexes
	$X_{f,\mathcal{R}}(U)\subset X_{f,\mathcal{R}}(V)$,
	and between the spaces of cluster projective currents
	$\mathcal{P}_{f,\mathcal{R}}(U)\subset\mathcal{P}_{f,\mathcal{R}}(V)$.
	The second inclusion completes a commutative diagram of maps up to homotopy
	together with the maps $q_U$, $q_V$, and $j_{U,V}$.
	The last inclusion agrees with the inclusion 
	between closed faces of $\mathcal{P}_{f,\mathcal{R}}$
	(Lemma \ref{P-description}).

	The \emph{cluster homology direction map}
	is the affine linear map
	\begin{equation}\label{cluster_hd-map}
		\mathrm{hd}_V\colon \mathcal{P}_{f,\mathcal{R}}(V)\to \mathbf{A}(Q_V,\phi_V)
	\end{equation}
	defined in the similar way as the homology direction map (\ref{hd-map}).
	To be precise, any $\mu\in\mathcal{P}_{f,\mathcal{R}}(V)$
	is a projective current fully supported on $V$, 
	and	$\mathrm{hd}_V(\mu)$ is the image of 
	the projective point $\Real\cdot[\mu]\in \mathbf{P}(H_1(V;\Real))$
	under the projective linear composite map
	$\mathbf{P}(H_1(V;\Real))\to\mathbf{P}(H_1(X_{f,\mathcal{R}}(V);\Real))
	\to\mathbf{P}(H_1(Q_V;\Real))$
	induced by a homotopy inverse of the restricted collapse map
	and the map $q_V$.
	
	\begin{lemma}\label{cluster_hd-description}
		Under the cluster homology direction map (\ref{hd-map}),
		the space of cluster projective currents $\mathcal{P}_{f,\mathcal{R}}(V)$
		projects onto the cluster homology direction hull $\mathcal{D}(Q_V,\phi_V)$.
	\end{lemma}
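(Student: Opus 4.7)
The plan is to follow the same two-inclusion strategy used in the proof of Lemma \ref{hd-description}, now restricted to the cluster setting. The two technical inputs needed are the bijective correspondence between abstract periodic trajectories of $X_{f,\mathcal{R}}(V)$ and dynamical cycles of $V$ supplied by Lemma \ref{collapse-correspondence}, and the polytope structure of $\mathcal{P}_{f,\mathcal{R}}(V)$ inherited as a closed face of $\mathcal{P}_{f,\mathcal{R}}$ by Lemma \ref{P-description}.

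For the inclusion $\mathcal{D}(Q_V,\phi_V)\subset\mathrm{hd}_V(\mathcal{P}_{f,\mathcal{R}}(V))$, I would argue as follows. Since the image of a convex set under an affine linear map is convex, it suffices to realize each generating projective point $\Real\cdot q_{V*}[\hat{C}]$ from Definition \ref{cluster_homology_direction_hull} as the image under $\mathrm{hd}_V$ of some current in $\mathcal{P}_{f,\mathcal{R}}(V)$. Given an abstract periodic trajectory $\hat{C}$ of $X_{f,\mathcal{R}}(V)$, Lemma \ref{collapse-correspondence} produces a unique dynamical cycle $z$ of $V$ with $\hat{C}=\hat{\gamma}_z$. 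I take $\mu_z\in\mathcal{P}_{f,\mathcal{R}}(V)$ to be the normalization of the edge-count measure of $z$, which is a rational projective current fully supported in $V$. By the construction of $\mathrm{hd}_V$ via a homotopy inverse of the restricted collapse map $X_{f,\mathcal{R}}(V)\to V$ followed by $q_{V*}$, one computes directly that $\mathrm{hd}_V(\mu_z)=\Real\cdot q_{V*}[\hat{\gamma}_z]$, as required.

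For the reverse inclusion, I would invoke Lemma \ref{P-description} to identify $\mathcal{P}_{f,\mathcal{R}}(V)$ with the closed face $F_V$ of $\mathcal{P}_{f,\mathcal{R}}$, whose vertices are the elementary projective currents $\mu_C$ associated to simple dynamical cycles $C$ of $V$. Each such $\mu_C$ satisfies $\mathrm{hd}_V(\mu_C)=\Real\cdot q_{V*}[\hat{\gamma}_C]$, which by Definition \ref{cluster_homology_direction_hull} lies in $\mathcal{D}(Q_V,\phi_V)$. Since $\mathcal{P}_{f,\mathcal{R}}(V)$ is the convex hull of these finitely many vertices and $\mathrm{hd}_V$ is affine linear, the image $\mathrm{hd}_V(\mathcal{P}_{f,\mathcal{R}}(V))$ is a subset of the convex hull of points of $\mathcal{D}(Q_V,\phi_V)$, hence of $\mathcal{D}(Q_V,\phi_V)$ itself. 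I do not anticipate any serious obstacle: the only item worth explicit verification is that the edge-count $1$--cycle of a dynamical cycle $z$ in $V$ represents the first homology class of $\hat{\gamma}_z$ under the homotopy equivalence $X_{f,\mathcal{R}}(V)\simeq V$ of Lemma \ref{collapse-correspondence}, which is immediate from the definitions of the collapse map and of $\hat{\gamma}_z$.
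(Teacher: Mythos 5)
Your proof is correct and takes essentially the same approach as the paper: realize the generating directions $\Real\cdot q_{V*}[\hat{C}]$ as images of normalized edge-count currents for the forward inclusion, and use that the vertices of the closed face $\mathcal{P}_{f,\mathcal{R}}(V)$ are the elementary projective currents on $V$ for the reverse. The paper's only stylistic difference is that it notes the argument of Lemma \ref{hd-description} simplifies here because Definition \ref{cluster_homology_direction_hull} is already phrased in terms of abstract periodic trajectories of $X_{f,\mathcal{R}}(V)$, so one never needs to zip to $Q_V$ (no analogue of Lemma \ref{zipping-correspondence}); your explicit appeal to Lemma \ref{collapse-correspondence} is fine and is implicitly used in the paper too via the homotopy equivalence $X_{f,\mathcal{R}}(V)\simeq V$ underlying the definition of $\mathrm{hd}_V$.
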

	
	\begin{proof}
		The argument is similar to Lemma \ref{hd-description} but even more straightforward.
		The simplification comes from Definition \ref{cluster_homology_direction_hull},
		in which we have required $\mathcal{D}(Q_V,\phi_V)$ to be spanned only by 
		those cluster periodic homology directions encoded by $V$.
		Therefore,
		analogue results as Lemmas \ref{zipping-correspondence} and \ref{collapse-correspondence}
		are no longer needed.
		Note that $\mathcal{P}_{f,\mathcal{R}}(V)$ 
		is a closed face of the polytope $\mathcal{P}_{f,\mathcal{R}}$,
		so it is spanned by the elementary projective currents on $V$, (Lemma \ref{P-description}).
		Then by inheriting the previous argument
		we conclude that $\mathcal{D}(Q_V,\phi_V)$ equals the image $\mathrm{hd}_V(\mathcal{P}_{f,\mathcal{R}}(V))$. 
	\end{proof}

	\subsection{Combinatorial description of cluster homology direction hulls}

	\begin{theorem}\label{cluster_D-description}
		Let $f$ be a pseudo-Anosov automorphism of a connected closed orientable surface $S$,
		and $\mathcal{R}$ be a Markov partition of $S$ with respect to $f$.
		Then for any irreducible subgraph $V$ of the transition graph $\tdigraph_{f,\mathcal{R}}$,
		the cluster homology direction hull 
		$\mathcal{D}(Q_V,\phi_V)$ 
		is a polytope of codimension $0$ in $\mathbf{A}(Q_V,\phi_V)$.
		
		Furthermore, suppose that $\mathcal{D}(Q_V,\phi_V)$ is not a single vertex.
		Then for any irreducible subgraph $U$ contained in $V$,
		the polytope $\mathcal{D}(Q_U,\phi_U)$ is not a single vertex
		unless $U$ is a simple dynamical cycle.
		
		(See Definitions \ref{cluster} and \ref{cluster_homology_direction_hull}.)
	\end{theorem}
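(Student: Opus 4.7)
For the first assertion, I would directly invoke the polytope structure of $\mathcal{P}_{f,\mathcal{R}}$ from Lemma \ref{P-description}: the space of cluster projective currents $\mathcal{P}_{f,\mathcal{R}}(V)$ is a closed face of this compact convex polytope, hence a compact convex polytope with finitely many vertices, and Lemma \ref{cluster_hd-description} realizes $\mathcal{D}(Q_V,\phi_V)$ as its affine-linear image under $\mathrm{hd}_V$. For the codimension-$0$ claim, I would use that the simple dynamical cycles of $V$ generate $H_1(V;\Real)$ and---via the $\pi_1$-surjective cluster zipping map $q_V$---generate $H_1(Q_V;\Real)$ as well; combined with $\phi_V\ne0$ (Lemma \ref{Q-description}) this forces the image of $\mathrm{hd}_V$ to span all of $\mathbf{A}(Q_V,\phi_V)$.

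For the second assertion my plan is to argue contrapositively. Supposing $U\subset V$ is irreducible, is not a simple dynamical cycle, and $\mathcal{D}(Q_U,\phi_U)$ is a single vertex, I would aim to deduce that $\mathcal{D}(Q_V,\phi_V)$ is a single vertex. By the codimension-$0$ part just proved, this reduces to the numerical implication $b_1(Q_U)=1\Rightarrow b_1(Q_V)=1$ under the stated hypothesis on $U$. The central input will be the word-hyperbolicity of $\pi_1(M_f)$, from Thurston's hyperbolization of pseudo-Anosov mapping tori, which is inherited by every finitely generated subgroup.

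The first step will be to extract from $b_1(Q_U)=1$ a rigidity statement about the simple dynamical cycles of $U$. Any two such cycles give loops whose classes in $H_1(Q_U;\Real)$ are linearly dependent, and in the hyperbolic ambient group $\pi_1(M_f)$ this dependence should constrain their loxodromic axes so that their zipped closed trajectories in $M_f$ are all iterates of a single primitive closed trajectory $\gamma_h$. The classification in Lemma \ref{zipping-correspondence} then bounds by six the number of simple primitive abstract periodic trajectories of $X_{f,\mathcal{R}}$ covering any given $\gamma_h$, sharply restricting the combinatorial structure available to $U$.

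The hard part will be propagating this restriction from $U$ upward to $V$. My plan is to induct on the finite poset of irreducible subgraphs lying between $U$ and $V$, showing that no intermediate irreducible subgraph can extend by a dynamical cycle whose loop lies outside the cyclic subgroup determined by $\gamma_h$ without violating the cluster Betti-number constraint at the top. I expect the rigidity extraction above to require input beyond generic hyperbolic group theory---since a subgroup of a hyperbolic $3$-manifold group may have rank-one abelianization while being non-cyclic, as for hyperbolic knot complements---and likely to draw on the quasiconvexity of cluster subgroups (cf.\ Lemma \ref{quasiconvex-Q}) together with the fibered-face structure of the enclosing covering mapping torus.
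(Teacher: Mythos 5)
Your first paragraph reproduces the paper's argument for the polytope and codimension-$0$ claims almost verbatim, and it is correct.

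For the second claim, however, your plan has a genuine gap, and you essentially diagnose it yourself. The assertion you want to extract from $b_1(Q_U)=1$ — that all simple dynamical cycles of $U$ zip to iterates of a single primitive closed trajectory $\gamma_h$ of $M_f$ — does \emph{not} follow from rank-one abelianization plus word-hyperbolicity, precisely because of the knot-complement phenomenon you mention. Your stated intention to "draw on the quasiconvexity of cluster subgroups together with the fibered-face structure" is not developed into an argument, and the subsequent "propagate the restriction upward" induction over intermediate irreducible subgraphs is left entirely unspecified. Nor is the six-component bound from Lemma \ref{zipping-correspondence} correct: the purely-corner-exceptional row of the classification table has $2\,\mathrm{pn}(\gamma)/\mathrm{po}(\gamma)$ components, which is not bounded by $6$.

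The paper proves this implication by $3$--manifold topology rather than hyperbolic group theory, and the route is both shorter and actually closes the gap. The hypothesis that $\mathcal{D}(Q_V,\phi_V)$ is not a vertex, i.e.\ $b_1(Q_V)>1$, enters at the very first step: since $b_1(Q_U)=1<b_1(Q_V)$, the map $j_{U,V*}\colon H_1(Q_U;\Real)\to H_1(Q_V;\Real)$ has nontrivial cokernel, whence (by the transfer map) $\pi_1(j_{U,V})$ has \emph{infinite index} in $\pi_1(Q_V)$. Choosing compact aspherical atoroidal models (Lemma \ref{Q-description}), this makes $Q_U$ homotopy equivalent to an infinite cover of the compact $Q_V$, so $b_3(Q_U)=0$ and $Q_U$ has nonempty boundary with no sphere components. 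The half-lives-half-dies inequality $b_1(Q_U)\geq b_1(\partial Q_U)/2$ then forces $\partial Q_U$ to be a single torus; since $M_f$ is a closed hyperbolic $3$--manifold, $\pi_1(M_f)$ contains no $\Integral^2$, so this boundary torus is compressible and hence $Q_U$ is a solid torus. Thus $\pi_1(Q_U)$ is infinite cyclic, i.e.\ the zipped $\pi_1$--image of $X_{f,\mathcal{R}}(U)$ is infinite cyclic. Finally, because distinct primitive periodic trajectories of $M_f$ represent distinct conjugacy classes of maximal infinite cyclic subgroups (via fixed point class theory), combined with the correspondences of Lemmas \ref{zipping-correspondence} and \ref{collapse-correspondence}, $U$ must be a simple dynamical cycle. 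This argument, rather than the loxodromic-axis rigidity you propose, is what you are missing.
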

	
	\begin{proof}
		The cluster homology direction hull $\mathcal{D}(Q_V,\phi_V)$ is 
		a polytope because it is the image of the cluster homology direction map,
		namely, $\mathrm{hd}_V(\mathcal{P}_{f,\mathcal{R}}(V))$,
		and because the space of cluster projective currents
		$\mathcal{P}_{f,\mathcal{R}}(V)$ is a closed face
		of the polytope $\mathcal{P}_{f,\mathcal{R}}$,
		(Lemmas \ref{cluster_hd-description} and \ref{P-description}).
		Since the cluster defining map $q_V\colon X_{f,\mathcal{R}}(V)\to Q_V$ 
		is $\pi_1$--surjective (Definition \ref{cluster}),
		it induces a surjective homomorphism 
		$q_{V*}\colon H_1(X_{f,\mathcal{R}}(V);\Real)\to H_1(Q_V;\Real)$.
		We also observe that the naturally induced projective linear map
		$\mathcal{P}_{f,\mathcal{R}}(V)\to \mathbf{P}(H_1(X_{f,\mathcal{R}}(V);\Real)$
		is an embedding of codimension $0$, 
		(Corollary \ref{P_face_dimension} and Remark \ref{nonwandering_subgraph_homology} (1)).
		As $\mathcal{D}(Q_V,\phi_V)$ is the image of the composite map
		$\mathcal{P}_{f,\mathcal{R}}(V)\to \mathbf{P}(H_1(X_{f,\mathcal{R}}(V);\Real))\to \mathbf{P}(H_1(Q_V;\Real))$,
		we see that $\mathcal{D}(Q_V,\phi_V)$ has codimension $0$ in $\mathbf{A}(Q_V,\phi_V)$,
		(Definition \ref{cluster_homology_direction_hull}).
		
		Suppose for the rest of the proof that $\mathcal{D}(Q_V,\phi_V)$ 
		is not a single vertex. 
		This is equivalent to $b_1(Q_V)>1$,
		as we have the relation
		$$\mathrm{dim}\left(\mathcal{D}(Q_V,\phi_V)\right)=\mathrm{dim}\left(\mathbf{A}(Q_V,\phi_V)\right)=b_1(Q_V)-1.$$
		For any irreducible subgraph $U$ contained in $V$,
		it suffices to argue that $b_1(Q_U)=1$ implies that $U$ is a simple dynamical cycle.
		
		To this end, observe that under the assumption $b_1(Q_U)=1$,
		the homomorphism on homology
		$$j_{U,V*}\colon H_1(Q_U;\Real)\to H_1(Q_V;\Real)$$
		induced by 
		the cluster subordination map $j_{U,V}$	must have positive-dimensional cokernel,
		(see Definition \ref{cluster}). 
		Therefore, with respect to any compatibly chosen auxiliary basepoints,
		the image of the induced homomorphism 
		$\pi_1(j_{U,V})\colon\pi_1(Q_U)\to\pi_1(Q_V)$
		has infinite index in $\pi_1(Q_V)$.
		
		We choose models for $Q_U$ and $Q_V$ 
		which are aspherical and atoroidal orientable compact $3$--manifolds (Lemma \ref{Q-description}).
		As a cluster $Q_U$ is also homotopy equivalent to an infinite cover of $Q_V$,
		which corresponds to the image of $\pi_1(j_{U,V})$,
		(see Definition \ref{cluster}).
		It follows that $b_3(Q_U)$ must vanish and therefore
		$Q_U$ has nonempty boundary.
		The boundary $\partial Q_U$ contains no sphere components,
		for otherwise the aspherical $3$--manifold $Q_U$ would have to be a $3$--ball,
		contrary to the nontriviality of $\phi_U$	(Lemma \ref{Q-description}).
		Using the well-known inequality in $3$--manifold topology:
		$$b_1(Q_U)\geq b_1(\partial Q_U)/2,$$ 
		(which is an exercise of the Poincar\'e--Lefschetz duality,)
		we see that $b_1(Q_U)=1$ occurs only if
		$\partial Q_U$ is connected and is homeomorphic to a torus.
		As $M_f$ is closed, atoroidal and aspherical, we infer that $Q_U$ must be a solid torus
		by $3$--manifold topology.
		This means that
		the restriction of the zipping map (\ref{zipping-map}),
		$X_{f,\mathcal{R}}(U)\to M_f$,
		has infinite cyclic $\pi_1$--image.
		
		To finish the proof, we invoke the fact that every primitive periodic trajectory of $M_f$
		represents a distinct conjugacy class of maximal infinite cyclic subgroups in $\pi_1(M_f)$.
		This is implied, for example, 
		by the fact that distinct fixed points of $f^m$ represent distinct fixed point classes,
		for any given $m\in\Natural$,	(see Remark \ref{fixed-point-index}).
		According to the mapping torus approach \cite[Definition 4.3]{Jiang-survey},
		any pair of fixed points $p,q$ of $f$
		represent the same fixed point class if and only if
		the loops $\gamma_1(f;p)$ and $\gamma_1(f;q)$	are freely homotopic to each other in $M_f$.
		The above fact about primitive periodic trajectories follows immediately
		from this characterization, by considering $f^m$ for all $m\in\Natural$.
		As $\pi_1(X_{f,\mathcal{R}}(U))$ projects onto an infinite cyclic subgroup of $\pi_1(M_f)$
		under zipping, we see that 
		$U$ has to be a simple dynamical cycle,
		because of the cycle--trajectory correspondences,
		(Lemmas \ref{zipping-correspondence} and \ref{collapse-correspondence}).
		This completes the proof.
	\end{proof}
	
	We close this section with the following useful notion.
	
	\begin{definition}\label{support_subgraph}
		Let $f$ be a pseudo-Anosov automorphism of a connected closed orientable surface and
		$\mathcal{R}$ be a Markov partition for $f$. 
		\begin{enumerate}
		\item
		For any cluster $Q_V$ subordinate to the mapping torus $M_f$
		and any closed face $E$ of the cluster homology direction hull $\mathcal{D}(Q_V,\phi_V)$,
		the \emph{support subgraph} for $E$,
		denoted as
		$$V[E]\subset V,$$
		is defined to be the nonwandering subgraph of the cluster transition graph $V$
		which corresponds to the closed face $\mathrm{hd}_{V}^{-1}(E)$ of 
		the space of cluster projective currents $\mathcal{P}_{f,\mathcal{R}}(V)$.
		\item
		We say that a closed face $E$ of $\mathcal{D}(Q_V,\phi_V)$ is 
		\emph{purely ordinary} if the flow-box subcomplex
		$X_{f,\mathcal{R}}(V[E])$ over the support subgraph $V[E]$
		contains no exceptional abstract periodic trajectories,
		or \emph{purely exceptional} if $X_{f,\mathcal{R}}(V[E])$
		contains no ordinary abstract periodic trajectories.
		(See Lemmas \ref{P-description} and \ref{cluster_hd-description}.)
		\end{enumerate}
	\end{definition}
	
	Note that for any purely exceptional closed face $E$ 
	of a cluster homology direction hull $\mathcal{D}(Q_V,\phi_V)$,	
	a primitive dynamical cycle $z$ of the cluster subgraph $V$ 
	is carried by the support subgraph $V[E]$
	if and only if the homology class $q_{V*}[\hat{\gamma}_z]\in H_1(Q_V;\Integral)_{\mathtt{free}}$
	lies in the linear cone over $E$, (see Definition \ref{cluster}).
	If $E$ is purely exceptional, the support subgraph $V[E]$ is necessarily 
	a union of mutually disjoint simple dynamical cycles
	due to the finiteness of exceptional abstract periodic trajectories
	(Lemma \ref{X-description}).
	Trivial examples of purely ordinary and purely exceptional closed faces
	occur for clusters associated to simple dynamical cycles,
	as the $0$--dimensional closed face of cluster homology direction hull.
	Closed faces may also be neither purely exceptional nor purely ordinary.
	The homology direction hull $\mathcal{D}(M_f,\phi_f)$ itself,
	as the codimension--$0$ closed face	for the cluster $M_f$ associated to $\tdigraph_{f,\mathcal{R}}$,
	provides a simple example.

\section{Reciprocal characteristic polynomials for clusters}\label{Sec-rcp}
	In this section, we continue the discussion of Section \ref{Sec-cluster},
	and introduce a calculation gadget for clusters
	called the reciprocal characteristic polynomial.
	At the end of this section, we exhibit three motivating examples
	for our main goal, 
	(compare Subsection \ref{Subsec-vertex-dominant}).
	
	Let $f$ be a pseudo-Anosov automorphism of a connected closed orientable surface and
	$\mathcal{R}$ be a Markov partition for $f$.
	To any primitive dynamical cycle $z$ of the transition graph $\tdigraph_{f,\mathcal{R}}$,
	we associate an integer
	\begin{equation}\label{sgn-z}
	\mathrm{sgn}(z)\in\{\pm1\}
	\end{equation}
	by the following rule: 
	Adopting Notation \ref{dc-apt-pt},
	we assign $\mathrm{sgn}(z)$ to be $1$
	if the abstract periodic trajectory $\hat{\gamma}_z$ is exceptional.
	Otherwise, as $\hat{\gamma}_z\colon S^1\to X_{f,\mathcal{R}}$ is embedded in the ordinary part,
	we observe that the tangent bundle of $X_{f,\mathcal{R}}$ is well-defined over $\hat{\gamma}_z(S^1)$.
	The tangent bundle over $\hat{\gamma}_z(S^1)$
	splits as the direct sum of three line subbundles,
	parallel to the ordinary abstract flow direction, the horizontal direction, and the vertical direction,
	respectively. The first one is always canonically oriented,
	whereas the latter two are either both orientable, or both nonorientable.
	We assign $\mathrm{sgn}(z)$ to be $1$ for the orientable case, or $-1$ for the nonorientable case.
	
	\begin{definition}\label{rcp}
	Let $f$ be a pseudo-Anosov automorphism of a connected closed orientable surface and
	$\mathcal{R}$ be a Markov partition for $f$.
	For any cluster $Q_V$ subordinate to $M_f$, and
	for any subgraph $W$ of the cluster transition graph $V$,
	we introduce the \emph{reciprocal characteristic polynomial} of $W$ 
	with respect to $Q_V$
	by the following expression, 
	in the positive half completion of $\Complex H_1(Q_V;\Integral)_{\mathtt{free}}$ with respect to the $\phi_V$--grading:
	$$\kappa^\fabcover_{f,\mathcal{R}}\left(W;Q_V\right)=
		\prod_{z\textrm{ primitive of }W}\left( 1-\mathrm{sgn}(z)\cdot q_{V*}\left[\hat{\gamma}_z\right]\right).
	$$
	Here the (possibly infinite) product is taken over all the primitive dynamical cycles $z$ of $W$,
	and $\hat{\gamma}_z$ is defined in Notation \ref{dc-apt-pt}.
	The \emph{cluster reciprocal characteristic polynomial} for $Q_V$ is defined
	using the cluster transition graph $V$, denoted particularly as	
	$$\kappa^\fabcover_{f,\mathcal{R}}\left( Q_V\right)=\kappa^\fabcover_{f,\mathcal{R}}\left(V;Q_V\right).$$
	(See Definition \ref{cluster}).
	\end{definition}
	
	The following description of $\kappa^\fabcover_{f,\mathcal{R}}(W;Q_V)$
	and its proof explain the name.
	
	\begin{lemma}\label{rcp-finiteness}
	Adopting the notations of Definition \ref{rcp},
	the reciprocal characteristic polynomial $\kappa^\fabcover_{f,\mathcal{R}}(W;Q_V)$ 
	is of the form	$1+\sum_u a_uu$
	where the summation is taken for all $u\in H_1(Q_V;\Integral)_{\mathtt{free}}$ 
	with the property $\Real u\in\mathcal{D}(Q_V,\phi_V)$ and $\phi_V(u)>0$,
	and where at most finitely many coefficients $a_u\in\Complex$ are nonzero.
	In particular, $\kappa^\fabcover_{f,\mathcal{R}}(W;Q_V)$ 
	can be identified as an element of $\Complex H_1(Q_V;\Integral)_{\mathtt{free}}$.
	\end{lemma}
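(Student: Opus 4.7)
My plan is to identify $\kappa^\fabcover_{f,\mathcal{R}}(W; Q_V)$, up to an explicit finite correction, with the characteristic polynomial of a finite twisted transition matrix, thereby exhibiting it as an element of the group algebra. The principal tool is the classical Bowen--Lanford (alternatively Ihara--Bass) identity for directed graphs: for any finite directed graph with edge weights $w(e)$ valued in a commutative algebra, the twisted transition matrix $A$ defined by $A(v,v') = \sum_{e: v \to v'} w(e)$ satisfies
$$\det(I - A) \;=\; \prod_{\gamma \text{ primitive closed walk}} \bigl(1 - w(\gamma)\bigr),$$
where $w(\gamma) = \prod_{e \in \gamma} w(e)$. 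This follows by comparing $-\log\det(I - A) = \sum_{n \geq 1} \mathrm{tr}(A^n)/n$ with the Euler-product decomposition of closed walks into iterated primitive ones.

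To apply this, I would first fix an arbitrary orientation of the horizontal leaf direction on each birectangle $R_i$, yielding an edge sign $\sigma(e) \in \{\pm 1\}$ on every edge $e_{ij}$ of the transition graph according to whether $f|_{R_{ij}}$ preserves or reverses horizontal orientation. Next I would choose a spanning tree $\mathcal{T}$ of $W$ with basepoint and, for each edge $e$ of $W$, assemble the loop $\ell_e$ by concatenating the tree path to $\mathrm{ini}(e)$, the edge $e$, and the tree path from $\mathrm{ter}(e)$; then set $g_e = q_{V*}[\ell_e] \in H_1(Q_V;\Integral)_{\mathtt{free}}$. I then form the matrix $A_W(v,v') = \sum_{e: v \to v'} \sigma(e) g_e$ over $\Complex H_1(Q_V;\Integral)_{\mathtt{free}}$ and apply the identity above. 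The spanning-tree contributions cancel around any closed walk, so the identity reduces to $\det(I - A_W) = \prod_{z \text{ primitive of } W} \bigl(1 - \sigma(z)\cdot q_{V*}[\hat{\gamma}_z]\bigr)$ with $\sigma(z) := \prod_{e \in z}\sigma(e)$.

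By direct inspection of the horizontal--vertical bundle splitting along $\hat{\gamma}_z$, the edgewise product $\sigma(z)$ coincides with $\mathrm{sgn}(z)$ whenever $z$ is ordinary. For the finitely many exceptional primitive cycles (Lemma \ref{X-description}), the convention $\mathrm{sgn}(z) = +1$ may disagree with $\sigma(z)$. Letting $\mathcal{E}$ denote the finite set of exceptional primitive cycles of $W$ at which $\sigma(z) = -1$, I would multiply both sides of the defining product for $\kappa^\fabcover$ by $\prod_{z \in \mathcal{E}}(1 + q_{V*}[\hat{\gamma}_z])$ to obtain
$$\kappa^\fabcover_{f,\mathcal{R}}(W; Q_V) \cdot \prod_{z \in \mathcal{E}}\bigl(1 + q_{V*}[\hat{\gamma}_z]\bigr) \;=\; \det(I - A_W) \cdot \prod_{z \in \mathcal{E}}\bigl(1 - q_{V*}[\hat{\gamma}_z]\bigr).$$
The right-hand side is a polynomial over $\Complex H_1(Q_V;\Integral)_{\mathtt{free}}$, while the left-hand correction factor has constant term $1$ and is therefore invertible in the $\phi_V$-graded completion, uniquely determining $\kappa^\fabcover$ as a rational expression there; showing that this expression is actually a polynomial (equivalently, that the correction polynomial on the left divides the right-hand side in the group ring) completes the identification.

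Each nonzero term in the resulting finite expansion has the shape $a_u \cdot u$ with $u = \sum_j q_{V*}[\hat{\gamma}_{z_j}]$ for some distinct primitive cycles $z_1, \ldots, z_k$ of $W$. Each summand $q_{V*}[\hat{\gamma}_{z_j}]$ satisfies $\phi_V(q_{V*}[\hat{\gamma}_{z_j}]) > 0$ because $\gamma_{z_j}$ represents a forward traversal of the suspension flow and so pairs positively with $\phi_f$, hence with $\phi_V = i_V^*\phi_f$; by Definition \ref{cluster_homology_direction_hull} its projective class lies in $\mathcal{D}(Q_V,\phi_V)$. Passing to the $\phi_V = 1$ affine parametrization, $\Real\cdot u$ becomes the affine convex combination of the $\Real\cdot q_{V*}[\hat{\gamma}_{z_j}]$ weighted by $\phi_V(q_{V*}[\hat{\gamma}_{z_j}])$, so convexity of $\mathcal{D}(Q_V,\phi_V)$ yields $\Real\cdot u \in \mathcal{D}(Q_V,\phi_V)$ and $\phi_V(u) > 0$; the constant term is manifestly $1$. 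The expected main obstacle is the polynomiality step in the previous paragraph: confirming that the exceptional-cycle correction really divides the determinant-side polynomial, which likely requires either a careful divisibility argument in $\Complex H_1(Q_V;\Integral)_{\mathtt{free}}$ or a reformulation of the Bowen--Lanford setup on a refined graph where $\sigma$ matches $\mathrm{sgn}$ on the nose.
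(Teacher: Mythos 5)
Your approach is essentially the same as the paper's: both assign edge signs to the transition graph from chart orientations on the birectangles, form a twisted transition matrix, and use the trace--determinant identity
$\det(\mathbf{1}-A) = \exp\bigl(-\sum_{m}\mathrm{tr}(A^m)/m\bigr)$
to convert the defining infinite product over primitive cycles into a finite determinant. The paper works with the matrix over $\Complex C_1(W)$ and observes the determinant lands in $\Complex Z_1(W)\cong\Complex H_1(W;\Integral)$, then pushes forward under $q_{V*}$; your spanning-tree device to produce group elements $g_e$ is an equivalent, if slightly clunkier, alternative (and for disconnected $W$ you would need a spanning forest, though abelianness of the target makes this cosmetic).

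The genuine gap is in your handling of the exceptional primitive cycles. You verify $\sigma(z)=\mathrm{sgn}(z)$ for ordinary $z$ but then allow that they ``may disagree'' for exceptional $z$, introduce a rational correction factor indexed by a set $\mathcal{E}$ of supposedly bad cycles, and leave the resulting divisibility question open---you yourself flag it as the main obstacle. In fact $\mathcal{E}$ is empty: the edgewise product $\sigma(z)$ is always $+1$ when $z$ is exceptional, which is precisely the content of the assertion ``it is clear that $\mathrm{sgn}(z)$ as (\ref{sgn-z}) equals $\mathrm{sgn}(e_1)\times\cdots\times\mathrm{sgn}(e_m)$'' in the paper's proof, applied to exceptional $z$. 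The geometric input you are missing is the covering-degree information of Lemma~\ref{zipping-correspondence}: every exceptional primitive abstract periodic trajectory $\hat{\gamma}_z$ covers the underlying primitive periodic trajectory $\gamma\subset M_f$ with degree exactly $\mathrm{po}(\gamma)$, so traversing $z$ once corresponds to applying $f^{m\cdot\mathrm{po}(\gamma)}$ at the underlying periodic point ($m$ its $f$-period). By the definition of $\mathrm{po}(\gamma)$ in (\ref{pn-po}), $f^{m\cdot\mathrm{po}(\gamma)}$ fixes every prong of the invariant foliations there, and since it shrinks the stable direction and stretches the unstable, it fixes each prong pointing away from the periodic point; hence it restores the chart orientations of the starting birectangle and $\sigma(z)=+1$. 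Once this is in place your argument closes with no correction, no divisibility step, and no refined graph needed.

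Your final paragraph verifying the constant term and that each exponent $\Real u$ lies in $\mathcal{D}(Q_V,\phi_V)$ with $\phi_V(u)>0$ is correct and somewhat more explicit than the paper's ``the asserted form follows the defining expression.''
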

	
	\begin{proof}
		We make the following auxiliary choices.
		Choose an orientation for the distinguished fiber ${S}$, which is preserved under ${f}$.
		For each birectangle ${R}_i$ of the Markov partition $\mathcal{R}=\{R_1,\cdots,R_k\}$,
		choose a Cartesian chart $(x_i,y_i)\colon {R}_i\to \Real\times\Real$,
		such that $\ud x_i\wedge\ud y_i$ agrees with the chosen orientation of $S$.
		We also require $|\ud x_i|={\mu}^{\mathtt{s}}$ and $|\ud y_i|={\mu}^{\mathtt{u}}$.
		(So $R_i$ is parametrized as a product rectangle in the Cartesian plane.)
		Note that ${f}^*\ud x_j=\pm\lambda\ud x_i$ and ${f}^*\ud y_j=\pm\lambda^{-1}\ud y_i$
		hold in any nonempty intersection $\interior(R_i)\cap {f}^{-1}(\interior(R_j))$,
		and that the signs are either both positive or both negative.
		Therefore, the auxiliary data 
		determines a function on the set of directed edges of the transition graph
		$\mathrm{sgn}\colon \mathrm{Edge}(\tdigraph_{{f},{\mathcal{R}}})\to\{\pm1\}$,
		as follows:
		For any nonempty intersection $\interior(R_i)\cap f^{-1}(\interior(R_j))$
		and the corresponding directed edge $e_{ij}$,
		$\mathrm{sgn}(e_{ij})=\pm1$	
		is assigned according to the sign in the above equations.
		
		Given any subgraph $W$ of the cluster transition graph $V$,
		denote by $C_1({W})$ the free abelian group of the cellular $1$--chains of ${W}$. 
		The commutative group algebra $\Complex C_1(W)$ can be naturally regarded as 
		a complex multivariable Laurent polynomial ring with indeterminates in $\mathrm{Edge}(W)$.
		Endow $\Complex C_1({W})$ with the natural $\Integral$--grading	
		where all the indeterminants $e\in\mathrm{Edge}(W)$ have degree $1$.
				
		Define a square matrix $\Phi(W)$ of size $|\mathcal{R}|$
		with entries in $\Complex C_1({W})$,	as follows:
		The $(i,j)$--entry of $\Phi(W)$ equals $\mathrm{sgn}(e_{ij})\cdot e_{ij}$ if 
		there is a directed edge $e_{ij}\in\mathrm{Edge}({W})$, or $0$ otherwise.
		Note that the nonzero entries of $\Phi(W)^m$ are all homogeneous of degree $m$,
		for any $m\in\Natural$.
		Moreover,
		the following identity holds in the positive-half graded completion of $\Complex C_1({W})$:
		\begin{equation}\label{identity_det_W}
		\mathrm{det}\left(\mathbf{1}-\Phi(W)\right)
		=
		\exp\left(-\sum_{m\in\Natural}\frac{\mathrm{tr}\left(\Phi(W)^m\right)}{m}\right),
		\end{equation}
		where $\mathbf{1}$	stands for the identity matrix of size $|\mathcal{R}|$ over $\Complex C_1(W)$.
		In fact, one way to see (\ref{identity_det_W}) is as follows:
		Treat the square matrix $\mathbf{1}-\Phi(W)$	
		as over an extended graded ring $\Complex C_1({W})[t,t^{-1}]$,	
		where the new indeterminant $t$ has degree $1$.
		The extended graded ring is also a Laurent polynomial ring $\Lambda[t,t^{-1}]$,
		over the subring $\Lambda$ that consists of all the degree--$0$ homogeneous elements.
		In any splitting field $K$ for 
		the characteristic polynomial of the matrix $\Phi(W)t^{-1}$ over $\Lambda$,
		the usual splitting arugment can be applied,
		so (\ref{identity_det_W}) follows from the identity 
		$1-h t=\exp\left(-\sum_{m\in\Natural} \frac{h^mt^m}{m}\right)$
		in $K[[t]]$ for all $h\in K$.
		
		Further manipulation of (\ref{identity_det_W}) yields the following identity,
		in the positive-half graded completion of $\Complex C_1({W})$:
		\begin{equation}\label{identity_exp_tr_W}
		\mathrm{det}\left(\mathbf{1}-\Phi(W)\right)
		=
		\prod_{z\textrm{ primitive of }W}\left(1-\mathrm{sgn}(z)\cdot z\right).
		\end{equation}
		The product in (\ref{identity_exp_tr_W}) is taken over all the primitive dynamical cycles $z$ of $W$.
		We adopt the multiplicative notation for the abelian group $C_1(W)$,
		and understand $z$ as a cellular $1$--cycle in the factors.
		In fact, one way to see (\ref{identity_exp_tr_W}) is as follows:
		Denote by $\mathrm{DC}_m(W)$ the set of dynamical cycles of $W$ that have combinatorial length $m$,
		and by $\mathrm{PDC}(W)$ the set of primitive dynamical cycles of $W$.
		For any $z\in\mathrm{DC}_m(W)$, denote by $\mathrm{div}(z)$ the divisibility of $z$,
		(that is, the degree of which $z$ is a cyclic cover of a primitive dynamical cycle).		
		Then the number of different closed dynamical paths
		that represent $z$ (by forgetting the basepoint) equals $m/\mathrm{div}(z)$.
		The traces of $\Phi(W)^m$ can be computed in $\Complex C_1(W)$ as:
		$$\mathrm{tr}(\Phi(W)^m)=\sum_{z\in\mathrm{DC}_m(W)} \frac{m\times\mathrm{sgn}(z)}{\mathrm{div}(z)}\cdot z.$$
		This is because each closed path that represents $z$ contributes $\mathrm{sgn}(z)\cdot z$ to the trace,
		by definition.
		Therefore, in the positive half completion of $\Complex C_1(W)$ with respect to the $\Integral$--grading,
		we compute:
		\begin{eqnarray*}
		\log\mathrm{det}\left(\mathbf{1}-\Phi(W)\right)&=&
		-\sum_{m\in\Natural}\sum_{z\in\mathrm{DC}_m(W)} \frac{\mathrm{sgn}(z)}{\mathrm{div}(z)}\cdot z\\
		&=&
		-\sum_{z\in\mathrm{PDC}(W)}\sum_{j\in\Natural} \frac{\mathrm{sgn}(z)^j\cdot z^j}{j}\\
		&=&
		\sum_{z\in\mathrm{PDC}(W)} \log(1-\mathrm{sgn}(z)\cdot z).
		\end{eqnarray*}
		Then (\ref{identity_exp_tr_W}) follows by taking exponential.
		Note that the equalities in the above computation are all understood in the graded formal series sense.
		This means that we compare the homogeneous parts of both sides degree by degree.
		In particular, there are only finitely many dynamical cycles involved in each time of comparison.
		
		From (\ref{identity_exp_tr_W}) we see	that $\mathrm{det}(\mathbf{1}-\Phi(W))$ 
		lies in the subalgebra $\Complex Z_1({W})$
		of $\Complex C_1({W})$, where $Z_1({W})$ is the abelian subgroup of $C_1({W})$
		that consists of all the cellular $1$--cycles of ${W}$.
		In fact, the left-hand side shows that it has degree at most $|\mathcal{R}|$,
		while the right-hand side shows that up to this degree
		it agrees with a $\Rational$--coefficient polynomial function
		of the primitive dynamical cycles in $W$ of length at most $|\mathcal{R}|$.
		
		As ${W}$ is a cell $1$--complex 
		homotopy equivalent to $X_{{f},{\mathcal{R}}}({W})$ (Lemma \ref{collapse-correspondence}),
		there are canonical isomorphisms of abelian groups
		\begin{equation}\label{isomorphism_Z}
		Z_1({W})\cong H_1({W};\Integral)\cong H_1({W};\Integral)_{\mathtt{free}}
		\cong H_1(X_{{f},{\mathcal{R}}}({W});\Integral)_{\mathtt{free}},
		\end{equation}
		under which $z$ is identified with $[\hat{\gamma}_z]$ by Notation \ref{dc-apt-pt}.
		Via the cluster zipping map $q_V$, we see that $\kappa^\fabcover_{f,\mathcal{R}}(W;Q_V)$
		lies in $\Complex H_1(Q_V;\Integral)_{\mathtt{free}}$.
		Then the asserted form follows the defining expression.
	\end{proof}
	
	\begin{remark}\label{remark_rcp-finiteness}
	The algebraic manipulations in our proof of Lemma \ref{rcp-finiteness} 
	is essentially equivalent to \cite[Section 2]{Fried-flowEqv}.
	In fact, computations with formal logarithms are done there
	for homological zeta functions associated with Axiom A flows,
	(see the arguments for Proposition 2 thereof).
	What becomes different here is only the context, 
	as our cluster subgraph $W$ does not arise  \textit{a priori} from an actual flow.
	\end{remark}
	
	For any cluster $Q_V$ subordinate to $M_f$ and for any subgraph $W$ of the cluster transition graph $V$,
	enumerate by $U_1,\cdots,U_r$ all the maximal irreducible subgraphs of $W$,
	(also known as the \emph{strong components} of $W$).
	Then we obtain the \emph{product formula}:
	\begin{equation}\label{kappa-product}
	\kappa^\fabcover_{f,\mathcal{R}}\left(W;Q_V\right)=
	\kappa^\fabcover_{f,\mathcal{R}}\left(U_1;Q_V\right)\times\cdots\times\kappa^\fabcover_{f,\mathcal{R}}\left(U_r;Q_V\right).
	\end{equation}
	For any other cluster $Q_U$ subordinate to $Q_V$, 
	suppose that a subgraph $W$ of $V$ is contained in the cluster transition graph $U$.
	Then we obtain the \emph{induction formula}:
	\begin{equation}\label{kappa-induce}
	\kappa^\fabcover_{f,\mathcal{R}}\left(W;Q_V\right)=j_{U,V*}\left(\kappa^\fabcover_{f,\mathcal{R}}\left(W;Q_U\right)\right)
	\end{equation}
	where $j_{U,V*}\colon \Complex H_1(Q_U;\Integral)_{\mathtt{free}}\to \Complex H_1(Q_V;\Integral)_{\mathtt{free}}$,
	is the group-algebra homomorphism induced by cluster subordination.
	Both (\ref{kappa-product}) and (\ref{kappa-induce}) are immediate consequences of Definition \ref{rcp}.
	
	For any cluster $Q_V$ subordinate to $M_f$, and any subgraph $W$ of the cluster transition graph $V$,
	write $\kappa^\fabcover_{f,\mathcal{R}}(W;Q_V)=1+\sum_u\,a_uu$ as in Lemma \ref{rcp-finiteness}.
	For any closed face $E$ of the cluster homology direction hull $\mathcal{D}(Q_V,\phi_V)$,
	we introduce the \emph{$E$--part} of 
	the reciprocal characteristic polynomial $\kappa^\fabcover_{f,\mathcal{R}}(W;Q_V)$,
	denoted as
	$\kappa^\fabcover_{f,\mathcal{R}}(W;Q_V)[E]\in\Complex H_1(Q_V;\Integral)_{\mathtt{free}}$,
	to be the sum 
	$1+\sum_{u\textrm{ over }E}\,a_u\cdot u$
	where the summation is taken over all $u\in H_1(Q_V;\Integral)_{\mathtt{free}}$
	in the $\phi_V$--positive linear cone over $E$.
	Then we obtain the \emph{face formula}:
	\begin{equation}\label{kappa-face}
	\kappa^\fabcover_{f,\mathcal{R}}\left(W;Q_V\right)[E]=\kappa^\fabcover_{f,\mathcal{R}}\left(W\cap V[E];Q_V\right)
	\end{equation}
	where $V[E]$ is the support subgraph for $E$.	
			
	\begin{example}\label{kappa-simple}
	For any cluster $Q_C$ associated to a simple dynamical cycle $C$
	of the transition graph $\tdigraph_{f,\mathcal{R}}$,
	the cluster reciprocal characteristic polynomial 
	$\kappa^\fabcover_{f,\mathcal{R}}(Q_C)$ equals $1- \mathrm{sgn}(C)\cdot u$,
	where $u$ is the generating element 
	of the infinite cyclic group $H_1(Q_V;\Integral)_{\mathtt{free}}$
	determined by $\phi_V(u)=|\mathrm{Edge}(C)|$,
	and where $\mathrm{sgn}(C)=\pm1$ is determined by the fundamental cycle of $C$.
	In particular, $\kappa^\fabcover_{f,\mathcal{R}}(Q_C)$ is not the constant polynomial $1$.
	\end{example}
		
	\begin{example}\label{kappa-zeta}
	For the cluster $M_f$ associated to the transition graph $\tdigraph_{f,\mathcal{R}}$ itself,
	the cluster reciprocal characteristic polynomial 
	$\kappa^\fabcover_{f,\mathcal{R}}(M_f)$ equals the multivariable Lefschetz zeta function $\zeta^\fabcover_f$
	up to finitely many cyclotomic polynomial factors determined
	by the exceptional periodic trajectories.
	To be precise, 
	by (\ref{mLzeta-product}), Lemma \ref{zipping-correspondence}, and Definition \ref{rcp},
	we obtain the formula
	$$\kappa^\fabcover_{f,\mathcal{R}}(M_f)=\zeta^\fabcover_f\times \prod_{\gamma\textrm{ primitive}} p_\gamma\left([\gamma]\right)$$
	where $p_\gamma$ are polynomials as indicated by Table \ref{correction-factor-table} below, 
	(see (\ref{pn-po}), Lemma \ref{zipping-correspondence}, and Table \ref{degree-table} for the notations).
	The product is taken over all the primitive periodic trajectories $\gamma$ of $M_f$,
	which is essentially finite as we ignore the trivial factors $1$.
	In particular, for any \emph{ordinary} closed face $E$ of $\mathcal{D}(M_f,\phi_f)$,
	we have an equality between the $E$--parts
	$$\kappa^\fabcover_{f,\mathcal{R}}(M_f)[E]=\zeta^\fabcover_f[E]$$
	by (\ref{kappa-face}), (see Definitions \ref{homology_directions} and \ref{support_subgraph}).
	\end{example}
	

	\begin{table}[h]
		{\footnotesize
		\begin{tabular}{l c c}
		\hline
		pre-zipping type combination & the polynomial $p_\gamma(a)$  & description of $\gamma$ \\
		\hline
		$\mathrm{I}^{\times1}$ & $1$  & ordinary\\
		$\mathrm{SH}^{\times m}$ or $\mathrm{SV}^{\times n}$ &  $1-a$ & purely side exceptional \\
		$\mathrm{KL}^{\times l}+\mathrm{KR}^{\times l}+\left(\mathrm{SH}^{\times m}\right)+\left(\mathrm{SV}^{\times n}\right)$
		&  $(1-a)\times \left(1-a^{\mathrm{po}(\gamma)}\right)^l$ & mixed or purely corner exceptional\\
		\hline
		\end{tabular}
		}
		\bigskip
		\caption{
		}\label{correction-factor-table}
	\end{table}

	\begin{example}\label{kappa-sequence}
	Consider a subordination sequence of clusters
	$$\xymatrix{
	Q_d \ar[r]^-{j_d} & \cdots \ar[r] & Q_1 \ar[r]^-{j_1} & Q_0
	}$$
	associated to an inclusion sequence of cluster transition graphs
	$V_d\subset\cdots\subset V_1\subset V_0$, 
	which are irreducible subgraphs of the transition graph $\tdigraph_{f,\mathcal{R}}$.
	Suppose the following conditions are all satisfied:
	\begin{itemize}	
	\item For $n=1,\cdots,d$, there is a closed face $E_n$ of the cluster homology hull
	$\mathcal{D}(Q_{n-1},\phi_{n-1})$ such that $V_n$ is a maximal irreducible subgraph of $V_{n-1}[E_n]$.
	\item For $n=1,\cdots,d$, the induced affine linear map
	$\mathcal{D}(j_{n})\colon \mathcal{D}(Q_n,\phi_n)\to \mathcal{D}(Q_{n-1},\phi_{n-1})$
	is an embedding.
	\item The subgraph $V_d$ is a simple dynamical cycle.
	\end{itemize}
	Therefore, we have a sequence of affine linear embeddings of affine linear polytopes
	$$\xymatrix{
	\mathcal{D}_d \ar[r] & E_{d} \ar[r] & \cdots \ar[r] 
	&	\mathcal{D}_1 \ar[r] & E_1 \ar[r] & \mathcal{D}_0
	}$$
	where $\mathcal{D}_n$ stands for $\mathcal{D}(Q_n,\phi_n)$ for $n=0,1,\cdots,d$
	and where $\mathcal{D}_d$ is a single vertex (Theorem \ref{cluster_D-description}).
	
	The third condition implies $\kappa^\fabcover_{f,\mathcal{R}}(Q_d)\neq1$ by Example \ref{kappa-simple}.
	Using the first two conditions,	we obtain $\kappa^\fabcover_{f,\mathcal{R}}(Q_{d-1})\neq1$,
	because $j_{d*}(\kappa^\fabcover_{f,\mathcal{R}}(Q_d))$ divides the $E_{d}$--part of
	$\kappa^\fabcover_{f,\mathcal{R}}(Q_{d-1})$ in $\Complex H_1(Q_{d-1};\Integral)_{\mathtt{free}}$
	by the formulas (\ref{kappa-product}), (\ref{kappa-induce}), and (\ref{kappa-face}).
	Repeating likewise, we obtain $\kappa^\fabcover_{f,\mathcal{R}}(Q_{d-2})\neq1$, 
	and $\kappa^\fabcover_{f,\mathcal{R}}(Q_{d-3})\neq1$, and so on.
	In the end, we see that the above conditions guarantee $\kappa^\fabcover_{f,\mathcal{R}}(Q_0)\neq1$, 
	and indeed, $\kappa^\fabcover_{f,\mathcal{R}}(Q_0)[E_1]\neq1$.
	\end{example}

\section{Behavior under finite covering}\label{Sec-covering}
	In this section, we extend our theory of clusters to the covering setting.
	Since all the former arguments work almost directly for covering mapping tori
	with respect to the lifted Markov partition of the distinguished fiber,
	we provide a summary about how the associated objects change under passage to
	a finite cover of the mapping torus.
	For supplementary details of the verification, see Remark \ref{remark-FLP}.
	
	Let $f$ be a pseudo-Anosov map of a connected closed orientable surface $S$.
	Denote by $M_f$ the mapping torus of $f$ and $\phi_f$ the distinguished cohomology class of $M_f$.
	For any finite cover $M'$ of $M_f$, 
	declare $M'$ as a covering mapping torus $M_{f'}$ over $M_f$ according to Convention \ref{covering-mapping-torus}.
		
	Given any Markov partition $\mathcal{R}$ of $S$ with respect to $f$,
	there is an induced Markov partition $\mathcal{R}'$ of $S'$ with respect to $f'$,
	namely, whose birectangles are precisely
	all the lifts of the birectangles from $\mathcal{R}$.
	We obtain derived objects in the same way as before.
	These include
	the flow-box complex $X_{f',\mathcal{R}'}$, 
	the transition graph $\tdigraph_{f',\mathcal{R}'}$,
	and the polytope of projective currents $\mathcal{P}_{f',\mathcal{R}'}$,
	(Definitions 
	\ref{abstract_flow-box_complex}, \ref{transition_digraph},
	and \ref{space_of_projective_currents}).
	The zipping map, and the collapse map,
	and the homology direction map are associated in the same way as before,
	(see (\ref{zipping-map}), (\ref{collapse-map}), and (\ref{hd-map})).
	The derived objects and the associated maps can be 
	described by the same statements as
	Lemmas \ref{X-description}, \ref{zipping-correspondence}, \ref{T-description}, \ref{collapse-correspondence},
	\ref{P-description}, and \ref{hd-description}.
	In particular, Theorem \ref{D-description} holds true for the covering mapping torus $M_{f'}$,
	so the homology direction hull $\mathcal{D}(M_{f'},\phi_{f'})$ 
	for the covering mapping torus (Definition \ref{homology_directions}) 
	is a polytope of codimension $0$ in $\mathbf{A}(M_{f'},\phi_{f'})$.
	Moreover, any deck transformation of $M_{f'}$ over $M_f$ preserves $\phi_{f'}$,
	and it induces isomorphic transformations	on the derived objects, 
	with respect to their own declared structures.
	The associated maps are equivariant with respect to the induced transformations.
			
	For any further finite cover $M''$ of $M'$, declare $M''$ as a covering mapping torus $M_{f''}$ over $M_f$.
	There is a commutative diagram of continuous maps
	\begin{equation}\label{TXM-covering}
	\xymatrix{
	\tdigraph_{f'',\mathcal{R}''} \ar[d] &
	X_{f'',\mathcal{R}''} \ar[l]_-{\mathrm{coll.}} \ar[d] \ar[r]^-{q_{f'',\mathcal{R}''}}& M_{f''} \ar[d]\\
	\tdigraph_{f',\mathcal{R}'} &
	X_{f',\mathcal{R}'} \ar[l]_-{\mathrm{coll.}}\ar[r]^-{q_{f',\mathcal{R}'}}& M_{f'}\\
	}
	\end{equation}
	where the vertical maps are induced as covering projections.
	There is a commutative diagram of affine linear maps
	\begin{equation}\label{PA-projection}
	\xymatrix{
	\mathcal{P}_{f'',\mathcal{R}''} \ar[r]^-{\mathrm{hd}_{f'',\mathcal{R}''}} \ar[d] & \mathbf{A}\left(M_{f''},\phi_{f''}\right) \ar[d]\\
	\mathcal{P}_{f',\mathcal{R}'} \ar[r]^-{\mathrm{hd}_{f',\mathcal{R}'}} & \mathbf{A}\left(M_{f'},\phi_{f'}\right),
	}
	\end{equation}
	where the vertical maps are induced and surjective.
	In particular, the induced affine linear map
	\begin{equation}\label{D-projection}
	\mathcal{D}\left(M_{f''},\phi_{f''}\right)\to \mathcal{D}\left(M_{f'},\phi_{f'}\right)
	\end{equation} 
	a (surjective) projection between polytopes.
	The preimage of any closed face of $\mathcal{D}(M_{f'},\phi_{f'})$
	is always a closed face of $\mathcal{D}(M_{f''},\phi_{f''})$
	with the same or higher codimension,
	but the preimage of an open face is often a union of several open faces.
	
	Clusters subordinate to a covering mapping torus $M_{f'}$ with respect to 
	the induced Markov partition $\mathcal{R}'$ are defined in the same way as
	Definition \ref{cluster}.
	Furthermore, Lemmas \ref{Q-description}, \ref{cluster_hd-description},	
	and Theorem \ref{cluster_D-description} 
	hold true for the cluster setting 
	with only change of notations.
	For clusters subordinate to a covering mapping torus,
	we have the same theory about reciprocal characteristic polynomials
	as with Section \ref{Sec-rcp}, 
	simply by dropping the connectedness assumption there.	
	
	When a covering mapping torus $M_{f''}$ covers $M_{f'}$ as above,
	some clusters subordinate to $M_{f''}$ cover
	clusters subordinate to $M_{f'}$ in a natural way.
	To be precise, 
	for any irreducible subgraph $V'$ 
	of the transition graph $\tdigraph_{f',\mathcal{R}'}$
	and any connected component $V''$
	of the preimage of $V'$ in $\tdigraph_{f'',\mathcal{R}''}$,
	there is an induced (homotopy)
	\emph{cluster covering projection} $Q_{V''}\to Q_{V'}$.
	Indeed, for any defining triple $(Q_{V'},q_{V'},i_{V'})$,
	some triple $(Q_{V''},q_{V''},i_{V''})$ and a covering projection	$Q_{V''}\to Q_{V'}$ 
	can be chosen to make the following diagram of maps commutative up to homotopy:
	\begin{equation}\label{Q-diagram}
	\xymatrix{
	V'' \ar[d] &
	X_{f'',\mathcal{R}''}(V'') \ar[l]_-{\mathrm{coll.}}\ar[d] \ar[r]^-{q_{V''}}&
	Q_{V''} \ar[d] \ar[r]^-{i_{V''}} & M_{f''} \ar[d]\\
	V' &
	X_{f',\mathcal{R}'}(V')  \ar[l]_-{\mathrm{coll.}}\ar[r]^-{q_{V'}}&
	Q_{V'}  \ar[r]^-{i_{V'}}& M_{f'}\\
	}
	\end{equation}
	(We may call any irreducible subgraph $V''$ as above 
	an \emph{elevation} of $V'$, and the cluster $Q_{V''}$ an \emph{elevation} of $Q_{V'}$,
	agreeing with customary glossary for covering spaces.)
	Cluster covering projections $Q_{V''}\to Q_{V'}$ behave 
	pretty much like covering projections between covering mapping tori $M_{f''}\to M_{f'}$.
	For example, it induces an affine linear projection between polytopes
	\begin{equation}\label{cluster_D-projection}
	\mathcal{D}\left(Q_{V''},\phi_{V''}\right)\to\mathcal{D}\left(Q_{V'},\phi_{V'}\right),
	\end{equation}
	which directly generalizes (\ref{D-projection}).

	\begin{remark}\label{remark-FLP}
		A careful inspection of the arguments of Sections \ref{Sec-perspective_of_Markov_partitions},
		\ref{Sec-cluster}, \ref{Sec-rcp} shows that connectedness of $S$ is only essentially
		used in the proof of Lemma \ref{X-description}.
		In fact, it is rather the ergodicity of $f$ on $S$ than the connectedness of $S$
		that implies the connectedness of the flow-box complex.
		Therefore, 
		Lemma \ref{X-description} holds for the covering setting as well,
		because the lifted pseudo-Anosov automorphism on the possibly disconnected distinguished fiber
		is evidently ergodic by Convention \ref{covering-mapping-torus}.
	\end{remark}

\section{Diversity of dominant virtual faces}\label{Sec-diversity_of_dominant_virtual_faces}
	In this section, we construct regular finite covers for closed pseudo-Anosov mapping tori
	so that they satisfy the hypothesis of Theorem \ref{criterion-enfeoffed}, (Theorem \ref{dominant-diversity}).
	This is done in two steps by Propositions \ref{vertices-diversity} and \ref{vertex-dominant},
	following the plan as explained after Theorem \ref{criterion-enfeoffed}.
	The proof for Theorem \ref{dominant-diversity} is summarized in Subsection \ref{Subsec-dominant-diversity}.

	\begin{theorem}\label{dominant-diversity}
		Let $f$ be a pseudo-Anosov automorphism of a connected closed orientable surface $S$.
		Then for any positive integer $N$, 
		there exists a connected regular finite cover $\tilde{M}$ of the mapping torus $M_f$,
		and moreover,
		the homology direction hull $\mathcal{D}(\tilde{M},\tilde{\phi})$ 
		contains  $N$ or more mutually distinct $\Gamma$--orbits 
		of mutually disjoint dominant closed faces.
		Here $\Gamma$ stands for the deck transformation group and
		$\tilde{M}$ is declared as a covering mapping torus 
		according to Convention \ref{covering-mapping-torus}.
		(See Definition \ref{homology_directions}, Theorem \ref{D-description}, and Section \ref{Sec-covering}.)
	\end{theorem}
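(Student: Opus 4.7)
The plan is to execute the two-step strategy described immediately after Theorem~\ref{criterion-enfeoffed}. I will first produce a connected regular finite cover $M'$ of $M_f$ whose homology direction hull $\mathcal{D}(M',\phi')$ contains at least $N$ deck-transformation orbits of distinct vertices, without yet requiring dominance. Then, using $M'$ as an intermediate base, I will construct a further connected regular finite cover $\tilde{M}$ of $M_f$ (factoring through $M'$) such that the preimage of each of those vertex orbits in $\mathcal{D}(\tilde{M},\tilde{\phi})$ becomes a union of mutually disjoint dominant closed faces. Because the covering projection of homology direction polytopes~(\ref{D-projection}) is affine linear and takes closed faces to closed faces, the preimage of a single vertex orbit downstairs is automatically a $\Gamma$-invariant disjoint union of closed faces upstairs, and distinct downstairs vertex orbits yield distinct upstairs $\Gamma$-orbits of such unions.

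For the first step, I would work with an auxiliary Markov partition $\mathcal{R}$. Vertices of $\mathcal{D}(M_f,\phi_f)$ correspond via Lemmas~\ref{P-description} and~\ref{hd-description} to elementary projective currents supported on simple dynamical cycles of $\tdigraph_{f,\mathcal{R}}$, which in turn represent distinct conjugacy classes of maximal infinite cyclic subgroups of $\pi_1(M_f)$ (cf.\ the end of the proof of Theorem~\ref{cluster_D-description}). These cyclic subgroups are quasi-convex in the word-hyperbolic mapping-torus group, hence separable inside its virtually compact special structure. Using this separability, I pass to a regular finite cover $M'$ in which sufficiently many distinct primitive periodic trajectories of $M_f$ split their preimages into distinct orbits whose projective homology classes, moreover, sit at extremal positions of $\mathcal{D}(M',\phi')$, securing the desired $N$ vertex orbits.

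For the second step, each downstairs vertex orbit comes from a simple dynamical cycle $C'$ in $\tdigraph_{f',\mathcal{R}'}$. To arrange dominance of a closed face above this vertex in some further cover $\tilde M$, I exploit Example~\ref{kappa-sequence}: it suffices to construct a subordination chain of clusters $Q_d \subset \cdots \subset Q_0$ in $\tilde{M}$ with $Q_d$ an elevation of the simple-cycle cluster $Q_{C'}$, and with each $\mathcal{D}(Q_n,\phi_n)$ embedding as a prescribed closed face of $\mathcal{D}(Q_{n-1},\phi_{n-1})$; this gives $\kappa^{\fabcover}_{\tilde f,\tilde{\mathcal{R}}}(Q_0)[E_1] \neq 1$, which through the face formula~(\ref{kappa-face}), the product formula~(\ref{kappa-product}), and Example~\ref{kappa-zeta} translates into $\zeta^{\fabcover}_{\tilde f}[\tilde E] \neq 1$ on the relevant face $\tilde E$ of $\mathcal{D}(\tilde{M},\tilde{\phi})$. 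The chain itself is built by descending induction on cluster first Betti number, driven by Theorem~\ref{cluster_D-description}: any cluster whose homology direction hull has positive dimension admits a proper sub-cluster whose hull is not a point, unless the cluster is itself a simple dynamical cycle.

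The main obstacle lies in the second step, namely building this elevation-subordination chain while keeping the final cover $\tilde{M}$ regular over $M_f$. The covers that naively realize a prescribed passage from a cluster $Q_V$ to a sub-cluster are finite covers of $Q_V$, which need not extend to regular covers of $M_f$; the cluster subgroups sit as infinite-index quasi-convex subgroups of $\pi_1(M_f)$, so the promotion to a global regular cover is delicate. I would handle it by invoking the virtually special quotient techniques of Wise and Agol--Groves--Manning in the form of Lemma~\ref{filling-Q}, together with the Haglund--Wise virtual retraction machinery in the form of Lemma~\ref{elevation-sequence-VW}: the former fills unwanted portions of $\pi_1(M_f)$ along quasi-convex subgroups without disturbing the cluster subgroup structure, while the latter propagates a controlled elevation sequence through the subordination diagram. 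Taking a common regular finite cover of the outputs of Steps~1 and~2 then yields the cover $\tilde{M}$ required by the theorem.
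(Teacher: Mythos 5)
Your overall two-step skeleton matches the paper's strategy, and your description of Step 2 (building a subordination chain of clusters and invoking Example~\ref{kappa-sequence}, with virtual retractions propagating the chain) is essentially Lemmas~\ref{sequence-VW} and~\ref{elevation-sequence-VW}. However, your Step 1 has a genuine gap.

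You propose that separability of the cyclic subgroups generated by primitive periodic trajectories lets you pass to a cover in which their projective homology classes ``sit at extremal positions of $\mathcal{D}(M',\phi')$.'' This does not follow. Separability controls which group elements land in finite-index subgroups, but it gives you no control over \emph{which} homology directions become vertices of the hull in the cover. A trajectory class can perfectly well lift to something sitting in the interior of the cover's polytope. The paper's actual mechanism for producing fresh vertex orbits is Lemma~\ref{vertices-increase}, built on Lemma~\ref{filling-Q}: at each stage one uses the special/malnormal quotient machinery to collapse the $\pi_1$-images of \emph{all} boundary-face clusters to finite subgroups of an infinite virtually abelian quotient $G'$, then passes to the cover $M''$ corresponding to a finite-index free abelian subgroup $G''\le G'$. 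Since $H_1(M'';\mathbb{Z})$ surjects onto $G''$ while the images of all boundary-face clusters are killed, the preimage of the old boundary $\partial\mathcal{D}(M',\phi')$ lands in a proper affine subspace of $\mathcal{D}(M'',\phi'')$, forcing at least one new vertex over the old interior. This affine-linear argument is the substance that your separability remark skips.

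Two further discrepancies. First, you place Lemma~\ref{filling-Q} in Step 2, claiming it helps make the cover regular; in fact the paper uses it only in Step 1, and regularity of the final $\tilde{M}$ is achieved trivially at the end of the proof of Theorem~\ref{dominant-diversity} by taking a common regular cover of the $M''_i$ produced by Proposition~\ref{vertex-dominant} (which itself does not require regularity). Second, you omit the distinction between ordinary and exceptional vertices. Example~\ref{kappa-zeta} only identifies $\zeta^\fabcover_{\tilde f}[\tilde E] = \kappa^\fabcover_{\tilde f,\tilde{\mathcal R}}(\tilde M)[\tilde E]$ for \emph{ordinary} closed faces, so Step~2 needs ordinary vertices as input. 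The paper secures this in Proposition~\ref{vertices-diversity} by producing $N+K$ vertex orbits, where $K$ bounds the number of primitive exceptional abstract periodic trajectories (Lemma~\ref{X-description}), and discarding at most $K$ of them. Your proposal does not guarantee ordinariness and so, even if Step~1 were repaired, the $\zeta=\kappa$ identification in Step~2 could fail on the produced faces.
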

	
	
	\subsection{Diversity of ordinary virtual vertices}\label{Subsec-vertices-diversity}
	
	\begin{proposition}\label{vertices-diversity}
		Let $f$ be a pseudo-Anosov automorphism of a connected closed orientable surface $S$
		and $\mathcal{R}$ be a Markov partition of $S$ for $f$.
		Then for any positive integer $N$, 
		there exists a connected regular finite cover $\tilde{M}$ of the mapping torus $M_f$,
		and moreover,
		the homology direction hull $\mathcal{D}(\tilde{M},\tilde{\phi})$ 
		contains  $N$ or more mutually distinct $\Gamma$--orbits of ordinary vertices.
		Here $\Gamma$ stands for the deck transformation group and
		$\tilde{M}$ is declared as a covering mapping torus 
		according to Convention \ref{covering-mapping-torus}.
		(See Definitions \ref{homology_directions}, \ref{support_subgraph}, 
		Theorem \ref{D-description}, and Section \ref{Sec-covering}.)
	\end{proposition}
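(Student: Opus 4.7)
The plan is to combine the dynamical abundance of primitive periodic orbits of $M_f$ with the rich virtual cohomology of $\pi_1(M_f)$, packaging these into a regular finite cover in which selected trajectories lift to vertices of the homology direction hull lying in many distinct $\Gamma$--orbits. First I would select $N$ distinct primitive ordinary periodic trajectories $\gamma_1,\ldots,\gamma_N$ of the suspension flow on $M_f$. These exist in abundance: the number of primitive closed orbits of a pseudo-Anosov suspension flow grows exponentially with period, while by Lemma \ref{X-description} only finitely many primitive abstract periodic trajectories of $X_{f,\mathcal{R}}$ are exceptional. Each $\gamma_i$ corresponds to a primitive dynamical cycle $z_i$ of $\tdigraph_{f,\mathcal{R}}$ whose associated abstract periodic trajectory $\hat{\gamma}_{z_i}$ is ordinary, and distinct $\gamma_i$ represent distinct conjugacy classes of maximal cyclic subgroups of $\pi_1(M_f)$, as exploited in the proof of Theorem \ref{cluster_D-description}.

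Next I would construct a regular finite cover $\tilde{M}\to M_f$, declared as a covering mapping torus per Convention \ref{covering-mapping-torus}, enjoying two properties: each $\gamma_i$ lifts to a primitive periodic trajectory $\tilde{\gamma}_i$ in $\tilde{M}$, and for each $i$ there exists a cohomology class $\psi_i\in H^1(\tilde{M};\Real)$ whose restriction to $\mathcal{D}(\tilde{M},\tilde{\phi})$ is uniquely maximized at $\Real\cdot[\tilde{\gamma}_i]\in\mathbf{A}(\tilde{M},\tilde{\phi})$. Producing such a cover would rely on the virtual compact specialness of $\pi_1(M_f)$ (Wise--Agol) and its virtual RFRS property (Agol), which together allow one, for any prescribed finite list of non-commensurable maximal cyclic subgroups, to build regular finite covers separating the chosen cyclic subgroups in $H_1$ and with $H^1(\tilde{M};\Real)$ rich enough to certify extremality of each lifted direction. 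Uniqueness of the maximum of $\psi_i$ forces $\Real\cdot[\tilde{\gamma}_i]$ to be a vertex of $\mathcal{D}(\tilde{M},\tilde{\phi})$, and ordinariness is inherited from the base because the covering projection of flow-box complexes in Section \ref{Sec-covering} sends ordinary abstract periodic trajectories to ordinary ones. Distinctness of $\Gamma$--orbits is then automatic: the $\Gamma$--orbit of any lift of $\gamma_i$ projects precisely onto $\gamma_i$ in $M_f$, so lifts of distinct base trajectories sit in distinct orbits, yielding at least $N$ mutually distinct $\Gamma$--orbits of ordinary vertices.

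The principal obstacle is this vertex-realization step, namely ensuring that each lifted homology direction is an extreme point of the polytope rather than lying in the interior or on a proper face. This is precisely the step that, as noted in Remark \ref{dominant-enfeoffed-remark}, requires significantly more work than Hadari's homologically trivial setting. The essential tool for overcoming it is the polytope description of Remark \ref{D-of-Fried}: $\mathcal{D}(\tilde{M},\tilde{\phi})$ is dual to the distinguished fibered face of the Thurston norm unit ball of $\tilde{M}$, and by passing to ever larger regular finite covers via the virtual fibering and RFRS machinery, the Thurston norm ball acquires arbitrarily many facets. One can thus isolate each preselected direction as the unique maximizer of some cohomology class, certifying it as a vertex, and the combinatorial richness ultimately needed will be dictated by $N$ together with the Galois action of $\Gamma$ on the set of candidate vertex directions.
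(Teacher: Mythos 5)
Your strategy differs fundamentally from the paper's, and the divergence is exactly where the gap lies. You propose to preselect $N$ distinct ordinary primitive periodic trajectories $\gamma_1,\ldots,\gamma_N$ in $M_f$ and then find a regular cover in which each lifted direction $\Real\cdot[\tilde\gamma_i]$ is certified as a vertex of $\mathcal{D}(\tilde M,\tilde\phi)$ by a cohomology class $\psi_i$ uniquely maximized there. The paper does not attempt this targeting: its Lemma \ref{vertices-increase} runs an induction producing, at each stage, \emph{some} new $\Gamma$--orbit of vertices sitting over the \emph{interior} of the previous hull, without any control over which trajectory realizes it, and then discards at most $K$ exceptional orbits at the end (where $K$ is the finite number of exceptional primitive abstract periodic trajectories from Lemma \ref{X-description}). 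The inductive step is driven by Lemma \ref{filling-Q}: a word-hyperbolic, virtually compact special quotient of $\pi_1(M')$ (via Wise's Malnormal Special Quotient Theorem and Agol--Groves--Manning Dehn filling, plus Lemma \ref{quasiconvex-Q} for quasiconvexity of the cluster groups) is used to pass to a cover in which all cluster subgroups associated to boundary faces of $\mathcal{D}(M',\phi')$ become homologically trivial, which forces the preimage of $\partial\mathcal{D}(M',\phi')$ to have positive codimension in $\mathcal{D}(M'',\phi'')$ and hence forces a new vertex orbit to appear over the interior.

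The gap in your proposal is the vertex-realization step. You assert that specialness together with RFRS will let you, ``for any prescribed finite list of non-commensurable maximal cyclic subgroups,'' find a regular cover in which each lifted direction becomes the unique maximizer of some real cohomology class. There is no argument here, and it is far from clear that such targeting is possible. A periodic direction $\Real\cdot[\gamma_i]$ that lies in the interior of $\mathcal{D}(M_f,\phi_f)$ is a nontrivial convex combination of other periodic directions by Theorem \ref{D-description}, and a priori this relation can persist in every finite cover: there is no reason a preassigned ordinary trajectory should ever become extreme in a covering hull. Separating finitely many cyclic subgroups in $H_1$ of a cover is a much weaker requirement than dominating every other vertex direction of an a priori unknown polytope, and ``the Thurston norm ball acquires arbitrarily many facets'' does not give you a handle on which directions those facets are dual to. The paper sidesteps this entirely by not trying to control which trajectory becomes the new vertex. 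Finally, the paper's introduction explicitly flags that substituting RFRS for the special quotient machinery ``looks hard,'' so your suggestion to lean on RFRS where the paper invokes Lemma \ref{filling-Q} is itself a second unsubstantiated step. To rescue your approach, you would need, at a minimum, a lemma of the following shape: for any ordinary primitive periodic trajectory $\gamma$ of $M_f$, there is a regular finite cover in which some lift of $\gamma$ is a vertex of the covering hull; no such statement is proven or cited, and it is not obviously a consequence of the tools you name.
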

	
	We prove Proposition \ref{vertices-diversity} in the rest of this subsection.

	In the argument, frequently we consider group-theoretic properties
	for the $\pi_1$--image of a cluster	under the subordination map.
	Both the fundamental group and the induced homomorphism 
	depend on a chosen basepoint of the cluster,
	but different choices do not matter if 
	the considered subgroup property is invariant under conjugation.
	So we simply omit mentioning the chosen basepoint whenever conjugacy invariance
	is clear from the context.
	
	\begin{lemma}\label{quasiconvex-Q}
		Suppose that $M'$ is a connected finite cover of $M_f$, 
		and that
		$E'$ is a positive codimensional closed face of the homology direction hull $\mathcal{D}(M',\phi')$.
		Then for any irreducible subgraph $V'$ of the support subgraph $\tdigraph_{f',\mathcal{R}'}[E']$, 
		the $\pi_1$--image for the subordination map of the assoicated cluster $i_{V'}\colon Q_{V'}\to M'$
		is quasiconvex of infinite index in the nonelementary word-hyperbolic group $\pi_1(M')$.
	\end{lemma}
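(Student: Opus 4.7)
The lemma asserts three properties: $\pi_1(M')$ is nonelementary and word-hyperbolic, the subgroup $i_{V'*}\pi_1(Q_{V'})$ has infinite index in $\pi_1(M')$, and it is quasiconvex. I would verify these in turn, with quasiconvexity as the substantive step. For the ambient group, Thurston's hyperbolization theorem applied to the closed pseudo-Anosov mapping torus $M_f$ implies that $M'$ is itself a closed hyperbolic $3$--manifold, and hence $\pi_1(M')$ is nonelementary, torsion-free, and word-hyperbolic.

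For infinite index I plan a dimension count in real homology. The covering-setting version of Theorem \ref{D-description} recorded in Section \ref{Sec-covering} gives that $\mathcal{D}(M',\phi')$ has codimension zero in $\mathbf{A}(M',\phi')$, so its real cone spans $H_1(M';\Real)$. Since $E'$ has positive codimension in $\mathcal{D}(M',\phi')$, the real linear span $L$ of the cone over $E'$ is a proper subspace, of dimension $\dim E'+1<b_1(M')$. By the construction of the cluster $Q_{V'}$ through $X_{f',\mathcal{R}'}(V')$ (Definition \ref{cluster}), the image $i_{V'*}H_1(Q_{V'};\Real)$ is generated by the homology classes of the periodic trajectories $\gamma_z$ for dynamical cycles $z$ of $V'\subseteq \tdigraph_{f',\mathcal{R}'}[E']$, all of which lie in the cone over $E'$, and hence in $L$. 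Therefore the image of $i_{V'*}\pi_1(Q_{V'})$ in $H_1(M';\Integral)_{\mathtt{free}}$ has infinite index, forcing $i_{V'*}\pi_1(Q_{V'})$ itself to have infinite index in $\pi_1(M')$.

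For quasiconvexity, the plan is to invoke the tameness theorem of Agol and Calegari--Gabai combined with Canary's covering theorem: any finitely generated subgroup of the fundamental group of a closed hyperbolic $3$--manifold is either geometrically finite, hence quasiconvex in the word-hyperbolic ambient group, or else commensurable (up to conjugation) with the fiber subgroup of some virtual fibration of $M'$. Since $Q_{V'}$ is compact, $i_{V'*}\pi_1(Q_{V'})$ is finitely generated, so it remains only to exclude the virtual-fiber alternative. A virtual-fiber subgroup is of the form $\ker\psi$, up to finite index and conjugation, for some fibered class $\psi\in H^1(M';\Integral)$, and its image in $H_1(M';\Real)$ has rank $b_1(M')-1$. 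The bound $\dim L \leq b_1(M')-1$ established above immediately contradicts commensurability whenever $E'$ has codimension at least $2$ in $\mathcal{D}(M',\phi')$. The remaining and delicate case, where $E'$ has codimension exactly $1$ so that $L$ would have to coincide with $\ker(\psi)_\Real$, I plan to rule out using Fried's duality between $\mathcal{D}(M',\phi')$ and the fibered face of the Thurston norm ball containing $\phi'$: by Fried's theorem any fibered class in that face is strictly positive on every periodic trajectory of the $\phi'$--suspension flow, so positive on the cone over $E'$, contradicting $\psi|_{E'}=0$ whenever $\psi$ lies in the same fibered cone as $\phi'$. The residual subtlety, that $\psi$ might live in a different fibered cone of $M'$, is the main technical obstacle; resolving it requires analyzing how distinct fibered faces of the Thurston norm ball can interact with the hyperplane $L$, and is where I expect the bulk of the actual work to lie.
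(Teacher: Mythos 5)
Your high-level architecture matches the paper's: hyperbolization gives that $\pi_1(M')$ is nonelementary word-hyperbolic; the dimension count (image of $i_{V'*}H_1(Q_{V'};\Real)$ lies in the proper subspace $L$ spanned by the cone over $E'$) gives infinite index; and the dichotomy ``geometrically finite, hence quasiconvex, versus virtual fiber'' is exactly what the paper uses (phrased there as geometrically finite versus geometrically infinite, the latter containing a normal surface subgroup of a finite-index subgroup with infinite cyclic quotient). So the setup is correct and the first two claims are fully established.

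The genuine gap is in closing the virtual-fiber case, and you have correctly identified it yourself. Two concrete problems. First, a virtual fiber subgroup is the fiber subgroup of a fibration of some \emph{finite cover} $\hat{M}$ of $M'$, so the relevant fibered class is $\hat{\psi}\in H^1(\hat{M};\Integral)$, not a class on $M'$; your dimension-comparison and Fried argument, as written, conflate $H_1(M';\Real)$ with $H_1(\hat{M};\Real)$, and this conflation is exactly where the codimension-one case becomes delicate. Second, and more to the point, your proposed resolution (``analyzing how distinct fibered faces of the Thurston norm ball can interact with the hyperplane $L$'') is not the direction the argument wants to go. The paper sidesteps the ``which fibered cone'' question entirely by flipping the logic: rather than trying to locate the hypothetical fibered class of the virtual fibration, it fixes once and for all a primitive integral class $\psi\in H^1(M';\Integral)$ that vanishes on $L$ and is strictly positive on the cone over $\mathcal{D}(M',\phi')\setminus E'$. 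By Fried's duality between $\mathcal{D}(M',\phi')$ and the fibered cone of $\phi'$ (\cite[Expos\'e 14, Theorem 14.11]{FLP}), such a $\psi$ lies on the boundary of that fibered cone, hence is \emph{not} a fibered class. Now the punchline is Stallings' fibering criterion \cite{Stallings-fibering}, which you did not invoke: $\Pi_{V'}\subseteq\ker\psi$ by construction, and if $\Pi_{V'}$ were geometrically infinite then a finite-index subgroup $\Pi''\leq\pi_1(M')$ would contain $\Pi_{V'}\cap\Pi''$ as a normal surface subgroup with $\Pi''/(\Pi_{V'}\cap\Pi'')\cong\Integral$; since $\psi|_{\Pi''}$ is a nonzero homomorphism to $\Integral$ that kills $\Pi_{V'}\cap\Pi''$, one gets $\ker(\psi|_{\Pi''})=\Pi_{V'}\cap\Pi''$, so $\ker\psi$ is finitely generated, so $\psi$ is fibered by Stallings --- contradicting its being a boundary class. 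This closes the case uniformly, with no separate treatment of codimension one versus two and no need to track other fibered faces. In short: you need Stallings plus the observation that the auxiliary class you built is non-fibered, not an analysis of the interaction of $L$ with the various fibered faces.
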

	
	\begin{proof}				
		Denote by $L_{E'}$ the smallest linear subspace of $H_1(M';\Real)$ 
		that contains the linear cone over $E'\subset\mathbf{A}(M',\phi')\subset\mathbf{P}(H_1(M';\Real))$,
		so $L_{E'}$ has codimension at least $1$ in $H_1(M';\Real)$ by the assumption.
		Since $\mathcal{D}(M',\phi')$ is a polytope 
		with rationally defined faces in $\mathbf{A}(M',\phi')$, 
		there exists some primitive cohomology class $\psi\in H^1(M';\Integral)$
		which vanishes on $L_{E'}$ and which remains strictly positive 
		on the linear cone over $\mathcal{D}(M',\phi')\setminus E'$.
		In other words, $\psi$ lies on the point-set boundary of the open linear cone
		in $H^1(M';\Real)$ that is dual to the linear cone over $\mathcal{D}(M',\phi')$ in 
		$H_1(M';\Real)$.
		It is known that the dual open linear cone
		coincides with the fibered cone that contains $\phi'$,
		\cite[Expos\'e 14, Theorem 14.11]{FLP}, (see also Remark \ref{D-of-Fried}).
		In particular, it is implied that $\psi$ must not be a fibered class.
		In other words, it is impossible to represent $\psi$ by 
		a fiber-bundle projection $M'\to S^1$,
		under the natural isomorphism $H^1(M';\Integral)\cong [M',S^1]$.
								
		Fix a basepoint $x\in X_{f',\mathcal{R}'}(V')$.
		Denote by $p_{V'}\in Q_{V'}$ and $p\in M'$ the induced basepoint
		via $q_{V'}$ and $i_{V'}\circ q_{V'}$ (Definition \ref{cluster}).
		Denote by $\Pi_{V'}$ the image of 
		the homomorphism $\pi_1(i_{V'})\colon \pi_1(Q_{V'},p_{V'})\to \pi_1(M',p)$.
		It is clear that $\Pi_{V'}$ is finitely generated, (see Lemma \ref{Q-description}).
		We also observe that $H_1(X_{f',\mathcal{R}'}(V');\Integral)\cong H_1(V';\Integral)$
		is generated by all the abstract periodic trajectories of $X_{f',\mathcal{R}'}(V')$.
		It follows from Definitions \ref{cluster} and \ref{support_subgraph}
		that the image of $i_{V'*}\colon H_1(Q_{V'};\Integral)\to H_1(M';\Real)$ 
		is contained in the proper linear subspace $L_{E'}$.
		Therefore, $\Pi_{V'}$ is contained in the kernel of the surjective composite homomorphism
		of groups:
		\begin{equation}\label{psi-homomorphism}
		\xymatrix{\pi_1(M',p) \ar[r]^{\mathrm{abel.}} & H_1(M';\Integral) \ar[r]^-{\psi} & \Integral.
		}
		\end{equation}
		In particular, $\Pi_{V'}$ has infinite index in $\pi_1(M',p)$.
		
		For the rest of the proof, we need to recall 
		some hyperbolic geometric facts (restricted to the closed orientable case),
		which are	technically deep consequences of hyperbolization.
		Every aspherical and atoroidal connected orientable closed $3$--manifold
		admits a Riemannian metric of constant sectional curvature $-1$.
		The metric is unique up to homotopy by the Mostow Rigidity,
		and in fact,
		unique up to isotopy by the Smale conjecture,
		proved by D.~Gabai for hyperbolic $3$--manifolds \cite{Gabai-smale}.
		See Thurston \cite{Thurston-notes} and Marden \cite{Marden-book} for general references
		of hyperbolization.	
		In particular, the fundamental group of any such $3$--manifold
		is word-hyperbolic in the sense of M.~Gromov \cite{Gromov-hyperbolic}.
		It is also nonelementary, 
		meaning that there are no infinite cyclic subgroups of finite index.
		After choosing an orthonormal frame of reference at a basepoint,
		the fundamental group can be uniquely represented via the holonomy representation
		as a cocompact Kleinian group,		
		by which we mean a cocompact discrete subgroup of $\mathrm{PSL}(2,\Complex)$.
		(We identify $\mathrm{PSL}(2,\Complex)$ as the group of orientation-preserving isometries
		acting on a fixed model of hyperbolic $3$--space $\mathbb{H}^3$,
		where a preferred orthonormal frame at a basepoint is given.)
		By hyperbolic geometry, any finitely generated subgroup 
		of a cocompact Kleinian group is either \emph{geometrically finite} or \emph{geometrically infinite}, 
		(distinguished according to the covolume of its convex hull).
		Every geometrically finite subgroup is
		a quasiconvex subgroup of the nonelementary word-hyperbolic fundamental group,
		in terms of geometric group theory.
		By contrast, a finitely generated and geometrically infinite subgroup
		is never quasiconvex.
		In fact, it	always contains some (closed orientable) surface subgroup 
		of finite index (actually at most $2$). 
		Moreover, the surface subgroup is normal in some finite-index subgroup 
		of the fundamental group, and the quotient group is infinite cyclic quotient.
		We refer the reader to \cite[Sections 4.1 and 4.4]{AFW-group}
		for an informative survey on hyperbolic $3$--manifolds,
		which contains the facts that we mention above and their direct references.

		The covering mapping torus $M'$ is aspherical and atoroidal because of 
		the Nielsen--Thurston classification of surface automorphisms,
		(see \cite[Section 1.10]{AFW-group}).
		Hence $\pi_1(M',p)$ is word-hyperbolic and nonelementary,
		(see \cite[Theorem 4.4.2]{AFW-group}).
		If the finitely generated subgroup $\Pi_{V'}$ were geometrically infinite,
		there would be a finite-index subgroup $\Pi''$ of $\pi_1(M',p)$ such that
		$\Pi_{V'}\cap\Pi''$ is normal in $\Pi''$ and that	$\Pi''/(\Pi_{V'}\cap\Pi'')$ is infinite cyclic.
		(In fact, one can construct a finite cover of that corresponds to $\Pi''$,
		such that $\Pi_{V'}\cap\Pi''$ is the fundamental group of a fiber surface;
		see \cite[Theorem 4.1.2]{AFW-group}.)
		In this case,	it is easy to see that the kernel of (\ref{psi-homomorphism}) 
		intersects $\Pi''$ exactly in the subgroup $\Pi_{V'}\cap\Pi''$:
		In fact, the restriction of (\ref{psi-homomorphism}) to $\Pi''$ has to factor through
		the infinite cyclic quotient $\Pi''\to \Pi''/(\Pi_{V'}\cap\Pi'')$,
		inducing an embedding $\Pi''/(\Pi_{V'}\cap\Pi'')\to\Integral$.
		In particular, the kernel of (\ref{psi-homomorphism}) is finitely generated.
		However, finite generation of the kernel occurs if and only if $\psi$ is a fibered class,
		by a well-known fibering criterion in $3$--manifold topology due to J.~Stallings
		\cite{Stallings-fibering},	(see also \cite[Expos\'e 14, Theorem 14.2]{FLP}).
		Because $\psi$ is nonfibered by our construction,
		the subgroup $\Pi_{V'}$ must be geometrically finite,
		or equivalently, quasiconvex in the nonelementary word-hyperbolic group $\pi_1(M',p)$.
		Therefore, we have verified all the asserted properties about 
		the $\pi_1$--image $\Pi_{V'}$ for the subordination map $i_{V'}\colon Q_{V'}\to M'$.
	\end{proof}
	
	\begin{lemma}\label{filling-Q}
		Suppose that $M'$ is a connected finite cover of $M_f$. Then there exists
		an infinite and virtually abelian quotient group $G'$ 
		of the fundamental group $\pi_1(M')$ with the following property:
		For every positive codimensional closed face $E'$ of the homology direction hull $\mathcal{D}(M',\phi')$,
		and for every irreducible subgraph $V'$ of the support subgraph $\tdigraph_{f',\mathcal{R}'}[E']$, 
		the $\pi_1$--image 
		for the subordination map $i_{V'}\colon Q_{V'}\to M'$
		has finite quotient-image in $G'$.
	\end{lemma}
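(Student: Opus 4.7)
The plan is to apply the virtually special quotient techniques of Wise and Agol--Groves--Manning to the cocompact Kleinian group $\pi_1(M')$. First I enumerate the finitely many subgroups to be handled. Since the transition graph $\tdigraph_{f',\mathcal{R}'}$ is finite, it has only finitely many irreducible subgraphs $V'$, and hence the subordination images $\Pi_{V'}=\pi_1(i_{V'})(\pi_1(Q_{V'}))\subset\pi_1(M')$ arising from the supports of positive codimensional closed faces of $\mathcal{D}(M',\phi')$ form a finite collection $\Pi_1,\ldots,\Pi_n$. By Lemma \ref{quasiconvex-Q}, each $\Pi_i$ is a quasiconvex subgroup of infinite index in the nonelementary word-hyperbolic group $\pi_1(M')$.

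Next I invoke the Malnormal Special Quotient Theorem of Wise, or the relative Dehn filling theorem of Agol--Groves--Manning, after enlarging the collection to an almost malnormal one via quasiconvex commensurator hulls if needed. This produces finite-index normal subgroups $\dot\Pi_i\lhd\Pi_i$ such that the quotient
\[
\bar G \,=\, \pi_1(M')\,\big/\,\big\langle\!\big\langle\, \dot\Pi_1,\,\ldots,\,\dot\Pi_n \,\big\rangle\!\big\rangle
\]
is again word-hyperbolic and virtually compact special, with the image of each $\Pi_i$ in $\bar G$ equal to the finite group $\Pi_i/\dot\Pi_i$. By taking the fillings deep enough, $\bar G$ remains infinite, exploiting the infinite index of each $\Pi_i$ in the nonelementary word-hyperbolic group $\pi_1(M')$.

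Since $\bar G$ is infinite, word-hyperbolic, and virtually compact special, it is virtually RFRS by Agol's theorem; thus some finite-index normal subgroup $\bar K\lhd \bar G$ carries an epimorphism $\pi\colon\bar K\twoheadrightarrow\Integral^r$ with $r\geq 1$. For coset representatives $g_1,\ldots,g_m$ of $\bar K$ in $\bar G$, the inner automorphisms $\phi_{g_j}\colon\bar K\to\bar K$ yield a homomorphism $\Phi\colon\bar K\to\Integral^{rm}$ assembling the composites $\pi\circ\phi_{g_j}$. Its kernel $N=\ker\Phi$ is normal in $\bar G$, because left translation on $\bar G$ merely permutes the coset representatives and so stabilizes $\Phi$ up to coordinate permutation. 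The quotient $\bar K/N$ embeds in $\Integral^{rm}$ as a finitely generated free abelian group that is infinite (it surjects onto $\Integral^r$ via the first factor). Setting $G':=\bar G/N$, the subgroup $\bar K/N$ is an infinite abelian subgroup of finite index in $G'$, so $G'$ is infinite and virtually abelian. Each $\Pi_{V'}$ has finite image in $\bar G$, and hence also finite image in its further quotient $G'$, as required.

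The main obstacle is the application of the special quotient theorem: one must verify (or arrange by replacing each $\Pi_i$ by its commensurator in $\pi_1(M')$, which is quasiconvex and contains $\Pi_i$ with finite index) that the collection $\{\Pi_i\}$ meets the almost malnormality hypothesis, and one must ensure that the fillings are deep enough to keep $\bar G$ infinite. Both are standard outputs of the Wise and Agol--Groves--Manning machinery, once the quasiconvex infinite-index input of Lemma \ref{quasiconvex-Q} is in hand.
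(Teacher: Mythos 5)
Your proposal follows essentially the same route as the paper: use Lemma \ref{quasiconvex-Q}, apply the Wise/Agol--Groves--Manning special-quotient machinery to kill a finite collection of quasiconvex infinite-index subgroups while keeping the quotient infinite word-hyperbolic and virtually compact special, and then pass to an infinite virtually abelian quotient via virtual positive first Betti number (the paper quotes \cite[Corollary 1.2]{Agol-VHC}; your RFRS + conjugate-kernel route is equivalent in substance).

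However, your handling of the almost-malnormality reduction contains a genuine gap. Replacing each $\Pi_i$ by its commensurator in $\pi_1(M')$ does \emph{not} produce an almost malnormal collection: the commensurator of a quasiconvex subgroup $H$ is the set of $g$ with $H\cap H^g$ of finite index in both, but there can still exist $g$ outside it for which $H\cap H^g$ is infinite (of infinite index), so almost malnormality can fail even after commensuration, and in addition the commensurator hulls of distinct $\Pi_i$ may intersect in infinite subgroups. What is actually needed is the height-reduction induction: take a malnormal core of the quasiconvex subgroup(s), apply the Malnormal Special Quotient Theorem to the induced peripheral structure, verify the quotient is nonelementary and virtually compact special, and check that the image subgroup has strictly smaller height, then iterate until the height reaches $0$. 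This is precisely what the paper does (following \cite[Appendix A]{Agol-VHC}), together with a Klein combination step that merges the finitely many $\Pi_{V'}$ into a single infinite-index quasiconvex subgroup $\Pi_{\mathscr{V}'}$ so that a single induction suffices. Your ``taking the fillings deep enough'' claim for keeping $\bar G$ infinite is also asserted without justification; the paper supplies this via \cite[Theorems 7.2 and 11.12]{GM-filling} after passing to a torsion-free finite-index characteristic subgroup. Until the height induction is put in, the proof is not complete.
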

	
	\begin{proof}
		The key fact for this proof is that $\pi_1(M')$ is 
		nonelementary word-hyperbolic and virtually compact special \cite[Section 9]{Agol-VHC}.
		We recall that a finitely generated group 
		is said to be \emph{virtually compact special} 
		if some finite-index subgroup of the group
		is isomorphic to the fundamental group of a compact \emph{special cube complex}
		\cite[Chapter 4, Definition 4.2]{Wise-book},
		(compare \cite[Definition 3.2]{Haglund--Wise} and \cite[Definition 2.1]{AGM-MSQT}).
		Since there are only finitely many available irreducible subgraphs $V'$,
		and since the subordination maps $i_{V'}$ all have quasiconvex  $\pi_1$--images (Lemma \ref{quasiconvex-Q}), 
		Wise's Special Quotient Theorem \cite[Theorem 12.7]{Wise-book} 
		applies to $\pi_1(M')$ with respect to those quasiconvex $\pi_1$--images.
		This will yield a virtually compact special word-hyperbolic quotient group $G^*$ of $\pi_1(M')$,
		and moreover, for each $V'$ as assumed, 
		the $\pi_1$--image for $i_{V'}$ projects onto a finite subgroup in $G^*$.
		If $G^*$ happens to be infinite, 
		an infinite and virtually abelian quotient group $G'$ as asserted
		can be obtained	by taking some quotient $G^*$.
		It seems plausible that 
		one may produce an infinite $G^*$
		by performing Wise's original construction with some mild control.		
		To avoid citation of technical proofs, however,
		we outline an alternate argument
		based on more efficiently stated results available in the literature.
		These include the Malnormal Special Quotient Theorem 
		and peripheral hyperbolic Dehn fillings.
		
		Choose a basepoint $p$ for $M'$.
		Denote by $\Pi$ the fundamental group $\pi_1(M',p)$. 
		Let $\mathscr{V}'$ be the set of 
		all the irreducible subgraphs $V'$ of $\tdigraph_{f',\mathcal{R}'}$
		with the property that 
		the image of the affine linear map 
		$\mathcal{D}(i_{V'})\colon \mathcal{D}(Q_{V'},\phi_{V'})\to \mathcal{D}(M',\phi')$
		is contained in $\partial\mathcal{D}(M',\phi')$.
		For each $V'\in\mathscr{V}'$,
		choose a basepoint $p_{V'}$ for the associated cluster $Q_{V'}$, 
		and a path $\alpha_{V'}$ in $M'$ from $p$ to $i_{V'}(p_{V'})$. 
		Then there is an induced homomorphism $\pi_1(i_{V'})\colon \pi_1(Q_{V'},p_{V'})\to \pi_1(M',i_{V'}(p_{V'}))$.
		Using the path $\alpha_{V'}$,
		we can identify the image of $\pi_1(i_{V'})$ as a subgroup of $\Pi$, denoted as $\Pi_{V'}$.
		Adjusting the path $\alpha_{V'}$ is equivalent to conjugating $\Pi_{V'}$ by some element of $\Pi$.
		As we assume that $\mathcal{D}(M',\phi')$ has a positive codimensional face $E'$,
		the subgroup $\Pi_{V'}$ must be quasi-convex of infinite index in $\Pi$, (Lemma \ref{quasiconvex-Q}).
		Moreover, if we adjust the paths $\{\alpha_{V'}\colon V'\in\mathscr{V}'\}$ suitably,
		the subgroup $\Pi_{\mathscr{V}'}$	of $\Pi$ generated by the subgroups $\{\Pi_{V'}\colon V'\in\mathscr{V}'\}$
		will be isomorphic to the free amalgamation of those subgroups.
		
		Such adjustment is quite well-known to be available.
		For example, it can be done as follows:
		Enumerate the not yet adjusted subgroups as $H_0,H_1\cdots,H_s$.
		We show that there are conjugates $H_0,H_1^{g_1},\cdots,H_s^{g_s}$ 
		which generate a free-amalgam subgroup $H_0*H_1^{g_1}*\cdots*H_s^{g_s}$ of $\Pi$.
		Here we adopt the notation $H^g=g^{-1}Hg$.
		The limit set $\Lambda(\Pi)$ of $\Pi$ itself 
		is the sphere at infinity $\mathbb{S}^2_\infty$ 
		of the hyperbolic $3$--space $\mathbb{H}^3$.		
		For any infinite-index quasiconvex subgroup $H$ of $\Pi$,
		the limit set $\Lambda(H)$ is a proper closed subset of $\mathbb{S}^2_\infty$
		and has Lebesgue measure zero \cite[Theorem 8.4.2]{Thurston-notes},
		(see also \cite[Theorem 5.6.6]{Marden-book}).
		In particular, for any open subset $D\subset \mathbb{S}^2_\infty$,
		there is some $g\in\Pi$ such that $\Lambda(H^g)$ is contained in $D$.
		(For example, one may pick $g$ so that the ideal fixed points of $g$ lies in $D\setminus\Lambda(H)$, 
		and the translation distance of $g$ is sufficiently large.)
		Therefore, there is some $g_1\in \Pi$ such that	
		$\Lambda(H_0)$ and $\Lambda(H_1^{g_1})$ are disjoint in $\mathbb{S}^2_\infty$.
		By Klein's Combination Theorem \cite[Chapter VII, Theorem A.13]{Maskit-book},
		the subgroups $H_0$ and $H_1^{g_1}$ generate a free-amalgam subgroup 
		$H_0*H_1^{g_1}$ of $\Pi$, which is again quasiconvex.
		Note that $H_0*H_1^{g_1}$ still has infinite index,
		since finite-index subgroups of $\Pi$ are freely indecomposable.
		Working similarly with $H_0*H_1^{g_1}$ and $H_2$,
		we find some $g_2\in\Pi$ such that $H_0*H_1^{g_1}$ and $H_2^{g_2}$
		generate a (quasiconvex infinite-index) free-amalgam subgroup $H_0*H_1^{g_1}* H_2^{g_2}$ of $\Pi$,
		and so on. 
		Eventually we obtain the conjugates $H_i^{g_i}$ and the free-amalgam subgroup
		$H_0*H_1^{g_1}*\cdots*H_s^{g_s}$, as claimed.
		
		Let $\Pi_{\mathscr{V}'}$ be the free-amalgam subgroup of $\Pi$,
		constructed from $\{\Pi_{V'}\colon V'\in\mathscr{V}'\}$ as above.
		In particular, $\Pi_{\mathscr{V}'}$ is geometrically finite of infinite covolume.
		Therefore, $\Pi_{\mathscr{V}'}$ is quasiconvex of infinite index
		in the virtually compact special nonelementary word-hyperbolic group $\Pi$.
				
		In general, we prove the following statement:
		For any virtually compact special and nonelementary word-hyperbolic group $G$ 
		and for any infinite-index quasiconvex subgroup $H$ of $G$,
		there exists a virtually compact special, nonelementary,
		and word-hyperbolic quotient group
		$G^*$ of $G$, and moreover, $H$ has finite image in $G^*$.
		This is done by induction on the height of $H$,
		following the same argument as with \cite[Theorem A.1]{Agol-VHC}.
		Recall the \emph{height} of $H$ in $G$ is defined to be the smallest integer
		$k\geq 0$ such that for any mutually distinct $H$--cosets 
		$H,g_1H,\cdots,g_kH$ in $G$,
		the intersection $H\cap H^{g_1}\cap\cdots\cap H^{g_k}$ is a finite subgroup of $G$,
		(see \cite[Definitions 0.1 and 0.2]{GMRS} or \cite[Definition 3.23]{AGM-MSQT}).
		For any infinite-index quasiconvex subgroup of a word-hyperbolic group,
		the height always exists, \cite{GMRS}.
		
		If the height of $H$ equals $0$, $H$ is already finite,
		so the statement is trivially true, for $G^*=G$.
		If the height of $H$ is at least $1$,
		we take a malnormal core for $H$ 
		and the induced peripheral structure for $G$, (see \cite[Definition A.5]{Agol-VHC}).
		For our consideration, let us only mention that 
		the induced peripheral structure is 
		given as a finite set of subgroups $\{P_1,\cdots,P_m\}$ of $G$,
		which is \emph{almost malnormal} in the sense that for any $P_i,P_j$ and $g\in G$,
		the intersection $P_i^g\cap P_j$ is a finite subgroup of $G$
		unless $P_i$ equals $P_j$ and contains $g$.
		By the Malnormal Special Quotient Theorem
		\cite[Theorem 12.3]{Wise-book} (see also \cite[Corollary 2.8]{AGM-MSQT}),
		sufficiently long peripherally finite fillings are word-hyperbolic and virtually compact special.
		More precisely, this means that 
		there are finite-index normal subgroups $\dot{P}_i$ of $P_i$,
		for $i=1,\cdots,m$, with the following property:
		For any finite-index normal subgroups $N_i$ of $P_i$ contained in $\dot{P}_i$,		
		the quotient group $\bar{G}=G/\langle\!\langle N_1\cup\cdots\cup N_m\rangle\!\rangle$ 
		of $G$ by the normal closure of $N_1\cup\cdots\cup N_m$
		is word-hyperbolic and virtually compact special.
		We can require in addition that $\bar{G}$ is nonelementary.
		In fact, this follows from \cite[Theorems 7.2 and 11.12]{GM-filling} if $G$ is torsion-free.
		For the general case, 
		we take a finite-index torsion-free characteristic subgroup $\dot{G}$ of $G$,
		using residual finiteness of the virtually compact special group $G$ \cite[Theorem 4.4]{Wise-book}.
		We require the above asserted $\dot{P}_i$ to be contained in $\dot{G}$.
		The quotient image of $\dot{G}$ in $\bar{G}$ is a peripherally finite filling of $\dot{G}$,
		with respect to the induced peripheral structure \cite[Notation 2.9]{AGM-MSQT}.
		For sufficiently long fillings, the torsion-free case implies that the image of $\dot{G}$ in $\bar{G}$
		is nonelementary, so $\bar{G}$ is nonelementary as well.
		Furthermore, we can require that the quotient image $\bar{H}$ of $H$
		in $\bar{G}$ is quasiconvex of height strictly less than the height of $H$
		\cite[Theorem A.16]{Agol-VHC}.
		In particular, $\bar{H}$ has infinite index in $\bar{G}$, as $\bar{G}$ is infinite.
		Passing to the pair $(\bar{G},\bar{H})$ from $(G,H)$ decreases the height strictly 
		with the other hypotheses unaffected.
		Then the statement to be proved holds true by induction.
		
		Apply the above statement
		to $G=\Pi$ and $H=\Pi_{\mathscr{V}'}$. This yields a nonelementary word-hyperbolic quotient
		$G^*$ of $\Pi$ which is virtually compact special. In particular, $G^*$ contains finite-index subgroups
		with (arbitrarily large) positive	first Betti number \cite[Corollary 1.2]{Agol-VHC}.
		So $G^*$ admits an infinite and virtually abelian quotient $G'$.
		Under the composite quotient homomorphism $\Pi\to G^*\to G'$,
		the image of $\Pi_{V'}$ for any $V'\in\mathscr{V}'$
		is finite, because the image of $\Pi_{\mathscr{V}'}$ is finite.
		This complete the proof for Lemma \ref{filling-Q}. 
	\end{proof}
	
	\begin{lemma}\label{vertices-increase}
		For every positive integer $n\in\Natural$,
		there exists a connected regular finite cover $M'_n$ of $M_f$ 
		with the deck transformation group denoted by $\Gamma'_n$,
		and moreover,
		the homology direction hull $\mathcal{D}(M'_n,\phi'_n)$ is a positive-dimensional polytope
		with at least $n$ $\Gamma'_n$--orbits of vertices.
	\end{lemma}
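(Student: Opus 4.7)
The plan is to prove this by induction on $n$. For the base case $n=1$, it suffices to produce any connected regular finite cover $M'_1$ of $M_f$ with $b_1(M'_1) > 1$; such covers exist because $\pi_1(M_f)$ is virtually compact special by the work of Agol and Wise, so in particular it has finite-index characteristic subgroups of arbitrarily large first Betti number. For such a cover the polytope $\mathcal{D}(M'_1, \phi'_1)$ is positive-dimensional and thus has at least two vertices, giving at least one $\Gamma'_1$-orbit.

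For the inductive step, assume we have $M'_n$ with $n$ distinct $\Gamma'_n$-orbits of vertices. The main tool is Lemma \ref{filling-Q} applied to $M'_n$: it produces an infinite virtually abelian quotient $G'$ of $\pi_1(M'_n)$ in which the $\pi_1$-image of every cluster subordinate to an irreducible subgraph of the support subgraph of a positive codimensional face becomes finite. In particular, the (infinite cyclic) $\pi_1$-image of every cluster associated with a vertex of $\mathcal{D}(M'_n, \phi'_n)$ has finite image in $G'$. After replacing $G'$ by a $\Gamma'_n$-equivariant quotient (via intersecting finitely many $\Gamma'_n$-translates of the defining kernel), any finite quotient $G'_K$ of this equivariant $G'$ yields a regular finite cover $M'_{n+1} \to M_f$ whose deck group $\Gamma'_{n+1}$ sits in a short exact sequence $1 \to G'_K \to \Gamma'_{n+1} \to \Gamma'_n \to 1$.

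Preservation of the $n$ existing orbits should be automatic: the projection $\mathcal{D}(M'_{n+1}, \phi'_{n+1}) \to \mathcal{D}(M'_n, \phi'_n)$ of (\ref{D-projection}) is $\Gamma'_{n+1} \twoheadrightarrow \Gamma'_n$-equivariant, so if a $\Gamma'_{n+1}$-element carried a preimage of $v_i$ to a preimage of $v_j$ for $v_i, v_j$ in distinct $\Gamma'_n$-orbits, its image in $\Gamma'_n$ would violate distinctness. Each preimage is a closed face of the same codimension, whose vertices extend the corresponding original orbit. To produce the extra $(n{+}1)$-st orbit, I would exploit that $G'$ has positive rank abelianization: by choosing $G'_K$ large enough and with sufficiently generic abelianization-part, I aim to arrange that $b_1(M'_{n+1}) > b_1(M'_n)$, forcing $\dim \mathcal{D}(M'_{n+1}, \phi'_{n+1}) > \dim \mathcal{D}(M'_n, \phi'_n)$. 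In a strictly higher-dimensional polytope there must exist vertices whose images in $\mathcal{D}(M'_n, \phi'_n)$ are non-vertex points of some positive-dimensional face, and such vertices cannot lie in any $\Gamma'_{n+1}$-orbit of an elevation of a previous vertex, yielding the sought new orbit.

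The main obstacle is the quantitative step of ensuring both (a) that $b_1$ strictly grows under a suitable finite quotient of $G'$ and (b) that the extra dimensions are actually realized by genuinely new extreme homology directions. For (a) one would use that $G'$ is infinite virtually abelian, so it admits a map onto $\mathbb{Z}$, and the corresponding large cyclic covers increase $b_1$ provided one arranges the vanishing of an appropriate Alexander polynomial factor at a character of $G'_K$. For (b) one exploits Thurston's combinatorial description of fibered faces together with the cluster-trajectory correspondence of Sections \ref{Sec-perspective_of_Markov_partitions}--\ref{Sec-cluster} to control which simple dynamical cycles of $\tdigraph_{f'_{n+1},\mathcal{R}'_{n+1}}$ give rise to extreme points of $\mathcal{D}(M'_{n+1},\phi'_{n+1})$ not already accounted for by elevations of vertex cycles in $M'_n$. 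Getting both refinements simultaneously, in a $\Gamma'_n$-equivariant way so that $M'_{n+1}\to M_f$ remains regular, is the technical heart of the argument.
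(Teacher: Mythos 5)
Your base case and the overall structure (apply Lemma~\ref{filling-Q}, pass to covers corresponding to the abelian quotient, then regularize over $M_f$) match the paper, but the step that actually produces the new vertex orbit has a genuine gap, and the specific claim you lean on is false. You argue that by increasing $b_1(M'_{n+1})$ past $b_1(M'_n)$, the polytope $\mathcal{D}(M'_{n+1},\phi'_{n+1})$ becomes strictly higher-dimensional, and that ``in a strictly higher-dimensional polytope there must exist vertices whose images in $\mathcal{D}(M'_n,\phi'_n)$ are non-vertex points.'' That is not true for affine linear projections of polytopes in general: a cube surjects onto a square along a coordinate axis, with every vertex mapping to a vertex. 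So raising the dimension by itself does not force the existence of a vertex over the interior.

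What actually does the work in the paper's argument is a much more specific linear-algebraic consequence of Lemma~\ref{filling-Q} that you never invoke. Let $M''_n$ be the intermediate cover corresponding to (the preimage of) a finite-index free abelian subgroup $G''_n$ of $G'_n$. For any positive-codimensional closed face $E'$ of $\mathcal{D}(M'_n,\phi'_n)$ with preimage face $E''$ in $\mathcal{D}(M''_n,\phi''_n)$, and any cluster $Q_{V''}$ associated to an irreducible subgraph of the support subgraph $\tdigraph_{f''_n,\mathcal{R}''_n}[E'']$, the composition $H_1(Q_{V''};\Integral)\to H_1(M''_n;\Integral)\to G''_n$ is trivial by construction. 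Hence the linear span in $H_1(M''_n;\Real)$ of all such cluster images lies in the kernel of $H_1(M''_n;\Real)\to G''_n\otimes\Real$, a subspace of codimension at least $\mathrm{rank}(G''_n)\ge1$. Consequently the convex hull of the full preimage of $\partial\mathcal{D}(M'_n,\phi'_n)$ sits inside a positive-codimensional affine slice of $\mathcal{D}(M''_n,\phi''_n)$, and since $\mathcal{D}(M''_n,\phi''_n)$ has codimension $0$ in $\mathbf{A}(M''_n,\phi''_n)$ (Theorem~\ref{D-description}), some vertex $v''_n$ of $\mathcal{D}(M''_n,\phi''_n)$ must lie outside that slice, hence projects into $\mathrm{int}\,\mathcal{D}(M'_n,\phi'_n)$. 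Passing to any further cover $M'_{n+1}$ of $M''_n$ regular over $M_f$ and lifting $v''_n$ to a vertex $v'_{n+1}$ gives the new $\Gamma'_{n+1}$-orbit: its image in $\mathcal{D}(M'_n,\phi'_n)$ is interior, so it cannot be in the orbit of any preimage of an old vertex. Your equivariance observation for why the $n$ old orbits stay distinct is fine, but to make the inductive step work you need to trade the dimension-growth heuristic for this control over where the boundary preimage actually lives.
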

	
	\begin{proof}
		For $n$ equal to $1$, we take a connected regular finite cover
		$M'_1$ of $M_f$ with the deck transformation group denoted by $\Gamma'_1$.
		We also require the homology direction hull $\mathcal{D}(M'_1,\phi'_1)$ not to be a single vertex.
		This is possible because of the deep fact that every pseudo-Anosov mapping torus
		admits connected finite covers with arbitrarily large first Betti number \cite[Theorem 9.2]{Agol-VHC}, 
		(see also \cite{AFW-group}).
		Then $M'_1$ satisfies the asserted property.
		
		Suppose by induction that 
		we have constructed $M'_n$ as asserted, for some $n\in\Natural$.
		To proceed to the next level, we apply Lemma \ref{filling-Q} to $M'_n$
		and obtain an infinite and virtually abelian quotient
		$$\pi_1(M'_n)\to G'_n.$$
		Take a finite-index free abelian subgroup $G''_n$ of $G'_n$.
		Choose a basepoint $p'_n\in M'_n$.
		Take a connected finite cover $M''_n$ of $M'_n$ with a lifted basepoint $p''_n$,
		such that $\pi_1(M''_n,p''_n)$ gives rise to the preimage of $G''_n$
		in $\pi_1(M'_n,p'_n)$.
		We take $M'_{n+1}$ 
		to be a connected finite cover of $M''_n$ which is regular over $M_f$. 
		
		We verify that $M'_{n+1}$ satisfies the asserted properties.
		Observe that there are induced affine linear projections of homology direction hulls
		$$\mathcal{D}(M'_{n+1},\phi'_{n+1})\to \mathcal{D}(M''_n,\phi''_n)\to \mathcal{D}(M'_n,\phi'_n),$$
		(Theorem \ref{D-description}	and Section \ref{Sec-covering}).
		In particular, $\mathcal{D}(M'_{n+1},\phi'_{n+1})$ has positive dimension, 
		as $\mathcal{D}(M'_{n},\phi'_{n})$ does.
		Over each $\Gamma'_n$--orbit of vertices in $\mathcal{D}(M'_n,\phi'_n)$,
		there is at least one $\Gamma'_{n+1}$--orbit of vertices in $\mathcal{D}(M'_{n+1},\phi'_{n+1})$,
		so $\mathcal{D}(M'_{n+1},\phi'_{n+1})$ already has at least $n$ distinct $\Gamma'_{n+1}$--orbits
		of vertices, by the induction hypothesis.
		To obtain at least one extra vertex orbit in $\mathcal{D}(M'_{n+1},\phi'_{n+1})$,
		it suffices to show that the convex hull of the preimage of $\partial \mathcal{D}(M'_n,\phi'_n)$ in
		$\mathcal{D}(M'_{n+1},\phi'_{n+1})$ is not the entire $\mathcal{D}(M'_{n+1},\phi'_{n+1})$.
		In fact, we show that this convex hull has positive codimension in $\mathcal{D}(M'_{n+1},\phi'_{n+1})$.
		To this end, 
		observe that the abelianized homomorphism 
		$$H_1\left(M''_n;\Integral\right)\to G''_n$$
		is surjective, by the construction of $M''_n$.
		If $E'$ is any positive-codimensional closed face of $\mathcal{D}(M'_n,\phi'_n)$,
		and if $E''$ is the closed face of $\mathcal{D}(M''_n,\phi''_n)$ given as the preimage of $E'$, 
		there is an induced covering projection between the support subgraphs 
		$\tdigraph_{f''_n,\mathcal{R}''_n}[E'']\to \tdigraph_{f'_n,\mathcal{R}'_n}[E']$.
		If $Q_{V''}$ is any cluster subordinate to $M''_n$ 
		which is associated to an irreducible subgraph $V''$ of $\tdigraph_{f''_n,\mathcal{R}''_n}[E'']$, 
		the construction of $M''_n$ guarantees that	the composite homomorphism 
		$$\xymatrix{
		H_1\left(Q_{V''};\Integral\right) \ar[r]^-{i_{V''*}} & H_1\left(M''_n;\Integral\right) \ar[r] & G''_n
		}$$
		is trivial.
		Therefore, 
		the subspace of $H_1(M''_n;\Real)$ 
		spanned by all the $i_{V''*}(H_1(Q_{V''};\Real))$	as above
		must have codimension at least the rank of $G''_n$.
		Passing to homology direction hulls,
		this means that 
		the preimage of $\partial\mathcal{D}(M'_n,\phi'_n)$ in $\mathcal{D}(M''_n,\phi'')$
		is contained in some affine linear subset of codimension at least the rank of $G''_n$.
		The rank of $G''_n$ is positive because $G''_n$ is a finite-index free abelian subgroup
		in the infinite virtually abelian group $G'_n$.
		Pulling back via $\mathcal{D}(M'_{n+1},\phi'_{n+1})\to \mathcal{D}(M''_n,\phi''_n)$,
		we see that
		the preimage of $\partial\mathcal{D}(M'_n,\phi'_n)$ in $\mathcal{D}(M'_{n+1},\phi'_{n+1})$
		is also contained in an affine linear subset of the same positive codimension.
		Therefore, the convex hull of the preimage of $\partial\mathcal{D}(M'_n,\phi'_n)$
		cannot be the entire $\mathcal{D}(M'_{n+1},\phi'_{n+1})$.
		As explained above, 
		$\mathcal{D}(M'_{n+1},\phi'_{n+1})$ has at least $(n+1)$ $\Gamma'_{n+1}$--orbits of vertices.
		This completes the induction.
	\end{proof}
	
	To prove Proposition \ref{vertices-diversity}, 
	we observe that there are only finitely many, say $K$, 
	primitive exceptional abstract periodic trajectories
	in the flow-box complex $X_{f,\mathcal{R}}$, (Lemma \ref{X-description}).
	For any regular finite cover $\tilde{M}$ of $M_f$ with the deck transformation group denoted by $\Gamma$,
	the primitive exceptional abstract periodic trajectories of $X_{\tilde{f},\tilde{\mathcal{R}}}$
	are precisely the preimage of those of $X_{f,\mathcal{R}}$,
	with respect to the induced covering projection $X_{\tilde{f},\tilde{\mathcal{R}}}\to X_{f,\mathcal{R}}$.
	So there are at most $K$ distinct $\Gamma$--orbits 
	of primitive exceptional abstract periodic trajectories in $X_{\tilde{f},\tilde{\mathcal{R}}}$.
	Given any positive integer $N\in\Natural$,
	take $\tilde{M}$ to be a regular finite cover $M'_{N+K}$ 
	as provided by Lemma \ref{vertices-increase}.
	We see that the homology direction hull
	$\mathcal{D}(\tilde{M},\tilde{\phi})$ has at least $(N+K)$ $\Gamma$--orbits
	of vertices, and that at most $K$ of them could be represented by exceptional abstract periodic trajectories.
	In other words, there remains to be $N$ or more $\Gamma$--orbits 
	of ordinary vertices (Definition \ref{support_subgraph}), as asserted.
	
	This completes the proof of Proposition \ref{vertices-diversity}.

	\subsection{Dominance over ordinary virtual vertices}\label{Subsec-vertex-dominant}
	
	\begin{proposition}\label{vertex-dominant}
		Let $f$ be a pseudo-Anosov automorphism of a connected closed orientable surface $S$
		and $\mathcal{R}$ be a Markov partition of $S$ for $f$.
		Then for any connected finite cover $M'$ of the mapping torus $M_f$ 
		and any ordinary vertex $v'$ of the homology direction hull for $M'$, 
		there exists a connected finite cover $\tilde{M}$ of $M'$
		with the property that the preimage of $v'$
		is a dominant closed face of the homology direction hull for $\tilde{M}$. 
		Here $M'$ and $\tilde{M}$ are declared as covering mapping tori over $M_f$
		according to Convention \ref{covering-mapping-torus}.
		(See Definitions \ref{homology_directions}, \ref{support_subgraph},
		Theorem \ref{D-description}, and (\ref{D-projection}).)
	\end{proposition}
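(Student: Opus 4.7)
The plan is to apply Example \ref{kappa-sequence} inside a carefully chosen connected finite cover $\tilde{M}$ of $M'$. The goal is to produce $\tilde{M}$ together with a subordination chain
\[
Q_{\tilde{V}_d}\to Q_{\tilde{V}_{d-1}}\to\cdots\to Q_{\tilde{V}_1}\to Q_0=\tilde{M},
\]
whose top-level face $\tilde{E}_1\subset\mathcal{D}(\tilde{M},\tilde{\phi})$ is the preimage of $v'$, whose terminal subgraph $\tilde{V}_d$ is a simple dynamical cycle, and whose successive cluster subordination maps induce embeddings of cluster homology direction hulls. Since the ordinary/exceptional type of an abstract periodic trajectory is a local condition preserved by flow-box covering projections, the preimage of the ordinary vertex $v'$ is again ordinary; thus by Example \ref{kappa-zeta} combined with the face formula~(\ref{kappa-face}), $\zeta^{\fabcover}_{\tilde{f}}[\tilde{E}_1]$ coincides with the $\tilde{E}_1$-part of $\kappa^{\fabcover}_{\tilde{f},\tilde{\mathcal{R}}}(\tilde{M})$. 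Example \ref{kappa-sequence} applied to the chain would then give $\kappa^{\fabcover}_{\tilde{f},\tilde{\mathcal{R}}}(\tilde{M})[\tilde{E}_1]\neq 1$, hence $\zeta^{\fabcover}_{\tilde{f}}[\tilde{E}_1]\neq 1$, which is exactly dominance of $\tilde{E}_1$.

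To build a reference chain in $M'$, I would proceed by a finite descent. Let $V'_1$ be any maximal irreducible subgraph of the support subgraph $\tdigraph_{f',\mathcal{R}'}[v']$. If $\mathcal{D}(Q_{V'_1},\phi_{V'_1})$ is a single vertex, Theorem \ref{cluster_D-description} forces $V'_1$ to be a simple dynamical cycle and the descent terminates. Otherwise I pick any vertex $E_2$ of $\mathcal{D}(Q_{V'_1},\phi_{V'_1})$, take a maximal irreducible subgraph $V'_2$ of $V'_1[E_2]$, and iterate. Because each chosen $E_{n+1}$ is a proper face whenever $\mathcal{D}(Q_{V'_n},\phi_{V'_n})$ has positive dimension, the subgraph $V'_n[E_{n+1}]$ is strictly smaller than $V'_n$, so the chain strictly shrinks and terminates at a simple cycle $V'_d$ in finitely many steps. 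This chain transfers to any connected finite cover $\tilde{M}$ of $M'$ by choosing consistent connected components $\tilde{V}_n$ of the preimages, noting that strong connectivity is preserved for finite connected covers of strongly connected directed graphs, and that the preimage of a support subgraph equals the support subgraph of the preimage face, so the maximal-irreducible hypothesis of Example \ref{kappa-sequence} is preserved on elevation.

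The remaining, and principal, obstacle is to arrange that every elevated subordination map $\mathcal{D}(j_{\tilde{V}_{n+1},\tilde{V}_n})$ is an embedding. A sufficient condition is that the induced homomorphism $H_1(Q_{\tilde{V}_{n+1}};\Real)\to H_1(Q_{\tilde{V}_n};\Real)$ be injective; this in turn follows if $\pi_1(Q_{\tilde{V}_{n+1}})$ appears as a retract of $\pi_1(Q_{\tilde{V}_n})$. By Lemma \ref{quasiconvex-Q}, every image $\pi_1(Q_{V'_n})\subset\pi_1(M')$ is quasiconvex of infinite index in the virtually compact special nonelementary word-hyperbolic group $\pi_1(M')$, and is therefore a virtual retract by Haglund--Wise. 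Lemma \ref{elevation-sequence-VW} is the packaging that furnishes a single regular finite cover $\tilde{M}$ simultaneously realizing the required retractions along the entire nested chain $\pi_1(Q_{V'_d})\subset\cdots\subset\pi_1(Q_{V'_1})\subset\pi_1(M')$. With such a $\tilde{M}$ in hand, the elevated chain fulfills the hypotheses of Example \ref{kappa-sequence} and the proposition follows. I expect this simultaneous virtual retraction to be the crux of the argument, the combinatorial descent in $M'$ and the passage between $\kappa^{\fabcover}$ and $\zeta^{\fabcover}$ being routine consequences of the formalism developed in Sections \ref{Sec-cluster} and \ref{Sec-rcp}.
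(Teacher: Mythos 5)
Your proposal follows essentially the same route as the paper: descend inside $M'$ by repeatedly passing to a maximal irreducible subgraph of the support subgraph of a chosen vertex (this is exactly Lemma~\ref{sequence-VW}, with the termination criterion coming from Theorem~\ref{cluster_D-description}); then use quasiconvexity (Lemma~\ref{quasiconvex-Q}) and the Haglund--Wise virtual-retract theorem to build a single finite cover $\tilde{M}$ in which all the subordination maps on homology become split injections (Lemma~\ref{elevation-sequence-VW}); finally apply the calculation of Example~\ref{kappa-sequence} together with Example~\ref{kappa-zeta} and the face formula~(\ref{kappa-face}), using that the preimage of an ordinary vertex stays ordinary. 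One small inaccuracy: Lemma~\ref{elevation-sequence-VW} (and the proposition) produce only a \emph{connected} finite cover $\tilde{M}$ of $M'$, not a regular one -- the cover built from a nested chain of Haglund--Wise virtual retracts is generally not regular, and regularity over $M_f$ is arranged only later, in the proof of Theorem~\ref{dominant-diversity}, by passing to a further cover.
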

	
	We prove Proposition \ref{vertex-dominant} in the rest of this subsection.

	\begin{lemma}\label{sequence-VW}
		Suppose that $M'$ is a connected finite cover of the mapping torus $M_f$ 
		and that $v'$ is an ordinary vertex of
		the homology direction hull $\mathcal{D}(M',\phi')$.
		Then there exist 
		a finite sequence of successively subordinate clusters 
		$$\xymatrix{
		Q'_d \ar[r] & \cdots \ar[r] & Q'_2 \ar[r] & Q'_1 \ar[r] & M',
		}$$
		and moreover, the following conditions are all satisified:
		\begin{itemize}
		\item The cluster homology direction hull $\mathcal{D}(Q'_1,\phi'_1)$ 
		projects onto the vertex $v'$,
		and the cluster $Q'_1$ is maximal subject to this property.
		\item For every $n=2,\cdots,d$, the cluster homology direction hull
		$\mathcal{D}(Q'_n,\phi'_n)$ 
		projects onto some vertex of $\mathcal{D}(Q'_{n-1},\phi'_{n-1})$,
		and the cluster $Q'_n$ is maximal subject to this property.
		\item The cluster homology direction hull
		$\mathcal{D}(Q'_d,\phi'_d)$ is a single vertex.
		\end{itemize}
		Here the projections of cluster homology direction hulls and
		the partial ordering of clusters are both understood as induced by subordination.
	\end{lemma}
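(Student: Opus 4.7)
The plan is to construct the sequence inductively, at each stage replacing the current cluster transition graph by a maximal irreducible subgraph of the support subgraph over a chosen vertex of the current cluster homology direction hull. For the base case, I start with the singleton face $\{v'\}$ of $\mathcal{D}(M',\phi')$. By Lemma~\ref{hd-description} and the lattice isomorphism of Lemma~\ref{P-description} (applied in the covering setting of Section~\ref{Sec-covering}), the preimage $\mathrm{hd}_{f',\mathcal{R}'}^{-1}(\{v'\})$ is a closed face of $\mathcal{P}_{f',\mathcal{R}'}$ whose support subgraph $\tdigraph_{f',\mathcal{R}'}[\{v'\}]$ is recurrent and contains at least one simple dynamical cycle. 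I take $V'_1$ to be any strong component of $\tdigraph_{f',\mathcal{R}'}[\{v'\}]$ and let $Q'_1$ be the associated cluster. Then $\mathcal{D}(Q'_1,\phi'_1)$ projects onto $\{v'\}$ by Lemma~\ref{cluster_hd-description}, and any strictly larger irreducible subgraph of $\tdigraph_{f',\mathcal{R}'}$ would contain some dynamical cycle with homology direction outside $\{v'\}$, yielding the required maximality.

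I iterate this procedure. Given $Q'_{n-1}$ with cluster transition graph $V'_{n-1}$, if $\mathcal{D}(Q'_{n-1},\phi'_{n-1})$ has already become a single vertex I set $d = n-1$ and halt. Otherwise I pick any vertex $v$ of $\mathcal{D}(Q'_{n-1},\phi'_{n-1})$, take its support subgraph $V'_{n-1}[\{v\}]$ in the sense of Definition~\ref{support_subgraph}, and let $V'_n$ be any strong component of this support subgraph with $Q'_n$ the associated cluster. The cluster subordination map $Q'_n \to Q'_{n-1}$ comes from the inclusion $V'_n \subset V'_{n-1}$, and the same reasoning as for the base case gives that $\mathcal{D}(Q'_n,\phi'_n)$ projects onto $\{v\}$ and that $Q'_n$ is maximal among clusters subordinate to $Q'_{n-1}$ with this property. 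The crucial termination observation is that whenever $\mathcal{D}(Q'_{n-1},\phi'_{n-1})$ is positive dimensional, the singleton $\{v\}$ is a proper face of it, so by the face-lattice correspondence of Lemma~\ref{P-description} applied to $\mathcal{P}_{f',\mathcal{R}'}(V'_{n-1})$ one has $V'_{n-1}[\{v\}] \subsetneq V'_{n-1}$, hence $V'_n \subsetneq V'_{n-1}$. The number of directed edges of $V'_n$ strictly decreases at each non-terminal step; since $\tdigraph_{f',\mathcal{R}'}$ is finite the construction terminates.

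I do not anticipate serious difficulties in this lemma: the argument is a direct induction reducing at each step to the face-lattice / recurrent-subgraph correspondence already established, combined with the finiteness of the transition graph. The main conceptual step is to translate the geometric condition \emph{``$\mathcal{D}(Q'_n,\phi'_n)$ projects onto a vertex of $\mathcal{D}(Q'_{n-1},\phi'_{n-1})$''} into the purely combinatorial condition \emph{``$V'_n$ is a maximal irreducible subgraph of the support subgraph $V'_{n-1}[\{v\}]$''}, via Lemma~\ref{cluster_hd-description}; once this is in hand, Theorem~\ref{cluster_D-description} further identifies the terminal subgraph $V'_d$ as a simple dynamical cycle, confirming that the sequence really does descend all the way down to a single-vertex cluster homology direction hull.
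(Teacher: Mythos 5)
Your proof is correct and follows essentially the same inductive scheme as the paper's: take a strong (maximal irreducible) component of the support subgraph over a chosen vertex, descend, and halt when the cluster homology direction hull becomes a single vertex. The only cosmetic difference is that the paper states the termination test as ``$V'_n$ is a simple dynamical cycle''; the two conditions are equivalent via Theorem~\ref{cluster_D-description}, granted that the ordinariness of $v'$ forces $\mathcal{D}(M',\phi')$ to have positive dimension --- a point the paper notes explicitly and you use only implicitly.
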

	
	\begin{proof}
		Take a maximal irreducible subgraph $V'_1$ of the support subgraph $\tdigraph_{f',\mathcal{R}'}[v']$
		(Definition \ref{support_subgraph}).
		(Topologically $V'_1$ is nothing but a connected component, 
		since $\tdigraph_{f',\mathcal{R}'}[v']$ is nonwandering.)
		We construct $Q'_1$	as the cluster subordinate to $M'$ and assoicated to $V'_1$.
		Suppose by induction that 
		a sequence of clusters $Q'_1,\cdots,Q'_n$ have been constructed for some $n\in\Natural$,
		associated to a properly descending chain of irreducible subgraphs 
		$V'_1\supsetneq V'_2\supsetneq\cdots\supsetneq V'_n$.
		If $V'_n$ is a simple dynamical cycle, we terminate our construction.
		Otherwise, take a vertex $w'_{n+1}$
		of the cluster homology direction hull $\mathcal{D}(Q'_n,\phi'_n)$, 
		and take a maximal irreducible subgraph $V'_{n+1}$ of the cluster support subgraph
		$V'_n[w'_{n+1}]$. Construct $Q'_{n+1}$ as the cluster associated to $V'_{n+1}$.
		
		Since there is an ordinary vertex $v'$, 
		$\mathcal{D}(M',\phi')$ must have positive dimension.
		It follows that $\mathcal{D}(Q'_n,\phi'_n)$ has positive dimension
		unless $V'_n$ is a simple dynamical cycle
		(Theorem \ref{cluster_D-description}).
		Then every $V'_{n+1}$ is properly contained in $V'_n$ unless 
		the construction terminates at the level $n$.
		However, the construction has to terminate at some level $n=d$,
		because the transition graph $\tdigraph_{f',\mathcal{R}'}$ is finite.
		It is straightforward to check that the clusters $Q'_1,\cdots,Q'_d$ 
		satisfy the asserted properties, (see Theorem \ref{cluster_D-description} and Definition \ref{support_subgraph}).		
	\end{proof}

	\begin{lemma}\label{elevation-sequence-VW}
		Suppose that $M'$ is a connected finite cover of the mapping torus $M_f$ 
		and that $v'$ is an ordinary vertex of
		the homology direction hull $\mathcal{D}(M',\phi')$.
		Then for any sequence of clusters $Q'_1,\cdots,Q'_d$ subordinate to $M'$
		as described by Lemma \ref{sequence-VW},
		there exist a connected finite cover $\tilde{M}$ of $M'$ 
		and clusters $\tilde{Q}_1,\cdots,\tilde{Q}_d$ subordinate to $\tilde{M}$,
		and moreover, the following conditions are all satisfied:
		\begin{itemize}
		\item The following diagram of maps is commutative up to homotopy:
		$$\xymatrix{
		\tilde{Q}_d \ar[r] \ar[d]& \cdots \ar[r] & \tilde{Q}_2 \ar[r] \ar[d]& \tilde{Q}_1 \ar[r] \ar[d]& \tilde{M} \ar[d]\\
		Q'_d \ar[r] & \cdots \ar[r] & Q'_2 \ar[r] & Q'_1 \ar[r] & M'
		}$$
		In the diagram, the horizontal maps are all cluster subordination maps, 
		and the vertical maps are all cluster covering projections.
		\item The subordination maps for cluster homology direction hulls
		$\mathcal{D}(\tilde{Q}_1,\tilde{\phi}_1)\to\mathcal{D}(\tilde{M},\tilde{\phi})$,
		and		
		$\mathcal{D}(\tilde{Q}_n,\tilde{\phi}_n)\to\mathcal{D}(\tilde{Q}_{n-1},\tilde{\phi}_{n-1})$
		for $n=2,\cdots,d$,
		are all affine linear embeddings.
		\item The map $\tilde{Q}_d\to Q'_d$ is a homotopy equivalence.
		\end{itemize}
	\end{lemma}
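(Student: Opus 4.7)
The plan is to construct $\tilde{M}$ as a regular finite cover of $M'$ by iteratively applying the Haglund--Wise virtual retraction theorem \cite{Haglund--Wise} along the subgroup chain of cluster fundamental groups. Recall that $\pi_1(M')$ is a nonelementary word-hyperbolic group that is virtually compact special, by Thurston's hyperbolization for pseudo-Anosov mapping tori together with Agol's theorem \cite{Agol-VHC}. For each $n$, the subgroup $\pi_1(Q'_n) \leq \pi_1(M')$ is quasiconvex since the image of $\mathcal{D}(Q'_n, \phi'_n)$ in $\mathcal{D}(M', \phi')$ lies in a positive codimensional closed face, following the argument of Lemma \ref{quasiconvex-Q}. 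Consequently each $\pi_1(Q'_n)$ is itself virtually compact special, and each nested inclusion $\pi_1(Q'_n) \leq \pi_1(Q'_{n-1})$ remains quasiconvex.

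The construction proceeds inductively from top to bottom. Let $r_1 \colon \Pi^{(1)} \to \pi_1(Q'_1)$ be a retraction produced by applying the virtual retraction theorem to $\pi_1(Q'_1) \leq \pi_1(M')$, where $\Pi^{(1)} \leq \pi_1(M')$ is finite-index. At stage $n \geq 2$, apply the virtual retraction theorem inside $\pi_1(Q'_{n-1})$ to the inclusion of $\pi_1(Q'_n)$, obtaining a finite-index subgroup $\Xi^{(n)} \leq \pi_1(Q'_{n-1})$ with retraction $\eta_n \colon \Xi^{(n)} \to \pi_1(Q'_n)$. Define $\Pi^{(n)}$ as the preimage of $\Xi^{(n)}$ under the composition $\eta_{n-1} \circ \cdots \circ \eta_2 \circ r_1$, so that $\Pi^{(n)}$ has finite index in $\Pi^{(n-1)}$ and supports the composed retraction $\sigma_n = \eta_n \circ \cdots \circ r_1$ landing in $\pi_1(Q'_n)$. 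After $d$ stages set $\Pi = \Pi^{(d)}$, which has finite index in $\pi_1(M')$ and contains $\pi_1(Q'_d)$, since each of $r_1$ and the $\eta_i$ restricts to the identity on $\pi_1(Q'_d) \leq \pi_1(Q'_i)$.

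Let $\tilde{M}$ be the connected finite cover of $M'$ corresponding to $\Pi$, regarded as a covering mapping torus via Convention \ref{covering-mapping-torus}; if necessary, pass to a further regular cover of $M_f$ by intersecting $\Pi$ with its finitely many $\pi_1(M_f)$-conjugates. For each $n$, let $\tilde{Q}_n$ denote the elevation of $Q'_n$ in $\tilde{M}$ whose fundamental group is $\pi_1(Q'_n) \cap \Pi$. The commutativity of the diagram of cluster subordinations and cluster covering projections then holds by construction, and the equality $\pi_1(Q'_d) \cap \Pi = \pi_1(Q'_d)$ yields the homotopy equivalence $\tilde{Q}_d \to Q'_d$. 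For the embeddings of cluster homology direction hulls, the key observation is that, after a further finite-index refinement of $\Pi$ ensuring $\sigma_n(\Pi) \leq \Pi$ for every $n$, the restriction of $\sigma_n$ to $\pi_1(\tilde{Q}_{n-1})$ becomes a retraction onto $\pi_1(\tilde{Q}_n)$. Abelianizing gives a real-linear retraction $H_1(\tilde{Q}_{n-1}; \Real) \to H_1(\tilde{Q}_n; \Real)$, so the inclusion-induced maps on first real homology are injective at every level; the required affine linear embeddings of $\mathcal{D}(\tilde{Q}_n, \tilde{\phi}_n)$ into $\mathcal{D}(\tilde{Q}_{n-1}, \tilde{\phi}_{n-1})$ then follow from Theorem \ref{cluster_D-description} and Definition \ref{cluster_homology_direction_hull}.

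The principal obstacle will be the simultaneous compatibility of the retractions at all levels: the composed retraction $\sigma_n$ sends $\Pi$ only into $\pi_1(Q'_n)$ a priori, not into $\Pi$ itself, and an additional refinement step is required to force $\sigma_n(\Pi) \leq \Pi$ for every $n$. This refinement is carried out inductively by replacing $\Pi$ with $\bigcap_n \sigma_n^{-1}(\Pi)$ and iterating until stability, with termination guaranteed by the finite-index property at each stage. A secondary technical point is the preservation of quasiconvexity throughout the nested chain of cluster fundamental groups, which is needed so that the Haglund--Wise theorem applies at every inductive stage.
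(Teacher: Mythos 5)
Your construction is genuinely different in organization from the paper's, though both rely on the same central tool, the Haglund--Wise virtual retraction theorem. The paper works \emph{top-down inside the cluster groups}: setting $\dot{\Pi}_d = \Pi_d$ and inductively producing, for $n=d,\dots,1$, a finite-index subgroup $\dot{\Pi}_{n-1}\leq \Pi_{n-1}$ that contains $\dot{\Pi}_n$ as a retract; the covers $\tilde{Q}_n$ then have fundamental groups $\dot{\Pi}_n$, nested directly as retracts, and homology injectivity is immediate with no further adjustment. You instead build retractions $\sigma_n\colon \Pi^{(n)}\to\pi_1(Q'_n)$ from finite-index subgroups of $\pi_1(M')$, then take $\tilde{Q}_n$ with $\pi_1(\tilde{Q}_n)=\pi_1(Q'_n)\cap\Pi$ and must retro-fit $\Pi$ to make those intersections compatible with the $\sigma_n$. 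This costs you the extra refinement step, while avoiding the paper's (implicit and slightly delicate) identification of the cover corresponding to $\dot{\Pi}_n$ with an elevation cluster. Both routes ultimately go through the same applications of quasiconvexity of the cluster subgroups (your appeal to the nested inclusions $\pi_1(Q'_n)\leq\pi_1(Q'_{n-1})$ being quasiconvex should be backed by a citation, e.g.\ Swarup or Kapovich--Short as the paper does, together with Wise's theorem that the intermediate groups remain virtually compact special).

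Two corrections are needed. First, your termination argument for the refinement is wrong as stated: the finite-index property does not by itself make a strictly decreasing chain of finite-index subgroups stabilize (consider $2^k\Integral$ in $\Integral$). What actually saves you is the idempotent structure of retractions. Since each $\sigma_n$ is a retraction onto $\pi_1(Q'_n)$, and since for $m<n$ one has $\pi_1(Q'_n)\subset\pi_1(Q'_m)$ while for $m>n$ one has $\sigma_m=\eta_m\circ\dots\circ\eta_{n+1}\circ\sigma_n$, the compositions satisfy $\sigma_m\circ\sigma_n=\sigma_{\max(m,n)}$ wherever defined. It follows that the \emph{single} refinement $\Pi^{\ast}=\Pi\cap\bigcap_n\sigma_n^{-1}(\Pi)$ already satisfies $\sigma_n(\Pi^{\ast})\subset\Pi^{\ast}$ for every $n$: given $g\in\Pi^{\ast}$, one has $\sigma_m(\sigma_n(g))=\sigma_{\max(m,n)}(g)\in\Pi$ for all $m$. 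No iteration is required, and the justification should be stated this way. Second, your clause about passing to a further cover of $M_f$ that is regular (by intersecting $\Pi$ with its $\pi_1(M_f)$-conjugates) should simply be deleted: the lemma does not ask for regularity over $M_f$, and replacing $\Pi$ by a smaller normal subgroup will in general destroy the containment $\pi_1(Q'_d)\subset\Pi$, and with it the asserted homotopy equivalence $\tilde{Q}_d\to Q'_d$. Regularity is arranged only later, in the synthesis for Theorem \ref{dominant-diversity}, and not at this stage.
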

	
	\begin{proof}
		For convenience we rewrite $M'$ as $Q'_0$. Choose a basepoint $p'_d$ for $Q'_d$, 
		and for $n=0,1,\cdots,d-1$, endow $Q'_{n}$ with the basedpoint $p'_n$ 
		which is the image of $p'_d$ under the cluster subordination map $Q'_d\to Q'_n$.
		Denote by $\Pi_n$ the fundamental group $\pi_1(Q'_n,p'_n)$, for $n=0,\cdots,d$.
		As the cluster subordination maps are all $\pi_1$--injective, 
		for $n=1,\cdots,d$, we identify each $\Pi_n$ as a subgroup of $\Pi_{n-1}$,
		so $\Pi_n$ are all subgroups of $\Pi_0$.
		We also identify $\Pi_0$ as a cocompact Kleinian group by hyperbolization.
		Observe $b_1(M')>1$ because $v'$ is an ordinary vertex, so $\Pi_1,\cdots,\Pi_d$
		are all geometrically finite in $\Pi_0$ by Lemmas \ref{quasiconvex-Q} and \ref{sequence-VW}.
		Therefore, for $n=1,\cdots,d$, $\Pi_{n-1}$ is word-hyperbolic
		and $\Pi_n$ is quasiconvex in $\Pi_{n-1}$ \cite{Swarup}, (see also \cite[Theorem 2]{Kapovich--Short}).
		Moreover, $\Pi_{n-1}$ is virtually compact special \cite[Theorem 16.6]{Wise-book}.
		
		For $n=0,\cdots,d$, we construct a finite-index subgroup $\dot{\Pi}_n$ of $\Pi_n$ as follows.
		First take $\dot{\Pi}_d=\Pi_d$. Since $\dot{\Pi}_d$ is quasiconvex
		in the virtually compact special word-hyperbolic group $\Pi_{d-1}$,
		$\dot{\Pi}_d$ is a virtual retract of $\Pi_{d-1}$ by \cite[Theorem 7.3]{Haglund--Wise},
		(see also \cite[Proof 2 of Theorem 4.13]{Wise-book}).
		In other words, there exists a finite index subgroup $\dot{\Pi}_{d-1}$ of $\Pi_{d-1}$
		which contains $\dot{\Pi}_d$, and moreover,
		there exists a homomorphism $\dot{\Pi}_{d-1}\to\dot{\Pi}_d$
		which extends the identity homomorphism of $\dot{\Pi}_d$.
		So we take such a $\dot{\Pi}_{d-1}$ for our construction.
		Proceeding likewise, we construct finite-index subgroups 
		$\dot{\Pi}_n$ of $\Pi_n$, for $n=d-2,\cdots,1,0$.
		The construction makes sure that 
		each $\dot{\Pi}_n$ is contained in $\dot{\Pi}_{n-1}$
		as a retract, for $n=1,\cdots,d$.
		
		For $n=0,\cdots,d$,
		take $\tilde{Q}_n$ to be the connected finite cover of $Q'_n$ 
		that corresponds to the subgroup $\dot{\Pi}_n$ of $\Pi_n$.
		We rewrite $\tilde{Q}_0$ as $\tilde{M}$, the asserted connected finite cover of $M'$.
		The finite covers $\tilde{Q}_n$ are naturally clusters subordinate to the covering mapping torus $\tilde{M}$.
		In fact, for $n=1,\cdots,d$,
		the cluster transition graph $\tilde{V}_n$ of $\tilde{Q}_n$ is a preimage component of
		the cluster transition graph $V'_n$ of $Q'_n$,
		under the induced covering projection of transition graph
		$\tdigraph_{\tilde{f},\tilde{\mathcal{R}}}\to\tdigraph_{f',\mathcal{R}'}$,
		(see Definition \ref{cluster} and (\ref{Q-diagram})).
		The successive retract property from the construction implies 
		that the sequence of homomorphisms induced by cluster subordination maps
		$$\xymatrix{
		H_1(\tilde{Q}_d;\Real) \ar[r] & \cdots \ar[r] & H_1(\tilde{Q}_2;\Real) \ar[r] & H_1(\tilde{Q}_1;\Real) \ar[r] & H_1(\tilde{M};\Real)
		}$$
		are all embeddings. This implies the asserted successive embeddings of cluster homology direction hulls
		$\mathcal{D}(\tilde{Q}_n,\tilde{\phi}_n)$. The remaining asserted properties are obvious from the construction.
	\end{proof}
		
	To prove Proposition \ref{vertex-dominant},
	we apply Lemmas \ref{sequence-VW} and \ref{elevation-sequence-VW}
	to construct a connected finite cover
	\begin{equation}\label{claimed-tilde-M}
	\tilde{M}\to M',
	\end{equation}
	with respect to the given ordinary vertex $v'$
	of the homology direction hull $\mathcal{D}(M',\phi')$.
	We retain 
	the clusters $Q'_1,\cdots,Q'_d$ subordinate to $M'$
	and the	clusters $\tilde{Q}_1,\cdots,\tilde{Q}_d$ subordinate to $\tilde{M}$
	in those lemmas for our record.
		
	Let us quickly analyze the result of our construction 
	on the homology direction level.
	For $n=1,\cdots,d$, 
	denote by $\mathcal{D}'_n$ the cluster homology direction hull for $Q'_n$,
	and $\tilde{\mathcal{D}}_n$ for $\tilde{Q}_n$.
	For $n=2,\cdots,d$, denote by $w'_n$ the vertex of $\mathcal{D}'_{n-1}$ 
	that is projected by $\mathcal{D}'_n$, and 
	denote by $\tilde{E}_n$ the closed face of $\tilde{\mathcal{D}}_n$
	that is the preimage of $w'_n$ under the cluster covering-induced projection
	(\ref{cluster_D-projection}).
	Denote by $\tilde{E}_{v'}$ the closed face of $\mathcal{D}(\tilde{M},\tilde{\phi})$
	that is the preimage of $v'$.
	Then the situation can be summarized by
	the following commutative diagram of affine linear maps of polytopes:
	\begin{equation}\label{diagram-DE}
	\xymatrix{
	\tilde{\mathcal{D}}_d \ar[r] \ar[d] & \tilde{E}_d \ar[r] \ar[d] & \cdots \ar[r] 
	&\tilde{\mathcal{D}}_2 \ar[r] \ar[d] & \tilde{E}_2 \ar[r] \ar[d]
	&\tilde{\mathcal{D}}_1 \ar[r] \ar[d] & \tilde{E}_{v'} \ar[r] \ar[d]	
	& \mathcal{D}\left(\tilde{M},\tilde{\phi}\right) \ar[d]\\
	\mathcal{D}'_d \ar[r] & w'_d \ar[r]  & \cdots \ar[r] 
	&\mathcal{D}'_2 \ar[r] & w'_2 \ar[r]
	&\mathcal{D}'_1 \ar[r] & v' \ar[r] & \mathcal{D}\left(M',\phi'\right) 
	}
	\end{equation}
	In the diagram (\ref{diagram-DE}), the vertical arrows are all surjective;
	the horizontal arrows in the upper row are embeddings; 
	the horizontal arrows in the lower row are
	projections onto singletons and embeddings as vertices, alternately.
	
	It remains to verify that $\tilde{E}_{v'}$ is a dominant closed face of  $\mathcal{D}(\tilde{M},\tilde{\phi})$.
	To this end, we apply the calculation of Example \ref{kappa-sequence}
	on the level of the covering mapping torus $\tilde{M}$ (see also Section \ref{Sec-covering}), 
	putting the cluster $\tilde{Q}_0$ as $\tilde{M}$ and 
	the closed face $\tilde{E}_1$ as $\tilde{E}_{v'}$.
	It follows that the $\tilde{E}_{v'}$--part of the reciprocal characteristic polynomial
	for $\tilde{M}$ satisfies
	$$\kappa^\fabcover_{\tilde{f},\tilde{\mathcal{R}}}\left(\tilde{M}\right)\left[\tilde{E}_{v'}\right]\neq1.$$
	As $v'$ is an ordinary vertex of $\mathcal{D}(M',\phi')$, 
	the closed face $\tilde{E}_{v'}$ of $\mathcal{D}(\tilde{M},\tilde{\phi})$ is also ordinary,
	by Definition \ref{support_subgraph}.
	Then the calculation of Example \ref{kappa-zeta} implies
	that the $\tilde{E}_{v'}$--part of the multivariable Lefschetz zeta function satisfies
	$$\zeta^\fabcover_{\tilde{f}}\left[\tilde{E}_{v'}\right]=\kappa^\fabcover_{\tilde{f},\tilde{\mathcal{R}}}\left(\tilde{M}\right)\left[\tilde{E}_{v'}\right]\neq1$$
	By Definition \ref{homology_directions}, this means exactly that $\tilde{E}_{v'}$
	is a dominant closed face of $\mathcal{D}(\tilde{M},\tilde{\phi})$.
	In other words, $\tilde{M}$ is a connected finite cover of $M'$
	which satisfies the asserted property of Proposition \ref{vertex-dominant}
	with respect to the given ordinary vertex $v'$.
	
	This completes the proof of Proposition \ref{vertex-dominant}.

	\subsection{{Proof of Theorem \ref{dominant-diversity}}}\label{Subsec-dominant-diversity}
	We obtain by Proposition \ref{vertices-diversity}
	a connected regular finite cover $M'$ of $M_f$,
	whose deck transformation group is denoted as $\Gamma'$.
	There are at least $N$ mutually distinct
	$\Gamma'$--orbits of ordinary vertices 
	in the homology direction hull $\mathcal{D}(M',\phi')$.
	We list them by some representative vertices $v'_1,\cdots,v'_N$.
	For each $v'_i$, obtain by Proposition \ref{vertex-dominant}
	a connected finite cover $M''_i$ over $M'$, so that the preimage of $v'_i$
	is a dominant closed face $E''_i$ of $\mathcal{D}(M''_i,\phi''_i)$.
	Take a common connected finite cover $\tilde{M}$ for all $M''_1,\cdots,M''_N$,
	and moreover, require $\tilde{M}$ to be regular over $M_f$.
	With respect to the induced affine linear projections
	$$\mathcal{D}\left(\tilde{M},\tilde{\phi}\right)\to \mathcal{D}\left(M''_i,\phi''_i\right),$$
	denote by $\tilde{E}_i$ the preimage of $E''_i$, 
	which are closed faces of $\mathcal{D}(\tilde{M},\tilde{\phi})$. 
	
	It is clear by the definition of dominant closed faces
	that $\tilde{E}_1,\cdots,\tilde{E}_N$ are also dominant closed faces of $\mathcal{D}(\tilde{M},\tilde{\phi})$,
	(see Definition \ref{homology_directions}).
	Denote by $\Gamma$ the deck transformation group of $\tilde{M}$ over $M_f$,
	according to the notation of Theorem \ref{dominant-diversity}.
	Note that the affine linear projection
	$$\mathcal{D}\left(\tilde{M},\tilde{\phi}\right)\to \mathcal{D}\left(M',\phi'\right)$$
	is equivariant with respect to the quotient homomorphism of deck transformation groups
	$$\Gamma\to\Gamma'.$$
	The closed faces $\tilde{E}_1,\cdots,\tilde{E}_N$ are mutually disjoint
	since they project the mutually distinct vertices $v'_1,\cdots,v'_N$.
	Moreover, the $\Gamma$--orbits of the closed faces are mutually disjoint
	(as $\Gamma$--invariant subsets), 
	since the vertices represent mutually distinct $\Gamma'$--orbits.
	Therefore, for the connected regular finite cover $\tilde{M}$ over $M_f$,
	the homology direction hull $\mathcal{D}(\tilde{M},\tilde{\phi})$ 
	possesses at least $N$ mutually distinct
	$\Gamma$--orbits	of mutually disjoint dominant closed faces,
	as asserted.
	
	This completes the proof of Theorem \ref{dominant-diversity}.

\section{Proof of the virtual homological spectral radius conjecture}\label{Sec-main_proof}
	In this section, we prove Theorem \ref{main-vhsr}, which confirms Conjecture \ref{conjecture-vhsr}.
	The essential case for pseudo-Anosov automorphisms
	of closed surfaces is summarized as the following Lemma \ref{vhsr-pA-closed},
	based on all the techniques that we have developed so far.
	The bounded pseudo-Anosov case and the partially pseudo-Anosov case 
	are derived by Lemmas \ref{vhsr-pA-bounded} and \ref{vhsr-pA-partial}.
	In the end, we complete the proof of Theorem \ref{main-vhsr}
	using the Nielsen--Thurston classification.
		
	\begin{lemma}\label{vhsr-pA-closed}
		For any pseudo-Anosov automorphism $f$ of a connected closed orientable surface $S$,
		some virtual homological spectral radius for $f$ is strictly greater than $1$.
	\end{lemma}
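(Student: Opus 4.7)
The lemma is the capstone of all the preceding machinery: the substantive work is done in Theorems \ref{criterion-enfeoffed} and \ref{dominant-diversity}, and the proof amounts to assembling them together with Corollary \ref{criterion-enfeoffed-corollary} (which packages Sun's characterization \cite{Sun-vhsr}).

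First, I would apply Theorem \ref{dominant-diversity} with $N = -\chi(S)+1$ to obtain a connected regular finite cover $\tilde{M}$ of the mapping torus $M_f$ such that the homology direction hull $\mathcal{D}(\tilde{M},\tilde{\phi})$ contains at least $-\chi(S)+1$ mutually distinct $\Gamma$-orbits of mutually disjoint dominant closed faces; here $\Gamma$ denotes the deck transformation group, and $\tilde{M}$ is declared as a covering mapping torus via Convention \ref{covering-mapping-torus}.

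Next, for each such $\Gamma$-orbit I would form the union $E_i$ of the closed faces comprising that orbit. Each $E_i$ is a union of mutually disjoint closed faces of the compact convex polytope $\mathcal{D}(\tilde{M},\tilde{\phi})$, hence semi-extreme by the remark following Definition \ref{homology_directions}, and $\Gamma$-invariant by construction. Dominance of $E_i$ is inherited from any of its dominant constituent faces $F$: every $u \in H_1(\tilde{M};\Integral)_{\mathtt{free}}$ with $\Real u \in F$ satisfies $\Real u \in E_i$ as well, and the coefficient $\mathrm{coef}(\zeta^\fabcover_{\tilde{f}};u)$ that makes $\zeta^\fabcover_{\tilde{f}}[F] \neq 1$ appears verbatim in $\zeta^\fabcover_{\tilde{f}}[E_i]$, so $\zeta^\fabcover_{\tilde{f}}[E_i] \neq 1$. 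The subsets $E_1,\ldots,E_N$ are mutually disjoint because the underlying orbits are disjoint collections of disjoint faces.

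Consequently the hypothesis of Theorem \ref{criterion-enfeoffed} is verified for $\tilde{M}$ with the family $\{E_1,\ldots,E_N\}$, and we conclude $\mathbb{M}(\Delta^\fabcover_{\tilde{M}}) > 1$. Invoking Corollary \ref{criterion-enfeoffed-corollary} then yields a connected finite cover $S'$ of $S$ together with an automorphism lift $f'$ of $f$ on $S'$ whose homological spectral radius is strictly greater than $1$; by definition this is a virtual homological spectral radius for $f$ exceeding $1$, as required. There is no serious obstacle within this lemma itself — the genuine difficulty lies entirely in the preparatory Theorems \ref{criterion-enfeoffed} and \ref{dominant-diversity}; the only bookkeeping issue, that dominance passes to supersets among semi-extreme subsets, is an immediate consequence of the coefficient-wise definition of $\zeta^\fabcover_{\tilde{f}}[E]$.
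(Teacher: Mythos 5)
Your proof is correct and follows essentially the same route as the paper: apply Theorem \ref{dominant-diversity} with $N = -\chi(S)+1$, verify the hypothesis of Theorem \ref{criterion-enfeoffed} for the unions of the $\Gamma$-orbits of dominant closed faces, and invoke Sun's characterization (which you cite through Corollary \ref{criterion-enfeoffed-corollary}, whereas the paper cites \cite[Theorem 1.2]{Sun-vhsr} directly — the same thing). The only difference is presentational: you spell out the bookkeeping step that a union of a $\Gamma$-orbit of dominant closed faces is a $\Gamma$-invariant dominant semi-extreme subset, which the paper leaves implicit; your coefficient-wise argument for why dominance passes from a face $F$ to a semi-extreme superset $E_i$ is the right one.
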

	
	\begin{proof}
		Denote by $M_f$ the mapping torus for $f$ with the distinguished cohomology class $\phi_f$.
		Take $N$ to be the positive integer $-\chi(S)+1$.
		We apply Theorem \ref{dominant-diversity}
		to obtain a connected regular finite cover $\tilde{M}$ of $M_f$
		with the deck transformation group denoted by $\Gamma$.
		Declare $\tilde{M}$ as a covering mapping torus 
		according to Convention \ref{covering-mapping-torus}.
		Then the homology direction hull $\mathcal{D}(\tilde{M},\tilde{\phi})$
		has at least $N$ mutually disjoint $\Gamma$--invariant dominant semi-extreme subsets,
		each given by a $\Gamma$--orbit of dominant closed faces as their union.
		So the Mahler measure of multivariable Alexander polynomial of $\tilde{M}$
		is strictly greater than $1$	by Theorem \ref{criterion-enfeoffed}.
		By \cite[Theorem 1.2]{Sun-vhsr},
		we conclude that for some connected finite cover $S'$ of $S$ and 
		some lift $f'\colon S'\to S'$ of $f$, 
		the homological spectral radius for $f'$ is strictly greater than $1$,
		as asserted.
	\end{proof}
		
	For a connected compact orientable surface $S$ with nonempty boundary,
	the interior of $S$ is a connected punctured orientable surface of finite type.
	In this setting, 
	an orientation-preserving self-homeomorphism $f\colon S\to S$
	is called a \emph{pseudo-Anosov automorphism}
	if $S$ has negative Euler characteristic,
	and if $f$ preserves projectively a pair of measured foliations 
	$(\mathscr{F}^{\mathtt{u}},\mu^{\mathtt{u}})$
	and $(\mathscr{F}^{\mathtt{s}},\mu^{\mathtt{s}})$	on the punctured surface $\interior(S)$.
	The underlying invariant foliations are transverse to each other 
	except at finitely many common singular points in the interior or at the punctures.
	For any singular point in the interior, 
	we require both of the invariant foliations have a $k$--prong singularity,
	for some and the same positive integer $k\geq3$;
	for any singular point at a puncture, we allow $1$--prong singularity in addition.
	(This agrees with the terminology of \cite[Expos\'e 11, Section 11.3]{FLP}.)
		
	\begin{lemma}\label{vhsr-pA-bounded}
		For any pseudo-Anosov automorphism $f$ of a connected compact orientable surface $S$ with nonempty boundary,
		some virtual homological spectral radius for $f$ is strictly greater than $1$.
	\end{lemma}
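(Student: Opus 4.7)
The plan is to reduce Lemma \ref{vhsr-pA-bounded} to the closed case Lemma \ref{vhsr-pA-closed} by a capping-off construction. Given the pseudo-Anosov $f\colon S\to S$, fill in each boundary circle of $S$ with a disk to form a closed orientable surface $\hat S$ with marked points $p_1,\dots,p_r$ at the centers of the capping disks, and extend $f$ to a homeomorphism $\hat f\colon \hat S\to \hat S$ permuting the $p_i$. The invariant measured foliations of $f$ on $\mathrm{int}(S)$ extend to $\hat f$-invariant measured foliations on $\hat S$ with an $n_i$-prong singularity at each $p_i$, where $n_i\ge 1$. In the easy case that $\hat S$ has genus $\ge 2$ and every $n_i\ge 3$, the extension $\hat f$ is already a pseudo-Anosov of a closed surface in the sense of this paper, and Lemma \ref{vhsr-pA-closed} applies directly.

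In the general case, first pass to a finite cyclic branched cover $\pi\colon \hat S'\to \hat S$ with branch locus contained in $\{p_1,\dots,p_r\}$, possibly augmented by a few auxiliary regular points, chosen with branching indices large enough that $\hat S'$ has genus $\ge 2$ and that every singularity of the pulled-back foliations has $\ge 3$ prongs. Such a cover exists because a degree-$d$ cyclic branching at an $n$-prong point produces a $dn$-prong singularity above it (so sufficiently divisible branching resolves all low-prong singularities), while the Riemann--Hurwitz formula allows the genus to be pushed above $1$ simultaneously. Since $\hat f$ permutes the finitely many branched covers of given combinatorial type, a suitable iterate $\hat f^M$ preserves the chosen cover and lifts to a genuine pseudo-Anosov $\hat f'\colon \hat S'\to \hat S'$ on the closed surface $\hat S'$ of genus $\ge 2$. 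Applying Lemma \ref{vhsr-pA-closed} to $\hat f'$ yields a connected finite cover $\tilde{\hat S}\to \hat S'$ together with a lift $\tilde{\hat f}$ of $\hat f'$ whose homological spectral radius strictly exceeds $1$. Now view $\tilde{\hat S}\to \hat S$ as a (possibly branched) finite cover and remove the preimages of the open capping disks to produce an unbranched finite cover $\tilde S\to S$ carrying a lift $\tilde f$ of an iterate $f^N$ (where $N$ absorbs both the covering degree and the iterate index $M$); the inclusion $\tilde S\hookrightarrow\tilde{\hat S}$ induces an equivariant surjection $H_1(\tilde S;\Complex)\twoheadrightarrow H_1(\tilde{\hat S};\Complex)$, so every eigenvalue of $\tilde{\hat f}_\ast$ is an eigenvalue of $\tilde f_\ast$ as well, giving in particular an eigenvalue $\lambda$ of $\tilde f_\ast$ with $|\lambda|>1$.

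To upgrade this to a virtual homological eigenvalue of $f$ itself rather than of an iterate, intersect the finite-index subgroup $H\le\pi_1(S)$ defining $\tilde S\to S$ with its $f_\ast$-conjugates, forming the $f_\ast$-invariant finite-index subgroup $H_0=\bigcap_{i=0}^{N-1}f_\ast^i(H)$; the corresponding finite cover $\tilde S_0\to S$ carries a lift $\tilde f_0$ of $f$ with $(\tilde f_0)^N$ agreeing (up to a deck transformation) with the pullback of $\tilde f$ via the equivariant projection $\tilde S_0\to \tilde S$. Since the eigenvalues of $(\tilde f_{0\ast})^N$ are $N$-th powers of the eigenvalues of $\tilde f_{0\ast}$, the eigenvalue $\lambda$ of $(\tilde f_{0\ast})^N$ with $|\lambda|>1$ forces $\tilde f_{0\ast}$ to have an eigenvalue of modulus $|\lambda|^{1/N}>1$, as required. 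The principal technical difficulty is the branched-cover arrangement above, which must simultaneously raise the genus of $\hat S$ to $\ge 2$, raise every low-prong singularity to $\ge 3$ prongs, and remain compatible with the $\hat f$-action on the marked points; this is a finite combinatorial problem handled by case-by-case analysis on $(g,r,\{n_i\})$, but introduces no conceptual ingredient beyond those already developed for the closed case.
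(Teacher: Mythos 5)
Your approach reduces to the closed case (Lemma \ref{vhsr-pA-closed}), as the paper's proof also does, but in a different order: you fill in the boundary first to get a closed surface $\hat S$ with marked points and then take a branched cover of $\hat S$, whereas the paper takes an honest (unbranched) characteristic finite cover of $S$ in which every boundary circle covers its image with degree at least $2$, and only then fills. That ordering is not an innocuous stylistic difference; it is precisely what sidesteps the two places where your argument has real gaps.

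\textbf{Auxiliary branch points break the covering condition.} You allow the branch locus of $\hat S'\to\hat S$ to include ``a few auxiliary regular points'' in addition to the $p_i$. Those auxiliary points lie in the interior of $S$, so after you delete the capping disks, the induced map $\tilde S\to S$ is still branched over them and is \emph{not} a covering projection. Consequently $\tilde f$ does not fit into the commutative square from the Introduction, and it does not witness a \emph{virtual} homological spectral radius for $f$. This can in fact be avoided — a Riemann--Hurwitz count shows that when $\mathrm{int}(S)$ supports a pseudo-Anosov (so either $g(\hat S)\geq 1$, or $g(\hat S)=0$ and $r\geq 3$), branching only over the $p_i$ with a suitable cyclic cover of large degree simultaneously raises the genus above $1$ and resolves every $1$-prong — but you do not argue this and instead explicitly leave the door open to auxiliary points, which would be fatal. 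The paper never encounters this issue because it covers $S$ itself, so the only thing that is ever collapsed or filled is boundary circles, and the composite is an unbranched cover of $S$ by construction.

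\textbf{The return from $f^N$ to $f$ is not justified.} Because a branched cover of $\hat S$ is generally preserved only by an iterate $\hat f^M$, you are forced to produce a lift of $f^N$ (with $N$ absorbing $M$ and the covering degree) and then descend to $f$. You pass to the $f_\ast$-invariant intersection $H_0=\bigcap_{i}f_\ast^i(H)$ and take the lift $\tilde f_0$ of $f$ to $\tilde S_0$, observing that $(\tilde f_0)^N$ agrees with the pullback of $\tilde f$ only up to a deck transformation. But multiplication by a finite-order automorphism does not preserve spectral radius: for example $\left(\begin{smallmatrix}0&1\\1&0\end{smallmatrix}\right)\left(\begin{smallmatrix}2&0\\0&1/2\end{smallmatrix}\right)$ has spectral radius $1$. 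So the assertion that $(\tilde f_{0\ast})^N$ has an eigenvalue of modulus $>1$ does not follow from what you wrote; you would need an additional argument (e.g., comparing via the transfer, or replacing $\tilde f_0$ by a different choice of lift) before the step ``$\rho(\tilde f_{0\ast})^N = \rho((\tilde f_0)^N_\ast) >1$'' can be invoked. The paper avoids this entirely: it takes a \emph{characteristic} finite cover of $S$, on which $f$ itself lifts, so no iterate ever needs to be undone, and the closed-case lemma is applied directly to a lift of $f$.

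In short: the student's reduction has the right target, but the fill-then-branched-cover order re-introduces exactly the difficulties (branched versus unbranched, iterate versus $f$ itself) that the paper's cover-then-fill order is designed to avoid, and the proposal as written does not close either gap.
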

	
	\begin{proof}
		For any characteristic finite cover $\tilde{S}$ of $S$,
		$f$ admits a lift to $\tilde{S}$ as a pseudo-Anosov automorphism $\tilde{f}$,
		and the invariant foliations $\tilde{\mathscr{F}}^{\mathtt{s}}$ and $\tilde{\mathscr{F}}^{\mathtt{u}}$ 
		for $\tilde{f}$ are obtained via pull-back.
		We require that every boundary component of $\tilde{S}$
		covers a boundary component of $S$ of degree at least $2$.
		Then the pull-back invariant foliations have no $1$--prong singularities.
		In this case, we obtain a closed orientable surface $\tilde{S}_{\mathtt{fill}}$ by collapsing
		every boundary component of $\tilde{S}$ to a point, so $\tilde{f}$
		descends to be a pseudo-Anosov automorphism $\tilde{f}_{\mathtt{fill}}$
		of $\tilde{S}_{\mathtt{fill}}$.
		For any lift $f'_{\mathtt{fill}}$ of $\tilde{f}_{\mathtt{fill}}$ 
		to some finite cover $S'_{\mathtt{fill}}$ of $\tilde{S}_{\mathtt{fill}}$,
		we have an induced lift $f'$ of $\tilde{f}$ to the induced finite cover $S'$ of $\tilde{S}$.
		
		For any lift $f'_{\mathtt{fill}}$ of $\tilde{f}_{\mathtt{fill}}$ as above,
		it is easy to see that $f'$ and $f'_{\mathtt{fill}}$ have the same homological spectral radius,
		(by considering the homology long exact sequence	induced by filling the punctures for $S'$).
		Since some $f'_{\mathtt{fill}}$ has 
		homological spectral radius strictly greater than $1$ (Lemma \ref{vhsr-pA-bounded}), 
		the same property holds for $f'$, which lifts $f$, as asserted.
	\end{proof}
	
	\begin{lemma}\label{vhsr-pA-partial}
		Let $f$ be an automorphism of a connected compact orientable surface $S$.
		Suppose that $S_0$ is an essentially embedded connected subsurface of $S$ which is invariant under $f$.
		If the restriction of $f$ to $S_0$ is a pseudo-Anosov automorphism of $S_0$,
		then some virtual homological spectral radius for $f$ is strictly greater than $1$.
	\end{lemma}
	
	\begin{proof}
		We start by explaining a basic construction for finite covers of surfaces
		(sometimes known as \emph{completing a semi-cover}):
		For any finite cover $S'_0\to S_0$, 
		we can construct a finite cover $S'$ of $S$ and an embedding of $S'_0$ into $S'$,
		so that they fit into the following commutative diagram:
		$$\xymatrix{
		S'_0 \ar[r]^-{\mathrm{emb.}} \ar[d]_{\mathrm{cov.}} & S' \ar[d]^{\mathrm{cov.}}\\
		S_0 \ar[r]^-{\mathrm{incl.}} & S
		}$$
		
		The above construction follows from the fact that the fundamental group of $S$ is LERF, 
		due to P.~Scott \cite[Theorem 3.1]{Scott-LERF}. 
		To be more precise, we recall 
		the following topological characterization of the LERF property \cite[Lemma 1.1]{Scott-LERF}:
		Let $X$ be a Hausdorff topological space with a regular covering space $\widehat{X}$
		and deck transformation group $G$. 
		Then $G$ is LERF if and only if given a finitely generated subgroup 
		$H$ of $G$ and a compact subset $K$ of $\widehat{X}/H$, 
		there is a finite covering projection $X'\to X$ such that the covering projection
		$\widehat{X}/H\to X$ factors through $X'$ and $K$ projects homeomorphically into $X'$.
		For convenience, we choose an auxiliary basepoint $*$ of $S$ which lies in $S_0$,
		and choose a lifted basepoint $*'$ in $S'_0$.
		Then $H=\pi_1(S'_0,*')$ can be naturally identified as a finitely generated subgroup of $G=\pi_1(S,*)$.
		Let $\widehat{S}$ be a universal cover of $S$ with a lifted basepoint,
		so $G$ acts naturally on $\widehat{S}$ as deck transformations.
		The composite map $S'_0\to S_0\to S$ lifts to $\widehat{S}/H$ as an embedding.
		Since $G$ is LERF \cite[Theorem 3.1]{Scott-LERF}, the covering projection $\widehat{S}/H \to S$
		factors through some finite cover $S'$ of $S$, and the embedded image of $S'_0$
		projects homeomorphically into $S'$.
		Therefore, we obtain a finite covering map $S'\to S$ and 
		an embedding $S'_0\to \widetilde{S}/H\to S'$, as claimed.
				
		To prove Lemma \ref{vhsr-pA-partial}, 
		it suffices to assume that the boundary of $S_0$ is nonempty,
		otherwise we are done by Lemma \ref{vhsr-pA-closed}.
		Denote by $f_0\colon S_0\to S_0$ the restriction of $f$ to $S_0$.
		We apply Lemma \ref{vhsr-pA-bounded}
		to obtain a connected finite cover $S'_0$,
		and an automorphism $f'_0\colon S'_0\to S'_0$ that lifts $f_0$ with homological spectral radius $>1$.
		By the above construction,
		we can extend $S'_0$ to obtain a finite cover $S'$ of $S$.
		Take a characteristic finite cover $\tilde{S}$ of $S$ that factors through $S'$,
		and take an automorphism $\tilde{f}\colon \tilde{S}\to\tilde{S}$ that lifts $f$.
		It suffices to show that $\tilde{f}$ has homological spectral radius  $>1$.
		
		To this end,
		let $\tilde{S}_0\subset \tilde{S}$ be a connected component of the preimage of $S'_0$. 
		There exists some $m\in \Natural$
		such that $\tilde{f}^m$ preserves $\tilde{S}_0$ and commutes with the deck transformations of $\tilde{S}$ over $S$.
		In particular, $\tilde{f}^m$ descends to $S'$ as an automorphism $F'\colon S'\to S'$.
		The restricted automorphism $F'_0\colon S'_0\to S'_0$ is necessarily of the form $\sigma\circ (f'_0)^m$, 
		where $\sigma$ is some deck transformation of $S'_0$ over $S_0$.
		Possibly after raising $m$ to a positive multiple, 
		(for example, $[S'_0:S_0]$ times $m$,)
		we may assume that $F'_0$ equals $(f'_0)^m$.
		As $\tilde{S}_0\to S'_0$ has finite degree,
		every homological eigenvalue of $(f'_0)^m$ off the unit circle
		is also a homological eigenvalue of the restriction of $\tilde{f}^m$ to $\tilde{S}_0$,
		(by the surjectivity of $H_1(\tilde{S}_0;\Complex)\to H_1(S'_0;\Complex)$).
		As $\tilde{S}_0$ is essentially embedded in $\tilde{S}$,
		every homological eigenvalue of $\tilde{f}^m|_{\tilde{S}_0}$ off the unit circle
		is also a homological eigenvalue of $\tilde{f}^m$,
		(by considering the homology Mayer--Vietoris sequence for the pair 
		$(\tilde{S}_0,\tilde{S}\setminus\interior(\tilde{S}_0))$).
		Since $f'_0\colon S'_0\to S'_0$ has homological spectral radius  $>1$,
		the same holds for $(f'_0)^m$, and hence for $\tilde{f}^m$.
		It follows that $\tilde{f}$ must also have homological spectral radius $>1$.		
	\end{proof}

	For any automorphism $f$ of a connected compact orientable surface $S$ in general,
	the Nielsen--Thurston classification implies that there exists a possibly empty
	collection of mutually disjoint essential simple closed curves on $S$ as follows:
	Up to isotopy, $f$ preserves the union of the curves, and therefore,
	some positive power $f^m$ of $f$ preserves each of the complementary components.
	Moreover, for each of the complementary component,
	(path-compactified as a connected compact orientable surface),
	the restriction of $f^m$ to that component is isotopic to 
	either a periodic automorphism or a pseudo-Anosov automorphism.
	(See \cite[Expos\'e 11, Theorem 11.7]{FLP}.)
	It is known that the mapping-class entropy of $f$ is positive 
	if and only if there is at least one component as above of the pseudo-Anosov type,
	(see \cite[Corollary 10]{Kojima}).
	In this case, 
	$f^m$, and hence $f$, 
	must have some virtual homological spectral radius which is strictly greater than $1$,
	by Lemmas \ref{vhsr-pA-closed}, \ref{vhsr-pA-bounded}, and \ref{vhsr-pA-partial}.
	Otherwise, some positive power of $f$ is isotopic to
	a Dehn multitwist along the above decomposition curves,
	namely, a product of integral Dehn twists along the curves.
	In this case,
	obviously
	any virtual homological spectral radius of $f$ must be $1$.
	
	This completes the proof of Theorem \ref{main-vhsr}.

\bibliographystyle{amsalpha}


\begin{thebibliography}{}

\bibitem[Ago08]{Agol-RFRS} I.~Agol, \textit{Criteria for virtual fibering}, J.~Topol.~\textbf{1} (2008), 269--284.

\bibitem[Ago13]{Agol-VHC} \bysame,
\textit{The virtual Haken conjecture}, with an appendix by I.~Agol, D.~Groves, and J.~F.~Manning,
Documenta Math.~\textbf{18} (2013), 1045--1087.

\bibitem[AGM]{AGM-MSQT} I.~Agol, D.~Groves, and J.~F.~Manning, 
\textit{An alternate proof of Wise's malnormal special quotient theorem},
Forum Math.~Pi \textbf{4} (2016), e1, 54 pp. 

\bibitem[AFW]{AFW-group}  M.~Aschenbrenner, S.~Friedl, and H.~Wilton. \textit{3-Manifold
Groups}, EMS Series of Lectures in Mathematics, 2015.

\bibitem[Bow73]{Bowen-symbolic}
R.~Bowen, \textit{Symbolic dynamics for hyperbolic flows}.
Amer.~J.~Math.~\textbf{95} (1973), 429--460.

\bibitem[CSW]{Calegari--Sun--Wang}, D.~Calegari, H.~Sun, S.~Wang, \textit{On fibered commensurability}, Pacific J.~Math.~\textbf{250} (2011), 287--317.

\bibitem[Cha]{Chapman} T.~A.~Chapman, \textit{Topological invariance of Whitehead torsion}, Amer.~J.~Math.~\textbf{96} (1974), 488--497.

\bibitem[CLR]{Cooper--Long--Reid} D.~Cooper, D.~D.~Long, A.~W.~Reid, \textit{Bundles and finite foliations}, Invent.~Math.~\textbf{118} (1994), 255--283.

\bibitem[EW]{Everest--Ward} G.~Everest and T.~Ward, \textit{Heights of Polynomials and Entropy in Algebraic
Dynamics}. Springer, London, 1999.

\bibitem[FLP]{FLP} A.~Fathi, F.~Laudenbach, V.~Po\'enaru, \textit{Thurston's Work on Surfaces}.
Translated from the 1979 French original by D.~M.~Kim and D.~Margalit. 
Princeton University Press, Princeton, NJ, 2012. 

\bibitem[Frd82a]{Fried-sections} 
D.~Fried, \textit{The geometry of cross sections to flows},
Topol.~ \textbf{21} (1982), 353--371.

\bibitem[Frd82b]{Fried-flowEqv} 
\bysame, \textit{Flow equivalence, hyperbolic systems, and a new zeta function},
Comment.~Math.~Helvitici \textbf{57} (1986), 237--259.


\bibitem[Frd83]{Fried-pptc} \bysame, ``Periodic points and twisted coefficients''. \textit{Geometric Dynamics (Rio de Janeiro, 1981)}, 
pp. 261--293, Springer, Berlin, 1983. 

\bibitem[Frd86]{Fried-zetaRS} 
\bysame, \textit{The zeta functions of Ruelle and Selberg, I},
Ann.~Scient.~\'{E}c.~Norm.~Sup., 4e s\'erie, \textbf{19} (1986), 491--517.

\bibitem[FV11]{FV-survey} S.~Friedl and S.~Vidussi, ``A survey of twisted Alexander polynomials'', 
\textit{The Mathematics of Knots: Theory and Application}, 
(Contributions in Mathematical and Computational Sciences), pp.~45--94, Springer, Heidelberg, 2011.

\bibitem[Gab]{Gabai-smale}
D.~Gabai,
\textit{The Smale conjecture for hyperbolic 3-manifolds: $\mathrm{Isom}(M^3)\simeq\mathrm{Diff}(M^3)$},
J.~Differential Geom.~\textbf{58} (2001), no.~1, 113--149. 

\bibitem[GMRS]{GMRS} 
R.~Gitik, M.~Mitra, E.~Rips, M.~Sageev, 
\textit{Widths of subgroups},
Trans.~Amer.~Math.~Soc.~\textbf{350} (1998), 321--329.

\bibitem[Grv]{Gromov-hyperbolic} M.~Gromov, ``Hyperbolic groups'', \textit{Essays in Group Theory}, 
Math.~Sci.~Res.~Inst.~Publ.~8, pp.~75--263, 
Springer, New York, 1987.

\bibitem[GM]{GM-filling} D.~Groves and J.~F.~Manning, 
\textit{Dehn filling in relatively hyperbolic groups}, Israel J.~Math.~\textbf{168} (2008), 317--429.


\bibitem[Had13]{Hadari-shadows} A.~Hadari, \textit{Homological shadows of attracting laminations}, Groups, Geom.~and Dynamics, 
to appear, 2013, 25 pages: \texttt{arXiv:1305.1613v3}

\bibitem[Had15]{Hadari-order} \bysame, \textit{Every infinite order mapping class has an infinite order action on the homology
of some finite cover}, preprint, 2015, 19 pages: \texttt{arXiv:1508.01555v1}

\bibitem[Had17]{Hadari-vhsr} \bysame, \textit{Homological eigenvalues of lifts of pseudo-Anosov mapping classes to finite covers},
preprint, 2017, 30 pages: \texttt{arXiv:1712.01416v1}

\bibitem[HW]{Haglund--Wise} F.~Haglund and D.~T.~Wise, \textit{Special cube complexes}, Geom.~Funct.~Anal.~\textbf{17} (2008),
1551--1620. 

\bibitem[Hem]{Hempel-book} J.~Hempel, \textit{3-Manifolds}, Ann.~of Math.~Studies, No.~86. 
Princeton University Press, Princeton, N.~J.; University of Tokyo Press, Tokyo, 1976.

\bibitem[Jia83]{Jiang-book} B.~Jiang, \textit{Lectures on Nielsen Fixed Point Theory},
Contemp.~Math.~14, Amer.~Math.~Soc., Province, RI, 1983.

\bibitem[Jia05]{Jiang-survey} \bysame, ``A primer of Nielsen fixed point theory'',
\textit{Handbook of Topological Fixed Point Theory}, pp.~617--645, Springer, Dordrecht, 2005.


\bibitem[JG]{JG} B.~Jiang and J.~Guo, \textit{Fixed points of surface diffeomorphisms}, Pacific J.~Math.~\textbf{160} (1993), 67--89.

\bibitem[KS]{Kapovich--Short} I.~Kapovich and H.~Short, \textit{Greenberg's theorem for quasiconvex subgroups of word hyperbolic groups},
Can.~J.~Math.~\textbf{48} (1996), 1224--1244.

\bibitem[Kit98]{Kitchens-book}
B.~Kitchens,
\textit{Symbolic Dynamics: One-sided, Two-Sided and Countable State Markov Shifts}.
Springer--Verlag, Berlin, 1998.

\bibitem[Kob12]{Koberda-NT_classification} T.~Koberda, 
	\textit{Asymptotic linearity of the mapping class group and a homological version of the Nielsen--Thurston classification}, 
	Geom.~Dedicata \textbf{156} (2012), 13--30.

\bibitem[Kob14]{Koberda-largeness} \bysame, 
	\textit{Alexander varieties and largeness of finitely presented groups},
	J.~Homotopy Relat.~Struct.~\textbf{9} (2014), 513--531.

\bibitem[KM]{Koberda--Mangahas} T.~Koberda, J. Mangahas, 
	\textit{An effective algebraic detection of the Nielsen--Thurston classification of mapping classes},
	J.~Topol.~Anal.~\textbf{7} (2015), 1--21. 

\bibitem[Koj]{Kojima} S.~Kojima, \textit{Entropy, Weil--Petersson translation distance and Gromov norm for surface automorphisms},
	Proc.~Amer.~Math.~Soc.~\textbf{140} (2012), 3993--4002. 

\bibitem[Le]{Le-abelian} T.~Le, \textit{Homology torsion growth and Mahler measure},
	Comment.~Math.~Helv.~\textbf{89} (2014), 719--757. 

\bibitem[LS]{L--Sun} Y.~Liu and H.~Sun, \textit{Virtual 1-domination of 3-manifolds}, Compos.~Math.~(2017), to appear, 19 pages: \texttt{arXiv:1610.03937v1}

\bibitem[Mar]{Marden-book} A.~Marden, \textit{Outer Circles: An Introduction to Hyperbolic 3-Manifolds}, Cambridge University Press, Cambridge, 2007. 

\bibitem[Mat]{Maskit-book} B.~Maskit, \textit{Kleinian Groups}, Grundlehren der Math.~ Wissenschaften, vol. 287, Springer, 1987.

\bibitem[Mas]{Masters} J.~Masters, \textit{Virtual Betti numbers of genus 2 bundles}, Geom.~Topol.~\textbf{6} (2002), 541--562. 

\bibitem[McM02]{McMullen-norm} C.~T.~McMullen, \textit{The Alexander polynomial of a 3-manifold and the Thurston norm on cohomology},
	Ann.~Sci.~\'{E}cole Norm. Sup.~(4) \textbf{35} (2002), 153--171.

\bibitem[McM13]{McMullen-entropy} C.~T.~McMullen, \textit{Entropy on Riemann surfaces and the Jacobians of finite covers},
	Comm.~Math.~Helv.~\textbf{88} (2013), 953--964.
	
\bibitem[Mil]{Milnor-torsion} J.~Milnor, \textit{Whitehead torsion}, Bull.~Amer.~Math.~Soc.~\textbf{72} (1966), 358--426.

\bibitem[MS]{Milnor--Stasheff} J.~Milnor and J.~Stasheff, \text{Characteristic Classes}, Princeton University Press, Princeton, 1974.

\bibitem[Nie]{Nielsen-I} J.~Nielsen, \textit{Untersuchungen zur Topologie der geschlossenen zweiseitigen Fl\"achen, I},
Acta Math.~\textbf{50} (1927), 189--358 (German); English transl. V.~L.~Hansen (ed.), ``Investigations
in the topology of closed orientable surfaces, I'', Jakob Nielsen: Collected Mathematical Papers, vol. 1, 
Birkh\"auser, Boston, 1986, pp.~223--341.

\bibitem[PW18]{PW-mixed} 
{P.~Przytycki and D.~Wise}, 
\textit{Mixed $3$-manifolds are virtually special}, J.~Amer.~Math.~Soc.~\textbf{31} (2018), 319--347.


\bibitem[Rat]{Ratcliffe-book} J.~Ratcliffe, \textit{Foundations of Hyperbolic Manifolds}.
Second edition. Graduate Texts in Mathematics, 149. Springer, New York, 2006.

\bibitem[Rei]{Reidemeister} K.~Reidemeister, \textit{Homotopieringe und Linsenr\"aume},
	Abh.~Math.~Sem.~Univ.~Hamburg \textbf{11} (1935), 102--109. 

\bibitem[Sco73]{Scott-core} 
P.~Scott, \textit{Compact submanifolds of 3-manifolds}, J.~London Math.~Soc.~(2) \textbf{7} (1973), 246--250.

\bibitem[Sco78]{Scott-LERF}
\bysame,
\textit{Subgroups of surface groups are almost geometric},
J.~London Math.~Soc.~(2) \textbf{17} (1978), 555--565.

\bibitem[SW]{Silver--Williams} D.~S.~Silver and S.~G.~Williams, \textit{Mahler measure, links and homology growth},
	Topol.~\textbf{41} (2002), 979--991.

\bibitem[Sta]{Stallings-fibering} J.~Stallings, ``On fibering certain 3-manifolds''. In
	\textit{Topology of 3-manifolds and related topics} (Proc. The Univ.~of Georgia Institute, 1961), pp.~95--100.
	Prentice-Hall, Englewood Cliffs, N.J., 1962.

\bibitem[Sun15]{Sun-torsion} H.~Sun, \textit{Virtual homological torsion of closed hyperbolic 3-manifolds}, 
	J.~Differ.~Geom.~\textbf{100} (2015), 547--583.

\bibitem[Sun17]{Sun-vhsr} \bysame, \textit{Virtual homological spectral radius and mapping torus of pseudo-Anosov maps},
	Proc.~Amer.~Math.~Soc.~\textbf{145} (2017), 4551--4560.

\bibitem[Swa]{Swarup} G.~A.~Swarup, \textit{Geometric finiteness and rationality}, J.~Pure Appl.~Algebra \textbf{86} (1993), 327--333.

\bibitem[Thu79]{Thurston-notes} W.~P.~Thurston, \textit{The geometry and topology of three-manifolds}, lecture
notes, Princeton University, 1979: \texttt{http://msri.org/publications/books/gt3m}

\bibitem[Thu86]{Thurston-norm} \bysame, \textit{A norm for the homology of 3-manifolds.} Mem.~Amer.~Math.~Soc.~\textbf{59} (1986),
no.~339, pp.~99--130.

\bibitem[Thu88]{Thurston-surfaces} \bysame, \textit{On the geometry and dynamics of diffeomorphisms of surfaces}, 
Bull.~Amer.~Math.~Soc.~(N.S.) \textbf{19} (1988), 417--431.

\bibitem[Tur01]{Turaev-combinatorial} V. Turaev, \textit{Introduction to Combinatorial Torsions}, Birkh\"auser, Basel, 2001.

\bibitem[Tur02]{Turaev} \bysame, \textit{A homological estimate for the Thurston norm}, preprint, 2002, 32 pages:
\texttt{arXiv:math/0207267v1}

\bibitem[Wis]{Wise-book} D.~T.~Wise, \textit{From Riches to RAAGs: 3-Manifolds, Right--Angled Artin Groups, and Cubical Geometry},
 CBMS Regional Conference Series in Mathematics, 2012.


\end{thebibliography}

\end{document}